\RequirePackage{fix-cm}
\documentclass[smallextended]{svjour3}       
\smartqed  
\usepackage{graphicx}
\usepackage[labelfont = bf]{caption}
\usepackage{xcolor}
\usepackage{tikz}
\usepackage[caption=false]{subfig}
\usepackage{amsmath,amssymb,mathtools}
\usepackage[bbgreekl]{mathbbol} 
\usepackage{stmaryrd}
\usepackage{hyperref}
\usepackage{enumitem}
\usepackage{tabularx,booktabs,multirow}  
\usepackage{float}
\usepackage{microtype}

\usetikzlibrary{patterns}
\usetikzlibrary{fadings}
\usetikzlibrary{shadows.blur}
\usetikzlibrary{shapes}
\definecolor{denim}{rgb}{0.08, 0.38, 0.74}
\definecolor{byzantium}{rgb}{0.44, 0.16, 0.39} 
\definecolor{shamrockgreen}{rgb}{0.0, 0.62, 0.38} 
\newcommand{\vect}[1]{\boldsymbol{#1}}
\newcommand\R{\mathbb{R}}
\newcommand\Rd{\mathbb{R}^d}
\newcommand\Rdd{\mathbb{R}^{d\times d}}
\newcommand\Rext{\mathbb{R}\cup\{+\infty\}}
\newcommand\Rextd{\mathbb{R}\cup\{-\infty\}}

\newcommand\tria{\mathcal{T}_h}
\newcommand\facesi{\mathcal{F}^i_h}
\newcommand\facesD{\mathcal{F}^D_h}

\newcommand\bb{\vect{b}}
\newcommand\br{\vect{r}}

\newcommand\bq{\vect{q}}
\newcommand\bv{\vect{v}}
\newcommand\bw{\vect{w}}
\newcommand\bu{\vect{u}}
\newcommand\bn{\vect{n}}
\newcommand\bg{\vect{g}}
\newcommand\BA{\tens{A}}
\newcommand\BB{\tens{B}}
\newcommand\BF{\tens{F}}
\newcommand\BT{\tens{T}}

\newcommand\BI{\tens{I}}
\newcommand\BL{\tens{L}}
\newcommand\BH{\tens{H}}

\newcommand\brho{\vect{\varrho}}

\newcommand\bsigma{\vect{\sigma}}
\newcommand\btau{\vect{\tau}}
\newcommand\bff{\vect{f}}
\newcommand\bzero{\vect{0}}
\newcommand\load{\bff^*}
\newcommand\uDirichlet{\smash{\widehat{\bu}_D}}
\newcommand\uhDirichlet{\smash{\widehat{\bu}^h_D}}

\newcommand\bbC{\mathbb{C}}

\newcommand\CR{V^h}
\newcommand\CRDirichlet{V^h_D}
\newcommand\RT{\Sigma^h}
\newcommand\RTNeumann{\Sigma^h_N}
\newcommand{\Pbroken}[1]{\mathbb{P}^{#1}(\mathcal{T}_h)}

\newcommand{\PbrokenM}[1]{(\mathbb{P}^{#1}(\mathcal{T}_h))^{d\times d}}
\newcommand\Hdiv{\mathbf{H}(\textup{div};\Omega)}
\newcommand\HdivN{\mathbf{H}_N(\textup{div};\Omega)}
\newcommand\HNeumann{\mathbf{H}^{-\frac{1}{2}}_{00}(\Gamma_N)}
\newcommand\HNeumannDual{{\mathbf{H}^{\frac{1}{2}}_{00}(\Gamma_N)}}
\newcommand\HoneDirichlet{\mathbf{H}^1_{D}(\Omega)}
\newcommand\HminusDirichlet{\mathbf{H}^{-1}_{D}(\Omega)}
\newcommand\LtwoM{\mathbb{L}^2(\Omega)}

\newcommand\Ltwosym{\mathbb{L}_{\mathrm{sym}}^2(\Omega)}

\newcommand\diver{\textup{div}\,}
\newcommand\dev{\textup{dev}\,}
\newcommand\sym{\textup{sym}\,}
\newcommand\tr{\textup{tr}\,}
\newcommand\osclocal{\mathrm{osc}^2_T\,}
\newcommand\skewM{\textup{skew}\,} 
\newcommand\fp{\,{:}\,}
\newcommand\characteristic[1]{I_{\{#1\}}}
\newcommand\symgrad[1]{\bbespilon(#1)}
\newcommand\symgraddisc[1]{\bbespilon_h(#1)} 

\newcommand\gap{\eta^2_{\mathrm{gap}}}
\newcommand\gapext{\overline{\eta}^2_{\mathrm{gap}}}
\newcommand\gaplocal{\eta^2_{\mathrm{gap},T}}
\newcommand\gapextlocal{\overline{\eta}^2_{\mathrm{gap},T}}
\newcommand\rhoprimal{\rho^2_{I}}
\newcommand\rhoprimalh{\rho^2_{I_h}}

\newcommand\rhodualext{\rho^2_{-\overline{D}}}
\newcommand\rhodualh{\rho^2_{-D_h}}

\newcommand\rhototext{\overline{\rho}^2_{\mathrm{tot}}}
\newcommand\rhototh{\rho^2_{\mathrm{tot},h}}

\providecommand{\jumptmp}[2]{#1\llbracket{#2}#1\rrbracket}
\providecommand{\jump}[1]{\jumptmp{}{#1}}
\providecommand{\normtmp}[2]{{#1\lVert #2 #1\rVert}}
\providecommand{\norm}[1]{\normtmp{}{#1}}

\spnewtheorem{algorithm}{Algorithm}{\bf}{\it}
\begin{document}

\title{\emph{A Priori} and \emph{A Posteriori} Error Identities\\ for Vectorial Problems via Convex Duality
  \thanks{P.A. G.-O. gratefully acknowledges support from the Charles University Research program no.\ PRIMUS/25/SCI/025 and UNCE/24/SCI/005.}
}

\titlerunning{Estimates via duality for vectorial problems}        

\author{P. A.\ Gazca-Orozco         \and
        A.\ Kaltenbach
}


\institute{P. A. Gazca-Orozco \at
  Faculty of Mathematics and Physics\\  Charles University\\ Sokolovská 83 \\ 186 75, Prague \\
              \email{gazca@karlin.mff.cuni.cz}           
           \and
           A.\ Kaltenbach \at
           Institute of Mathematics \\ Technical University of Berlin \\ Stra\ss e des 17.\ Juni 136\\ 10623 Berlin \\
           \email{kaltenbach@math.tu-berlin.de}
}

\date{Received: date / Accepted: date}

\maketitle

\begin{abstract}
  Convex duality has been leveraged in recent years to derive \emph{a posteriori} error estimates and identities for a wide range of non-linear and non-smooth scalar problems.
By employing remarkable compatibility properties of the Crouzeix--Raviart and Raviart--Thomas elements, optimal convergence of non-conforming discretisations and flux reconstruction formulas have also been established.
This paper aims to extend these results to the vectorial setting, focusing on the archetypical problems of incompressible Stokes and Navier--Lam\'e.
Moreover, unlike most previous results, we consider inhomogeneous mixed boundary conditions and loads in the topological dual of the energy space.
At the discrete level, we derive error identities and estimates that enable to prove quasi-optimal error estimates for a Crouzeix--Raviart discretisation with minimal regularity assumptions and no data {oscillation} terms.
	\keywords{Incompressible Stokes problem; Navier--Lam\'e problem; minimal regularity estimates; Prager–Synge-type identities; convex duality; discrete convex duality; mixed boundary conditions.}
\subclass{49M29\and 65N15\and  65N30\and  65N50\and 74F10\and 74B05}
\end{abstract}

\section{Introduction}\label{sec:intro}
  \hspace{5mm}The Prager--Synge identity (\textit{cf}.\ \cite{PraSyn47}) forms the cornerstone of many developments in \emph{a posteriori} error analysis.
  For the classical Poisson (minimisation) problem posed with full homogeneous Dirichlet boundary conditions on a bounded Lipschitz domain $\Omega \subseteq \mathbb{R}^d$, $d\in \mathbb{N}$, and $f\in L^2(\Omega)$, \textit{i.e.},
  \begin{equation}\label{eq:laplace_primal_intro}
    \text{Find }u\in H^1_0(\Omega) \quad\text{ s.t. }\quad
    I(u) = \min_{v\in H^1_0(\Omega)}{\big\{I(v)
    \coloneqq \tfrac{1}{2}\|\nabla v\|^2_\Omega - (f,v)_\Omega\big\}}\,,
  \end{equation}
  for every $v\in H^1_0(\Omega)$,
   $\br\in H(\textup{div};\Omega)$ with  
    $\diver \br = -f$ a.e.\ in $\Omega$,
  the \emph{Prager--Synge identity} reads
  \begin{equation}\label{eq:prager_synge_intro}
    \tfrac{1}{2}\|\nabla u - \nabla v\|^2_\Omega 
    + \tfrac{1}{2} \|\bq - \br\|^2_\Omega 
    = \tfrac{1}{2}\|\br-\nabla v\|^2_\Omega \,.
  \end{equation}
  Here, $(\cdot,\cdot)_\Omega$ and $\|\cdot\|_{\Omega}$ denote the $L^2(\Omega)$-inner product and -norm, respectively (the precise notation is introduced in the following section).
  The vector field $\bq\coloneqq \nabla u \in H(\textup{div};\Omega)$ is the flux  associated to the \emph{primal solution} $u\in H^1_0(\Omega)$, 
  and is actually the solution of the \emph{dual problem} to \eqref{eq:laplace_primal_intro}:
  \begin{equation}\label{eq:dual_laplace_intro}
   \text{Find } \left\{\begin{aligned}
        \bq\in H(\textup{div};\Omega)\text{ with } \\\diver \bq=-f\text{ a.e.\ in }\Omega 
    \end{aligned}\right.\quad\text{ s.t. }\quad
    D(\bq) = \max_{\substack{\br \in H(\textup{div};\Omega)\\\diver \br = -f}}{\big \{D(\br)
    \coloneqq 
    -\tfrac{1}{2}\|\br\|^2_\Omega\big\}}\,.
  \end{equation}
In practice, the function  $v\in H_0^1(\Omega)$ and the vector field $\br\in H(\textup{div};\Omega)$ with $\diver \br = -f$ a.e.\ in $\Omega$ arise from numerical approximations, and the right-hand-side of the Prager--Synge identity \eqref{eq:prager_synge_intro} provides a computable error quantity, which is localisable and leads to error control with explicit constants (for further details and historical remarks, we refer to  \cite{Rep08,BB.2019}).

The traditional derivation of the Prager--Synge identity \eqref{eq:prager_synge_intro} is based on a Pythagorean identity,
which is not straightforward to generalise to non-linear settings.
However, a simple computation reveals that for every $v\in H^1_0(\Omega)$ and $\br\in H(\textup{div};\Omega)$ with $\diver\br=-f$ a.e.\ in $\Omega$, one has that 
\begin{align}
  I(v) - I(u) &= \tfrac{1}{2}\|\nabla u - \nabla v\|^2_\Omega\,,\\
  D(\bq) - D(\br) &= \tfrac{1}{2}\|\bq - \br\|^2_\Omega\,.
\end{align}
In other words, the natural error measure for convex optimisation problems (the energy difference) is nothing other than the usual error.
Moreover, since the \emph{strong duality relation} $I(u)=D(\bq)$ holds for the Poisson problem \eqref{eq:prager_synge_intro} (\textit{cf}.\ \cite[Prop.\ 2.1, p.\ 80]{ET99}),
it is straightforward to see that for every $v\in H^1_0(\Omega)$ and $\br\in H(\textup{div};\Omega)$ with $\diver\br=-f$ a.e.\ in $\Omega$, there holds
\begin{equation}\label{eq:prager_synge_intro2}
I(v) - I(u) + D(\bq) - D(\br) 
= I(v) - D(\br) 
= \tfrac{1}{2}\|\br - \nabla v\|^2_\Omega\,.
\end{equation}
This formulation of the Prager--Synge identity \eqref{eq:prager_synge_intro}, written in terms of the so-called \emph{primal-dual gap estimator} $\eta^2_{\mathrm{gap}}(\br,v)\coloneqq I(v) - D(\br)$, has the advantage that it can be readily generalized to a more general class of energy functionals, as long as a strong duality relation holds.
Such an identity has been derived for a class of energy functionals of the form $I\colon W^{1,p}_0(\Omega)\to \mathbb{R}\cup \{+\infty\}$, $p\in [1,+\infty]$, for every $v\in W^{1,p}_0(\Omega)$ defined by
\begin{equation}
I(v) \coloneqq \int_\Omega{\phi(\cdot,\nabla v)\,\mathrm{d}x} + \int_\Omega{\psi(\cdot,v)\,\mathrm{d}x}\,,
\end{equation}
where $\phi\colon \Omega\times\mathbb{R}^d\to \mathbb{R}\cup \{+\infty\}$ and $\psi\colon \Omega\times \mathbb{R}\to \mathbb{R}\cup \{+\infty\}$ are convex normal integrands (\textit{cf}.~\cite{Rockafellar1968}) satisfying appropriate growth and coercivity conditions, but which could be non-linear and  non-smooth (\textit{cf}.\ \cite{BM20,Bar21,BK.2023,BK.2024} and \cite{RepinXanthis1997,Rep99,Han05,Rep08,Rep20C}, for earlier results focusing mostly on upper bounds of 
the primal error).

While the results from \cite{BM20,Bar21,BK.2023,BK.2024} are quite general,
only problems in scalar function spaces, where the load $f$  belongs to $L^{p'}(\Omega)$ (instead of the (topological) dual space $W^{-1,p'}(\Omega)$, $\frac{1}{p}+\frac{1}{p'}=1$) have been considered.
One goal of this paper is to extend the ideas from  \cite{BM20,Bar21,BK.2023,BK.2024} to \emph{problems in vectorial function spaces}, namely we consider the incompressible Stokes problem (\textit{cf}.~\cite{Stokes_2009}) and the Navier--Lam\'e problem (\textit{cf}.~\cite{Navier1821,Lame1833});
Moreover, we carry out the analysis for \emph{general loads} belonging to the dual of the energy space, 
and also for \emph{inhomogeneous mixed boundary conditions}.
Even though the corresponding \emph{a posteriori} error identities were previously derived by other means (under more strict assumptions) 
our analysis constitutes the first step towards the generalisation to non-linear and/or non-smooth vectorial problems.

At the discrete level, we consider a Crouzeix--Raviart discretisation for the primal solution and a Raviart--Thomas approximation for the flux~(or~stress) solution.
For instance, for the Poisson problem \eqref{eq:laplace_primal_intro}, the discrete problem consists in finding a function $u_h$ belonging to the Crouzeix--Raviart space $V^h_D$ with zero boundary conditions (the precise definition is given in {Section} \ref{sec:CR})
that minimises the discrete energy functional $I_h\colon V^h_D \to \R$, for every~${v_h\in V_D^h}$~defined~by
\begin{equation}\label{intro:poisson_primal_discrete}
I_h(v_h) \coloneqq 
\tfrac{1}{2}\|\nabla_h v_h\|^2_\Omega - (f,\Pi_h v_h)_\Omega\,,
\end{equation}
where $\nabla_h$ denotes the element-wise  gradient operator and $\Pi_h$ represents the orthogonal projection of $V^h_D$ onto element-wise constant functions.
The presence of the projector $\Pi_h$ is important, as it ensures that a strong duality~relation~holds with the dual energy functional $D_h\colon \Sigma^h \to \R\cup\{-\infty\}$, for every $\br_h\in \Sigma_h$ defined by
\begin{equation}\label{intro:poisson_dual_discrete}
D_h(\br_h )\coloneqq 
- \tfrac{1}{2}\|\Pi_h \br_h\|^2_\Omega - I^\Omega_{\{-\Pi_h f\}}(\diver\br_h)\,,
\end{equation}
where $\Sigma^h$ the lowest-order Raviart--Thomas space (the precise definition is given in {Section} \ref{sec:RT}).

More precisely, this means that the discrete primal solution $u_h\hspace{-0.1em}\in \hspace{-0.1em} V^h_D$, \textit{i.e.}, the {minimiser} of \eqref{intro:poisson_primal_discrete},
and the discrete dual solution $\bq_h\in \Sigma_h$,  \textit{i.e.}, the maximiser of \eqref{intro:poisson_dual_discrete},
 satisfy $I_h(u_h)=D_h(\bq_h)$.
As recently noted in \cite{BarGudKal24}, for the error analysis of a discretisation of the scalar Signorini problem, 
this allows one to derive a discrete analogue of the Prager--Synge identity \eqref{eq:prager_synge_intro}, which, for every $v_h\in V^h_D$ and $\br_h\in \Sigma^h$ with $\diver\br_h = - \Pi_h f$ a.e.\ in $\Omega$, reads
\begin{equation}\label{intro:poisson_prager_synge_discrete}
    \tfrac{1}{2}\|\nabla_h u_h - \nabla_h v_h\|^2_\Omega 
    + \tfrac{1}{2} \|\Pi_h\bq_h - \Pi_h\br_h\|^2_\Omega 
    = \tfrac{1}{2}\|\Pi_h\br_h-\nabla_h v_h\|^2_\Omega \,.
\end{equation}
The \emph{a priori} identity \eqref{intro:poisson_prager_synge_discrete} characterises the distance between arbitrary discrete {admissible} functions and discrete solutions,
and could be employed in stopping criteria for inexact linear solvers.
In this paper, we exploit the \emph{a priori} identity \eqref{intro:poisson_prager_synge_discrete} to prove that the aforementioned discretisation is quasi-optimal,
meaning that the discretisation and best-approximation errors are equivalent, \textit{i.e.},
\begin{equation}\label{intro:poisson_quasioptimal}
  \begin{aligned}
  \|\nabla_h u_h - \nabla u \|^2_\Omega 
  + \|\Pi_h\bq_h - \bq\|^2_\Omega 
  &\sim 
  \inf_{v_h \in V^h_D}{\big\{\|\nabla_h v_h - \nabla u\|^2_\Omega\big\}} \\
  &\quad+ \inf_{\substack{\br_h \in \Sigma^h\\ \diver \br_h = -\Pi_h f}}
{\big\{\|\Pi_h\br_h - \bq\|^2_\Omega\big\}}\,,
\end{aligned}
\end{equation}
where the implicit constant involved in the equivalence `$\sim$' does not depend on any parameters.
We stress that the quasi-optimality result \eqref{intro:poisson_quasioptimal} has minimal regularity assumptions ($u\in H^1_0(\Omega)$ only) and later is shown to hold also for more general loads and mixed boundary conditions. Further, 
note the absence of data oscillation terms in \eqref{intro:poisson_quasioptimal}, unlike in the  classical result from \cite{Gud10A}. This observation seems to be new even for the Poisson problem \eqref{eq:laplace_primal_intro};
to the best of the author's {knowledge}, the only other genuine minimal-regularity quasi-optimality result for a Crouzeix--Raviart discreti\-sation of the Poisson problem \eqref{eq:laplace_primal_intro} is the result from \cite{VZ.2019.II}, which requires the implementation of a so-called smoothing operator and involves a constant that depends on shape-regularity and is not explicit (for more details, we refer to Remark \ref{rmk:quasi-optimal_laplace}).

\subsection{The incompressible Stokes problem}

The incompressible Stokes (minimisation) problem (\textit{cf}.~\cite{Stokes_2009}), for now considered with homogeneous Dirichlet boundary conditions,
seeks a \emph{velocity vector field} $\bu\in \BH^1_0(\Omega) \coloneqq  (H^1_0(\Omega))^d$ that minimises the  energy functional $I\colon \BH^1_0(\Omega) \to \Rext $, for every $\bv\in \BH^1_0(\Omega)$ defined by
\begin{equation}\label{intro:stokes_primal} 
I(\bv)\coloneqq 
\tfrac{\nu}{2}\|\nabla \bv\|^2_\Omega + 
I^\Omega_{\{0\}}(\diver \bv)
- \langle \bff^*, \bv\rangle_\Omega\,, 
\end{equation}
where $\nu>0$ is the \emph{kinematic viscosity}, $\bff^*\in \BH^{-1}(\Omega)$ a given \emph{body force},  $\langle \cdot,\cdot\rangle_\Omega$ the duality {pairing} bet\-ween $\BH^{-1}(\Omega)$ and $\BH^1_0(\Omega)$, and the indicator functional $\smash{I_{\{0\}}^{\Omega}}\colon L^2(\Omega)\to \mathbb{R}\cup \{+\infty\}$, for every $\widehat{v}\in L^2(\Omega) $, is defined by
	\begin{align*}
		I_{\{0\}}^{\Omega}(\widehat{v})
		\coloneqq 
		\begin{cases}
			0&\text{ if }\widehat{v}=0\text{ a.e.\ in }\Omega\,,\\[-0.5mm]
			+\infty&\text{ else}\,.
		\end{cases}
	\end{align*} 
  This zero divergence constraint   represents the main difficulty in the incompressible Stokes problem \eqref{intro:stokes_primal}  that is not present in the Poisson problem \eqref{eq:laplace_primal_intro}.
  Making use of a standard representation formula for elements in $\BH^{-1}(\Omega)$, \textit{i.e.}, fixing 
  $\bff\in \BL^2(\Omega)\coloneqq (L^2(\Omega))^d$ and 
  $\BF\in \LtwoM\coloneqq (L^2(\Omega))^{d\times d}$ such that
  \begin{equation*}
\langle \bff^*,\bv\rangle_\Omega 
= (\bff,\bv)_\Omega 
+ (\BF,\nabla \bv)_\Omega
\quad \text{for all  }\bv\in \BH^1_0(\Omega)\,,
  \end{equation*}
  a dual 
  energy functional $D\colon \BF + \BH(\textup{div};\Omega) \to \Rextd$, for every $\btau \in \BF + \BH(\textup{div};\Omega) $, is given via 
  \begin{equation} \label{intro:stokes_dual} 
D(\btau ) \coloneqq 
- \tfrac{1}{2\nu}\|\dev\btau\|^2_\Omega 
- I^\Omega_{\{-\bff\}}(\diver(\btau-\BF))\,, 
  \end{equation}
  where $\dev\btau = \btau - \frac{1}{d}\mathrm{tr}(\btau)\BI\in \LtwoM$ is the deviatoric part of $\btau$
  and $\BH(\textup{div};\Omega)\coloneqq  (H(\textup{div};\Omega))^d$.
  Similar to the Prager--Synge identity \eqref{eq:laplace_primal_intro}, 
one can exploit the strong duality relation $I(\bu)=D(\BT)$ between the unique minimiser $\bu\in \BH^1_0(\Omega)$ of \eqref{intro:stokes_primal}  and the unique maximiser $\BT \in \BF + \BH(\textup{div};\Omega)$ of \eqref{intro:stokes_dual} (which represents the \emph{Cauchy stress}) to derive an \emph{a posteriori} error identity for arbitrary $\bv\in \BH^1_0(\Omega)$ with $\diver\bv=0$ a.e. in $\Omega$ and $\btau\in \BF+\BH(\textup{div};\Omega)$ with $ \diver(\btau - \BF) = -\bff$~a.e.~in~$\Omega$:
\begin{equation}\label{intro:stokes_prager_synge_discrete}
    \tfrac{\nu}{2}\|\nabla \bv - \nabla \bu\|^2_\Omega 
    + \tfrac{1}{2\nu}\|\dev\btau - \dev \BT\|^2_\Omega 
    = 
    \tfrac{\nu}{2}\|\nabla \bv - \tfrac{1}{\nu}\dev \btau\|^2_\Omega\,.
  \end{equation}
  
  In order to be able to use the identity \eqref{intro:stokes_prager_synge_discrete} to estimate the error \emph{a posteriori}, an admissible tensor field $\btau\in \BF+\BH(\textup{div};\Omega)$ with $ \diver(\btau - \BF) = -\bff$ a.e.\ in $\Omega$ needs to be constructed. To this end,
 classical stress equilibration procedures such as \cite{DA.2005,HSV.2012,AABR.2012,CHTV.2018} can be used. 
 Taking inspiration from the work~of~Marini~\cite{Mar85}, a reconstruction formula that provides the discrete dual solution for a given Crouzeix--Raviart approximation has been derived for several problems (\textit{cf}.~\cite{CL15,LLC18,LC20,CGS.2013,Bar21,BK.2023}).
  For the incompressible Stokes problem with $\BF=\mathbf{0}$ a.e.\ in $\Omega$, it takes the form (\textit{cf}.\ \cite{CGS.2013})
			\begin{align}\label{intro:stokes_marini}
				\smash{\BT_h= \nu\nabla_h\bu_h - p_h \BI
					- \tfrac{1}{d}\Pi_h\bff \otimes (\mathrm{id}_{\Rd} -\Pi_h \mathrm{id}_{\Rd})\quad\text{ a.e.\ in }\Omega}\,,
			\end{align}
  where $\bu_h\in V_D^h$ denotes the Crouzeix--Raviart velocity vector field  and $p_h\in \mathbb{P}^0(\mathcal{T}_h)$ is the {discrete} (element-wise constant) kinematic pressure field.
  In this paper,  we establish that formula \eqref{intro:stokes_marini} is still valid in a minimal regularity setting with more general loads and boundary {conditions} as in \cite{CGS.2013} (\textit{cf}. Proposition~\ref{prop:stokes_marini}).
  Regarding the vector field $\bv\in \BH^1_0(\Omega)$, the identity \eqref{intro:stokes_prager_synge_discrete} requires it to be exactly {divergence-free},
which is not usually the case for approximations, so that, 
in general, 
post-processing is required.
For a Crouzeix--Raviart velocity vector field $\bu_h\in V_D^h$, a suitable post-processing is described in \cite{VZ.2019}.
However, in the case that such a post-processing is not desired, 
we also provide a result that allows one to employ an arbitrary $\bv\in \BH^1_0(\Omega)$ (\textit{cf}. {Theorem} \ref{thm:prager_synge_identity_stokes2});
the price to pay is that the equality in \eqref{intro:poisson_prager_synge_discrete} becomes an equivalence, albeit with an explicit constant that depends only on the constant from the so-called dev-div inequality (see \eqref{eq:dev_div} below);
we note that estimates with a similar structure were obtained in \cite{Rep.2002,Rep.2005,Kim.2014} under more restrictive assumptions.

Similar to the Poisson problem \eqref{eq:laplace_primal_intro}, we establish quasi-optimality of a Crouzeix--Raviart discretisation of the incompressible  Stokes problem \eqref{intro:stokes_primal} with minimal regularity assumptions (\textit{cf}. Theorem \ref{thm:apriori_stokes}):
\begin{align*}
 & \tfrac{\nu}{2}\|\nabla_h \bu_h - \nabla \bu\|^2_\Omega + 
  \tfrac{1}{2\nu}\|\Pi_h \dev\BT_h - \dev \BT\|^2_\Omega 
  \\
                                                         &\sim 
    \inf_{\bv_h \in V^h_D}{\big\{\tfrac{\nu}{2}\|\nabla_h \bv_h - \nabla \bu\|^2_\Omega \big\}}
    + \hspace{-3ex}\inf_{\substack{\btau_h \in \Sigma^h\\ \diver (\br_h-\Pi_h \BF) = -\Pi_h f}}{\big\{
  \tfrac{1}{2\nu}\|\dev\Pi_h\btau_h - \dev\BT\|^2_\Omega\big\}}\,,
\end{align*}
where the implicit constant involved in the equivalence `$\sim$' does not depend on any parameters.
The only other minimal regularity quasi-optimality result for a Crouzeix--Raviart approximation of the incompressible~Stokes~problem~\eqref{intro:stokes_primal}, to the best of the author's knowledge, is {\cite[Thm. 4.2]{VZ.2019}},
which likewise requires the implementation of a smoothing operator and involves a constant that might depend on shape-regularity (\textit{cf}.\  Remark \ref{rmk:quasi-optimal_stokes}).

\subsection{The Navier--Lam\'e problem}
The Navier--Lamé (minimisation) problem (\textit{cf}.\ \cite{Navier1821,Lame1833}), for now considered with homogeneous Dirichlet boundary conditions,  seeks 
a \emph{displacement~vector~field} $\bu \in \BH^1_0(\Omega)$ that  minimises the  energy functional  $I\colon \BH^1_0(\Omega) \to \R $,~for~every~$\bv\in \BH^1_0(\Omega)$ defined by  
\begin{equation} \label{intro:navier_lame_primal} 
I(\bv)\coloneqq 
\tfrac{1}{2}(\bbC\symgrad{\bv}, \symgrad{\bv})_\Omega 
- \langle \bff^*, \bv\rangle_\Omega\,. 
\end{equation}
where \hspace{-0.1mm}$\bbC\colon \hspace{-0.175em}\Rdd\hspace{-0.175em}\to\hspace{-0.175em} \Rdd$ \hspace{-0.1mm}denotes \hspace{-0.1mm}the \hspace{-0.1mm}\emph{linear \hspace{-0.1mm}elasticity \hspace{-0.1mm}tensor} (see \hspace{-0.1mm}Section~\hspace{-0.1mm}\ref{subsec:elasticity_primal},~\hspace{-0.1mm}for~\hspace{-0.1mm}the precise definition),
$\symgrad{\bv}\hspace{-0.15em}\coloneqq\hspace{-0.15em} \frac{1}{2}(\nabla \bv + \nabla \bv^\top)$  the 
\emph{symmetric~gradient~\mbox{operator}},~and $\bff^*\in \BH^{-1}(\Omega)$ a given \emph{load}. {Similar} to the incompressible Stokes~problem~\eqref{intro:stokes_primal},  
a dual  energy functional ${D\colon \hspace{-0.175em}\BF \hspace{-0.175em}+ \hspace{-0.175em}\BH(\textup{div};\Omega)\hspace{-0.175em} \to \hspace{-0.175em}\R\hspace{-0.175em}\cup\hspace{-0.175em}\{-\infty\}}$, for every $\btau\in \BF + \BH(\textup{div};\Omega)$ defined by
  \begin{equation} 
D(\btau ) \coloneqq 
- \tfrac{1}{2}(\bbC^{-1}\btau,\btau)_\Omega
- I^\Omega_{\{-\bff\}}(\diver(\btau-\BF))
- I^{\Omega}_{\{ \bzero\}}(\skewM{\btau})\,, 
  \end{equation}
where the second indicator functional accounts for the fact that the stresses should be symmetric. 
This is one of the main difficulties of the Navier--Lam\'e problem,
because while an \emph{a posteriori} error identity similar to \eqref{intro:poisson_prager_synge_discrete} is available 
\cite{PraSyn47,Rep.2001,Rep08,LS.2023},
in practice, it is challenging to construct $\BH(\textup{div};\Omega)$-conforming symmetric approximations.
Naturally, if discretisations or equilibration procedures that enforce exact symmetry are employed, such as those described in \cite{NWW.2008,AR.2010,AR.2011,RdPE.2017,LS.2023},
 the error identity may be used.
In this paper, we instead employ in the analysis an extended dual energy functional $\overline{D}\colon \BF + \BH(\textup{div};\Omega) \to \Rextd$ obtained by simply dropping the symmetry requirement and derive an \emph{a posteriori} equivalence, which for every $\bv\in \BH^1_0(\Omega)$ and $\btau \in \BF +$ $ \BH(\textup{div};\Omega)$ with $\diver(\btau-\BF)=-\bff$ a.e.\ in $\Omega$ reads:
\begin{equation}
\|\smash{\smash{\bbC^{\smash{\frac{1}{2}}}}}\symgrad{\bv-\bu}\|_{\Omega}^2
+
\|\smash{\smash{\bbC^{-\smash{\frac{1}{2}}}}}(\btau-\bsigma)\|_{\Omega}^2
\sim
\|\smash{\smash{\bbC^{\smash{\frac{1}{2}}}}}(\symgrad{\bv} - \smash{\bbC^{-1}}\btau)\|_{\Omega}^2\,,
\end{equation}
where $\|\smash{\smash{\bbC^{\smash{\frac{1}{2}}}}}(\cdot)\|_{\Omega}^2\coloneqq (\bbC(\cdot),\cdot)_\Omega$, $\|\smash{\smash{\bbC^{-\smash{\frac{1}{2}}}}}(\cdot)\|_{\Omega}^2\coloneqq (\bbC^{-1}(\cdot),\cdot)_\Omega$, $\bsigma\in \BF + \BH(\textup{div};\Omega)$ with $\diver(\bsigma-\BF)=-\bff$ a.e.\ in $\Omega$ and $\bsigma^\top= \bsigma$ a.e.\ in  $\Omega$ denotes the dual solution (\textit{i.e.}, the elastic stress tensor field), and the implicit constant in the equivalence `$\sim$' depends only on local Korn constants.
While estimates of this type have been derived before in the context of the Navier--Lam\'e problem  (\textit{cf}.\ \cite{Kim.2011,Kim.2012,DM.2013,BKMS.2021}) (typically under more restrictive assumptions on the data and boundary conditions),
the argument based on convex duality that we present here is much more readily applicable to non-linear and/or non-smooth generalisations of the Navier--Lam\'e problem (and the incompressible Stokes problem).

Similarly to the Poisson problem \eqref{eq:laplace_primal_intro} and the incompressible Stokes problem \eqref{intro:stokes_primal}, applying these ideas at the discrete level, we are able to derive a minimal regularity \emph{a {priori}} {error} {estimate}.
To remedy the well-known fact that Crouzeix--Raviart vector fields fail to satisfy Korn's inequality (\textit{cf}.~\cite{Falk74}),
we consider a discrete formulation that includes a jump stabilisation term $s_h(\cdot,\cdot)$.
The presence of a stabilisation term $s_h(\cdot,\cdot)$ has the consequence that a corresponding mixed formulation with a saddle-point structure for a discrete stress belonging to a Raviart--Thomas space cannot be defined in the usual way,
so, at the discrete level, we do not derive an error \emph{identity}. However,
by working with the discrete energy norm 
\begin{equation}
  \norm{\bv_h}^2_h \coloneqq \|
  \bbC^{\smash{\frac{1}{2}}}\symgraddisc{\bv_h}\|_\Omega ^2
  + s_h(\bv_h,\bv_h),
  \quad \bv_h\in V^h\,,
\end{equation}
where $\symgraddisc{\bv_h}\coloneqq \frac{1}{2}( \nabla_h\bv_h+ \nabla_h\bv_h^\top)$ denotes the element-wise  symmetric gradient operator,
we are still able to prove the following minimal-regularity error bound (see Theorem \ref{thm:apriori_elasticity}):
 \begin{align*}
\|\bu_h - \bu\|^2_h
\lesssim
    \inf_{\bv_h \in V^h_D}{\big\{\|\bv_h -  \bu\|^2_h\big\}}
  + \inf_{\substack{\btau_h \in \Sigma^h\\ \diver (\br_h-\Pi_h \BF) = -\Pi_h f}}{\big\{
  \|\bbC^{\smash{-\frac{1}{2}}}(\bsigma - \btau_h)\|^2_\Omega\big\}}\,,
\end{align*}
where $\Sigma^h$ denotes the lowest-order (vectorial) Raviart--Thomas space without symmetry {condition} and 
the implicit constant involved in the estimate  `$\lesssim$' depends solely on the discrete Korn constant satisfied by the norm $\|\cdot\|_h$. 
Although an estimate that only involves the displacement best-approximation error would be desirable,
we emphasize that, unlike more classical estimates for Crouzeix--Raviart approximations (such as \cite[Thm. 3.1]{HL.2003b}),
our result involves minimal regularity requirements, leading to optimal fractional convergence rates in case the required additional regularity is at hand.
Additionally, we provide as well a discrete stress reconstruction formula:
\begin{align}
  \bsigma_h^*\coloneqq \bbC\symgraddisc{\bu_h} +
\nabla_h \br_h 
- \tfrac{2}{d+1}\sym (\bff_h \otimes (\mathrm{id}_{\R^d} - \Pi_h \mathrm{id}_{\R^d}))\quad \text{ a.e.\ in }\Omega\,,
\end{align}
where $\br_h\in V_D^h$ denotes a Crouzeix--Raviart lifting of the jump stabilisation form ${s_h(\bu_h,\cdot)\in ( V_D^h)^*}$.
This provides an element-wise affine $\BH(\textup{div};\Omega)$-conforming stress approximation with a computational cost comparable to the discrete primal problem,
whose lack of symmetry can be quantified in terms of the jumps of $\bu_h\in V_D^h$ (for more details,
we refer to Remark \ref{rem:marini_elasticity} and Remark~\ref{rem:marini_elasticity2}).

\textit{This paper is organised as follows:} Section \ref{sec:preliminaries} introduces the functional-analytic setting, notation, and discretisation ingredients. Section \ref{sec:stokes} develops the primal-dual formulation of the incompressible Stokes problem, derives continuous and discrete error identities via convex duality, and proves minimal-regularity quasi-optimality of the Crouzeix--Raviart discretisation. Section \ref{sec:navier-lame} presents the analogous theory for the Navier--Lamé problem, including the dual formulation, \emph{a posteriori} equivalences, and a discrete stress reconstruction. Section \ref{sec:experiments} reports numerical experiments illustrating and supporting the theoretical findings.

	\section{Preliminaries}\label{sec:preliminaries}
Throughout the paper,  we denote by ${\Omega \subseteq \mathbb{R}^d}$, ${d \ge 2}$, a bounded polyhedral Lipschitz domain, whose (topological) boundary $\partial\Omega$ is  divided into a relatively open Dirichlet part $\Gamma_D$ and a relatively open Neumann part $\Gamma_N$, \textit{i.e.}, we have that $ \overline{\Gamma}_D\cup\overline{\Gamma}_N=\partial\Omega $ and $ \Gamma_D\cap\Gamma_N=\emptyset$.
	For simplicity, we assume that $\Gamma_D\not=\emptyset$. 
	Then, we make use of the following standard notation for function spaces:
	\begin{align*} 
		\mathbf{L}^2(\Omega)&\coloneqq (L^2(\Omega))^d\,,\quad\!\mathbb{L}^2(\Omega)\coloneqq (L^2(\Omega))^{d\times d}\,,\quad\mathbb{L}^2_{\textrm{sym}}(\Omega)\coloneqq (L^2(\Omega))^{d\times d}_{\textrm{sym}}\,,\\
		\mathbf{H}^1(\Omega)&\coloneqq (H^1(\Omega))^d\,,\quad\!\mathbf{H}_D^1(\Omega) \coloneqq \big\{\bv \in \mathbf{H}^1(\Omega)\mid \bv = \mathbf{0} \text{ q.e.\ in  }\Gamma_D \big\}\,,\\
		\mathbf{H}^{\smash{\frac{1}{2}}}(\gamma)&\coloneqq (H^{\frac{1}{2}}(\gamma))^d\text{ for }\gamma\in \{\partial\Omega,\Gamma_D,\Gamma_N\}\,,\\
		\smash{\HNeumannDual} &\coloneqq \big\{ \bv \in \mathbf{H}^{\smash{\frac{1}{2}}}(\Gamma_N) \mid \text{the zero extension }\widetilde{\bv}\text{ of }\bv\text{ belongs to }\mathbf{H}^{\smash{\frac{1}{2}}}(\partial\Omega)\big\}\,,\\
		\Hdiv &\coloneqq \big\{\btau=(\btau_{ij})_{i,j\in \{1,\ldots,d\}} \in \LtwoM \mid \diver \btau
      \in \mathbf{L}^2(\Omega) \big\}\,.
	\end{align*}
	For any (Lebesgue) measurable set $\omega\subseteq \Omega$, 
	the restriction of the canonical inner product or norm to $\omega$ of 
	$L^2(\Omega)$, $\mathbf{L}^2(\Omega)$, and $\mathbb{L}^2(\Omega)$ is denoted by $(\cdot,\cdot)_\omega$ and $\|\cdot\|_{\omega}$, respectively. For the duality pairings, we employ the~abbreviated~notations
	\begin{align*}
		\begin{aligned} 
			\langle \bff^*, \bv\rangle_\Omega&\coloneqq \langle \bff^*, \bv\rangle_{\HoneDirichlet} &&\; \forall\,\bff^* \in \mathbf{H}^{-1}_D(\Omega)\coloneqq (\HoneDirichlet)^*\,,\; \bv\in \HoneDirichlet\,, \\[-0.75mm]
			\langle \bw, \bv\rangle_{\partial\Omega}&\coloneqq\langle \bw, \bv\rangle_{\smash{\mathbf{H}^{\smash{\frac{1}{2}}}(\partial\Omega)}} &&\; \forall\,\bw \in \mathbf{H}^{-\frac{1}{2}}(\partial\Omega)\coloneqq (\mathbf{H}^{\smash{\frac{1}{2}}}(\partial\Omega))^*\,,\; \bv\in \mathbf{H}^{\smash{\frac{1}{2}}}(\partial\Omega)\,,\\[-0.75mm]
			\langle \bg, \bv\rangle_{\Gamma_N}&\coloneqq \langle \bg, \bv\rangle_{\smash{\mathbf{H}^{\frac{1}{2}}_{00}(\Gamma_N)}} &&\; \forall\,\bg \in \mathbf{H}^{-\frac{1}{2}}_{00}(\Gamma_N)\coloneqq (\mathbf{H}^{\frac{1}{2}}_{00}(\Gamma_N))^*\,,\; \bv\in \HNeumannDual\,.
		\end{aligned}
	\end{align*}
	Eventually, with this notation at hand, next, we define  the subspace of tensor fields in $\Hdiv$ with vanishing normal trace on the Neumann boundary part $\Gamma_N$ (in a generalised sense), \textit{i.e.},
	\begin{equation*}
		\smash{\HdivN \coloneqq  \Big\{ \BT\in \Hdiv \mid \BT\bn = \mathbf{0} \text{ in }\smash{\HNeumann}\Big\}\,.}
	\end{equation*}

	One of the goals of this paper is to perform the numerical analyses for general loads $\smash{\bff^*\hspace{-0.15em}\in \hspace{-0.15em}\mathbf{H}^{-1}_D(\Omega)}$. To this end, we resort to a well-known representation result for the (topological) dual space $\mathbf{H}^{-1}_D(\Omega)$.
	
	\begin{lemma}\label{lem:dual_representation}
		For every load $\bff^*\in \smash{\mathbf{H}^{-1}_D(\Omega)}$, there exist a vector field $\bff \in \mathbf{L}^2(\Omega)$ and a tensor field $\BF\in \LtwoM $ such that for every $\bv\in \HoneDirichlet$, there holds
		\begin{equation}\label{eq:dual_representation}
			\langle \bff^*,\bv\rangle_\Omega =
			(\bff,\bv)_\Omega
			+(\BF, \nabla\bv)_{\Omega}\,.
		\end{equation} 
		Moreover, one has that
		\begin{equation}
			\smash{\|\smash{\bff^*}\|_{\smash{\mathbf{H}^{-1}_D(\Omega)}}
				= \min\left\{\|\bff\|_{\Omega} + \|\BF\|_{\Omega} \mid \bff\in \mathbf{L}^2(\Omega)\,,\,\BF\in \mathbb{L}^2(\Omega) \text{ with }\eqref{eq:dual_representation}\right\}\,.}
		\end{equation} 
		In other words,  $\smash{\mathbf{H}^{-1}_D(\Omega)}$ is isometrically isomorphic to the sum space $\mathbf{L}^2(\Omega)+\textup{div}(\mathbb{L}^2(\Omega))$, equipped with the customary sum norm, where the image space $\textup{div}(\mathbb{L}^2(\Omega))$ is equipped with the customary image norm with respect to the \emph{distributional row-wise divergence} operator $\diver\colon \mathbb{L}^2(\Omega)\to \mathbf{H}^{-1}_D(\Omega)$, for every $\btau\in \LtwoM $ and $\bv\in \HoneDirichlet$  defined by
		\begin{align*}
			\langle\diver\btau, \bv\rangle_{\Omega}\coloneqq-(\btau,\nabla\bv)_{\Omega}\,. 
		\end{align*}
	\end{lemma}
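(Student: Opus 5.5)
The plan is the classical Hahn--Banach/Riesz argument, via an isometric embedding of $\HoneDirichlet$ into a product of Lebesgue spaces. The one point to fix first is the norm on $\HoneDirichlet$. For the sum norm $\|\bff\|_\Omega+\|\BF\|_\Omega$ in the statement to be exactly dual, $\HoneDirichlet$ must carry the norm $\|\bv\|_{1}\coloneqq\max\{\|\bv\|_\Omega,\|\nabla\bv\|_\Omega\}$. This is equivalent to the usual $\mathbf{H}^1$-norm, so the space $\mathbf{H}^{-1}_D(\Omega)$ is unchanged; with the Euclidean product norm instead, the same argument yields $(\|\bff\|_\Omega^2+\|\BF\|_\Omega^2)^{1/2}$. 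I will state this convention explicitly.

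\emph{Step 1 (embedding).} Let $X\coloneqq\mathbf{L}^2(\Omega)\times\LtwoM$ with norm $\|(\bw,\btau)\|_X\coloneqq\max\{\|\bw\|_\Omega,\|\btau\|_\Omega\}$. The map $J\colon\HoneDirichlet\to X$, $J\bv\coloneqq(\bv,\nabla\bv)$, is linear and isometric by the choice of norm. Its range $Y\coloneqq J(\HoneDirichlet)$ is therefore a (closed) subspace of $X$.

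\emph{Step 2 (extension and representation).} Given $\bff^*\in\mathbf{H}^{-1}_D(\Omega)$, define $\ell\colon Y\to\R$ by $\ell(J\bv)\coloneqq\langle\bff^*,\bv\rangle_\Omega$. Since $J$ is an isometry, $\|\ell\|_{Y^*}=\|\bff^*\|_{\mathbf{H}^{-1}_D(\Omega)}$. Hahn--Banach gives a norm-preserving extension $\overline{\ell}\in X^*$. The dual of a product equipped with the max norm is the product of the duals equipped with the sum norm. Combined with the Riesz representation in each factor, this gives $(\bff,\BF)\in\mathbf{L}^2(\Omega)\times\LtwoM$ such that
\begin{equation*}
\overline{\ell}(\bw,\btau)=(\bff,\bw)_\Omega+(\BF,\btau)_\Omega
\quad\text{and}\quad
\|\bff\|_\Omega+\|\BF\|_\Omega=\|\overline{\ell}\|_{X^*}=\|\bff^*\|_{\mathbf{H}^{-1}_D(\Omega)}.
\end{equation*}
Restricting $\overline{\ell}$ to $Y$ yields \eqref{eq:dual_representation}.

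\emph{Step 3 (the reverse inequality).} Let $(\bff,\BF)$ be any pair satisfying \eqref{eq:dual_representation}. By Cauchy--Schwarz,
\begin{equation*}
|\langle\bff^*,\bv\rangle_\Omega|\le\|\bff\|_\Omega\|\bv\|_\Omega+\|\BF\|_\Omega\|\nabla\bv\|_\Omega\le(\|\bff\|_\Omega+\|\BF\|_\Omega)\|\bv\|_1 .
\end{equation*}
Hence $\|\bff^*\|_{\mathbf{H}^{-1}_D(\Omega)}$ is bounded above by every admissible sum, and Step 2 shows the infimum is attained. This proves the min-formula.

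\emph{Step 4 (sum-space reformulation).} With $\langle\diver\btau,\bv\rangle_\Omega=-(\btau,\nabla\bv)_\Omega$, identity \eqref{eq:dual_representation} reads $\bff^*=\bff-\diver\BF$. Thus $\mathbf{H}^{-1}_D(\Omega)=\mathbf{L}^2(\Omega)+\diver(\LtwoM)$ as sets.

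For the norms, the image norm of $\bg^*\in\diver(\LtwoM)$ is $\inf\{\|\btau\|_\Omega\mid\diver\btau=\bg^*\}$. Inserting this into the customary sum norm $\inf\{\|\bff\|_\Omega+\|\bg^*\|\mid\bff^*=\bff+\bg^*\}$ and merging the two infima gives precisely the min-formula from Step 3. This is the claimed isometric isomorphism.

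The main obstacle is only the norm bookkeeping: checking that the max norm on $\HoneDirichlet$ makes $J$ an isometry and dualises to the sum norm. The analytic content is entirely Hahn--Banach plus Riesz.
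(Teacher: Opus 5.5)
Your proposal is correct and follows essentially the same route as the paper. The paper's proof is only the citation \cite[Thm.~3.8]{AF03}, and that theorem is proved exactly as you do: the isometric embedding $\bv\mapsto(\bv,\nabla\bv)$ into a product of $L^2$-spaces, a norm-preserving Hahn--Banach extension, and the Riesz representation.

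Your norm remark is also right and worth keeping. With the standard Hilbert norm on $\HoneDirichlet$, this argument (and the cited theorem) gives the dual norm as the minimum of $(\|\bff\|_\Omega^2+\|\BF\|_\Omega^2)^{1/2}$ over all representations. So the sum-norm identity as stated relies on your convention $\|\bv\|_1=\max\{\|\bv\|_\Omega,\|\nabla\bv\|_\Omega\}$.
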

	
	\begin{proof}
		See \cite[Thm. 3.8]{AF03}.
	\end{proof}
	
	For the analysis of the incompressible Stokes problem, we will resort to the \textit{dev-div-inequality} (\textit{cf}.\ \cite[Thm. 3.1]{BNPS.2016} assuming a normal trace boundary condition on some portion of $\partial \Omega$ and \cite[Prop. 9.1.1]{BBF13} assuming a zero mean condition), which states that there exists a constant $c_{\textrm{DD}}>0$, called \textit{dev-div-constant},  depending only on the domain $\Omega$, such that for {every} ${\btau\hspace{-0.1em}\in \hspace{-0.1em}\mathbb{L}^2(\Omega)}$,~there~holds
  \begin{equation}\label{eq:dev_div}
		\smash{\|\btau\|_{\Omega}
			\leq c_{\textrm{DD}}\big\{\|\dev\btau\|_{\Omega} + \|\diver \btau\|_{\smash{\HminusDirichlet}}\big\} \,.}
	\end{equation} 
	Here,  
	the \emph{deviatoric part} of a tensor field $\btau\in \mathbb{L}^2(\Omega)$, defined by
	\begin{align*}
		\smash{\dev\btau\coloneqq \btau-\tfrac{1}{d}\textup{tr}(\btau) \BI\in \mathbb{L}^2_{\textrm{tr}}(\Omega)\,,}
	\end{align*}
	where $\smash{\BI\coloneqq (\delta_{ij})_{i,j=1,\ldots,d}\in\mathbb{R}^{d\times d}}$ denotes the \emph{identity matrix}, is the trace free part of  $\btau\in \mathbb{L}^2(\Omega)$.
	
  For the analysis of the Navier--Lam\'e  problem, we will resort to \emph{Korn's inequality} (\textit{cf}.\ \cite[Thm.\ 6.3-4]{Ciarlet1988}), which states that there exists a constant $c_{\mathrm{K}}>0$, the so-called \emph{Korn constant}, depending only on $\Omega$, such that for every $\bv\in \HoneDirichlet$, there holds 
	\begin{equation}\label{eq:korn}
		\|\nabla \bv\|_{\Omega}^2 \leq c_{\mathrm{K}} \|\symgrad{\bv}\|_{\Omega}^2\,,
	\end{equation}
	where the \textit{symmetric gradient (or strain tensor)} of a vector field (or displacement field)  $\bv\in \HoneDirichlet$ is defined by
	\begin{align*}
		\symgrad{\bv}\coloneqq \tfrac{1}{2}(\nabla \bv + \nabla \bv^\top)\in \mathbb{L}^2_{\textrm{sym}}(\Omega)\,.
	\end{align*}
	Note that, in the case that $\Gamma_D=\partial\Omega$, we have that $c_{\mathrm{K}}=2$ (\textit{cf}.\ \cite[p.\ 294]{Ciarlet1988}).
	
	\subsection{Triangulations}
	
Throughout the entire paper, we denote by $\{\mathcal{T}_h\}_{h>0}$ a family of triangulations of $\Omega\subseteq \mathbb{R}^d$, $d\ge 2$, (\textit{cf}.\  \cite[Def.\ 11.2]{EG21}), consisting of closed simplices $T$. 
	The subscript $h$ denotes the {\textit{averaged mesh-size}}, \textit{i.e.}, denoting by $\mathcal{N}_h$ the \emph{set of vertices} of $\mathcal{T}_h$, we define
	\begin{align*}
		h\coloneqq\big(\tfrac{\vert \Omega\vert}{\textup{card}(\mathcal{N}_h)}\big)^{\smash{\frac{1}{d}}}\,.
	\end{align*}
    
	We assume that the family  of triangulations $\{\mathcal{T}_h\}_{h>0}$ is \textit{shape regular}, \textit{i.e.}, denoting, for
	every $T \in \mathcal{T}_h$,
	by $h_T\coloneqq \textup{diam}(T)$ the diameter of $T$ and by
	$\rho_T\coloneqq \sup\{r>0\mid \exists x\in T\,:\,B_r^d(x)\subseteq T\}$ the supremum of diameters of inscribed balls in $T$, we assume that 
	there exists a constant $\omega_0>0$, called the \textit{chunkiness} of $\{\mathcal{T}_h\}_{h>0}$, which does not depend on $h>0$,~such~that~$\max_{T\in \mathcal{T}_h}{\big\{\frac{h_T}{\rho_T}\big\}}\le\omega_0$.
	Moreover, we introduce the \emph{sets of interior sides} $\mathcal{S}_h^{i}$, \emph{boundary sides} $\mathcal{S}_h^{\partial}$, and \emph{all sides} $\mathcal{S}_h$, which are connected to the degrees of freedom of the finite element spaces of interest and defined by
	\begin{align*}
		\mathcal{S}_h&\coloneqq \mathcal{S}_h^{i}\cup \mathcal{S}_h^{\partial}\,,\\
		\mathcal{S}_h^{i}&\coloneqq \big\{T\cap T'\mid T,T'\in\mathcal{T}_h\,,\text{dim}_{\mathcal{H}}(T\cap T')=d-1\big\}\,,\\
		\mathcal{S}_h^{\partial}&\coloneqq\big\{T\cap \partial\Omega\mid T\in \mathcal{T}_h\,,\text{dim}_{\mathcal{H}}(T\cap \partial\Omega)=d-1\big\}\,,
	\end{align*}
	where the Hausdorff dimension is defined by $\text{dim}_{\mathcal{H}}(\omega) \hspace{-0.1em}\coloneqq\hspace{-0.1em} \inf\{d'\hspace{-0.1em}\geq\hspace{-0.1em} 0\mid {\mathcal{H}^{d'}(\omega)\hspace{-0.1em}=\hspace{-0.1em}0}\}$ for all ${\omega \subseteq   \mathbb{R}^d}$. The \emph{set of Dirichlet sides} $\mathcal{S}_h^D$ and the \emph{set of Neumann~sides}~$\mathcal{S}_h^N$, respectively, are defined by
	\begin{align*}
		\mathcal{S}_h^{\mathrm{X}}\coloneqq\{S\in \mathcal{S}_h^{\partial}\mid \textup{int}(S)\subseteq \Gamma_\mathrm{X}\}\quad\text{ for } \mathrm{X}\in \{D,N\}\,,
	\end{align*}
    where we assume that the triangulation $\mathcal{T}_h$ is such that $\mathcal{S}_h^{\partial}=\mathcal{S}_h^D\dot{\cup}\mathcal{S}_h^N$.
	
	For $k\in \mathbb{N}_0$ and $T\in \mathcal{T}_h$, let $\mathbb{P}^k(T)$ denote the space of polynomials of maximal degree $k$ on $T$. Then, for $k\in \mathbb{N}_0$,
	the \emph{space of {element-wise} polynomial functions of maximal degree $k$} is defined by
	\begin{align*}
		\Pbroken{k} &\coloneqq  \big\{v_h\in L^\infty(\Omega)\mid v_h|_T\in\mathbb{P}^k(T)\text{ for all }T\in \mathcal{T}_h\big\}\,. 
	\end{align*}
	The (local) $L^2$-projection operator $\Pi_h\colon (L^1(\Omega))^{\ell}\to (\Pbroken{0})^{\ell}$, $\ell\in \{1,d,d\times d\}$, onto element-wise constant functions, vector or tensor fields, respectively, for every 
	$\bv\in (L^1(\Omega))^{\ell}$, is defined by 
	\begin{align*}
		\Pi_h \bv|_T\coloneqq \tfrac{1}{\vert T\vert }(\bv,1)_T\quad \text{ for all }T\in \mathcal{T}_h\,.
	\end{align*}
	The (local) $L^2$-projection  operator  $\pi_h\colon \hspace{-0.1em}(L^1(\cup\mathcal{S}_h))^{\ell}\hspace{-0.1em}\to \hspace{-0.1em}(\mathbb{P}^0(\mathcal{S}_h))^{\ell}$, $\ell\hspace{-0.1em}\in\hspace{-0.1em} {\{1,d,d\hspace{-0.1em}\times\hspace{-0.1em} d\}}$, where $\mathbb{P}^0(\mathcal{S}_h)$ is the \emph{space of side-wise constant functions} defined on $\cup\mathcal{S}_h$,  onto side-wise constant functions,  vector or tensor fields, respectively, for every $\bv_h\in (L^1(\cup\mathcal{S}_h))^{\ell}$, is defined by
	\begin{align*}
		\pi_h \bv_h|_S\coloneqq \tfrac{1}{\vert S\vert }(\bv,1)_S\quad \text{ for all }S\in \mathcal{S}_h\,.
	\end{align*} 
	For $k\in \mathbb{N}$, the \emph{element-wise gradient} operator 
	$\nabla_h\colon  (\Pbroken{k})^{\ell}\to(\Pbroken{k-1})^{\ell\times d}$, $\ell\in \{1,d,d\times d\}$, for every $\bv_h\in 
	(\Pbroken{k})^{\ell}$, is defined by $\nabla_h\bv_h|_T\coloneqq\nabla(\bv_h|_T)$ for all ${T\in  \mathcal{T}_h}$, and the \emph{element-wise divergence} operator
	$\textup{div}_h\colon  (\Pbroken{k})^{d\times\ell}\to(\Pbroken{k-1})^{\ell}$, $\ell\in \{1,d\}$, for every $\bv_h\in 
	(\Pbroken{k})^{d\times\ell}$, is defined by $(\textup{div}_h\bv_h|_T)_i\coloneqq\textup{div}(\mathbf{e}_i^\top\bv_h|_T)$\footnote{For every $i=1,\ldots,d$, we denote the $i$-th unit vector by $\mathbf{e}_i\coloneqq (\delta_{ij})_{j=1,\ldots,d}\in \mathbb{S}^{d-1}$.} for all ${T\in  \mathcal{T}_h}$ and $i=1,\ldots,\ell$.
	
  \subsubsection{The Crouzeix--Raviart element}\label{sec:CR}
	
	\hspace{5mm}The \emph{Crouzeix--Raviart space} (\textit{cf}.~\cite{CR73}) (with homogeneous Dirichlet boundary condition on $\Gamma_D$, respectively) is defined by
	\begin{align*}
		\CR &\coloneqq  \big\{\bv_h\in (\Pbroken{1})^d \mid \pi_h\jump{\bv_h}=\mathbf{0}\text{ q.e.\ in }\cup\mathcal{S}_h^{i}\big\}\,,\\
		\CRDirichlet &\coloneqq  \big\{\bv_h\in \CR \mid \pi_h\bv_h=\mathbf{0}\text{ q.e.\ in  }\cup\mathcal{S}_h^{D}\big\}\,.
	\end{align*}
	Here, for  every $\bv_h\hspace{-0.05em}\in\hspace{-0.05em} (\mathbb{P}^1(\mathcal{T}_h))^d$, the \textit{jump (across $\mathcal{S}_h$)} $\jump{\bv_h}\hspace{-0.05em}\in\hspace{-0.05em} (\mathbb{P}^1(\mathcal{S}_h))^d$ is defined by ${\jump{\bv_h}|_S\hspace{-0.05em}\coloneqq \hspace{-0.05em}\jump{\bv_h}_S}$ for all $S\in\mathcal{S}_h$, where, for every  $S\in\mathcal{S}_h$, the \textit{jump (across $S$)} $\jump{\bv_h}_S\in (\mathbb{P}^1(S))^d$ is defined by 
	\begin{align}
		\jump{\bv_h}_S\coloneqq  \begin{cases}
			\bv_h|_{T_+}-\bv_h|_{T_-}&\text{ if }
            \left\{\begin{aligned}
                 S\in\mathcal{S}_h^{i}\text{ and }T_+, T_-\in \mathcal{T}_h\\\text{ with }\partial T_+\cap\partial T_-=S\,,
            \end{aligned}\right.
           \\
			\bv_h|_T&\text{ if }S\in\mathcal{S}_h^{\partial}\text{ and }T\in \mathcal{T}_h\text{ with }S\subseteq \partial T\,.
		\end{cases} 
	\end{align}
	
	It is well-known that Crouzeix--Raviart functions do not satisfy a discrete Korn inequality analogous to \eqref{eq:korn} (\textit{cf}.\ \cite{Falk1991}). 
	To ensure the validity of a Korn inequality at the discrete level, a bounded and symmetric stabilisation bilinear form $s_h \colon \CR \times \CR \to \R$ is usually introduced (\textit{cf}.\ \cite{Bre.2003}), so that
	there exists a  constant $c^{\mathrm{disc}}_{\mathrm{K}}>0$, independent of $h>0$, such that for every~$\bv_h \in \CRDirichlet$,~there~holds 
	\begin{equation}\label{eq:discrete_korn}
		\smash{\|\bbC\nabla_h \bv_h\|_{\Omega}^2 \leq c^{\mathrm{disc}}_{\mathrm{K}} \big\{\|\bbC\symgraddisc{\bv_h}\|_{\Omega}^2 + s_h(\bv_h,\bv_h) \big\}\,,}
	\end{equation}
	where, for convenience, we include the elasticity tensor $\bbC\colon \mathbb{R}^{d\times d}\to \mathbb{R}^{d\times d}$, which is introduced later (\textit{cf}. \eqref{def:C}).
	Then,  an alternative inner product and the associated induced norm on the space $\CRDirichlet$, for every $\bv_h ,\bw_h\in \CRDirichlet$, are~defined~by
    \begin{subequations} 
	\begin{align}
        (\bv_h,\bw_h)_h &\coloneqq  (\bbC \symgraddisc{\bv_h},  \symgraddisc{\bw_h})_\Omega
			+ s_h(\bv_h,\bw_h)\,,\label{eq:discrete_product}\\
		\|\bv_h\|_h^2 &\coloneqq  \big( (\bv_h,\bv_h)_h\big)^{\smash{\frac{1}{2}}}\,.\label{eq:discrete_norm}
	\end{align}
    \end{subequations}
	A prototypical choice for $s_h $ in \eqref{eq:discrete_korn} would be a standard jump penalisation, for every $\bv_h,\bw_h\hspace{-0.15em}\in\hspace{-0.15em} \CR$ {defined} by 
	\begin{equation}
		s_h(\bv_h,\bw_h) \coloneqq
		\sum_{S \in \mathcal{S}_h^i\cup \mathcal{S}_h^D}{\int_S \frac{2\mu}{h_S} \jump{\bv_h}_S \jump{\bw_h}_S\,\mathrm{d}s}\,,
	\end{equation}
	where $h_S\hspace{-0.05em}\coloneqq\hspace{-0.05em}\textup{diam}(S)$ for all $S\hspace{-0.05em}\in \hspace{-0.05em}\mathcal{S}_h$ and, for convenience, we include the shear modulus ${\mu}$ (\textit{cf}. \eqref{def:C}). 
	
	A basis for  $\CR$ is given by $\boldsymbol{\varphi}_S^{i}\in \CR$, $S\in \mathcal{S}_h$, $i=1,\ldots,d$, satisfying  $\boldsymbol{\varphi}_S^{i}(x_{S'})=\delta_{S,S'} \mathbf{e}_i$ for all $S,S'\in \mathcal{S}_h$ and $i=1,\ldots,d$.
	Then, the canonical (quasi-)interpolation operator $\mathcal{I}_h^{cr}\colon \HoneDirichlet \to \CRDirichlet$ (\textit{cf}.\ \cite[Secs.\ 36.2.1--2]{EG21}), for every $\bv\in \smash{\HoneDirichlet}$ defined by
	\begin{align}
		\mathcal{I}_h^{cr}\bv\coloneqq  \sum_{i=1,\ldots,d}{\sum_{S\in \mathcal{S}_h}{\tfrac{1}{\vert S\vert}(\bv,\mathbf{e}_i)_S\,\boldsymbol{\varphi}^{i}_S}}\,,\label{CR-interpolant}
	\end{align}
	has structure-preserving properties, \emph{i.e.}, for every $\bv\in \smash{\HoneDirichlet}$, we have that
	\begin{subequations}\label{eq:ICR_preservation}
		\begin{alignat}{2}
			\nabla_h(\mathcal{I}_h^{cr}\bv)&=\Pi_h(\nabla \bv)&&\quad\text{ a.e.\ in } \Omega\,,\label{eq:ICR_preservation_grad}\\
			\textup{div}_h(\mathcal{I}_h^{cr}\bv)&=\Pi_h(\textup{div}\, \bv)&&\quad\text{ a.e.\ in }\Omega\,.\label{eq:ICR_preservation_div} 
		\end{alignat}
	\end{subequations}
	The gradient-preservation property \eqref{eq:ICR_preservation_grad} implies that for every $\bv\hspace{-0.15em}\in \hspace{-0.15em}\HoneDirichlet$, one has that
	\begin{equation}
		\|\nabla\bv - \nabla_h(\mathcal{I}_h^{cr}\bv)\|_{\Omega}^2
		=
		\sum_{T\in \tria}{\inf_{\BA_T \in (\mathbb{P}^0(T))^{d\times d}}
			\big\{\|\nabla \bv - \BA_T\|_T^2\big\}}\,,
	\end{equation}
	which for functions with the additional regularity $\bv \in \smash{\HoneDirichlet }\cap \mathbf{H}^{1+s}(\tria)\footnote{For $s>0$, we set $H^s(\mathcal{T}_h)\hspace{-0.15em}\coloneqq \hspace{-0.15em}\{v\hspace{-0.15em}\in\hspace{-0.15em} L^2(\Omega)\mid v|_T\hspace{-0.15em}\in\hspace{-0.15em} H^s(T)\;\forall T\hspace{-0.15em}\in\hspace{-0.15em} \mathcal{T}_h\}$, $\mathbf{H}^s(\tria)\hspace{-0.15em}\coloneqq\hspace{-0.15em} (H^s(\mathcal{T}_h))^d$, ${\mathbb{H}^s(\tria)\hspace{-0.15em}\coloneqq\hspace{-0.15em} (H^s(\mathcal{T}_h))^{d\times d}}$.}$, with $s\in (0,1]$, implies that
	\begin{equation}\label{eq:cr_best_approximation}
		\|\nabla \bv - \nabla_h (\mathcal{I}^{cr}_h \bv)\|_{\Omega}^2
		\leq
		c_{cr} \sum_{T\in\tria} h_T^{2s} {|\bv|_{\mathbf{H}^{1+s}(T)}^2}\,,
	\end{equation}
	where $c_{cr} = \smash{\frac{1}{\pi^2}}$ if $s=1$ and is otherwise a  constant $c_{cr}>0$ depending on 
	$\omega_0$ (\textit{cf}.\ {\cite[Thm. 3.2]{Beb.03}} for $s=1$ and \cite[Prop. 2.2]{Heu.14} for $s\in(0,1)$).
	Similarly,  for every $\bv \in \HoneDirichlet$ with $\textup{div}\,\bv\in H^{s}(\mathcal{T}_h)$, with $s\in (0,1]$,
	the divergence-preservation property \eqref{eq:ICR_preservation_div} leads to
	\begin{equation}\label{eq:cr_div_best_approximation}
		\|\diver \bv - \textup{div}_h (\mathcal{I}^{cr}_h \bv)\|_{\Omega}^2
		\leq
		c_{cr} \sum_{T\in\tria} h_T^{2s} {|\diver \bv|_{H^{s}(T)}^2}\,.
	\end{equation}

  \subsubsection{The Raviart--Thomas element}\label{sec:RT}
	
	\hspace{5mm}The \emph{Raviart--Thomas space} (\textit{cf}.\ \cite{RT75}) (with homogeneous normal boundary condition on  $\Gamma_N$, respectively) is defined by
	\begin{align*}
		\RT&\coloneqq  \left\{\btau_h\in (\Pbroken{1})^{d\times d} \;\left|\;
		\begin{aligned} 
			&\,(\btau_h|_T) \mathbf{n}_T=\textup{const}\text{ on }\partial T\text{ for all }T\in \mathcal{T}_h\,,\\ 
			&	\jump{\btau_h\mathbf{n}}_S=\mathbf{0}\text{ on }S\text{ for all }S\in \mathcal{S}_h^{i}
		\end{aligned}\right.\right\}\,,\\
		\RTNeumann& \coloneqq  \big\{\btau_h\in \RT \mid \btau_h\mathbf{n}=\mathbf{0}\text{ q.e.\ in  }\Gamma_N\big\}\,.
	\end{align*} 
	Here, for  every $\btau_h\in (\mathbb{P}^1(\mathcal{T}_h))^{d\times d}$, \emph{normal jump (across $\mathcal{S}_h$)} $\jump{\btau_h\bn}\in (\mathbb{P}^1(\mathcal{S}_h))^d$ is defined by $\jump{\btau_h\bn}|_S=\jump{\btau_h\bn}_S$ for all $S\in \mathcal{S}_h$, where, for every $S\in \mathcal{S}_h$, the \emph{normal jump (across $S$)} $\jump{\btau_h\bn}\in (\mathbb{P}^1(S))^d$ is defined by 
	\begin{align*}
		\jump{\btau_h \bn}_S\coloneqq\begin{cases}
			\btau_h|_{T_+}\bn_{T_+}+\btau_h|_{T_-}\bn_{T_-}&\text{ if }
            \left\{\begin{aligned}
                 S\in\mathcal{S}_h^{i}\text{ and }T_+, T_-\in \mathcal{T}_h\\\text{ with }\partial T_+\cap\partial T_-=S\,,
            \end{aligned}\right.
           \\
			\btau_h|_T\bn&\text{ if }S\in\mathcal{S}_h^{\partial}\text{ and }T\in \mathcal{T}_h\text{ with }S\subseteq \partial T\,.
		\end{cases}
	\end{align*} 
	
	For the Raviart--Thomas space, it is also possible to construct a structure-preserving interpolation operator. One can, \textit{e.g.}, employ the (quasi-)interpolation operator ${\mathcal{I}^{rt}_h\colon \HdivN\to \RTNeumann}$ from \cite{EGSV.2022}, which for every $\btau \in \HdivN$,  satisfies the  divergence-preserving property
	\begin{alignat}{2}
		\textup{div}\,(\mathcal{I}_h^{rt}\btau)&=\Pi_h(\textup{div}\,\btau)&&\quad\text{ a.e.\ in } \Omega\,.\label{eq:IRT_preservation_div}
	\end{alignat}

	Moreover, this interpolation operator is, in fact, a projector and satisfies local stability and approximation properties.
	But crucial for us is a key component in the derivation of an optimal approximation estimate.
	More precisely, for every $s\in [0,1]$, there exists a constant ${c_{rt}>0}$, depending only on $s$, 
	$d$, and the chunkiness $\omega_0$, such that for every $\btau \in \HdivN \cap \mathbb{H}^s(\tria)$, there holds (\textit{cf}.\ \cite[Thm. 3.6]{EGSV.2022})
	\begin{equation}\label{eq:rt_best_approximation}
		\inf_{\overset{\btau_h\in \RTNeumann}{\diver \btau_h = \Pi_h \diver \btau}}
\|\btau - \btau_h\|_{\Omega}^2
		\leq c_{rt}
		\left\{
		\sum_{T\in \tria} h_T^{2s} \|\btau\|^2_{\mathbb{H}^s(T)}
		+ \delta_{s<1} h_T^2 \|\diver \btau\|_T^2
		\right\}\,,
	\end{equation}
	where $\delta_{s<1}=1$ if $s <1$ and $\delta_{s<1}=0$ if $s=1$.
	
	\subsubsection{Discrete integration-by-parts formula}
	
	\hspace{5mm}For every $\bv_h\in \CR$ and $\BT_h\in \RT$, there holds the \textit{discrete integration-by-parts formula}
	\begin{align}
		(\nabla_h\bv_h,\Pi_h \BT_h)_\Omega=-(\Pi_h \bv_h,\,\textup{div}\,\BT_h)_\Omega+(\BT_n\mathbf{n},\pi_h \bv_h)_{\partial \Omega}\,.\label{eq:pi0}
	\end{align}
	
	In addition, crucial in our analysis is the  (orthogonal with respect to the inner product $(\cdot,\cdot)_{\Omega}$) \textit{discrete Helmholtz--Weyl decomposition} (\textit{cf}.\ \cite{BW21,BringmannKettelerSchedensack2025})
	\begin{align}
		\PbrokenM{0} = \textup{ker}(\textup{div}|_{\smash{\RTNeumann}})\oplus \nabla_h(\CRDirichlet) \,.\label{eq:decomposition}
	\end{align}
	Apart from that, the following orthogonality relations hold (\textit{cf}.\ \cite[Sec.\ 2.4]{BW21}):
	\begin{subequations}\label{eq:orthogonality}
		\begin{align}
			(\Pi_h \RTNeumann)^{\perp_{\mathbb{L}^2}}
			&=
			\nabla_h (\mathrm{ker}\,\Pi_h|_{\smash{\CRDirichlet}})  \,,\label{eq:orthogonality1} \\
			(\Pi_h \CRDirichlet)^{\perp_{\BL^2}}
			&=\diver(\mathrm{ker}\,\Pi_h|_{\RTNeumann}) \,.
			\label{eq:orthogonality2}
		\end{align}
	\end{subequations}

	\section{The incompressible Stokes  problem}\label{sec:stokes}
In this section, we consider the incompressible Stokes problem, first introduced by G. G. Stokes in 1845  in his work \cite{Stokes_2009} on the internal friction of fluids, which describes the motion of incompressible viscous Newtonian flows at low Reynolds numbers.
	
	\subsection{The primal problem}\label{sec:stokes_primal}
	
Let $\bff^*\in \HminusDirichlet$ be a given load and let $\bu_D \in \BH^{\frac{1}{2}}(\Gamma_D)$ be a given  Dirichlet boundary datum. For the Dirichlet datum, we assume that there exists an incompressible lifting to the interior of $\Omega$, \textit{i.e.},  
	there exists a vector field $\uDirichlet\in \BH^1(\Omega)$ with $\textup{div}\,\uDirichlet=0$ a.e.\ in $\Omega$ and $\uDirichlet=\bu_D $ q.e.\ in $\Gamma_D$.~In~the~following, we will introduce a formulation for a velocity vector field unknown $\bu \in \HoneDirichlet$. Then, the actual solution to the original problem will be given via $\bu_{\textup{orig}}\coloneqq\uDirichlet+\bu \in \uDirichlet+\HoneDirichlet$.
	
	More precisely, we consider an \textit{incompressible Stokes (minimisation) problem} that seeks a velocity vector field $\bu\in \HoneDirichlet$  that minimises the \emph{primal energy functional} $I\colon \HoneDirichlet \to  \Rext$, for every $\bv \in \HoneDirichlet$ defined by
	\begin{align}
		\smash{I(\bv)\coloneqq \tfrac{\nu}{2}\|\nabla(\bv + \uDirichlet)\|_{\Omega}^2
			+ I_{\{0\}}^{\Omega}(\diver \bv  )
			- \langle \bff^*, \bv \rangle_\Omega\,,}
		\label{eq:stokes_primal}
	\end{align}
	where $\nu>0$ is the \textit{kinematic viscosity} and $\smash{I_{\{0\}}^{\Omega}}\colon L^2(\Omega)\to \mathbb{R}\cup\{+\infty\}$ is the indicator functional, for every $\widehat{v}\in L^2(\Omega)$  defined by
	\begin{align}\label{eq:characteristic_omega}
		I_{\{0\}}^{\Omega}(\widehat{v})
		\coloneqq 
		\begin{cases}
			0&\text{ if }\widehat{v}=0\text{ a.e.\ in }\Omega\,,\\[-0.5mm]
			+\infty&\text{ else}\,.
		\end{cases}
	\end{align}
	Then, the effective domain\footnote{For a functional $F\colon X\to \mathbb{R}\cup\{+\infty\}$, the \emph{effective domain} is defined by $\textup{dom}(F)\coloneqq \{x\in X\mid F(x)<+\infty\}$.} of the primal energy functional  \eqref{eq:stokes_primal} is given via
	\begin{align*}
		\smash{K_{\bu}\coloneqq \textup{dom}(I)\coloneqq \big\{\bv\in \HoneDirichlet\mid \diver \bv=0\text{ a.e.\ in }\Omega\big\}\,.}
	\end{align*}
	In this section, we refer to the minimisation of \eqref{eq:stokes_primal} as the \textit{primal problem}. The existence of a unique minimiser $\bu\hspace*{-0.15em}\in \hspace*{-0.15em}\HoneDirichlet$, called \textit{primal solution},  follows from the direct method in the calculus of variations.   
	Since  \eqref{eq:stokes_primal} is not Fréchet differentiable, the optimality condition associated with the primal problem is not given via a variational equality (\textit{i.e.}, in $\HminusDirichlet$), but~in~a~\mbox{hydro-mechanical}~equality instead: 
	in fact, $\bu\in K_{\bu}$ is minimal for \eqref{eq:stokes_primal} if and only if for {every} ${\bv\in K_{\bu}}$, there holds
	\begin{align*}
		\nu(\nabla \bu,\nabla \bv)_{\Omega} = \langle \bff^*,\bv \rangle_\Omega\,.\\[-6.5mm]
	\end{align*}
	
	\subsection{The dual problem}
	
In this subsection, we derive and examine a (Lagrangian) dual problem (in the sense of  \cite[Chap. 49]{ZeiIII}) to the incompressible Stokes problem \eqref{eq:stokes_primal}. To this end, we introduce the Lagrangian $\mathcal{L}\colon \LtwoM\times \HoneDirichlet \to \R$, for every $(\btau,\bv)\in  \LtwoM\times \HoneDirichlet $ defined by
	\begin{align}\label{eq:lagrangian_stokes}
		\smash{\mathcal{L}(\btau,\bv) \coloneqq 
			-\tfrac{1}{2\nu} \|\dev\btau\|_{\Omega}^2
			+ (\btau, \nabla(\bv + \uDirichlet))_\Omega
			- \langle \load, \bv\rangle_\Omega\,. }
	\end{align}
	Note that the (Fenchel) conjugate functional (with respect to the~second argument) of the energy density $\phi\colon\Omega\times\mathbb{R}^{d\times d}\to \mathbb{R}\cup\{+\infty\}$, for a.e.\ $x\in \Omega$ and every $\BA\in \mathbb{R}^{d\times d}$ defined by
	\begin{align*}
		\phi(x,\BA)\coloneqq \begin{cases}
			\frac{\nu}{2}\vert \BA
			+\nabla\widehat{\bu}_D(x)\vert^2&\text{ if }\textrm{tr}\,\BA=0\,,\\
			+\infty &\text{ else}\,,
		\end{cases} 
	\end{align*}
	is given via $\smash{\phi^*\colon\Omega\times\mathbb{R}^{d\times d}\to \mathbb{R}}$, for a.e.\ $x\in \Omega$ and every $\BA\in \smash{\mathbb{R}^{d\times d}}$ defined by
	\begin{align*}
		\smash{\phi^*(x,\BA)\coloneqq \tfrac{1}{2\nu}\vert \dev \BA
			\vert^2-\BA:\nabla\widehat{\bu}_D(x)\,.}
	\end{align*}
	As a  consequence, according to \cite[Thm.\ 2]{Rockafellar1968}, for every $\bv\in  \HoneDirichlet $,~we~have~that
	\begin{align*}
		\tfrac{\nu}{2}\|\nabla(\bv + \uDirichlet)\|_{\Omega}^2
		+ I_{\{0\}}^{\Omega}(\diver \bv  )=\sup_{\btau \in \LtwoM}{\big\{-\tfrac{1}{2\nu} \|\dev\btau\|_{\Omega}^2
			+ (\btau, \nabla(\bv + \uDirichlet))_\Omega\big\}}\,,
	\end{align*}
	which implies that, for every $\bv\in \smash{\HoneDirichlet}$, we have that
	\begin{gather}
		\smash{I(\bv) =
			\sup_{\btau \in \LtwoM}{\big\{\mathcal{L}(\btau,\bv)\big\}}}\,.
	\end{gather}
	As the Lagrangian \eqref{eq:lagrangian_stokes} is concave-convex and $\mathcal{L}(\btau,\bv)\hspace{-0.12em}\to\hspace{-0.12em} -\infty$ ${(\|\btau\|_{\Omega}\hspace{-0.12em}\to\hspace{-0.12em}+\infty)}$ for all ${\bv\hspace{-0.12em}\in\hspace{-0.12em} \HoneDirichlet}$, 
	by the  min-max-theorem (\textit{cf}.\ \cite[Thm.\ 49.B]{ZeiIII}), 
	it admits a saddle point $(\BT,\bu)\in \LtwoM\times \HoneDirichlet$, \textit{i.e.}, we have that
	\begin{align}\label{eq:stokes_minmax}
		\min_{\bv\in \HoneDirichlet}{\max_{\btau\in \LtwoM}{\big\{\mathcal{L}(\btau,\bv)\big\}}}=\mathcal{L}(\BT,\bu)=\max_{\btau\in \LtwoM}{\min_{\bv\in \HoneDirichlet}{\big\{\mathcal{L}(\btau,\bv)\big\}}}\,.
	\end{align} 
	The optimality conditions associated with the corresponding saddle point problem amount to finding $(\BT,\bu)\in \LtwoM\times \HoneDirichlet$
	such that for every $(\btau,\bv)\in \LtwoM\times \HoneDirichlet$, there holds
	\begin{subequations} \label{eq:stokes_optimality_continuous}
		\begin{alignat}{2}
			-\tfrac{1}{\nu}(\dev\BT, \dev\btau)_\Omega
			+ (\nabla(\bu + \uDirichlet), \btau)_\Omega
			&	=0 \,,  \label{eq:stokes_optimality_continuous_1}\\
			(\BT,\nabla \bv)_\Omega &= \langle \load,\bv \rangle_\Omega\,,
		\end{alignat}
	\end{subequations}
	where \eqref{eq:stokes_optimality_continuous_1} is 
    equivalent to
	\begin{align}\label{eq:stokes_optimality_continuous_3}
		\dev\BT=\nu\nabla(\bu + \uDirichlet)\quad\text{ a.e.\ in }\Omega\,.
	\end{align}
	Then, according to \cite[Sec.\ 49.2]{ZeiIII}, the associated (Lagrangian) dual problem is given via the maximisation of the \emph{dual energy functional} $D\colon \LtwoM \to \Rext$, for every $\btau\in \LtwoM$ defined by
	\begin{equation}\label{eq:stokes_primal_0}
		D(\btau) \coloneqq \inf_{\bv\in\HoneDirichlet}{\big\{\mathcal{L}(\btau,\bv)\big\}}\,.
	\end{equation}
	The infimum in definition \eqref{eq:stokes_primal_0} is finite only if for every $\bv\in \HoneDirichlet$,~we~have~that
	\begin{equation}
		(\btau,\nabla \bv)_\Omega = \langle \load ,\bv \rangle_\Omega = (\bff,\bv)_\Omega + (\BF,\nabla \bv)_\Omega\,,
	\end{equation}
	where we used the representation \eqref{eq:dual_representation}, which is 
    equivalent to   $\btau-\BF\in \Hdiv$ and
	\begin{subequations}\label{eq:stokes_constraints}
		\begin{alignat}{2}
			\diver (\btau - \BF) &= -\bff &&\quad \text{ a.e. in }\Omega\,,\label{eq:stokes_stress_constraint}\\
			\langle (\btau - \BF)\bn, \widetilde{\bv} \rangle_{\partial\Omega} &= 0
			&&\quad\text{ for all }\bv\in \smash{\HNeumannDual}\,. \label{eq:Neumann_condition_weak}
		\end{alignat}
	\end{subequations}
	Recall that $\widetilde{\bv}\in \BH^{\frac{1}{2}}(\partial\Omega)$ denotes the zero extension of a vector field ${\bv\in \HNeumannDual}$. For this reason, it is enough to consider the restricted dual energy functional $D\colon \BF+\HdivN\to \Rext $, for every $\btau\in \BF+\HdivN$ admitting the explicit integral representation
	\begin{equation}\label{eq:stokes_dual}
		D(\btau) = 
		-\tfrac{1}{2\nu}\|\dev\btau\|_{\Omega}^2
		+ (\dev\btau,\nabla \uDirichlet)_\Omega- I^\Omega_{\{-\bff\}}(\diver (\btau-\BF))\,,
	\end{equation}
	where the indicator functional $I^\Omega_{\{-\bff\}}\colon \mathbf{L}^2(\Omega)\to \Rext$, for every $\widehat{\bv}\in \mathbf{L}^2(\Omega)$, is defined by
	\begin{align*}
		I^\Omega_{\{-\bff\}}(\widehat{\bv})\coloneqq \begin{cases}
			0&\text{ if }\widehat{\bv}=-\bff\text{ a.e.\ in  }\Omega\,,\\
			+\infty&\text{ else}\,.
		\end{cases}
	\end{align*} 
	Then, the effective domain of the negative of the dual energy functional \eqref{eq:stokes_dual} is given via
	\begin{align*}
		K_{\BT}\coloneqq \textup{dom}(-D)\coloneqq \big\{\btau\in \BF +\HdivN\mid \diver (\btau-\BF)=-\bff\text{ a.e.\ in }\Omega\big\}\,.
	\end{align*}
	By \eqref{eq:stokes_minmax}, there exists a unique maximiser $\BT \in \BF+ \HdivN$ of the dual energy functional \eqref{eq:stokes_dual}, called \emph{dual solution}, and a \emph{strong duality relation} applies, \textit{i.e.}, we have that
	\begin{equation}\label{eq:stokes_strong_duality}
		I(\bu) = D(\BT)\,.
	\end{equation}
  In the case of full Dirichlet boundary condition (\textit{i.e.}, $\Gamma_D\hspace{-0.1em}=\hspace{-0.1em}\partial \Omega$), the uniqueness of the {spherical} part of the stress (pressure) $\tfrac{1}{d}\mathrm{tr}(\BT)$ is usually enforced through imposing a zero-mean condition, \textit{i.e.}, imposing ${\int_\Omega \mathrm{tr}(\BT)\,\mathrm{d}x = 0}$.
	
	\begin{remark}\label{rem:Neumann}
		The condition \eqref{eq:Neumann_condition_weak} can be interpreted as Neumann boundary condition on $\Gamma_N$.
		In other words, in our framework, the Neumann boundary datum is encoded in the tensor field $\BF\in \LtwoM$.
		A relation of the form \eqref{eq:Neumann_condition_weak} with test functions in $\HNeumannDual$ is the correct representation of the Neumann boundary condition
		``$\BT\bn = \BF\bn$ on $\Gamma_N$'' in the weak setting (\textit{cf}.\ \cite[Sec.~31.3.3]{EG21b}).
	\end{remark}

	\subsection{The discrete primal problem}\label{sec:stokes_discrete_primal}
Similarly to the continuous setting, we assume that the Dirichlet boundary condition can be described by a vector field defined in the bulk $\uhDirichlet \in \CR$;
	we assume that $\textup{div}_h\uhDirichlet=0$ a.e. in $\Omega$. Such a vector field can be obtained by suitably interpolating $\uDirichlet$ onto $\CR$.
	In a similar manner to the continuous setting, the discrete formulation will, then, be framed in terms of a vector field $\bu_h \in \CRDirichlet$,
	but the desired approximation to the original problem will be ${\bu_{\textup{orig}}^h\hspace{-0.15em}\coloneqq\hspace{-0.15em}\bu_h + \uhDirichlet\hspace{-0.15em}\in\hspace{-0.15em} V^h}$. Furthermore, we denote element-wise constant approximations of the vector field $\bff\in \BL^2(\Omega)$ and the tensor field $\BF\in \mathbb{L}^2(\Omega)$ defining the load $\bff^*\in \HminusDirichlet$ by $\bff_h \in (\Pbroken{0})^d$ and $\BF_h \in \PbrokenM{0}$, respectively.
	
	Then, we consider a \emph{discrete incompressible Stokes (minimisation) problem} that seeks a discrete velocity vector field
	$\bu_h \in \CRDirichlet$ that minimises the \emph{discrete primal energy functional} $I_h \colon \CRDirichlet \to \Rext$, for every $\bv_h \in \CRDirichlet$ defined by
	\begin{align}\label{eq:stokes_primal_discrete}
    \begin{aligned} 
		I_h(\bv_h) &\coloneqq  
			\tfrac{\nu}{2}  \|\nabla_h (\bv_h + \uhDirichlet)\|_{\Omega}^2
			+
			\characteristic{0}^\Omega(\textup{div}_h\bv_h)
			\\&\quad- (\bff_h,\Pi_h\bv_h)_\Omega
			- (\BF_h,\nabla_h \bv_h)_\Omega\,.
            \end{aligned}
	\end{align}
	The effective domain of the discrete  primal energy functional \eqref{eq:stokes_primal_discrete} is given via
	\begin{align*}
		\smash{K^h_{\bu} \coloneqq
			\mathrm{dom}(I_h) = 
			\big\{\bv_h \in \CRDirichlet \mid \textup{div}_h \bv_h = 0\text{ a.e.\ in }\Omega\big\}\,.}
	\end{align*}In this section, we refer to the minimisation of \eqref{eq:stokes_primal_discrete} as the \textit{discrete primal problem}. The existence of a unique minimiser $\bu_h \in K^h_{\bu}$, called \textit{discrete primal solution}, follows from the direct method in the calculus of variations.   
	Since  \eqref{eq:stokes_primal_discrete} is not Frech\'et differentiable, the optimality condition associated with the discrete primal problem is not given via a discrete variational {equality}~(\textit{i.e.},~in~$(\CRDirichlet)^*$), but in a discrete hydro-mechanical equality instead: 
	in fact, $\bu_h \in K^h_{\bu}$ is minimal for \eqref{eq:stokes_primal_discrete} if and only if for every $\bv_h\in K_{\bu}^h$, there holds
	\begin{equation}
		\smash{\nu	(\nabla_h (\bu_h+\uhDirichlet) ,\nabla_h \bv_h)_\Omega = (\bff_h, \Pi_h\bv_h)_\Omega
			+ (\BF_h,\nabla_h\bv_h)_\Omega\,.}
	\end{equation}
	
	
	\subsection{The discrete dual problem}\label{sec:stokes_discrete_duality}
	
In this subsection, we derive and examine a (Lagrangian) dual problem (in the sense of  \cite[Chap. 49]{ZeiIII}) to the discrete incompressible Stokes  problem \eqref{eq:stokes_primal}. To this end, we introduce the Lagrangian $\mathcal{L}_h^0\colon\PbrokenM{0}\times \CRDirichlet \to \R$, for every $(\overline{\btau}_h,\bv_h)\in  \PbrokenM{0}\times \CRDirichlet $ defined by
	\begin{align}\label{eq:lagrangian_stokes_discrete}
		 \begin{aligned}\mathcal{L}_h^0(\overline{\btau}_h,\bv_h) &\coloneqq 
			-\tfrac{1}{2\nu} \|\dev\overline{\btau}_h\|_{\Omega}^2
			+ (\overline{\btau}_h, \nabla(\bv_h + \uhDirichlet))_\Omega
			\\&\quad- (\bff_h,\bv_h)_{\Omega}-(\BF_h,\nabla_h\bv_h)_{\Omega}\,.  \end{aligned}
	\end{align}
	Note that the (Fenchel) conjugate functional (with respect to the second argument) of the energy density  $\phi_h\colon\Omega\times\mathbb{R}^{d\times d}\to \mathbb{R}\cup\{+\infty\}$, for a.e.\ $x\in \Omega$ and every $\BA\in \mathbb{R}^{d\times d}$ defined by
	\begin{align*}
		\phi_h(x,\BA)\coloneqq \begin{cases}
			\frac{\nu}{2}\vert \BA
			+\nabla_h\uhDirichlet(x)\vert^2&\text{ if }\textrm{tr}\,\BA=0\,,\\
			+\infty &\text{ else}\,,
		\end{cases} 
	\end{align*}
	is given via $\phi^*_h\colon\Omega\times\mathbb{R}^{d\times d}\to \mathbb{R}$, for a.e.\ $x\in \Omega$ and every $\BA\in \mathbb{R}^{d\times d}$ defined by
	\begin{align*}
		\smash{\phi^*_h(x,\BA)\coloneqq \tfrac{1}{2\nu}\vert \dev \BA
			\vert^2-\BA:\nabla\widehat{\bu}_D^h(x)\,.}
	\end{align*}
	As a  consequence, applying element-wise definition of $\phi_h^*$, for every $\bv_h\in  \CRDirichlet $, we find that
	\begin{align*}
		&\tfrac{\nu}{2}\|\nabla_h(\bv_h + \uhDirichlet)\|_{\Omega}^2
		+ I_{\{0\}}^{\Omega}(\textrm{div}_h \bv_h  )\\&=\sup_{\overline{\btau}_h \in \PbrokenM{0}}{\big\{-\tfrac{1}{2\nu} \|\dev\overline{\btau}_h\|_{\Omega}^2
			+ (\overline{\btau}_h, \nabla_h(\bv_h + \uhDirichlet))_\Omega\big\}}\,,
	\end{align*}
	which implies that, for every $\bv_h\in \smash{\CRDirichlet}$, we have that
	\begin{gather}
		I_h(\bv_h) =
		\sup_{\overline{\btau}_h \in \PbrokenM{0}} {\big\{\mathcal{L}_h^0(\overline{\btau}_h,\bv_h)\big\}}\,.
	\end{gather}
	As the Lagrangian \eqref{eq:lagrangian_stokes_discrete} is concave-convex and $\mathcal{L}_h^0(\overline{\btau}_h,\bv_h)\hspace{-0.15em}\to \hspace{-0.15em}-\infty$ ${(\|\overline{\btau}_h\|_{\Omega}\hspace{-0.15em}\to\hspace{-0.15em} +\infty)}$ for all ${\bv_h\hspace{-0.12em}\in\hspace{-0.12em} \CRDirichlet}$, 
	by the min-max-theorem (\textit{cf}.\ \cite[Thm.\ 49.B]{ZeiIII}), 
	it admits a saddle point ${(\overline{\BT}_h,\bu_h)\hspace{-0.15em}\in\hspace{-0.15em} \PbrokenM{0}\hspace{-0.2em}\times\hspace{-0.2em} \CRDirichlet}$, \textit{i.e.}, we have that
	\begin{equation}\label{eq:stokes_minmax_discrete}
    \begin{aligned}
		\min_{\bv_h\in \CRDirichlet}{\max_{\overline{\btau}_h\in \PbrokenM{0}}{\big\{\mathcal{L}_h^0(\overline{\btau}_h,\bv_h)\big\}}}
    &=\mathcal{L}_h^0(\overline{\BT}_h,\bu_h)
  \\&=\max_{\overline{\btau}_h\in \PbrokenM{0}}{\min_{\bv_h\in \CRDirichlet}{\big\{\mathcal{L}_h^0(\overline{\btau}_h,\bv_h)\big\}}}\,.
    \end{aligned}
	\end{equation} 
	The optimality conditions associated with the corresponding saddle point problem amount to finding $(\overline{\BT}_h,\bu_h)\in \PbrokenM{0}\times \CRDirichlet$
	such that for every $(\overline{\btau}_h,\bv_h)\in \PbrokenM{0}\times \CRDirichlet$, there holds
	\begin{subequations} \label{eq:stokes_optimality_discrete}
		\begin{alignat}{2}
			-\tfrac{1}{\nu}(\dev\overline{\BT}_h, \dev\overline{\btau}_h)_\Omega
			+ (\nabla_h(\bu_h + \uhDirichlet), \overline{\btau}_h)_\Omega
			&	=0 \,,  \label{eq:stokes_optimality_discrete_1}\\
			(\overline{\BT}_h,\nabla_h \bv_h)_\Omega &= (\bff_h,\bv_h)_{\Omega}+(\BF_h,\nabla_h\bv_h)_{\Omega}\,,\label{eq:stokes_optimality_discrete_2}
		\end{alignat}
	\end{subequations}
	where \eqref{eq:stokes_optimality_discrete_1} is 
    equivalent to
	\begin{align}\label{eq:stokes_optimality_discrete_3}
		\dev\overline{\BT}_h=\nu \nabla_h(\bu_h + \uhDirichlet)\quad\text{ a.e.\ in }\Omega\,.
	\end{align}
	Then, according to \cite[Sec.\ 49.2]{ZeiIII}, the associated (Lagrangian) dual problem is given via the maximisation of the discrete dual energy functional   $D_h^0\colon \PbrokenM{0} \to \Rext$, for every $\overline{\btau}_h\in \PbrokenM{0}$ defined by 
	\begin{equation}\label{eq:stokes_dual_0_discrete}
		D_h^0(\overline{\btau}_h) \coloneqq \inf_{\bv_h\in\CRDirichlet} {\big\{\mathcal{L}_h^0(\overline{\btau}_h,\bv_h)\big\}}\,.
	\end{equation}
	The infimum in definition \eqref{eq:stokes_dual_0_discrete} is finite only if for every $\bv_h\in \CRDirichlet$, we have that
	\begin{equation}
		(\overline{\btau}_h,\nabla_h \bv_h)_\Omega = (\bff_h,\Pi_h\bv_h)_\Omega + (\BF_h,\nabla_h \bv_h)_\Omega\,,
	\end{equation}
	which is 
    equivalent to the existence of $\btau_h\in \BF_h+\RTNeumann$ such that $\overline{\btau}_h=\Pi_h\btau_h$ a.e. $\Omega$ and
	\begin{subequations}\label{eq:stokes_constraints_discrete}
		\begin{alignat}{2}
			\diver (\btau_h - \BF_h) &= -\bff_h &&\quad \text{ a.e.\ in }\Omega\,,\label{eq:stokes_stress_constraint_discrete}\\
			(\btau_h - \BF_h)\bn &= \mathbf{0}
			&&\quad\text{ q.e.\ in }\Gamma_N\,. \label{eq:Neumann_condition_weak_discrete}
		\end{alignat}
	\end{subequations}
	For this reason, it is enough to consider the restricted discrete dual energy functional $D_h^0\colon \BF_h+\Pi_h(\RTNeumann)\to \Rextd $, for every $\overline{\btau}_h\in \BF_h+\Pi_h(\RTNeumann)$ and $\btau_h\in \BF_h+\RT$ such that $\overline{\btau}_h=\Pi_h\btau_h$ a.e.\ in $\Omega$ admitting the explicit integral representation
	\begin{align}\label{eq:stokes_dual0_discrete}
		\begin{aligned} D_h^0(\overline{\btau}_h) &= 
		-\tfrac{1}{2\nu}\|\dev\Pi_h\btau_h\|_{\Omega}^2
		+ (\dev\Pi_h\btau_h,\nabla \uDirichlet)_\Omega\\&\quad- I^\Omega_{\{-\bff_h\}}(\diver (\btau_h-\BF_h))\,.
        \end{aligned}
	\end{align}
	We introduce the \textit{discrete dual energy functional} $D_h\colon \BF_h+\RTNeumann\to \Rextd $, for every $\btau_h \in \BF_h+\RTNeumann$ defined by $D_h(\btau_h)\coloneqq D_h^0(\Pi_h\btau_h)$,~\textit{i.e.},~we~have~that
	\begin{align}\label{eq:stokes_dual_discrete}
		\begin{aligned} 
			D_h(\btau_h)&\coloneqq -\tfrac{1}{2\nu}\|\dev\Pi_h\btau_h\|_{\Omega}^2
			+ (\dev\Pi_h\btau_h,\nabla_h \uDirichlet)_\Omega\\&\quad- I^\Omega_{\{-\bff_h\}}(\diver (\btau_h-\BF_h))\,.
		\end{aligned}
	\end{align}
	Then, the effective domain of the negative of the discrete dual energy functional \eqref{eq:stokes_dual_discrete} is given via
	\begin{align*}
		K_{\BT}^h\coloneqq \textup{dom}(-D_h)\coloneqq \big\{\btau_h\in \BF_h +\RTNeumann\mid \diver (\btau_h-\BF_h)=-\bff_h\text{ a.e.\ in }\Omega\big\}\,.
	\end{align*}
	By \eqref{eq:stokes_minmax_discrete}, there exists a unique maximiser $\overline{\BT}_h \in \PbrokenM{0}$ of the discrete dual energy functional \eqref{eq:stokes_dual_0_discrete} and a \emph{discrete strong duality relation} applies, \textit{i.e.}, we have that ${I_h(\bu_h) \hspace{-0.175em}=\hspace{-0.175em} D_h^0(\overline{\BT}_h)}$. Since $\textup{dom}(D_h^0)\hspace{-0.175em}\subseteq\hspace{-0.175em} \BF_h\hspace{-0.05em}+\hspace{-0.05em}\Pi_h(\RTNeumann)$, \hspace{-0.175mm}there \hspace{-0.175mm}exists \hspace{-0.175mm}$\BT_h \hspace{-0.175em}\in\hspace{-0.175em} \BF_h\hspace{-0.05em}+\hspace{-0.05em}\RTNeumann$ \hspace{-0.175mm}with \hspace{-0.175mm}$\overline{\BT}_h\hspace{-0.175em}=\hspace{-0.175em}\Pi_h\BT_h$ \hspace{-0.175mm}a.e.\ \hspace{-0.175mm}in \hspace{-0.175mm}$\Omega$ \hspace{-0.175mm}and, \hspace{-0.175mm}thus, we \hspace{-0.175mm}have \hspace{-0.175mm}that
	\begin{align}\label{eq:stokes_strong_duality_discrete}
		I_h(\bu_h) = D_h(\BT_h)\,.
	\end{align} 
	
	\begin{proposition}[(Inverse) Marini formula]\label{prop:stokes_marini} The following statements apply:
		\begin{itemize}[noitemsep,topsep=0pt,leftmargin=!,labelwidth=\widthof{(ii)}]
			\item[(i)]\hypertarget{prop:stokes_marini.i}{} Given the discrete primal solution $\bu_h\in K_{\bu}^h$ (\textit{i.e.}, the discrete velocity) and a discrete Lagrange multiplier $p_h\in \Pbroken{0}$ (\textit{i.e.}, the discrete pressure) such that  
			for every $\bv_h\in \CRDirichlet$, there holds
			\begin{equation}\label{eq:stokes_with_pressure}
				\smash{(\nu\nabla_h (\bu_h+\uhDirichlet) - p_h\BI  , \nabla_h \bv_h)_\Omega =
				( \bff_h ,\bv_h)_\Omega
				+ (\BF_h, \nabla_h \bv_h)_\Omega\, ,}
			\end{equation}
			the discrete dual solution $\BT_h\in K_{\BT}^h$ (\textit{i.e.}, the discrete stress) is immediately available via 
			\begin{align}\label{eq:stokes_marini}
				\smash{\BT_h= \nu\nabla_h (\bu_h + \uhDirichlet) - p_h \BI
					- \tfrac{1}{d}\bff_h \otimes (\mathrm{id}_{\Rd} -\Pi_h \mathrm{id}_{\Rd})\quad\text{ a.e.\ in }\Omega}\,;
			\end{align}
			\item[(ii)]\hypertarget{prop:stokes_marini.ii}{}  Given the discrete dual solution $\BT_h\in K_{\BT}^h$ (\textit{i.e.}, the discrete stress) and a discrete Lagrange multiplier $\overline{\bu}_h\in (\Pbroken{0})^d$ such that 
        for every $\btau_h\in \RTNeumann $, there holds 
			\begin{equation} \label{eq:stokes_with_multiplier} 
				\smash{-\tfrac{1}{\nu}(\dev\Pi_h\BT_h, \dev\Pi_h\btau_h)_\Omega
				- (\overline{\bu}_h, \diver\btau_h)_\Omega
				=-(\nabla_h\uhDirichlet,\dev \Pi_h\btau_h)_\Omega }\,,
			\end{equation}
			the discrete primal solution $\bu_h\in K_{\bu}^h$ (\textit{i.e.}, the discrete velocity) is immediately available via
			\begin{align}\label{eq:stokes_marini_inverse}
				\smash{\bu_h= \overline{\bu}_h+\tfrac{1}{\nu }\dev\Pi_h\BT_h(\mathrm{id}_{\Rd} -\Pi_h \mathrm{id}_{\Rd})\quad\text{ a.e.\ in }\Omega}\,.
			\end{align}
		\end{itemize}
	\end{proposition}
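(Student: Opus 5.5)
For (i), I would define $\widehat{\BT}_h$ by the right-hand side of \eqref{eq:stokes_marini} and prove three things: that $\widehat{\BT}_h\in \BF_h+\RTNeumann$, that it satisfies the constraint \eqref{eq:stokes_stress_constraint_discrete}, and that $\Pi_h\widehat{\BT}_h$ fulfils the optimality conditions \eqref{eq:stokes_optimality_discrete}. By uniqueness of $\overline{\BT}_h$, this gives $\Pi_h\widehat{\BT}_h=\overline{\BT}_h$, and hence (up to the choice of $\BT_h$ with $\Pi_h\BT_h=\overline{\BT}_h$) that $\widehat{\BT}_h$ is the discrete dual solution.

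The algebraic facts are immediate. Since $\Pi_h(\mathrm{id}_{\Rd}-\Pi_h\mathrm{id}_{\Rd})=0$, we get $\Pi_h\widehat{\BT}_h=\nu\nabla_h(\bu_h+\uhDirichlet)-p_h\BI$. Because $\textup{div}_h\bu_h=0$ and $\textup{div}_h\uhDirichlet=0$, the trace of $\nabla_h(\bu_h+\uhDirichlet)$ vanishes. Therefore $\dev\Pi_h\widehat{\BT}_h=\nu\nabla_h(\bu_h+\uhDirichlet)$, which is \eqref{eq:stokes_optimality_discrete_3}. Testing \eqref{eq:stokes_with_pressure} gives \eqref{eq:stokes_optimality_discrete_2}, with $(\bff_h,\bv_h)_\Omega=(\bff_h,\Pi_h\bv_h)_\Omega$ because $\bff_h$ is element-wise constant. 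The row-wise element-wise divergence of $\bff_h\otimes(\mathrm{id}_{\Rd}-\Pi_h\mathrm{id}_{\Rd})$ is $d\,\bff_h$, so $\textup{div}_h\widehat{\BT}_h=-\bff_h$.

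The main obstacle is conformity: showing that $\widehat{\BT}_h-\BF_h$ has continuous, side-wise constant normal traces, vanishing on $\Gamma_N$. Element-wise, $\widehat{\BT}_h$ has the Raviart--Thomas form $\BA_T+\mathbf{b}_T\otimes(x-x_T)$, so on each side its normal trace is constant. To control the jumps I would use the discrete integration-by-parts identity on a single element $T$. For each side $S$ and each $i$, I test \eqref{eq:stokes_with_pressure} with the basis function $\bv_h=\boldsymbol{\varphi}^i_S$ (for $S\in\mathcal{S}_h^i\cup\mathcal{S}_h^N$). Element-wise integration by parts gives
\begin{align*}
(\Pi_h\widehat{\BT}_h,\nabla_h\bv_h)_T=-(\textup{div}\,\widehat{\BT}_h,\Pi_h\bv_h)_T+(\widehat{\BT}_h\bn_T,\pi_h\bv_h)_{\partial T}\,.
\end{align*}
Here I use that the normal trace is side-wise constant and that $\bv_h$ is affine, so that $\int_T\bv_h=|T|\Pi_h\bv_h$ and face integrals reduce to face means. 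Summing over $T\supseteq S$ and comparing with \eqref{eq:stokes_with_pressure} — the volume term $-(-\bff_h,\Pi_h\bv_h)$ matches $(\bff_h,\bv_h)_\Omega$ — leaves $|S|\,\mathbf{e}_i\cdot\jump{(\widehat{\BT}_h-\BF_h)\bn}_S=0$. Here $\BF_h$ is element-wise constant and its normal jumps appear through $(\BF_h,\nabla_h\bv_h)_\Omega$. This forces vanishing jumps on interior sides and $(\widehat{\BT}_h-\BF_h)\bn=\mathbf{0}$ on Neumann sides. The care needed is in the bookkeeping of $\BF_h$: strictly, one should read $\BT_h\in\BF_h+\RTNeumann$ as meaning that the normal jumps of $\widehat{\BT}_h$ coincide with those of $\BF_h$.

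Part (ii) is dual. I set $\widehat{\bu}_h\coloneqq\overline{\bu}_h+\tfrac{1}{\nu}\dev\Pi_h\BT_h(\mathrm{id}_{\Rd}-\Pi_h\mathrm{id}_{\Rd})$. Then $\nabla_h\widehat{\bu}_h=\tfrac{1}{\nu}\dev\Pi_h\BT_h$ and $\Pi_h\widehat{\bu}_h=\overline{\bu}_h$, so $\textup{div}_h\widehat{\bu}_h=0$. Next I test \eqref{eq:stokes_with_multiplier} with the Raviart--Thomas basis functions of $\RTNeumann$ and apply the same local integration-by-parts formula \eqref{eq:pi0}. This shows that the face means $\pi_h\widehat{\bu}_h$ are single-valued across interior sides and vanish on Dirichlet sides, so that $\widehat{\bu}_h\in\CRDirichlet$. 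Finally, \eqref{eq:stokes_optimality_discrete_3} together with uniqueness of $\bu_h$ in the saddle point identifies $\widehat{\bu}_h=\bu_h$; the right-hand side of \eqref{eq:stokes_with_multiplier} accounts for the shift by $\uhDirichlet$.
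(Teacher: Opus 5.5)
\textbf{Part (i).} Your proof of (i) is correct. It obtains conformity by a different route from the paper's.

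\begin{itemize}
\item The paper subtracts the already known discrete dual solution. It notes that $\widehat{\BT}_h-\BT_h$ is element-wise constant and, by \eqref{eq:pi0} and \eqref{eq:stokes_with_pressure}, $\mathbb{L}^2$-orthogonal to $\nabla_h(\CRDirichlet)$. The discrete Helmholtz--Weyl decomposition \eqref{eq:decomposition} then places $\widehat{\BT}_h-\BT_h$ in $\ker(\diver|_{\RTNeumann})$.
\item You verify the normal-jump conditions side by side, by testing with the Crouzeix--Raviart basis functions.
\end{itemize}

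Your version is self-contained: it needs neither the decomposition nor the prior existence of $\BT_h$, and it is the local mechanism behind the inclusion the paper quotes. The paper's version is shorter once \eqref{eq:decomposition} is available. The final identification through uniqueness is the same in both arguments. In both, when $\Gamma_N=\emptyset$ this holds only modulo constant multiples of $\BI$, which matches the normalisation of $p_h$.

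\textbf{Part (ii).} Part (ii) has a genuine gap. It sits exactly where you say that the right-hand side of \eqref{eq:stokes_with_multiplier} ``accounts for the shift by $\uhDirichlet$''.

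Your $\widehat{\bu}_h$ satisfies $\nabla_h\widehat{\bu}_h=\tfrac{1}{\nu}\dev\Pi_h\BT_h$. But \eqref{eq:stokes_optimality_discrete_3} says $\tfrac{1}{\nu}\dev\Pi_h\BT_h=\nabla_h(\bu_h+\uhDirichlet)$. Hence $\nabla_h(\widehat{\bu}_h-\bu_h)=\nabla_h\uhDirichlet$, and $\widehat{\bu}_h=\bu_h$ is impossible unless $\nabla_h\uhDirichlet=\mathbf{0}$.

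Your face test fails for the same reason. Element-wise integration by parts combined with \eqref{eq:stokes_with_multiplier} gives, for all $\btau_h\in\RTNeumann$,
\[
\sum_{T\in\mathcal{T}_h}(\btau_h\bn_T,\pi_h\widehat{\bu}_h)_{\partial T}=(\nabla_h\uhDirichlet,\Pi_h\btau_h)_\Omega\,.
\]
This is non-zero in general, so $\widehat{\bu}_h\notin\CRDirichlet$.

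In fact you can compute the multiplier exactly. Using \eqref{eq:pi0}, $\pi_h\bu_h=\mathbf{0}$ on Dirichlet sides and $\textup{tr}\,\nabla_h\bu_h=0$, one checks that $\Pi_h\bu_h$ satisfies \eqref{eq:stokes_with_multiplier}. Surjectivity of $\diver\colon\RTNeumann\to(\Pbroken{0})^d$ then forces $\overline{\bu}_h=\Pi_h\bu_h$, and therefore
\[
\bu_h=\overline{\bu}_h+\big(\tfrac{1}{\nu}\dev\Pi_h\BT_h-\nabla_h\uhDirichlet\big)(\mathrm{id}_{\Rd}-\Pi_h\mathrm{id}_{\Rd})\,.
\]
By contrast, the right-hand side of \eqref{eq:stokes_marini_inverse} equals $\bu_h+\uhDirichlet-\Pi_h\uhDirichlet$.

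So \eqref{eq:stokes_marini_inverse} holds only when $\uhDirichlet$ is constant, for example for homogeneous Dirichlet data. In that case your argument goes through as written. The paper's own proof of (ii) makes the same slip: it asserts $\nabla_h(\widehat{\bu}_h-\bu_h)=\mathbf{0}$. No argument can establish the formula as stated.

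The fix is to include the extra term $-\nabla_h\uhDirichlet$ in the definition of $\widehat{\bu}_h$. Then your face test gives zero and $\textup{div}_h\widehat{\bu}_h=0$, so $\widehat{\bu}_h\in K^h_{\bu}$. Moreover $\nabla_h\widehat{\bu}_h=\nabla_h\bu_h$. An element-wise constant field in $\CRDirichlet$ vanishes, so $\widehat{\bu}_h=\bu_h$, which proves the corrected formula.
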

	
	\begin{remark}
		Such reconstruction formulas for the incompressible Stokes  problem were known in particular cases; e.g., \cite{CGS.2013} derived it for $d=2$, $\BF=\mathbf{0}$ a.e.\ in $\Omega$, and more regular Dirichlet boundary datum $\uDirichlet  \in \BH^1(\Gamma_D) \cap \BH^1(\Omega)$.
		Proposition \ref{prop:stokes_marini}, however, ensures that the formulas are valid for any $d\ge 2$,  general loads $\bff^*\in \mathbf{H}^{-1}_D(\Omega)$, and inhomogeneous mixed boundary conditions.
	\end{remark}
	
	\begin{proof}[of Proposition \ref{prop:stokes_marini}]
		\textit{ad (\hyperlink{prop:stokes_marini.i}{i}).} To begin with, we define tensor field
		\begin{align*}
			\smash{\widehat{\BT}_h}\coloneqq \nu\nabla_h (\bu_h + \uhDirichlet) - p_h \BI
			- \tfrac{1}{d}\bff_h \otimes (\mathrm{id}_{\Rd} -\Pi_h \mathrm{id}_{\Rd})\in \PbrokenM{1}\,,
		\end{align*}
		which is locally (on each element) a Raviart--Thomas tensor field with $\textup{div}_h\widehat{\BT}_h=-\bff_h$ a.e. in $\Omega$ and, by definition, satisfies
		\begin{align}\label{prop:stokes_marini.1}
			\smash{\Pi_h\smash{\widehat{\BT}_h}=\nu\nabla_h (\bu_h+\uhDirichlet) - p_h\BI \quad\text{ a.e.\ in }\Omega\,.}
		\end{align}
        Due to $\dev (p_h\BI)=0$ a.e.\ in $\Omega$, \eqref{prop:stokes_marini.1} implies that
		\begin{align}\label{prop:stokes_marini.2}
			\smash{\dev\Pi_h\smash{\widehat{\BT}_h}=\nu\nabla_h (\bu_h+\uhDirichlet) \quad\text{ a.e.\ in }\Omega\,.}
		\end{align}
		Due to $\diver(\smash{\widehat{\BT}_h}-\BT_h)=\mathbf{0}$ a.e.\ in $T$ for all $T\in \mathcal{T}_h$, we have that $\smash{\widehat{\BT}_h}-\BT_h\in \PbrokenM{0}$. Thus,  from \eqref{prop:stokes_marini.1}, the discrete integration-by-parts formula \eqref{eq:pi0}, and \eqref{eq:stokes_with_pressure}, for every $\bv_h\in \CRDirichlet$, it follows that
		\begin{align}\label{prop:stokes_marini.2.1}
            \begin{aligned}
			 (\widehat{\BT}_h-\BT_h+\BF_h,\nabla_h\bv_h)_{\Omega}& 
			 = (\Pi_h \widehat{\BT}_h-\Pi_h\BT_h+\BF_h,\Pi_h \nabla_h\bv_h)_{\Omega}\\&=
			 (\nu\nabla_h (\bu_h+\uhDirichlet) - p_h\BI  , \nabla_h \bv_h)_\Omega
                                                                                    \\&\quad +(\diver (\BT_h-\BF_h)  ,\Pi_h\bv_h)_\Omega
			 \\&=(\BF_h, \nabla_h \bv_h)_\Omega\,.
            \end{aligned}
		\end{align}
		Then, using the discrete Helmholtz--Weyl decomposition \eqref{eq:decomposition}, from \eqref{prop:stokes_marini.2.1}, we infer that 
		\begin{align*}
			\smash{\widehat{\BT}_h}-\BF_h-(\BT_h-\BF_h)=\smash{\widehat{\BT}_h}-\BT_h\in \smash{(\nabla_h(\CRDirichlet))^{\perp_{\mathbb{L}^2}}}=\textup{ker}(\textup{div}|_{\smash{\RTNeumann}})\,,
		\end{align*}
		and, due to $\BT_h-\BF_h\in \RTNeumann$, we find that $\smash{\widehat{\BT}_h}-\BF_h\in \RTNeumann$ with
		\begin{align}\label{prop:stokes_marini.3}
			\textup{div}(\smash{\widehat{\BT}_h}-\BF_h)=\textup{div}(\BT_h-\BF_h)=-\bff_h\quad\text{ a.e.\ in }\Omega\,,
		\end{align}
		\textit{i.e.}, we have that $\widehat{\BT}_h\hspace{-0.15em}\in\hspace{-0.15em} K_{\BT}^h$.
		In particular, due to \eqref{prop:stokes_marini.2} and \eqref{prop:stokes_marini.3}, we conclude that ${\widehat{\BT}_h=\BT_h}$ a.e.\ in $\Omega$.
		
		\textit{ad (\hyperlink{prop:stokes_marini.ii}{ii}).} To begin with, we define the vector field
		\begin{align*}
			\smash{\widehat{\bu}_h\coloneqq \overline{\bu}_h+\tfrac{1}{\nu }\dev\Pi_h\BT_h(\mathrm{id}_{\Rd} -\Pi_h \mathrm{id}_{\Rd})\in (\Pbroken{1})^d}\,,
		\end{align*}
    which is locally (on each element) a  
    $\mathbb{P}^1$-vector field, which,  
    by definition, satisfies
		\begin{subequations} 
			\begin{alignat}{2} \label{prop:stokes_marini.4}
				\nabla_h  \widehat{\bu}_h&=\tfrac{1}{\nu }\dev\Pi_h\BT_h&&\quad\text{ a.e.\ in }\Omega\,,\\
				\Pi_h  \widehat{\bu}_h&=\overline{\bu}_h&&\quad\text{ a.e.\ in }\Omega\,. \label{prop:stokes_marini.5}
			\end{alignat}
		\end{subequations}
		Due to $\nabla_h(\widehat{\bu}_h-\bu_h)=\mathbf{0}$ a.e.\ in $T$ for all $T\in  \mathcal{T}_h$ (\textit{cf}.\ \eqref{prop:stokes_marini.5}), we have that ${\widehat{\bu}_h-\bu_h\in (\Pbroken{0})^d}$. Thus,  from \eqref{prop:stokes_marini.5}, the discrete integration-by-parts formula \eqref{eq:pi0}, and \eqref{eq:stokes_with_multiplier}, for {every} ${\btau_h\in  \RTNeumann}$, it follows that
		\begin{align*}
			(\widehat{\bu}_h-\bu_h,\diver\btau_h)_{\Omega}& = (\Pi_h \widehat{\bu}_h-\Pi_h\bu_h,\diver\btau_h)_{\Omega}\\&=
			(\overline{\bu}_h , \diver \btau_h)_\Omega
			+(\nabla_h\bu_h  ,\dev\Pi_h\btau_h)_\Omega
			\\&=(\overline{\bu}_h , \diver \btau_h)_\Omega+(\tfrac{1}{\nu}\dev\Pi_h\BT_h-\nabla_h\uhDirichlet,\dev\Pi_h\btau_h)_{\Omega}
			\\&=0\,.
		\end{align*}
    Then, using  that $\diver(\RTNeumann)=(\mathbb{P}^0(\tria))^d$ 
    because $\Gamma_N\neq \partial\Omega$ (by assumption, we have that $\Gamma_D\neq \emptyset$), 
		we find that
		\begin{align*}
			\smash{\widehat{\bu}_h-\bu_h\in (\diver(\RTNeumann))^{\perp_{\BL^2}}}=\{\mathbf{0}\}\,,
		\end{align*}
		\textit{i.e.}, we conclude that $\widehat{\bu}_h=\bu_h$ a.e.\ in $\Omega$.
	\end{proof}  
	
	\subsection{\emph{A priori} error analysis}\label{sec:stokes_apriori}
	
Following ideas from \cite{BarGudKal24,ABKKTorsion}, we perform our \emph{a priori} error analysis in the spirit of an \emph{a posteriori} error analysis at the discrete level resorting
	to the derived discrete duality relations (\textit{cf}.\ Section \ref{sec:stokes_discrete_duality}). This procedure yields an \emph{a priori} error identity, which, to the best of the {authors}' {knowledge}, is new for the incompressible Stokes problem \eqref{eq:stokes_primal} and the associated {discretisation} \eqref{eq:stokes_primal_discrete}.
	
The first step in the procedure in \cite{BarGudKal24,ABKKTorsion} consists in deriving a \textit{`meaningful'} representation of the so-called \emph{discrete primal-dual gap estimator} $\eta_{\textup{gap},h}^2\colon K^h_{\bu} \times K^h_{\BT} \to [0,+\infty)$, for every $\bv_h\in K^h_{\bu}$ and $\btau_h\in K^h_{\BT}$ defined by
	\begin{align}\label{eq:discrete_primal_dual_gap_estimator_stokes}
		\eta_{\textup{gap},h}^2(\bv_h,\btau_h)\coloneqq I_h(\bv_h)-D_h(\btau_h)\,.
	\end{align} 
	Such a \textit{`meaningful'} representation of the discrete primal-dual gap estimator \eqref{eq:discrete_primal_dual_gap_estimator_stokes}, which shows that the latter precisely measures the violation of the discrete convex optimality relation \eqref{eq:stokes_optimality_discrete}, is derived in the following lemma.

	\begin{lemma}\label{lem:discrete_primal_dual_gap_estimator_stokes}
		For every $\bv_h\in K^h_{\bu}$ and $\btau_h\in  K^h_{\BT}$, we have that
		\begin{align*}
			\eta_{\textup{gap},h}^2(\bv_h,\btau_h)&\coloneqq   	\tfrac{\nu}{2} \|\nabla_h \bv_h+
			\nabla_h \smash{\widehat{\bu}_D^h}-\smash{\tfrac{1}{\nu}}\Pi_h \dev \btau_h\|_{\Omega}^2\,.
		\end{align*}
	\end{lemma}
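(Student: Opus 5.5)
The identity follows by expanding $I_h(\bv_h)-D_h(\btau_h)$, eliminating the load terms through the discrete integration-by-parts formula \eqref{eq:pi0}, and completing the square.

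Fix $\bv_h\in K^h_{\bu}$ and $\btau_h\in K^h_{\BT}$. Both energies are finite, since $\textup{div}_h\bv_h=0$ and $\diver(\btau_h-\BF_h)=-\bff_h$, so
\begin{align*}
I_h(\bv_h)-D_h(\btau_h)
&=\tfrac{\nu}{2}\|\nabla_h(\bv_h+\uhDirichlet)\|_\Omega^2+\tfrac{1}{2\nu}\|\dev\Pi_h\btau_h\|_\Omega^2-(\dev\Pi_h\btau_h,\nabla_h\uhDirichlet)_\Omega\\
&\quad-(\bff_h,\Pi_h\bv_h)_\Omega-(\BF_h,\nabla_h\bv_h)_\Omega\,.
\end{align*}
Completing the square,
\[
\tfrac{\nu}{2}\|\nabla_h(\bv_h+\uhDirichlet)-\tfrac{1}{\nu}\dev\Pi_h\btau_h\|_\Omega^2
\]
equals the first three terms plus the cross term $(\dev\Pi_h\btau_h,\nabla_h\bv_h)_\Omega$ that is still missing. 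It therefore suffices to show
\[
(\dev\Pi_h\btau_h,\nabla_h\bv_h)_\Omega=(\bff_h,\Pi_h\bv_h)_\Omega+(\BF_h,\nabla_h\bv_h)_\Omega\,.
\]

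To show this, first use $\textup{tr}(\nabla_h\bv_h)=\textup{div}_h\bv_h=0$ to get $(\dev\Pi_h\btau_h,\nabla_h\bv_h)_\Omega=(\Pi_h\btau_h,\nabla_h\bv_h)_\Omega$. Then split $\Pi_h\btau_h=\Pi_h(\btau_h-\BF_h)+\BF_h$, using that $\BF_h$ is element-wise constant. Since $\btau_h-\BF_h\in\RTNeumann$, the formula \eqref{eq:pi0} gives
\[
(\Pi_h(\btau_h-\BF_h),\nabla_h\bv_h)_\Omega=-(\Pi_h\bv_h,\diver(\btau_h-\BF_h))_\Omega+((\btau_h-\BF_h)\bn,\pi_h\bv_h)_{\partial\Omega}\,.
\]
The boundary term vanishes: on $\Gamma_N$ the normal trace $(\btau_h-\BF_h)\bn$ is zero, and on $\Gamma_D$ the side means $\pi_h\bv_h$ vanish. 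Since the normal trace is side-wise constant, pairing it with $\bv_h$ reduces to pairing with $\pi_h\bv_h$, so it does not matter which form of the formula is used. Finally, $\diver(\btau_h-\BF_h)=-\bff_h$ yields $(\bff_h,\Pi_h\bv_h)_\Omega$, which is the claimed identity.

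The one point needing care is a notational inconsistency in the definition of $D_h$, which carries $(\dev\Pi_h\btau_h,\nabla_h\uDirichlet)_\Omega$ rather than $\nabla_h\uhDirichlet$. This must be read as the discrete lifting $\uhDirichlet$, as in $\phi_h^*$; otherwise the square does not close. No other obstacle is expected.
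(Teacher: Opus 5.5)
Your proposal is correct and follows essentially the same route as the paper. Both expand $I_h(\bv_h)-D_h(\btau_h)$, eliminate the load terms via $\diver(\btau_h-\BF_h)=-\bff_h$ together with the discrete integration-by-parts formula \eqref{eq:pi0} and the Neumann condition \eqref{eq:Neumann_condition_weak_discrete}, use trace-freeness to insert $\dev$, and complete the square. Your reading of $\nabla_h\uDirichlet$ in $D_h$ as $\nabla_h\uhDirichlet$ matches what the paper's proof actually uses: it writes $(\btau_h,\nabla_h\uhDirichlet)_\Omega$ and invokes $\textup{div}_h\uhDirichlet=0$, a step you avoid by keeping the $\dev$ already present in $D_h$.
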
 
	
	\begin{proof}
		For every $\bv_h\in K^h_{\bu}$ and $\btau_h\in  K^h_{\BT}$,
		using \eqref{eq:stokes_stress_constraint_discrete}, the discrete integration-by-parts {formula} \eqref{eq:pi0} (since $\btau_h-\BF_h\in \RTNeumann$), \eqref{eq:Neumann_condition_weak_discrete},
		$ \btau_h:(\nabla_h \bv_h+\nabla_h\smash{\widehat{\bu}_D^h})=\dev \btau_h:(\nabla_h \bv_h+\nabla_h\smash{\widehat{\bu}_D^h})$ a.e. in $\Omega$ (since $\textrm{tr}\,\nabla_h (\bv_h+\smash{\widehat{\bu}_D^h})=\textup{div}_h(\bv_h 
		+\smash{\widehat{\bu}_D^h})=0$ a.e. in $\Omega$), and a binomial formula,
		we find that
		\begin{align*}
			I_h(\bv_h)-D_h(\btau_h)&= \tfrac{\nu}{2}\|\nabla_h \bv_h+\nabla_h\smash{\widehat{\bu}_D^h}\|_{\Omega}^2-(\bff_h,\Pi_h \bv_h)_{\Omega}-(\BF_h,\nabla_h \bv_h)_{\Omega}\\&\quad+\tfrac{1}{2\nu}\| \Pi_h \dev \btau_h\|_{\Omega}^2-(\btau_h,\nabla_h\smash{\widehat{\bu}_D^h})_{\Omega}\\&
			= \tfrac{\nu}{2}\| \nabla_h \bv_h+\nabla_h\smash{\widehat{\bu}_D^h}\|_{\Omega}^2+(\textup{div}\,(\btau_h-\BF_h),\Pi_h \bv_h)_{\Omega}-(\BF_h,\nabla_h \bv_h)_{\Omega}\\&\quad+\tfrac{1}{2\nu}\| \Pi_h \dev \btau_h\|_{\Omega}^2-(\btau_h,\nabla_h\smash{\widehat{\bu}_D^h})_{\Omega}
			\\&=\tfrac{\nu}{2}\| \nabla_h \bv_h+\nabla_h\smash{\widehat{\bu}_D^h}\|_{\Omega}^2-(\btau_h-\BF_h,\nabla_h \bv_h)_{\Omega}-(\BF_h,\nabla_h \bv_h)_{\Omega}\\&\quad+\tfrac{1}{2\nu}\| \Pi_h \dev \btau_h\|_{\Omega}^2-(\btau_h,\nabla_h\smash{\widehat{\bu}_D^h})_{\Omega}
			\\&=\tfrac{\nu}{2}\| \nabla_h \bv_h+\nabla_h\smash{\widehat{\bu}_D^h}\|_{\Omega}^2-(\Pi_h\dev \btau_h,\nabla_h \bv_h+\nabla_h\smash{\widehat{\bu}_D^h})_{\Omega}\\&\quad+\tfrac{1}{2\nu}\| \Pi_h \dev \btau_h\|_{\Omega}^2
			\\&= \tfrac{\nu}{2} \|\nabla_h \bv_h+
			\nabla_h \smash{\widehat{\bu}_D^h}-\smash{\tfrac{1}{\nu}}\Pi_h \dev \btau_h\|_{\Omega}^2\,,
		\end{align*}
		which is the claimed representation of the discrete primal-dual gap estimator \eqref{eq:discrete_primal_dual_gap_estimator_stokes}.
	\end{proof}
	
	The second step in the procedure in \cite{BarGudKal24,ABKKTorsion} consists 
	in deriving also \textit{`meaningful’} representations of the \emph{discrete optimal strong convexity measures} for the discrete primal energy functional \eqref{eq:stokes_primal_discrete} at the discrete primal solution $\bu_h \in K^h_{\bu}$, \textit{i.e.}, 
	\hspace{-0.175mm}$\rho_{I_h^{cr}}^2\colon K^h_{\bu} \to [0,+\infty)$, for every ${\bv_h \in K^h_{\bu}}$ {defined} by
	\begin{align}\label{def:discrete_optimal_primal_error}
		\smash{\rhoprimalh(\bv_h)\coloneqq I_h(\bv_h)-I_h(\bu_h)\,,}
	\end{align}
	and for (the negative of) the discrete dual energy functional \eqref{eq:stokes_dual_discrete} at the discrete dual solution $\BT_h\in K^h_{\BT}$, \textit{i.e.}, $\rhodualh\colon \hspace{-0.1em}K^h_{\BT}\hspace{-0.1em}\to\hspace{-0.1em} [0,+\infty)$,  
	for every $\btau_h\hspace{-0.1em}\in\hspace{-0.1em} K^h_{\BT}$~defined~by
	\begin{align}\label{def:discrete_optimal_dual_error}
		\smash{\rhodualh(\btau_h)\coloneqq- D_h(\btau_h)+D_h(\BT_h)\,.}
	\end{align}
	the sum of which, called \emph{discrete primal-dual total error}, \textit{i.e.}, $\rho_{\textup{tot},h}^2\colon K^h_{\bu}\times K^h_{\BT}\to [0,+\infty)$, for every $\bv_h\in K^h_{\bu}$ and $\btau_h\in K^h_{\BT}$ defined by
	\begin{align}\label{eq:discrete_primal_dual_error}
		\smash{\rho_{\textup{tot},h}^2(\bv_h,\btau_h)\coloneqq \rho_{I_h}^2(\bv_h)+\rho_{-D_h}^2(\btau_h)\,,}
	\end{align}
	represents a \emph{`natural'} error quantity in the discrete primal-dual gap identity (\textit{cf}.\ Theorem \ref{thm:discrete_prager_synge_identity_stokes}).
	
	Such \textit{`meaningful'} representations of the discrete optimal strong convexity measures \eqref{def:discrete_optimal_primal_error} and \eqref{def:discrete_optimal_dual_error} are derived in the following lemma.
	
	\begin{lemma}\label{lem:discrete_strong_convexity_measures_stokes}
		The following statements apply:
		\begin{itemize}[noitemsep,topsep=2pt,leftmargin=!,labelwidth=\widthof{(ii)}]
			\item[(i)]\hypertarget{lem:discrete_strong_convexity_measures_stokes.i}{} For every $\bv_h\in K^h_{\bu}$, we have that 
			\begin{align*}
				\smash{\rhoprimalh(\bv_h)=\tfrac{\nu}{2}\|\nabla_h \bv_h-\nabla_h \bu_h\|_{\Omega}^2\,;}
			\end{align*}
			\item[(ii)]\hypertarget{lem:discrete_strong_convexity_measures_stokes.ii}{} For every $\btau_h\in K^h_{\BT}$, we have that 
			\begin{align*}
				\smash{\rhodualh(\btau_h)=\tfrac{1}{2\nu}\|\Pi_h \dev \btau_h-\Pi_h \dev \BT_h\|_{\Omega}^2\,.}
			\end{align*}
		\end{itemize}
	\end{lemma}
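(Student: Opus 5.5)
Both identities come from the same device: expand the quadratic energy around its optimiser, and observe that the linear cross term vanishes by the discrete optimality conditions. No Helmholtz--Weyl decomposition is needed here.

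\emph{Step (i).} Fix $\bv_h\in K^h_{\bu}$. Since $\bu_h\in K^h_{\bu}$, the indicator terms in $I_h(\bv_h)$ and $I_h(\bu_h)$ both vanish. Set $\bw_h\coloneqq \bv_h-\bu_h$ and apply the binomial formula to $\tfrac{\nu}{2}\|\nabla_h(\bv_h+\uhDirichlet)\|_\Omega^2$ around $\nabla_h(\bu_h+\uhDirichlet)$. This gives
\begin{align*}
I_h(\bv_h)-I_h(\bu_h)&=\tfrac{\nu}{2}\|\nabla_h\bw_h\|_\Omega^2+\nu(\nabla_h(\bu_h+\uhDirichlet),\nabla_h\bw_h)_\Omega\\&\quad-(\bff_h,\Pi_h\bw_h)_\Omega-(\BF_h,\nabla_h\bw_h)_\Omega\,.
\end{align*}
Because $\textup{div}_h\bw_h=0$, we have $\bw_h\in K^h_{\bu}$. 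The discrete hydro-mechanical optimality relation stated in Section~\ref{sec:stokes_discrete_primal}, tested with $\bw_h$, then kills the last three terms. This proves (i).

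\emph{Step (ii).} Fix $\btau_h\in K^h_{\BT}$. The indicator terms vanish again, and $\boldsymbol{\delta}_h\coloneqq\btau_h-\BT_h\in\RTNeumann$ with $\diver\boldsymbol{\delta}_h=\mathbf{0}$. Expanding $\tfrac{1}{2\nu}\|\dev\Pi_h\btau_h\|_\Omega^2$ around $\dev\Pi_h\BT_h$ gives
\begin{align*}
-D_h(\btau_h)+D_h(\BT_h)&=\tfrac{1}{2\nu}\|\dev\Pi_h\boldsymbol{\delta}_h\|_\Omega^2\\&\quad+\big(\tfrac{1}{\nu}\dev\Pi_h\BT_h-\nabla_h\uhDirichlet,\dev\Pi_h\boldsymbol{\delta}_h\big)_\Omega\,.
\end{align*}
Here I read the $\nabla\uDirichlet$ in \eqref{eq:stokes_dual_discrete} as $\nabla_h\uhDirichlet$, consistently with the discrete Lagrangian.

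To kill the cross term, use \eqref{eq:stokes_optimality_discrete_3}: $\tfrac{1}{\nu}\dev\Pi_h\BT_h=\nabla_h(\bu_h+\uhDirichlet)$. The cross term therefore becomes $(\nabla_h\bu_h,\dev\Pi_h\boldsymbol{\delta}_h)_\Omega$. Since $\textup{tr}\,\nabla_h\bu_h=\textup{div}_h\bu_h=0$, this equals $(\nabla_h\bu_h,\Pi_h\boldsymbol{\delta}_h)_\Omega$. By the discrete integration-by-parts formula \eqref{eq:pi0}, it equals
\[
-(\Pi_h\bu_h,\diver\boldsymbol{\delta}_h)_\Omega+(\boldsymbol{\delta}_h\bn,\pi_h\bu_h)_{\partial\Omega}\,.
\]
The first term is zero because $\diver\boldsymbol{\delta}_h=\mathbf{0}$. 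The boundary term is zero because $\pi_h\bu_h=\mathbf{0}$ on $\Gamma_D$-sides and $\boldsymbol{\delta}_h\bn=\mathbf{0}$ on $\Gamma_N$-sides. Finally, $\dev\Pi_h=\Pi_h\dev$, which gives (ii).

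The only delicate point is this cross term in (ii). It requires combining the algebraic identity $\dev\Pi_h\BT_h=\nu\nabla_h(\bu_h+\uhDirichlet)$, which comes from the saddle point, with the exact discrete integration by parts. One also has to check that $\BT_h$, the particular representative with $\Pi_h\BT_h=\overline{\BT}_h$, is the one entering \eqref{eq:stokes_optimality_discrete_3}. This holds by construction. Alternatively, one could use the Lagrange-multiplier relation \eqref{eq:stokes_with_multiplier} from Proposition~\ref{prop:stokes_marini}(ii) with test field $\boldsymbol{\delta}_h$, which yields the vanishing directly since $\diver\boldsymbol{\delta}_h=\mathbf{0}$.
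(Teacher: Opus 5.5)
Your proposal is correct and is essentially the paper's argument. In (i), the binomial expansion combined with the discrete optimality relation tested with $\bv_h-\bu_h\in K^h_{\bu}$ is exactly the paper's Taylor expansion with vanishing first variation on $K^h_{\bu}$, and in (ii) you eliminate the cross term with the same ingredients the paper uses ($\textup{tr}\,\nabla_h\bu_h=0$, the discrete integration-by-parts formula \eqref{eq:pi0} with \eqref{eq:stokes_constraints_discrete}, and \eqref{eq:stokes_optimality_discrete_3}), followed by a binomial formula; your reading of $\nabla\uDirichlet$ as $\nabla_h\uhDirichlet$ in $D_h$ is also the one the paper's proof implicitly uses.
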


	\begin{proof}[of Lemma \ref{lem:discrete_strong_convexity_measures_stokes}] 
		\emph{ad (\hyperlink{lem:discrete_strong_convexity_measures_stokes.i}{i}).} Noting that 
		$K^h_{\bu}$ is a Hilbert space and that the restricted discrete primal energy functional $I_h\colon K^h_{\bu}\to \R$ is a smooth quadratic functional, for every $\bv_h\in K^h_{\bu}$, a Taylor expansion yields that
		\begin{align*}
			I_h(\bv_h) - I_h(\bu_h)
			&= \langle \mathrm{D}I(\bu_h),\bv_h - \bu_h\rangle_{K^h_{\bu}}+
			\tfrac{1}{2}\langle \mathrm{D}^2I(\bu_h)(\bv_h - \bu_h), \bv_h - \bu_h\rangle_{K^h_{\bu}}
			\\&= \tfrac{\nu}{2} \|\nabla_h\bv_h - \nabla_h \bu_h\|_{\Omega}^2\,,
		\end{align*} 
		which is the claimed representation of the discrete optimal strong convexity measure \eqref{def:discrete_optimal_primal_error}.

		\emph{ad (\hyperlink{lem:discrete_strong_convexity_measures_stokes.ii}{ii}).} For every $\btau_h\in K^h_{\BT}$, using that $(\dev\BT_h - \dev\btau_h, \nabla_h \bu_h)_\Omega=(\BT_h - \btau_h, \nabla_h \bu_h)_\Omega =0$
		(which follows from $\textup{tr}(\nabla_h \bu_h)\hspace{-0.15em}=\hspace{-0.15em}\textup{div}_h\bu_h\hspace{-0.15em}=\hspace{-0.15em}0$~a.e.~in~$\Omega$, the discrete integration-by-parts {formula} \eqref{eq:pi0}, and \eqref{eq:stokes_constraints_discrete}), the 
		discrete convex optimality condition \eqref{eq:stokes_optimality_discrete}, and a binomial formula, we {get}
		\begin{align*}
			-D_h(\btau_h)+D_h(\BT_h)
            &=\tfrac{1}{2\nu}\|\Pi_h\dev \btau_h\|_{\Omega}^2-\tfrac{1}{2\nu}\|\Pi_h\dev \BT_h\|_{\Omega}^2
			\\& \quad
			+(\dev\BT_h- \dev\btau_h,\nabla_h(\uhDirichlet+\bu_h))_{\Omega}
			\\&=\tfrac{1}{2\nu}\|\Pi_h\dev \btau_h\|_{\Omega}^2-\tfrac{1}{2\nu}\|\Pi_h\dev \BT_h\|_{\Omega}^2
			\\&\quad+\tfrac{1}{\nu}(\Pi_h\dev(\BT_h- \btau_h),\Pi_h\dev \BT_h)_\Omega
			\\&=\tfrac{1}{2\nu}\|\Pi_h\dev \btau_h\|_{\Omega}^2+\tfrac{1}{2\nu}\|\Pi_h\dev \BT_h\|_{\Omega}^2\\&\quad-\tfrac{1}{\nu}(\Pi_h\dev\btau_h,\Pi_h\dev \BT_h)_\Omega
			\\&=\tfrac{1}{2\nu}\|\Pi_h\dev  \btau_h-\Pi_h\dev  \BT_h\|_{\Omega}^2\,,
		\end{align*}
		which is the claimed representation of the discrete optimal strong convexity measure \eqref{def:discrete_optimal_dual_error}.
	\end{proof}

	In the third step in the procedure in \cite{BarGudKal24,ABKKTorsion}, from the definitions \eqref{eq:discrete_primal_dual_gap_estimator_stokes}--\eqref{eq:discrete_primal_dual_error} and the discrete strong duality relation \eqref{eq:stokes_strong_duality_discrete}, it follows that an \emph{a posteriori} error identity on the discrete level, called \textit{discrete primal-dual gap identity}, applies.

	\begin{theorem}[Discrete primal-dual gap identity]\label{thm:discrete_prager_synge_identity_stokes}
		For every $\bv_h\in  K^h_{\bu}$ and $\btau_h\in K^h_{\BT}$, there holds
		\begin{equation}\label{eq:discrete_praguer_synge_identity_stokes}
			\smash{\rhototh(\bv_h,\btau_h)=\eta_{\textup{gap},h}^2(\bv_h,\btau_h)\,.}
		\end{equation}
	\end{theorem}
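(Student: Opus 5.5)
The identity follows directly from the definitions combined with the discrete strong duality relation \eqref{eq:stokes_strong_duality_discrete}; Lemmas \ref{lem:discrete_primal_dual_gap_estimator_stokes} and \ref{lem:discrete_strong_convexity_measures_stokes} are not needed for the identity itself. They only give its concrete form.

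Fix $\bv_h\in K^h_{\bu}$ and $\btau_h\in K^h_{\BT}$. Since both lie in the effective domains, $I_h(\bv_h)$ and $D_h(\btau_h)$ are finite real numbers. This follows from \eqref{eq:stokes_primal_discrete} and \eqref{eq:stokes_dual_discrete}, because the indicator terms vanish on $K^h_{\bu}$ and $K^h_{\BT}$. The same applies to the discrete solutions $\bu_h\in K^h_{\bu}$ and $\BT_h\in K^h_{\BT}$. Hence all quantities in \eqref{def:discrete_optimal_primal_error}--\eqref{eq:discrete_primal_dual_error} are well defined, and no $\infty-\infty$ expressions occur.

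Next, expand the definitions:
\begin{align*}
\rhototh(\bv_h,\btau_h) = I_h(\bv_h)-I_h(\bu_h) - D_h(\btau_h)+D_h(\BT_h)\,.
\end{align*}
Inserting $I_h(\bu_h)=D_h(\BT_h)$ from \eqref{eq:stokes_strong_duality_discrete}, the middle terms cancel. What remains is $I_h(\bv_h)-D_h(\btau_h)$, which is $\eta_{\textup{gap},h}^2(\bv_h,\btau_h)$ by \eqref{eq:discrete_primal_dual_gap_estimator_stokes}.

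The only point requiring care is the validity of \eqref{eq:stokes_strong_duality_discrete} with $\BT_h\in\BF_h+\RTNeumann$ (rather than merely $\overline{\BT}_h\in\PbrokenM{0}$) and the fact that $\BT_h$ maximises $D_h$ over $K^h_{\BT}$. This was established before the theorem via the min-max relation \eqref{eq:stokes_minmax_discrete} and the identity $D_h=D_h^0\circ\Pi_h$. I therefore take it as given.

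As a consequence, combining the identity with Lemma \ref{lem:discrete_primal_dual_gap_estimator_stokes} and Lemma \ref{lem:discrete_strong_convexity_measures_stokes} yields the explicit discrete Prager--Synge form
\begin{align*}
\tfrac{\nu}{2}\|\nabla_h \bv_h-\nabla_h \bu_h\|_{\Omega}^2+\tfrac{1}{2\nu}\|\Pi_h \dev \btau_h-\Pi_h \dev \BT_h\|_{\Omega}^2
=\tfrac{\nu}{2} \|\nabla_h \bv_h+\nabla_h \uhDirichlet-\tfrac{1}{\nu}\Pi_h \dev \btau_h\|_{\Omega}^2\,.
\end{align*}
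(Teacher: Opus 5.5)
Your proposal is correct and follows the paper's proof: expand the definitions \eqref{eq:discrete_primal_dual_gap_estimator_stokes}--\eqref{eq:discrete_primal_dual_error} and cancel $I_h(\bu_h)$ against $D_h(\BT_h)$ using the discrete strong duality relation \eqref{eq:stokes_strong_duality_discrete}. Your remarks on finiteness, and the explicit form you obtain from Lemmas \ref{lem:discrete_primal_dual_gap_estimator_stokes} and \ref{lem:discrete_strong_convexity_measures_stokes}, agree with the paper's subsequent use in \eqref{eq:discrete_prager_synge_identity_stokes_2}.
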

 
	\begin{proof}\let\qed\relax
		We combine the definitions \eqref{eq:discrete_primal_dual_gap_estimator_stokes}--\eqref{eq:discrete_primal_dual_error} and the discrete strong duality relation \eqref{eq:stokes_strong_duality_discrete}.
\end{proof} 

With the help of the discrete primal-dual gap identity \eqref{eq:discrete_praguer_synge_identity_stokes}, it is possible to derive an \emph{a priori}  error identity characterising the distance between the discrete solutions and the interpolants of the exact solutions in terms of a dual interpolation error. 
In particular, this leads to a quasi-optimal error estimate~that, in particular, holds for minimal regularity solutions of the incompressible Stokes problem. 

\begin{theorem}\label{thm:apriori_stokes}
	Let $\bff_h\coloneqq\Pi_h \bff\in  (\Pbroken{0})^d$, $\BF_h \coloneqq \Pi_h \BF\in (\Pbroken{0})^{d\times d}$, and $\widehat{\bu}_D^h \coloneqq\mathcal{I}^{cr}_h\widehat{\bu}_D\in V^h$. Then,
	the following statements apply:
	\begin{enumerate}[noitemsep,topsep=2pt,leftmargin=!,labelwidth=\widthof{(ii)}]
		\item[(i)]\hypertarget{thm:apriori_stokes.i}{}    There  holds the  \emph{a priori}  error identity
		\begin{equation}\label{eq:stotkes_apriori_identity}
			\hspace*{-4mm}\begin{aligned}
				\tfrac{\nu}{2}\|\nabla_h \mathcal{I}_h^{cr}\bu-\nabla_h \bu_h\|_{\Omega}^2&+\tfrac{1}{2\nu}\|\Pi_h\dev (\mathcal{I}_h^{rt}(\BT-\BF))-\Pi_h\dev (\BT_h-\BF_h)\|_{\Omega}^2\\&= \tfrac{1}{2\nu} \|\Pi_h \dev (\BT-\BF)-\Pi_h \dev \mathcal{I}_h^{rt}(\BT-\BF)\|_{\Omega}^2\,;
			\end{aligned}\hspace*{-1mm}
		\end{equation}
		
		\item[(ii)]\hypertarget{thm:apriori_stokes.ii}{} There  holds the quasi-optimal error estimate
		\begin{equation}\label{eq:stotkes_quasi-optimality}
			\begin{aligned}
				&\nu\|\nabla (\bu + \uDirichlet)- \nabla_h (\bu_h+\uhDirichlet)\|_{\Omega}^2+\tfrac{1}{\nu}\|\dev \BT - \Pi_h\dev \BT_h\|_{\Omega}^2\\ 
				&\leq 
				6\inf_{\bv_h \in K^h_{\bu}}{\big\{\nu\|\nabla(\bu+\uDirichlet) - \nabla_h(\bv_h+\uhDirichlet)\|_{\Omega}^2\big\}}
				\\&\quad +
				6\inf_{\btau_h \in K^h_{\BT}}{\big\{\tfrac{1}{\nu}\|\dev\BT - \dev\Pi_h \btau_h\|_{\Omega}^2\big\}}\,.
			\end{aligned}
		\end{equation}
	\end{enumerate}
\end{theorem}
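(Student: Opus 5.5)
The plan is to derive both statements from the discrete primal-dual gap identity (Theorem \ref{thm:discrete_prager_synge_identity_stokes}), evaluated at interpolants of the exact solutions and combined with the commuting properties \eqref{eq:ICR_preservation} and \eqref{eq:IRT_preservation_div}.

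\emph{Step 1: admissibility of the interpolants.} Set $\bv_h\coloneqq\mathcal{I}_h^{cr}\bu$ and $\btau_h\coloneqq \BF_h+\mathcal{I}_h^{rt}(\BT-\BF)$. By \eqref{eq:ICR_preservation_div} we get $\textup{div}_h\bv_h=\Pi_h\diver\bu=0$, so $\bv_h\in K^h_{\bu}$. Since $\BT-\BF\in\HdivN$, the operator $\mathcal{I}_h^{rt}$ maps it into $\RTNeumann$. By \eqref{eq:IRT_preservation_div} we then have $\diver(\btau_h-\BF_h)=\Pi_h\diver(\BT-\BF)=-\Pi_h\bff=-\bff_h$, so $\btau_h\in K^h_{\BT}$.

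\emph{Step 2: proof of (i).} By Lemma \ref{lem:discrete_strong_convexity_measures_stokes}, $\rhototh(\bv_h,\btau_h)$ is exactly the left-hand side of \eqref{eq:stotkes_apriori_identity}. Here we use $\btau_h-\BT_h=\mathcal{I}_h^{rt}(\BT-\BF)-(\BT_h-\BF_h)$. For the right-hand side we use Lemma \ref{lem:discrete_primal_dual_gap_estimator_stokes} together with the choice $\uhDirichlet=\mathcal{I}_h^{cr}\uDirichlet$. Then \eqref{eq:ICR_preservation_grad} and the optimality relation \eqref{eq:stokes_optimality_continuous_3} give
\[
\nabla_h(\bv_h+\uhDirichlet)=\Pi_h\nabla(\bu+\uDirichlet)=\tfrac{1}{\nu}\Pi_h\dev\BT\,.
\]
Moreover $\Pi_h\dev\btau_h=\Pi_h\dev\BF+\Pi_h\dev\mathcal{I}_h^{rt}(\BT-\BF)$, because $\Pi_h$ commutes with $\dev$. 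Substituting both expressions into the gap representation yields exactly the right-hand side of \eqref{eq:stotkes_apriori_identity}.

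\emph{Step 3: reduction of (ii) to a best-approximation bound.} Write $\bw\coloneqq\bu+\uDirichlet$. Keep $\bv_h=\mathcal{I}_h^{cr}\bu$, but now let $\btau_h\in K^h_{\BT}$ be arbitrary. The gap identity then gives
\[
\tfrac{\nu}{2}\|\nabla_h(\mathcal{I}_h^{cr}\bu-\bu_h)\|_\Omega^2+\tfrac{1}{2\nu}\|\Pi_h\dev\btau_h-\Pi_h\dev\BT_h\|_\Omega^2=\tfrac{1}{2\nu}\|\Pi_h\dev\BT-\Pi_h\dev\btau_h\|_\Omega^2\,.
\]
Since $\Pi_h$ is an $L^2$-orthogonal projection and $\dev\Pi_h\btau_h$ is element-wise constant, the right-hand side is at most $\tfrac{1}{2\nu}\|\dev\BT-\dev\Pi_h\btau_h\|_\Omega^2$.

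\emph{Step 4: velocity error.} Decompose
\[
\nabla\bw-\nabla_h(\bu_h+\uhDirichlet)=(\nabla\bw-\Pi_h\nabla\bw)+\nabla_h(\mathcal{I}_h^{cr}\bu-\bu_h)\,.
\]
The first summand is $L^2$-orthogonal to element-wise constants and the second is element-wise constant, so the squared norms add by Pythagoras. The first summand is bounded by $\|\nabla\bw-\nabla_h(\bv_h'+\uhDirichlet)\|_\Omega$ for every $\bv_h'\in K^h_{\bu}$, because $\Pi_h$ is the best element-wise constant approximation. The second summand is controlled via Step 3.

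\emph{Step 5: stress error.} Split similarly,
\[
\dev\BT-\Pi_h\dev\BT_h=(\dev\BT-\Pi_h\dev\BT)+\Pi_h\dev(\BT-\BT_h)\,,
\]
again with Pythagoras. The first term equals $\nu(\nabla\bw-\Pi_h\nabla\bw)$ by \eqref{eq:stokes_optimality_continuous_3}, so it is bounded by the primal best-approximation term. For the second term we insert $\Pi_h\dev\btau_h$ and use the triangle and Young inequalities, together with Step 3.

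\emph{Main obstacle.} The key difficulty is avoiding any need to bound the interpolation error of $\mathcal{I}_h^{rt}$ in the $\dev\Pi_h$-norm by the best approximation. Using the gap identity with an arbitrary $\btau_h$ in Step 3 sidesteps this. What remains is bookkeeping of the constants: the factors $2$ from the two Young inequalities and the factor $2$ in the gap identity should sum to at most $6$.
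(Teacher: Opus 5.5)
Your proof is correct. Part (i) is exactly the paper's argument, but for (ii) you take a different route.

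\textbf{The paper's route for (ii).} The paper keeps both $\bv_h\in K^h_{\bu}$ and $\btau_h\in K^h_{\BT}$ arbitrary. It inserts $\nabla_h(\bv_h+\uhDirichlet)$ and $\dev\Pi_h\btau_h$ via the triangle inequality and $(a+b)^2\le 2a^2+2b^2$. It then recognises the resulting discrete distances $\nu\|\nabla_h(\bu_h-\bv_h)\|_\Omega^2+\tfrac{1}{\nu}\|\dev\Pi_h(\btau_h-\BT_h)\|_\Omega^2$ as twice the discrete gap. Finally it splits the gap using $\nabla(\bu+\uDirichlet)=\tfrac{1}{\nu}\dev\BT$ and one more Young inequality, which produces the factor $6$. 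That argument never uses the interpolants or the orthogonality of $\Pi_h$. It therefore remains valid for arbitrary discrete data $\uhDirichlet$, $\bff_h$, $\BF_h$.

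\textbf{Your route and what it buys.} You fix $\bv_h=\mathcal{I}_h^{cr}\bu$, which makes the primal part of the gap vanish; this genuinely needs $\uhDirichlet=\mathcal{I}_h^{cr}\uDirichlet$. You then replace triangle inequalities by Pythagoras with respect to $\Pi_h$. This gives sharper constants. Set $E\coloneqq\nu\|\nabla_h(\mathcal{I}_h^{cr}\bu-\bu_h)\|_\Omega^2$, $G\coloneqq\tfrac{1}{\nu}\|\dev\Pi_h(\btau_h-\BT_h)\|_\Omega^2$ and $C\coloneqq\tfrac{1}{\nu}\|\Pi_h\dev\BT-\dev\Pi_h\btau_h\|_\Omega^2$. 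Your Step 3 then reads $E+G=C$. A single Young inequality in Step 5 gives
\[
\nu\|\nabla\bw-\nabla_h(\bu_h+\uhDirichlet)\|_\Omega^2+\tfrac{1}{\nu}\|\dev\BT-\Pi_h\dev\BT_h\|_\Omega^2\le 2\nu\|\nabla\bw-\Pi_h\nabla\bw\|_\Omega^2+E+2G+2C\le 2\nu\|\nabla\bw-\Pi_h\nabla\bw\|_\Omega^2+4C\,,
\]
that is, constants $2$ and $4$ in place of $6$ and $6$.

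\textbf{A further consequence.} Since $\dev\BT=\nu\nabla\bw$, Pythagoras also gives $\tfrac{1}{\nu}\|\dev\BT-\dev\Pi_h\btau_h\|_\Omega^2=\nu\|\nabla\bw-\Pi_h\nabla\bw\|_\Omega^2+C$. Your route therefore shows that the total error is bounded by $\tfrac{4}{\nu}\inf_{\btau_h\in K^h_{\BT}}\|\dev\BT-\dev\Pi_h\btau_h\|_\Omega^2$ alone. For this choice of $\uhDirichlet$, the primal best-approximation term is redundant.
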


\begin{remark}\label{rmk:quasi-optimal_stokes}
	To the best of the author's knowledge, Corollary \ref{thm:apriori_stokes} presents the first minimal regularity estimate for a Crouzeix--Raviart approximation of the Stokes problem with explicit constants, that considers a general load $\bff^*\in \mathbf{H}^{-1}_D(\Omega)$, and inhomogeneous mixed boundary conditions. 
	The only other quasi-optimality result is \cite[Thm.\ 4.2]{VZ.2019}, where the authors obtain the estimate
	\begin{equation}\label{eq:stotkes_quasi-optimality_2}
		\|\nabla \bu-\nabla_h\bu_h\|_{\Omega}
		\leq c_{\mathrm{qo}}
		\inf_{\bv_h\in \CRDirichlet}{\big\{\|\nabla\bu-\nabla_h\bv_h\|_{\Omega}\big\}}\,.
	\end{equation}
	(We do not deem results, \textit{e.g.}, from \cite{Lin.2014,LMNN.2018} to be genuinely of minimal regularity, since the oscillation of, \textit{e.g.}, $\Delta \bu$ in $\BL^2(\Omega)$ is assumed to be under control.)
	While the estimate \eqref{eq:stotkes_quasi-optimality_2} does not involve the dual best-approximation error (unlike \eqref{eq:stotkes_quasi-optimality}), it requires the implementation of a so-called smoothing operator $E_h \colon \CRDirichlet \to \BH_D^1(\Omega)$ on the right-hand side, \textit{i.e.}, $\langle \bff^*, E_h \bv_h \rangle_\Omega$ (which involves solving Stokes problems on macro-elements), and the constant $c_{\mathrm{qo}}>0$ is not explicit and may depend, \textit{e.g.}, on the shape-regularity. 
	Furthermore, their analysis was carried out for homogeneous Dirichlet boundary conditions only.
	However, the method from \cite{VZ.2019} is pressure robust.
\end{remark}

\begin{proof}[of Theorem \ref{thm:apriori_stokes}]
	
	\emph{ad (\hyperlink{thm:apriori_stokes.i}{i}).} 
	Recalling Lemma \ref{lem:discrete_primal_dual_gap_estimator_stokes} and Lemma \ref{lem:discrete_strong_convexity_measures_stokes}, the discrete primal-dual gap identity \eqref{eq:discrete_praguer_synge_identity_stokes}, for every $\bv_h\in K^h_{\bu}$ and $\btau_h\in K^h_{\BT}$, reads explicitly
	\begin{align}\label{eq:discrete_prager_synge_identity_stokes_2}
		\tfrac{\nu}{2}\|\nabla_h \bu_h -\nabla_h \bv_h\|_{\Omega}^2&+\tfrac{1}{2\nu}\|\Pi_h\dev\BT_h - \Pi_h\dev \btau_h\|_{\Omega}^2
		\\&= \tfrac{\nu}{2} \|\nabla_h(\bv_h + \uhDirichlet) - \tfrac{1}{\nu}\Pi_h \dev\btau_h\|_{\Omega}^2\,.
	\end{align}
	Using the divergence-preservation property \eqref{eq:ICR_preservation_div}, we see that $\mathcal{I}^{cr}_h \bu \in K^h_{\bu}$.
	Similarly, from the divergence-preservation property  \eqref{eq:IRT_preservation_div}, we obtain $\textup{div}\,(\mathcal{I}_h^{rt}(\BT-\BF))\hspace{-0.15em}=\hspace{-0.15em}
	-\bff_h$ a.e.\ in $\Omega$, \textit{i.e.}, we see that $\mathcal{I}_h^{rt}(\BT-\BF)+\BF_h\in K^h_{\BT}$. Then, 
	for  $\bv_h = \mathcal{I}^{cr}_h\bu\in K^h_{\bu}$ and $\btau_h = \mathcal{I}^{rt}_h(\BT-\BF) + \BF_h\in K^h_{\BT}$ in \eqref{eq:discrete_prager_synge_identity_stokes_2}, by the gradient-preservation property \eqref{eq:ICR_preservation_grad} and the convex optimality condition \eqref{eq:stokes_optimality_continuous_3},~we~find~that
	\begin{align*}
		\tfrac{\nu}{2}\|\nabla_h \mathcal{I}_h^{cr}\bu-\nabla_h \bu_h\|_{\Omega}^2&+\tfrac{1}{2\nu}\|\Pi_h\dev  \mathcal{I}_h^{rt} (\BT-\BF)-\Pi_h\dev  (\BT_h-\BF_h)\|_{\Omega}^2\\&= \tfrac{\nu}{2} \|\nabla_h\mathcal{I}_h^{cr}\bu+\nabla_h\widehat{\bu}_D^h - \tfrac{1}{\nu}\Pi_h \dev (\mathcal{I}_h^{rt} (\BT - \BF) + \BF_h)\|_{\Omega}^2
		\\&= \tfrac{1}{2\nu} \|\Pi_h\dev(\BT -\BF)- \Pi_h \dev (\mathcal{I}_h^{rt} (\BT - \BF) )\|_{\Omega}^2\,,
	\end{align*}  
	which is the claimed \emph{a priori} error identity \eqref{eq:stotkes_apriori_identity}.
	
	\emph{ad (\hyperlink{thm:apriori_stokes.ii}{ii}).} 
	Let $\bv_h\in K^h_{\bu}$ and $\btau_h\in K^h_{\BT}$ be fixed, but arbitrary. Then, using twice the discrete primal-dual gap identity \eqref{eq:discrete_praguer_synge_identity_stokes} and once the convex optimality condition \eqref{eq:stokes_optimality_continuous_3},  we find that 
	\begin{align}\label{eq:discrete_prager_synge_identity_stokes_3}
		\begin{aligned} 
			&\tfrac{\nu}{2}\|\nabla (\bu + \uDirichlet)- \nabla_h (\bu_h+\uhDirichlet)\|_{\Omega}^2+\tfrac{1}{2\nu}\|\dev \BT - \Pi_h\dev \BT_h\|_{\Omega}^2\\ 
			&\quad\leq\tfrac{\nu}{2} \left\{ \|\nabla(\bu+\uDirichlet)- \nabla_h(\bv_h+\uhDirichlet)\|_{\Omega} + 
			\|\nabla_h \bu_h - \nabla_h \bv_h\|_{\Omega}\right\}^2 \\
			&\qquad+ \tfrac{1}{2\nu} \left\{\|\dev\BT - \dev\Pi_h\btau_h\|_{\Omega} + \|\dev\Pi_h\btau_h - \dev\Pi_h\BT_h\|_{\Omega} \right\}^2 \\
			&\quad\leq \nu \|\nabla(\bu+\uDirichlet)- \nabla_h(\bv_h+\uhDirichlet)\|_{\Omega}^2
			+ \tfrac{1}{\nu} \|\dev\BT - \dev\Pi_h\btau_h\|_{\Omega}^2  \\
			&\qquad + \nu \|\nabla_h(\bv_h + \uhDirichlet) - \nabla(\bu+\uDirichlet) +\tfrac{1}{\nu}\dev\BT- \tfrac{1}{\nu}\dev\Pi_h\btau_h\|_{\Omega}^2 \\
			&\quad\leq 3\nu \|\nabla(\bu+\uDirichlet)- \nabla_h(\bv_h+\uhDirichlet)\|_{\Omega}^2
			+ \tfrac{3}{\nu} \|\dev\BT - \dev\Pi_h\btau_h\|_{\Omega}^2\,.
		\end{aligned}
	\end{align}
	Then, taking infima with respect to $\bv_h\in K^h_{\bu}$ and $\btau_h\in K^h_{\BT}$, respectively,~in~\eqref{eq:discrete_prager_synge_identity_stokes_3}, we conclude that the claimed quasi-optimal error estimate \eqref{eq:stotkes_quasi-optimality} applies. 
\end{proof}

Combining Theorem \ref{thm:apriori_stokes} with the approximation properties \eqref{eq:cr_best_approximation}~and~\eqref{eq:rt_best_approximation}~it~is, then, possible to extract the optimal error decay rate $s\in (0,1]$, under appropriate regularity {assumptions}, \textit{i.e.},
\begin{equation}
	\smash{\tfrac{\nu}{2}\|\nabla (\bu + \uDirichlet)- \nabla_h (\bu_h+\uhDirichlet)\|_{\Omega}^2+\tfrac{1}{2\nu}\|\dev \BT - \Pi_h\dev \BT_h\|_{\Omega}^2
		= \mathcal{O}(h_\mathcal{T}^{2s})\,.}
\end{equation}

We further would  like to point out that this approach to obtain quasi-optimal error estimates for Crouzeix--Raviart approximations seems to be new, even for the Poisson problem; in particular, for a general load $f^*\in H^{-1}_D(\Omega)$ and inhomogeneous mixed boundary conditions. For the sake of completeness, we outline this approach in this case, whose proof follows an analogous argument:

Let $f^*\in H^{-1}_D(\Omega)$ be a given load, which similarly to Lemma \ref{lem:dual_representation}, for every $v\in H^1_D(\Omega)$, can be represented as $\langle f^*,v\rangle_\Omega = (f,v)_\Omega + (\bff,\nabla v)_\Omega$,  for a function $f\in L^2(\Omega)$ and a vector field $\bff\in \BL^2(\Omega)$, and let $u_D\in H^{\frac{1}{2}}(\Gamma_D)$ be a given Dirichlet boundary datum, for which 
a lift to the interior $\Omega$ exists, \textit{i.e.}, 
a function 
$\widehat{u}_D\in H^1(\Omega)$ with $\widehat{u}_D=u_D$ q.e. in $\Gamma_D$.
Then, the \textit{Poisson (minimisation) problem} seeks to minimise the primal energy functional $I\colon H^1_D(\Omega) \to \R$, for every $v\in H^1_D(\Omega) $ defined by
\begin{equation}\label{def:poisson_primal}
	\smash{I(v) \coloneqq \tfrac{1}{2}\|\nabla(v+\widehat{u}_D)\|_{\Omega}^2 - \langle f^* ,v\rangle_\Omega\,.}
\end{equation}
A (Lagrangian) dual problem (in the sense of  \cite[Chap. 49]{ZeiIII}) 
is given via the maximisation 
of the dual energy functional $D\colon \bff+H_N(\textup{div};\Omega) \to \R\cup\{-\infty\}$, for every $\br\in \bff+H_N(\textup{div};\Omega) $ defined by
\begin{equation}\label{eq:dual_energy_laplace}
	\smash{D(\br) \coloneqq -\tfrac{1}{2}\|\br\|_{\Omega}^2 
		+ (\br,\nabla \widehat{u}_D)
		- \characteristic{-f}^{\Omega}(\diver(\br- \bff))\,.}
\end{equation}
By the direct method in the calculus of variations, the primal energy functional \eqref{def:poisson_primal} admits a unique minimiser $u\in H^1_D(\Omega)$ and the dual energy~functional~\eqref{eq:dual_energy_laplace} a unique maximiser $\bq\in \bff+H_N(\textup{div};\Omega)$, related via the convex optimality relation $\bq= \nabla (u+\widehat{u}_D)$ a.e.\ in $\Omega$.

Next, with a slight abuse of notation, let $\CRDirichlet$ and $\RTNeumann$ now be scalar- and vector-valued spaces. Moreover, 
let $f_h\hspace{-0.1em}\in\hspace{-0.1em} \Pbroken{0}$, $\bff_h\hspace{-0.1em}\in\hspace{-0.1em} (\Pbroken{0})^d$, and $\widehat{u}_D^h \hspace{-0.1em}\in \hspace{-0.1em}V^h$ be approximations of $f\hspace{-0.1em}\in\hspace{-0.1em} L^2(\Omega)$, ${\bff\hspace{-0.1em}\in\hspace{-0.1em} \BL^2(\Omega)}$, and $\widehat{u}_D\in H^1(\Omega)$. Then, the \textit{discrete Poisson (minimisation) problem} seeks to minimise the discrete primal energy functional $I_h\colon \CRDirichlet \to \R$, for every 
$v_h\in \CRDirichlet$ defined by
\begin{align}\label{def:poisson_primal_discrete}
	\smash{I_h(v_h) \coloneqq \tfrac{1}{2}\|\nabla_h(v_h+\widehat{u}^h_D)\|_{\Omega}^2 
		- (f_h,\Pi_hv_h)_\Omega - (\bff_h,\nabla_hv_h)_\Omega \,.}
\end{align}
A (Lagrangian) dual problem (in the sense of  \cite[Chap. 49]{ZeiIII}) 
is given via the maximisation of the discrete dual energy functional
$D_h\colon \bff_h + \RTNeumann \to \R\cup\{-\infty\}$, for every  $\br_h\in\bff_h + \RTNeumann$ defined by
\begin{align}\label{eq:dual_energy_laplace_discrete}
	\smash{D_h(\br_h) \coloneqq -\tfrac{1}{2}\|\Pi_h\br_h\|_{\Omega}^2 
		+ ( \Pi_h\br_h,\nabla_h \widehat{u}_D)
		- \characteristic{-f_h}^{\Omega}(\diver(\br_h- \bff_h))\,.}
\end{align}
By the direct method in the calculus of variations, the discrete primal energy functional \eqref{def:poisson_primal_discrete} admits a unique minimiser $u_h\hspace{-0.1em}\in\hspace{-0.1em} \CRDirichlet$ and the discrete dual energy functional \eqref{eq:dual_energy_laplace_discrete} a unique {maximiser} $\bq_h\in \bff_h+\RTNeumann$, related via the discrete convex optimality relation $\Pi_h \bq_h= \nabla_h(u_h + \widehat{u}^h_D)$ a.e. in $\Omega$.

Analogously to Theorem \ref{thm:apriori_stokes}, we can derive a quasi-optimal error estimate.
\begin{corollary}\label{thm:apriori_laplace} Let $f_h\coloneqq\Pi_h f\in  \Pbroken{0}$, $\bff_h \coloneqq \Pi_h \bff\in (\Pbroken{0})^d$, and $\widehat{u}_D^h \coloneqq\mathcal{I}^{cr}_h\widehat{u}_D\in V^h$. Then,
	the following statements apply:
	
	\begin{enumerate}[noitemsep,topsep=2pt,leftmargin=!,labelwidth=\widthof{(ii)}]
		\item[(i)]   There holds the \emph{a priori} error identity
		\begin{equation*}
			\begin{aligned}
				\|\nabla_h \mathcal{I}_h^{cr}u-\nabla_h u_h\|_{\Omega}^2 
        &+\|\Pi_h (\mathcal{I}_h^{rt}(\bq-\bff))-\Pi_h(\bq_h-\bff_h)\|_{\Omega}^2
      \\&
      = \|\Pi_h (\bq-\bff)-\Pi_h \mathcal{I}_h^{rt}(\bq-\bff)\|_{\Omega}^2\,;
			\end{aligned}
		\end{equation*}
		
		\item[(ii)] There holds the quasi-optimal error estimate
		\begin{equation*}
			\begin{aligned}
        &\|\nabla (u + \widehat{u}_D) - \nabla_h (u_h+\widehat{u}^h_D)\|_{\Omega}^2
        +\|\bq - \Pi_h\bq_h\|_{\Omega}^2 
      \\& \leq 
				6\inf_{\bv_h \in \CRDirichlet}{\big\{\|\nabla(u+\widehat{u}_D) - \nabla_h(v_h+\widehat{u}^h_D)\|_{\Omega}^2\big\}}
				+6\inf_{\substack{\br_h \in K_{\bq}^h}}{\big\{\|\bq - \Pi_h \br_h\|_{\Omega}^2\big\}}\,,\\[-1mm]
			\end{aligned}
		\end{equation*}
		where  $$\smash{K_{\bq}^h\coloneqq\textup{dom}(-D_h)\coloneqq\{\br_h\in \bff_h+\RTNeumann\mid \diver(\br_h-\bff_h)=-f_h\text{ a.e.\ in }\Omega\}}\,.$$
	\end{enumerate}
\end{corollary}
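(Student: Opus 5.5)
The plan is to repeat the three-step argument used for Theorem~\ref{thm:apriori_stokes}, now in the scalar setting and without the divergence constraint and the deviatoric projection. First, we establish a discrete primal-dual gap identity for the Poisson discretisation. For every $v_h\in\CRDirichlet$ and $\br_h\in K^h_{\bq}$, the discrete integration-by-parts formula \eqref{eq:pi0} applies to $\br_h-\bff_h\in\RTNeumann$. Combined with $\diver(\br_h-\bff_h)=-f_h$ and the vanishing normal trace on $\Gamma_N$, it gives $-(f_h,\Pi_h v_h)_\Omega=(\br_h-\bff_h,\nabla_h v_h)_\Omega$. A binomial formula then yields
\begin{align*}
I_h(v_h)-D_h(\br_h)=\tfrac12\|\nabla_h(v_h+\widehat{u}_D^h)-\Pi_h\br_h\|_\Omega^2 .
\end{align*}
In the dual energy \eqref{eq:dual_energy_laplace_discrete} the term should be read with $\nabla_h\widehat{u}^h_D$.

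Next, we compute the strong convexity measures exactly as in Lemma~\ref{lem:discrete_strong_convexity_measures_stokes}. Since $I_h$ is quadratic, a Taylor expansion gives $I_h(v_h)-I_h(u_h)=\frac12\|\nabla_h v_h-\nabla_h u_h\|_\Omega^2$. For the dual side, we use that $(\Pi_h(\bq_h-\br_h),\nabla_h u_h)_\Omega=0$, which follows from \eqref{eq:pi0} because $\diver(\bq_h-\br_h)=0$ and the normal trace of $\bq_h-\br_h$ vanishes on $\Gamma_N$. Together with the discrete optimality relation $\Pi_h\bq_h=\nabla_h(u_h+\widehat{u}^h_D)$, this gives $D_h(\bq_h)-D_h(\br_h)=\frac12\|\Pi_h\br_h-\Pi_h\bq_h\|_\Omega^2$. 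Discrete strong duality $I_h(u_h)=D_h(\bq_h)$ follows from the min-max argument of Section~\ref{sec:stokes_discrete_duality}. Combining these gives the identity
\begin{align*}
\tfrac12\|\nabla_h u_h-\nabla_h v_h\|^2_\Omega+\tfrac12\|\Pi_h\bq_h-\Pi_h\br_h\|_\Omega^2=\tfrac12\|\nabla_h(v_h+\widehat{u}^h_D)-\Pi_h\br_h\|_\Omega^2 .
\end{align*}

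For (i), insert $v_h=\mathcal{I}^{cr}_hu$ and $\br_h=\mathcal{I}^{rt}_h(\bq-\bff)+\bff_h$. The second choice is admissible by \eqref{eq:IRT_preservation_div}, since $\Pi_h\diver(\bq-\bff)=-\Pi_h f=-f_h$; this requires $\bq-\bff\in H_N(\textup{div};\Omega)$, which holds by the dual constraint. By \eqref{eq:ICR_preservation_grad}, $\nabla_h(\mathcal{I}^{cr}_hu+\mathcal{I}^{cr}_h\widehat{u}_D)=\Pi_h\nabla(u+\widehat{u}_D)=\Pi_h\bq$. Since $\Pi_h\bff_h=\Pi_h\bff$, the right-hand side becomes $\frac12\|\Pi_h(\bq-\bff)-\Pi_h\mathcal{I}^{rt}_h(\bq-\bff)\|_\Omega^2$. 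On the left-hand side, $\Pi_h\br_h-\Pi_h\bq_h=\Pi_h\mathcal{I}^{rt}_h(\bq-\bff)-\Pi_h(\bq_h-\bff_h)$. Multiplying by two gives (i).

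For (ii), we argue as in \eqref{eq:discrete_prager_synge_identity_stokes_3}. We apply the triangle inequality with arbitrary $v_h$ and $\br_h$, then use $(a+b)^2\le 2a^2+2b^2$. The discrete errors $\|\nabla_h(u_h-v_h)\|^2+\|\Pi_h(\bq_h-\br_h)\|^2$ are bounded by the gap, that is, by $\|\nabla_h(v_h+\widehat{u}^h_D)-\Pi_h\br_h\|^2$. We then insert $\bq=\nabla(u+\widehat{u}_D)$ and split once more, so that the gap is bounded by $2\|\nabla(u+\widehat{u}_D)-\nabla_h(v_h+\widehat{u}^h_D)\|^2+2\|\bq-\Pi_h\br_h\|^2$. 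Collecting the constants gives the factor $6$ (after multiplying the half-weighted bound by two), and taking infima concludes. The only delicate point is verifying admissibility of the interpolants and the vanishing boundary term in \eqref{eq:pi0} under the mixed boundary conditions. Both follow from $\pi_h v_h=0$ on $\Gamma_D$ and a zero normal trace on $\Gamma_N$.
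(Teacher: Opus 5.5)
Your proposal is correct and follows essentially the same route as the paper, which obtains the corollary by transferring the proof of Theorem~\ref{thm:apriori_stokes}. That means the discrete gap representation, the discrete strong convexity measures and discrete strong duality, then inserting $\mathcal{I}^{cr}_h u$ and $\mathcal{I}^{rt}_h(\bq-\bff)+\bff_h$ for (i), and the triangle-inequality argument yielding the constant $6$ for (ii). One small sign slip: the discrete integration by parts gives $-(f_h,\Pi_h v_h)_\Omega=-(\br_h-\bff_h,\nabla_h v_h)_\Omega$, not $+(\br_h-\bff_h,\nabla_h v_h)_\Omega$; with the correct sign, your binomial formula and the resulting gap identity come out exactly as you state.
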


\begin{remark}\label{rmk:quasi-optimal_laplace}
	To the best of the authors' knowledge, Corollary \ref{thm:apriori_laplace} presents the first minimal regularity estimate in the context of Crouzeix--Raviart approximations of the Poisson problem with explicit constants, that considers a general load $f^*\in H^{-1}_D(\Omega)$, and inhomogeneous mixed boundary conditions. 
	Within the framework of convex duality theory, there exist results relating the discrete solution $(\bq_h,u_h)\in K_{\bq}^h \times \CRDirichlet$ to the interpolant of the exact solution (cf. \cite[eq. (1.34)]{BK.2024}), but do not cover general loads, and are generally written for {homogeneous} {boundary} {conditions}.
	The only other quasi-optimality result is \cite[Thm. 3.4]{VZ.2019.II}, where the authors obtain the {estimate}\vspace{-0.5mm}
	\begin{equation*}
		\|\nabla u-\nabla_hu_h\|_{\Omega}
		\leq c_{\mathrm{qo}}
		\inf_{v_h\in \CRDirichlet}{\big\{\|\nabla u-\nabla_hv_h\|_{\Omega}\big\}}\,.\\[-1mm]
	\end{equation*}
	While this estimate does not involve the dual best-approximation error (unlike Corollary \ref{thm:apriori_laplace}), it requires the implementation of a so-called smoothing operator $E_h \colon\hspace{-0.175em} \CRDirichlet \hspace{-0.175em}\to \hspace{-0.175em}H_D^1(\Omega)$ on the {right-hand} side, \textit{i.e.}, $\langle f^*, E_h v \rangle_\Omega$, and the constant $c_{\mathrm{qo}}\hspace{-0.15em}>\hspace{-0.15em}0$ is not explicit and can depend, \textit{e.g.}, on the {shape-regularity}. 
	Furthermore, their analysis was carried out for homogeneous Dirichlet boundary~conditions only.
\end{remark}

\subsection{\emph{A posteriori} error analysis}\label{sec:aposteriori_stokes} 

\hspace{5mm}After having performed an \emph{a priori} error analysis in the spirit of an \emph{a posteriori} error analysis at the discrete level in Section \ref{sec:stokes_apriori}, in this section,~eventually, we perform an \emph{a posteriori} error analysis. In doing so, 
we again follow ideas from \cite{BarGudKal24,ABKKTorsion} and, analogous to the \textit{a priori error} analysis at the discrete level, proceed in three steps.

The first step, again, consists in deriving a  \textit{`meaningful'} representation of the so-called
\emph{primal-dual gap estimator} ${\eta^2_{\textup{gap}}\colon K_{\bu}\times K_{\BT}\to \mathbb{R}}$, for every $\bv\in K_{\bu}$ and $\btau\in K_{\BT}$ defined by 
\begin{align}\label{eq:primal-dual.1}
	\begin{aligned}
		\gap(\bv,\btau)&\coloneqq I(\bv)-D(\btau)\,.
	\end{aligned}
\end{align}
In the context of \emph{a posteriori} error analysis, the central requirement of a representation of the primal-dual gap estimator \eqref{eq:primal-dual.1} being \textit{`meaningful'} consists in an integral representation with pointwise non-negative integrand, which allows the derivation of local mesh-refinement indicators, on the basis of which local mesh-refinement can be performed.
Such a \textit{`meaningful'} representation of the  primal-dual gap estimator \eqref{eq:primal-dual.1}, which is analogous to the discrete setting in Subsection \ref{sec:stokes_apriori} (\textit{cf}.\ Lemma \ref{lem:discrete_primal_dual_gap_estimator_stokes}) and shows that the latter precisely measures the violation of the convex optimality relation \eqref{eq:stokes_optimality_continuous_3}, is derived~in~the~\mbox{following}~lemma.

\begin{lemma}\label{lem:primal_dual_gap_estimator_stokes}
	For every $\bv\in K_{\bu}$ and $\btau\in K_{\BT}$, we have that
	\begin{equation*}
		\eta_{\textup{gap}}^2(\bv,\btau)= \tfrac{\nu}{2}\|\nabla (\bv+\uDirichlet)-\smash{\tfrac{1}{\nu}}\dev \btau\|_{\Omega}^2\,. 
	\end{equation*}
\end{lemma}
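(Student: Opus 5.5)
The plan mirrors the proof of Lemma \ref{lem:discrete_primal_dual_gap_estimator_stokes}, with the discrete integration-by-parts formula \eqref{eq:pi0} replaced by the weak definition of the divergence and of the Neumann condition. Fix $\bv\in K_{\bu}$ and $\btau\in K_{\BT}$. Since $\diver\bv=0$ and $\diver(\btau-\BF)=-\bff$, both indicator functionals vanish. Using the representation \eqref{eq:dual_representation} of $\bff^*$, the gap becomes
\begin{align*}
I(\bv)-D(\btau)&=\tfrac{\nu}{2}\|\nabla(\bv+\uDirichlet)\|_\Omega^2-(\bff,\bv)_\Omega-(\BF,\nabla\bv)_\Omega
+\tfrac{1}{2\nu}\|\dev\btau\|_\Omega^2-(\dev\btau,\nabla\uDirichlet)_\Omega\,.
\end{align*}

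The key step is the identity $(\bff,\bv)_\Omega+(\BF,\nabla\bv)_\Omega=(\btau,\nabla\bv)_\Omega$ for $\bv\in\HoneDirichlet$. This follows from the fact that $\btau-\BF\in\HdivN$ with $\diver(\btau-\BF)=-\bff$. The Green formula in $\Hdiv$ gives
\[
(\btau-\BF,\nabla\bv)_\Omega=-(\diver(\btau-\BF),\bv)_\Omega+\langle(\btau-\BF)\bn,\bv\rangle_{\partial\Omega}.
\]
The boundary term vanishes by \eqref{eq:Neumann_condition_weak}, because the trace of $\bv$ vanishes on $\Gamma_D$ and hence is the zero extension of an element of $\HNeumannDual$. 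In fact, this identity is exactly the finiteness condition from which $K_{\BT}$ was derived, so it can be quoted directly from the derivation preceding \eqref{eq:stokes_dual}. Making this boundary pairing rigorous for general mixed boundary conditions is the only delicate point, and I would rely on that equivalence rather than reprove it.

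Next, since $\diver\bv=0$ and $\diver\uDirichlet=0$, we have $\tr\nabla(\bv+\uDirichlet)=0$. Hence $\btau:\nabla(\bv+\uDirichlet)=\dev\btau:\nabla(\bv+\uDirichlet)$ a.e. in $\Omega$. Combining the two terms $-(\btau,\nabla\bv)_\Omega-(\dev\btau,\nabla\uDirichlet)_\Omega$ therefore yields $-(\dev\btau,\nabla(\bv+\uDirichlet))_\Omega$. The gap then reads
\[
\tfrac{\nu}{2}\|\nabla(\bv+\uDirichlet)\|_\Omega^2-(\dev\btau,\nabla(\bv+\uDirichlet))_\Omega+\tfrac{1}{2\nu}\|\dev\btau\|_\Omega^2,
\]
and the binomial formula gives $\tfrac{\nu}{2}\|\nabla(\bv+\uDirichlet)-\tfrac{1}{\nu}\dev\btau\|_\Omega^2$, as claimed.
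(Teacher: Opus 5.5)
Your proposal is correct and follows essentially the same route as the paper's proof. Both rewrite the load term as $(\btau,\nabla\bv)_\Omega$ via $\diver(\btau-\BF)=-\bff$, integration by parts and the weak Neumann condition; both then use $\tr\nabla(\bv+\uDirichlet)=0$ to replace $\btau$ by $\dev\btau$, and finish with the binomial formula.
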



\begin{proof}
	For every $\bv\in K_{\bu}$ and $\btau\in K_{\BT}$, using \eqref{eq:stokes_stress_constraint}, the continuous integration-by-parts formula (since $\btau-\BF\in \HdivN$), \eqref{eq:Neumann_condition_weak},
	$ \btau:(\nabla \bv+\nabla\smash{\widehat{\bu}_D})=\dev \btau:(\nabla \bv+\nabla\smash{\widehat{\bu}_D})$ a.e.\ in $\Omega$ (since $\textrm{tr}\,\nabla (\bv+\smash{\widehat{\bu}_D})=\textup{div}\,(\bv 
	+\smash{\widehat{\bu}_D})=0$ a.e. in $\Omega$), and a binomial formula,
	we find that
	\begin{align*}
		I(\bv)-D(\btau)&= \tfrac{1}{2}\| \nabla (\bv+\uDirichlet)\|_{\Omega}^2-(\bff,\bv)_{\Omega}-(\BF,\nabla\bv)_{\Omega}\\&\quad+\tfrac{1}{2}\|\dev \btau\|_{\Omega}^2-(\dev\btau,\nabla\uDirichlet)_{\Omega}\\&
		= \tfrac{\nu}{2}\| \nabla (\bv+\uDirichlet)\|_{\Omega}^2+(\textup{div}\,(\btau-\BF),\bv)_{\Omega}-(\BF,\nabla\bv)_{\Omega}\\&\quad+\tfrac{1}{2\nu}\| \dev \btau\|_{\Omega}^2-(\dev\btau,\nabla\uDirichlet)_{\Omega}
		\\&
		= \tfrac{\nu}{2}\| \nabla (\bv+\uDirichlet)\|_{\Omega}^2-(\dev \btau,\nabla (\bv+\uDirichlet))_{\Omega}+\tfrac{1}{2\nu}\|\dev \btau \|_{\Omega}^2
		\\&
		= \tfrac{\nu}{2}\| \nabla (\bv+\uDirichlet)-\smash{\tfrac{1
			}{\nu}}\dev \btau\|_{\Omega}^2 \,,
	\end{align*}
	which is the claimed representation of the primal-dual gap estimator \eqref{eq:primal-dual.1}.
\end{proof}

The second step, again, consists in deriving \textit{`meaningful'} representations of the 
\emph{optimal strong convexity measures} for the primal energy functional~\eqref{eq:stokes_primal}~at the primal solution $\bu\hspace{-0.1em}\in\hspace{-0.1em} K_{\bu}$, \textit{i.e.}, 
$\rho_I^2\colon \hspace{-0.1em}K_{\bu}\hspace{-0.1em}\to\hspace{-0.1em} [0,+\infty)$, for~every~${\bv\hspace{-0.1em}\in \hspace{-0.1em}K_{\bu}}$~\mbox{defined}~by
\begin{align}\label{def:optimal_primal_error}
	\rho_I^2(\bv)\coloneqq I(\bv)-I(\bu)\,,
\end{align}
and for (the negative of) the dual energy functional \eqref{eq:stokes_dual} at the dual solution $\BT\in K_{\BT}$, \textit{i.e.}, $\rho_{-D}^2\colon K_{\BT}\to [0,+\infty)$,  
for every ${\btau\in K_{\BT}}$ defined by
\begin{align}\label{def:optimal_dual_error}
	\rho_{-D}^2(\btau)\coloneqq- D(\btau)+D(\BT)\,,
\end{align}
the sum of which, called \emph{primal-dual total error}, \textit{i.e.}, 	$\rho_{\textup{tot}}^2\colon K_{\bu}\times K_{\BT}\to [0,+\infty)$, for every $\bv\in K_{\bu}$ and $\btau\in K_{\BT}$ defined by 
\begin{align}\label{def:primal_dual_total_error}
	\rho_{\textup{tot}}^2(\bv,\btau)\coloneqq \rho_I^2(\bv)+\rho_{-D}^2(\btau)\,,
\end{align}
again, represents a \emph{`natural'} error quantity in the primal-dual gap identity (\textit{cf}.\ Theorem \ref{thm:prager_synge_identity_stokes}).\newpage

Such \textit{`meaningful'} representations of the optimal strong convexity measures \eqref{def:optimal_primal_error} and \eqref{def:optimal_dual_error}, which are analogous to the discrete setting in Subsection \ref{sec:stokes_apriori} (\textit{cf}.\ Lemma \ref{lem:discrete_strong_convexity_measures_stokes}), are derived in the following lemma.

\begin{lemma}
	\label{lem:strong_convexity_measures_stokes}
	The following statements apply:
	\begin{itemize}[noitemsep,topsep=2pt,leftmargin=!,labelwidth=\widthof{(ii)}]
		\item[(i)]\hypertarget{lem:strong_convexity_measures_stokes.i}{} For every $\bv\in K_{\bu}$, we have that
		\begin{align*}
			\rho_I^2(\bv)=\tfrac{\nu}{2}\|\nabla \bv-\nabla \bu\|_{\Omega}^2\,;
		\end{align*}
		\item[(ii)]\hypertarget{lem:strong_convexity_measures_stokes.ii}{} For every $\btau\in K_{\BT}$, we have that
		\begin{align*}
			\rho_{-D}^2(\btau)=\tfrac{1}{2\nu}\|\dev \btau-\dev \BT\|_{\Omega}^2\,.
		\end{align*}
	\end{itemize}
\end{lemma}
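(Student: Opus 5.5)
The plan is to mirror the proof of Lemma \ref{lem:discrete_strong_convexity_measures_stokes}, replacing discrete objects by their continuous counterparts.

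\emph{ad (i).} On $K_{\bu}$ the functional $I$ is the quadratic functional $\bv\mapsto \tfrac{\nu}{2}\|\nabla(\bv+\uDirichlet)\|_{\Omega}^2-\langle \bff^*,\bv\rangle_\Omega$, since the indicator term vanishes there. For $\bv\in K_{\bu}$ we expand $\tfrac{\nu}{2}\|\nabla(\bv+\uDirichlet)\|_\Omega^2$ around $\bu$. This gives
\begin{align*}
I(\bv)-I(\bu)=\nu(\nabla(\bu+\uDirichlet),\nabla(\bv-\bu))_\Omega-\langle\bff^*,\bv-\bu\rangle_\Omega+\tfrac{\nu}{2}\|\nabla\bv-\nabla\bu\|_\Omega^2\,.
\end{align*}
Since $\bv-\bu\in K_{\bu}$, the first two terms cancel by the hydro-mechanical optimality condition stated in Section \ref{sec:stokes_primal}. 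That condition was written without $\uDirichlet$; the version with $\uDirichlet$ follows from minimality of $\bu$ over the linear space $K_{\bu}$. Alternatively, we take $\bw=\bv-\bu$ in the second equation of \eqref{eq:stokes_optimality_continuous} and use $(\BT,\nabla\bw)_\Omega=(\dev\BT,\nabla\bw)_\Omega$, which holds because $\tr\nabla\bw=\diver\bw=0$. Then \eqref{eq:stokes_optimality_continuous_3} gives $\nu(\nabla(\bu+\uDirichlet),\nabla \bw)_\Omega=\langle\bff^*,\bw\rangle_\Omega$.

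\emph{ad (ii).} For $\btau\in K_{\BT}$ both $\btau$ and $\BT$ satisfy \eqref{eq:stokes_stress_constraint} and \eqref{eq:Neumann_condition_weak}, so $\BT-\btau\in\HdivN$ with $\diver(\BT-\btau)=\mathbf{0}$. Integrating by parts against $\bu\in\HoneDirichlet$ (the boundary term vanishes: $\bu=\mathbf{0}$ on $\Gamma_D$ and the normal trace vanishes on $\Gamma_N$) yields $(\BT-\btau,\nabla\bu)_\Omega=0$. Because $\diver\bu=0$, this equals $(\dev\BT-\dev\btau,\nabla\bu)_\Omega=0$. Hence
\begin{align*}
-D(\btau)+D(\BT)=\tfrac{1}{2\nu}\|\dev\btau\|_\Omega^2-\tfrac{1}{2\nu}\|\dev\BT\|_\Omega^2+(\dev\BT-\dev\btau,\nabla(\bu+\uDirichlet))_\Omega\,.
\end{align*}
By \eqref{eq:stokes_optimality_continuous_3} we replace $\nabla(\bu+\uDirichlet)$ by $\tfrac1\nu\dev\BT$. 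The binomial formula then gives $\tfrac{1}{2\nu}\|\dev\btau-\dev\BT\|_\Omega^2$.

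The main obstacle is justifying the integration by parts in the weak setting, i.e.\ that $(\bsigma,\nabla\bv)_\Omega=-(\diver\bsigma,\bv)_\Omega$ for $\bsigma\in\HdivN$ and $\bv\in\HoneDirichlet$. The boundary pairing $\langle\bsigma\bn,\bv\rangle_{\partial\Omega}$ vanishes because $\bv|_{\partial\Omega}$ is the zero extension of an element of $\HNeumannDual$, which is exactly what the condition defining $\HdivN$ tests against. This is the same fact already used to pass from the Lagrangian to \eqref{eq:stokes_dual}, so we take it as available.
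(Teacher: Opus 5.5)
Your proposal is correct and follows essentially the same route as the paper. For (i), you use the quadratic (Taylor) expansion about the minimiser on the linear space $K_{\bu}$, with the first-order term vanishing by optimality; you are right that this relation must carry the lifting, i.e.\ $\nu(\nabla(\bu+\uDirichlet),\nabla\bw)_\Omega=\langle\bff^*,\bw\rangle_\Omega$ for $\bw\in K_{\bu}$. For (ii), you use exactly the paper's ingredients: $(\dev\BT-\dev\btau,\nabla\bu)_\Omega=(\BT-\btau,\nabla\bu)_\Omega=0$, the relation \eqref{eq:stokes_optimality_continuous_3}, and a binomial formula.
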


\begin{proof}
	\emph{ad (\hyperlink{lem:strong_convexity_measures_stokes.i}{i}).} 
  The proof is analogous to that of Lemma \ref{lem:discrete_strong_convexity_measures_stokes}(\hyperlink{lem:discrete_strong_convexity_measures_stokes.i}{i}).
%
	
	\emph{ad (\hyperlink{lem:strong_convexity_measures_stokes.ii}{ii}).} For every $\btau\in K_{\BT}$, using that $(\dev\BT - \dev\btau, \nabla \bu)_\Omega=(\BT - \btau, \nabla \bu)_\Omega =0$
	(which follows from $\textup{tr}(\nabla \bu)=\textup{div}\,\bu=0$ a.e.\ in $\Omega$, the continuous integration-by-parts formula, and \eqref{eq:stokes_constraints}), the 
	convex optimality condition \eqref{eq:stokes_optimality_continuous_3}, and a binomial formula, we find that
	\begin{align*}
		-D(\btau)+D(\BT)
		&=\tfrac{1}{2\nu}\|\dev \btau\|_{\Omega}^2-\tfrac{1}{2\nu}\|\dev \BT\|_{\Omega}^2+(\dev \BT-\dev \btau,\nabla\bu+\nabla\uDirichlet)_{\Omega}
		\\&=\tfrac{1}{2\nu}\|\dev \btau\|_{\Omega}^2-\tfrac{1}{2\nu}\|\dev \BT\|_{\Omega}^2+\tfrac{1}{\nu}(\dev \BT-\dev \btau,\dev \BT)_{\Omega}
		\\&=\tfrac{1}{2\nu}\|\dev \btau\|_{\Omega}^2-
		\tfrac{1}{\nu}(\dev \btau,\dev \BT)_{\Omega}+\tfrac{1}{2\nu}\|\dev \BT\|_{\Omega}^2
		\\&=\tfrac{1}{2\nu}\|\dev \btau-\dev \BT\|_{\Omega}^2\,,
	\end{align*} 
	which is the claimed representation of the optimal strong convexity measure \eqref{def:optimal_dual_error}.
\end{proof}

In the third step, from the definitions \eqref{eq:primal-dual.1}--\eqref{def:primal_dual_total_error} and the  strong duality relation \eqref{eq:stokes_strong_duality}, it follows that an \emph{a posteriori} error identity at the continuous level, called \textit{primal-dual gap identity}, applies.

\begin{theorem}[Primal-dual gap identity]\label{thm:prager_synge_identity_stokes}
	For every $\bv\in K_{\bu}$ and $\btau\in K_{\BT}$, there holds
	\begin{equation}\label{eq:prager_synge_identity_stokes}
		\rho_{\textup{tot}}^2(\bv,\btau)
		=\eta_{\textup{gap}}^2(\bv,\btau)\,.
	\end{equation}
\end{theorem}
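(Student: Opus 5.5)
The identity follows purely algebraically from the definitions \eqref{eq:primal-dual.1}--\eqref{def:primal_dual_total_error} together with the strong duality relation \eqref{eq:stokes_strong_duality}. I would mirror the proof of Theorem \ref{thm:discrete_prager_synge_identity_stokes}.

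Fix $\bv\in K_{\bu}$ and $\btau\in K_{\BT}$. By definition,
\begin{align*}
\rho_{\textup{tot}}^2(\bv,\btau)=I(\bv)-I(\bu)-D(\btau)+D(\BT)\,.
\end{align*}
All four quantities are finite. Indeed, $\bv,\bu\in K_{\bu}=\textup{dom}(I)$ and $\btau,\BT\in K_{\BT}=\textup{dom}(-D)$, so no $\infty-\infty$ expressions arise. Inserting $I(\bu)=D(\BT)$ from \eqref{eq:stokes_strong_duality}, the middle terms $-I(\bu)+D(\BT)$ cancel. What remains is $I(\bv)-D(\btau)=\eta_{\textup{gap}}^2(\bv,\btau)$.

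The only point requiring care, and hence the main obstacle, is the justification of strong duality \eqref{eq:stokes_strong_duality} itself, together with the fact that $\BT$ lies in $K_{\BT}$. Both were established earlier via the min-max theorem \eqref{eq:stokes_minmax} and the characterisation \eqref{eq:stokes_constraints} of the effective domain of $D$, so I would simply invoke them.

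Optionally, I would remark that combining the result with Lemma \ref{lem:primal_dual_gap_estimator_stokes} and Lemma \ref{lem:strong_convexity_measures_stokes} yields the explicit Prager--Synge-type form \eqref{intro:stokes_prager_synge_discrete}:
\begin{align*}
\tfrac{\nu}{2}\|\nabla \bv-\nabla \bu\|_{\Omega}^2+\tfrac{1}{2\nu}\|\dev\btau-\dev\BT\|_{\Omega}^2=\tfrac{\nu}{2}\|\nabla(\bv+\uDirichlet)-\tfrac{1}{\nu}\dev\btau\|_{\Omega}^2\,.
\end{align*}
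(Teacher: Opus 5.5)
Your proposal is correct and matches the paper's proof: you expand $\rho_{\textup{tot}}^2(\bv,\btau)=I(\bv)-I(\bu)-D(\btau)+D(\BT)$ from the definitions and cancel $-I(\bu)+D(\BT)$ using the strong duality relation \eqref{eq:stokes_strong_duality}. Your check that all four terms are finite, so that no $\infty-\infty$ expression can arise, is a reasonable piece of care that the paper leaves implicit.
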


\begin{proof} We combine the definitions \eqref{eq:primal-dual.1}, \eqref{def:optimal_primal_error}--\eqref{def:primal_dual_total_error}, and the strong duality relation \eqref{eq:stokes_strong_duality}. 
\end{proof} 

\begin{remark} 
  The identity \eqref{eq:prager_synge_identity_stokes} was previously derived in \cite{Rep.2002,Rep.2005,Kim.2014}, albeit under more restrictive conditions.
  Namely, the loads were taken in $\BL^2(\Omega)$, the stresses $\btau$ were required to be symmetric, and the condition $\diver \btau = -\bff^*$ was enforced in $K_{\bu}^*$.
  Ultimately this was used to derive an upper bound that did not assume $\bv$ in \eqref{eq:prager_synge_identity_stokes} to be divergence-free, but which required the introduction of an additional error estimator.
  In this paper, we will prove an upper and lower bound, where no additional estimator is required and the pressure error can, in fact, be controlled as well; see Section \ref{sec:stokes_revisited}.
\end{remark}

For the use in \emph{a posteriori}  error estimation, the error identity \eqref{eq:prager_synge_identity_stokes} requires admissible pairs $(\btau,\bv)\in K_{\BT}\times K_{\bu}$.
Obtaining an admissible stress is straightforward, as one can employ the Marini reconstruction \eqref{eq:stokes_marini}. Obtaining an admissible velocity vector field, however, is not entirely trivial, since the discrete solution is not exactly divergence-free. 
Thankfully, a post-processing of the velocity vector field that can deliver an admissible vector field has been developed in {\cite[Eq. (4.10)]{VZ.2019.II}}.
In Theorem \ref{thm:prager_synge_identity_stokes2}, we will present an alternative argument that enables to employ non-divergence-free vector fields the primal-dual gap estimator directly, for which, then, a simple node-averaging post-processing 
of the discrete velocity vector field suffices; the price to pay is that the primal-dual gap identity \eqref{eq:prager_synge_identity_stokes} becomes a primal-dual gap equivalence.

\section{The Navier--Lam\'e problem}\label{sec:navier-lame}
In this section, we consider the Navier--Lam\'e problem, first introduced by C.-L. Navier in 1821 (\textit{cf}.\ \cite{Navier1821}) and later cast in its modern form by G. Lamé in 1833 (\textit{cf}.\ \cite{Lame1833}), which describes the deformation of elastic bodies under external loads, with the Lamé parameters encoding resistance to compression and shear.

\subsection{The primal problem}\label{subsec:elasticity_primal}

\hspace{5mm}Let $\bff^*\in \HminusDirichlet$ be a given load and let $\bu_D \in \BH^{\frac{1}{2}}(\Gamma_D)$ be a given  Dirichlet {boundary} {datum}. For the Dirichlet boundary datum, we assume that there exists a lifting to the interior of $\Omega$, \textit{i.e.},  
there exists a vector field $\uDirichlet\in \BH^1(\Omega)$ with $\uDirichlet=\bu_D $ q.e.\ in $\Gamma_D$. 
By analogy with Section \ref{sec:stokes}, we will pose the problem for a displacement vector field $\bu\in \HoneDirichlet$, keeping in mind that the actual solution to the original problem will be given via $\bu_{\textup{orig}}\coloneqq\uDirichlet+\bu \in \uDirichlet+\HoneDirichlet$.

For given \textit{Lamé parameters} $\lambda,\mu\hspace{-0.1em} >\hspace{-0.1em} 0$, denoting the \textit{bulk modulus} and \textit{shear modulus}, {respectively}, by $\bbC\colon \Rdd\to \Rdd$
we denote the symmetric fourth-order \textit{elasticity tensor},  which, for every $\BA\in \Rdd$, is defined by
\begin{equation}\label{def:C}
	\bbC \BA \coloneqq 
  2\mu \BA + \lambda \mathrm{tr}(\BA)\BI\,.
\end{equation}
The tensor $\bbC$ is symmetric and positive definite, which ensures that the inverse $\bbC^{-1}$ also exists;
in this case, it takes the form 
\begin{equation}
	\bbC^{-1}\BB = \tfrac{1}{2\mu}\dev\BB
  + \tfrac{1}{2d\mu + d^2\lambda} \mathrm{tr}(\BB)\BI\,,
  \qquad \forall\, \BB\in\Rdd.
\end{equation}
In particular, the following norm equivalences apply:
\begin{subequations} 
	\begin{alignat}{3}
		2\mu |\BA|^2 &\leq \bbC\BA\fp \BA&& \leq (2\mu + d\lambda) |\BA|^2  &&\quad\text{for all }\BA\in \Rdd\,, \label{eq:elasticity_norm_equivalence1} \\
		\tfrac{1}{2\mu + d\lambda} |\BB|^2 &\leq \bbC^{-1}\BB\fp \BB&& \leq \tfrac{1}{2\mu} |\BB|^2  &&\quad\text{for all }\BB\in \Rdd\,. \label{eq:elasticity_norm_equivalence2}
	\end{alignat}
\end{subequations} 

As a consequence of \eqref{eq:elasticity_norm_equivalence1} and \eqref{eq:elasticity_norm_equivalence2}, it is possible to define norms induced by $\bbC$ and $\bbC^{-1}$, which are equivalent to $\|\cdot\|_{\Omega}$ and denoted by
\begin{subequations} \label{eq:norm_equiv}
	\begin{alignat}{2}
		\|\smash{\bbC^{\smash{\frac{1}{2}}}} \BA\|_{\Omega} &\coloneqq  ( (\bbC \BA, \BA)_\Omega)^{\smash{\frac{1}{2}}}&&\quad \text{ for all }\BA\in  \LtwoM\,,\label{eq:norm_equiv.1}\\
		\|\smash{\bbC^{-\smash{\frac{1}{2}}}} \BB\|_{\Omega} &\coloneqq  ( (\bbC^{-1} \BB, \BB)_\Omega)^{\smash{\frac{1}{2}}}&&\quad\text{ for all } \BB\in  \LtwoM\,.\label{eq:norm_equiv.2}
	\end{alignat}
\end{subequations}
Note \hspace{-0.15mm}that \hspace{-0.15mm}using \hspace{-0.15mm}the \hspace{-0.15mm}definition \hspace{-0.15mm}of \hspace{-0.15mm}the \hspace{-0.15mm}elasticity \hspace{-0.15mm}tensor \hspace{-0.15mm}$\bbC\colon \hspace{-0.15em}\Rdd\hspace{-0.15em}\to \hspace{-0.15em}\Rdd$ \hspace{-0.15mm}(\textit{cf}.\ \hspace{-0.15mm}\eqref{def:C}), 
\hspace{-0.15mm}one \hspace{-0.15mm}can \hspace{-0.15mm}{re-write} \hspace{-0.15mm}the Korn inequality \eqref{eq:korn} in terms of the equivalent norm \eqref{eq:norm_equiv.1}: more precisely, for every ${\bv\in  \HoneDirichlet}$, we have that
\begin{equation}\label{eq:korn2}
	\|\bbC^{\smash{\frac{1}{2}}} \nabla \bv\|_{\Omega}^2 \leq c_{\mathrm{K}} 
	\|\bbC^{\smash{\frac{1}{2}}}\symgrad{\bv}\|_{\Omega}^2 \,,
\end{equation}
where $c_{\mathrm{K}}>0$ is the same Korn constant as in the Korn inequality \eqref{eq:korn}.

Then, given the above definitions, we consider a \emph{Navier--Lam\'e (minimisation) problem} that seeks a displacement vector field $\bu \in \HoneDirichlet$ that minimises the \emph{primal energy functional} $I\colon \HoneDirichlet \to \R$, for every $\bv\in \HoneDirichlet$ defined by
\begin{equation}\label{eq:elasticity_primal}
	I(\bv) \coloneqq
	\tfrac{1}{2} (\bbC \symgrad{\bv+\uDirichlet}, \symgrad{\bv+\uDirichlet})_\Omega
	- \langle \load,\bv\rangle_\Omega\,.
\end{equation}
In this section, we refer to the minimisation of \eqref{eq:elasticity_primal} as the \textit{primal problem}. The existence of a unique minimiser $\bu\hspace*{-0.15em}\in\hspace*{-0.15em} \HoneDirichlet$, called \textit{primal solution}, follows from the direct method in the calculus of variations, where the necessary weak coercivity of the primal energy functional \eqref{eq:elasticity_primal} is crucially based on the Korn inequality \eqref{eq:korn2}. 
Since the primal energy functional \eqref{eq:elasticity_primal} is Frech\'et {differentiable}, the optimality condition associated with the primal problem is given via a variational equality (\textit{i.e.}, in $\HminusDirichlet$): 
in fact, $\bu\in \HoneDirichlet$ is minimal for \eqref{eq:elasticity_primal} if and only if for every $\bv\in \HoneDirichlet$, there holds 
\begin{equation}\label{eq:elasticity_primal_optimality}
	(\bbC \symgrad{\bu + \uDirichlet}, \symgrad{\bv})_\Omega 
	= \langle \load,\bv\rangle_\Omega\,.
\end{equation}

\subsection{The dual problem}\label{subsec:elasticity_dual}

\hspace{5mm}In this subsection, we derive and examine a (Lagrangian) dual problem (in the sense of \cite[Chap. 49]{ZeiIII}) to the Navier--Lam\'e problem \eqref{eq:elasticity_primal}. To this end, we introduce the Lagrangian  
$\mathcal{L}\colon \Ltwosym\times \HoneDirichlet \to \R$, for every $(\btau,\bv)\in \Ltwosym\times \HoneDirichlet $ defined by
\begin{align*}
	\mathcal{L}(\btau,\bv) \coloneqq 
	-\tfrac{1}{2} (\bbC^{-1}\btau,\btau)_\Omega
	+ (\btau, \nabla(\bv + \uDirichlet))_\Omega
	- \langle \load, \bv\rangle_\Omega\,.
\end{align*}
Note that the (Fenchel) conjugate functional (with respect to the second argument) of the energy density $\phi\colon \Omega\times\mathbb{R}^{d\times d}_{\textup{sym}}\to \R$, for a.e.\ $x\in \Omega$ and every $\BA\in \mathbb{R}^{d\times d}_{\textup{sym}}$ defined by
\begin{align*}
	\phi(x,\BA)\coloneqq \tfrac{1}{2}\bbC(\BA + \symgrad{\uDirichlet}(x)):(\BA + \symgrad{\uDirichlet}(x))\,,
\end{align*}
is given via $\phi^*\colon\Omega\times\mathbb{R}^{d\times d}_{\textup{sym}}\to \R$, for a.e.\ $x\in \Omega$ and every $\BA\in \mathbb{R}^{d\times d}_{\textup{sym}}$ defined by
\begin{align*}
	\phi^*(x,\BA)\coloneqq \tfrac{1}{2}\bbC^{-1}\BA:\BA-\BA:\symgrad{\uDirichlet}(x)\,.
\end{align*}
As a consequence, according to \cite[Thm.\ 2]{Rockafellar1968}, for every $\bv\in \HoneDirichlet$,~we~have~that
\begin{align*}
	\tfrac{1}{2}(\bbC\symgrad{\bv+\uDirichlet}, \symgrad{\bv+\uDirichlet})_{\Omega}=\sup_{\btau\in\Ltwosym }{\big\{-\tfrac{1}{2}(\bbC^{-1}\btau,\btau)_{\Omega}-(\btau,\symgrad{\uDirichlet})_{\Omega}\big\}}\,,
\end{align*}
which implies that, for every $\bv\in \HoneDirichlet$, we have that
\begin{align}
	I(\bv)=\sup_{\btau\in \Ltwosym}{\big\{\mathcal{L}(\btau,\bv)\big\}}\,.
\end{align} 
As the Lagrangian $\mathcal{L}\colon \Ltwosym\times \HoneDirichlet \to \R$ is concave-convex and $\mathcal{L}(\btau,\bv)\to -\infty$ ($\|\btau\|_{\Omega}\to+\infty$) for all $\bv\in \HoneDirichlet$, by the min-max-theorem (\textit{cf}.\ \cite[Thm.\ 49.B]{ZeiIII}), it admits a saddle point $(\bsigma,\bu)\in \Ltwosym\times \HoneDirichlet$, \textit{i.e.}, we have that
\begin{align}\label{eq:elasticity_minmax}
	\min_{\bv\in \HoneDirichlet}{\max_{\btau\in \Ltwosym}{\big\{\mathcal{L}(\btau,\bv)\big\}}}=\mathcal{L}(\bsigma,\bu)=\max_{\btau\in \Ltwosym}{\min_{\bv\in \HoneDirichlet}{\big\{\mathcal{L}(\btau,\bv)\big\}}}\,.
\end{align}
The optimality conditions associated with the corresponding saddle point problem amount to finding $(\bsigma,\bu)\in \Ltwosym \times \HoneDirichlet$
such that for every $(\btau,\bv)\in \Ltwosym \times \HoneDirichlet$, there holds 
\begin{subequations}\label{eq:elasticity_optimality_conditions}
	\begin{alignat}{2}
		-(\bbC^{-1}\bsigma, \btau)_\Omega
		+ (\symgrad{\bu + \uDirichlet}, \btau)_\Omega
		&	=0\,, \label{eq:elasticity_optimality_conditions.1} \\
		(\bsigma,\symgrad{\bv})_\Omega &= \langle \load,\bv \rangle_\Omega\,,\label{eq:elasticity_optimality_conditions.2}
	\end{alignat}
\end{subequations}
which is equivalent to
\begin{align}\label{eq:elasticity_optimality_conditions.3}
	\bsigma= \bbC\symgrad{\bu + \uDirichlet}\quad \text{ a.e.\ in }\Omega\,.
\end{align}
Then, according to \cite[Sec.\ 49.2]{ZeiIII}, the associated (Lagrangian) dual problem is given via the maximisation of the \emph{dual energy functional} $D\colon \Ltwosym\to \mathbb{R}\cup\{+\infty\}$, for every $\btau\in \Ltwosym$ defined by
\begin{align}\label{eq:elasticity_dual_0}
	D(\btau)\coloneqq \inf_{\bv\in \HoneDirichlet}{\big\{\mathcal{L}(\btau,\bv)\big\}}\,.
\end{align}
Note that we can swap the gradients for symmetric gradients, due to the symmetry of stresses, so that the infimum in definition \eqref{eq:elasticity_dual_0} is finite if only if for every $\bv\in \HoneDirichlet$, we have that
\begin{align*}
	(\bsigma,\nabla \bv)_{\Omega}=(\bsigma,\symgrad{\bv})_{\Omega}=\langle \bff^*,\bv\rangle_{\Omega}=(\bff,\bv)_{\Omega}+(\BF,\nabla \bv)_{\Omega}\,,
\end{align*}
where we used the representation \eqref{eq:dual_representation}, which is equivalent to $\btau-\BF\in \Hdiv$ and
\begin{subequations} \label{eq:elasticity_stress_constraint}
	\begin{alignat}{2}
		\diver (\btau - \BF) &=  -\bff &&\quad\text{ a.e.\ in }\Omega\,,\\
		\langle (\btau - \BF)\bn, \widetilde{\bv} \rangle_{\partial\Omega}
		&= 0
		&& \quad\text{  for all } \bv\in \smash{\HNeumannDual}\,.
	\end{alignat}
\end{subequations}
For this reason, it is enough to consider the \emph{restricted dual energy functional} $D\colon (\BF+\HdivN)\cap \Ltwosym\to\R\cup\{+\infty\}$, for every $\btau\in  (\BF+\HdivN)\cap \Ltwosym$ admitting the  explicit integral representation
\begin{equation} \label{eq:elasticity_dual}
	D(\btau) \coloneqq 
	-\tfrac{1}{2} (\bbC^{-1}\btau,\btau)_\Omega
	+ (\btau,\nabla\uDirichlet)_\Omega
	- \characteristic{-\bff}^\Omega(\diver(\btau - \BF))\,.
\end{equation}
Then, the effective domain of the negative of the dual energy functional \eqref{eq:elasticity_dual} is given via
\begin{align*}
	K_{\bsigma}\coloneqq \textup{dom}(-D)\coloneqq \big\{ \BF+\HdivN)\cap \Ltwosym\mid \textup{div}(\btau-\BF)=-\bff\text{ a.e.\ in }\Omega \big\}\,.
\end{align*}
By \eqref{eq:elasticity_minmax}, there exists a unique maximiser $\bsigma\in K_{\bsigma}$, called \textit{dual solution}, and a \textit{strong duality relation} applies, \textit{i.e.}, we have that
\begin{align}\label{eq:elasticity_strong_duality}
	I(\bu)=D(\bsigma)\,.
\end{align}

\subsection{The dual problem extended to non-symmetric tensor fields}

\hspace{5mm}Although the existence of the dual solution $\bsigma\hspace{-0.1em}\in \hspace{-0.1em}K_{\bsigma}$ in the previous subsection is {always} {ensured}, it is notoriously difficult to design conforming discretisations of the dual formulation \eqref{eq:elasticity_optimality_conditions}, where the discrete stresses are symmetric and simultaneously belong to $\BF+\HdivN$. In addition, at the level of \emph{a posteriori} error analyses, it is equally difficult to reconstruct equilibrated stresses that meet the symmetric requirement encoded in the 
dual energy functional \eqref{eq:elasticity_dual}. For this {reason}, in this subsection, we introduce an extension of the duality framework introduced in Subsection \ref{subsec:elasticity_primal} and Subsection \ref{subsec:elasticity_dual} to the non-symmetric case: more precisely,  in the primal {energy} {functional} \eqref{eq:elasticity_primal}, we replace the symmetric gradient with the full gradient, so that the symmetry requirement on stresses encoded
in the dual energy functional~\eqref{eq:elasticity_dual}~may~be~dropped.

To \hspace{-0.15mm}begin \hspace{-0.15mm}with, \hspace{-0.15mm}we \hspace{-0.15mm}introduce \hspace{-0.15mm}the \hspace{-0.15mm}\textit{extended \hspace{-0.15mm}primal \hspace{-0.15mm}problem} \hspace{-0.15mm}that \hspace{-0.15mm}seeks \hspace{-0.15mm}a \hspace{-0.15mm}vector \hspace{-0.15mm}field \hspace{-0.15mm}${\overline{\bu}\hspace{-0.15em}\in \hspace{-0.15em}\HoneDirichlet}$ \hspace{-0.15mm}that minimises the \textit{extended primal energy functional} $\overline{I}\colon \HoneDirichlet\to \mathbb{R}$, for every $\bv\in \HoneDirichlet$ defined by
\begin{align}\label{eq:elasticity_primal_extended}
	\overline{I}(\bv) \coloneqq \tfrac{1}{2}(\bbC\nabla(\bv+\uDirichlet),\nabla(\bv+\uDirichlet))_\Omega
	-\langle\load ,\bv\rangle_\Omega \,.
\end{align}
Note that the extended primal energy functional \eqref{eq:elasticity_primal_extended}, in fact, it not an extension of the primal energy functional \eqref{eq:elasticity_primal}, but  rather a modification, where the symmetric gradient is replaced with the full gradient.

Then, in order to identity a (Lagrangian) dual problem (in the sense of \cite[Chap. 49]{ZeiIII}) to the minimisation of \eqref{eq:elasticity_primal_extended}, we introduce the 
extended Lagrangian $\overline{\mathcal{L}}\colon \LtwoM \times \HoneDirichlet \to \R$, for every $(\btau,\bv) \in\LtwoM \times \HoneDirichlet $ defined by 
\begin{align}\label{eq:elasticity_lagrangian_extended}
	\overline{\mathcal{L}}(\btau,\bv) \coloneqq
	-\tfrac{1}{2} (\bbC^{-1}\btau,\btau)_\Omega
	+ (\btau, \nabla(\bv + \uDirichlet))_\Omega
	- \langle \load, \bv\rangle_\Omega\,.
\end{align}

By analogy with the procedure in Subsection \ref{subsec:elasticity_dual}, we find that
the (Lagrangian) dual problem (in the sense of \cite[Chap. 49]{ZeiIII}) to the minimisation of \eqref{eq:elasticity_primal_extended} is given via the maximisation of the \textit{extended dual energy functional} 
$\overline{D}\colon \BF+\HdivN\to \R\cup\{-\infty\}$, for every $\btau \in \BF+\HdivN$ defined by 
\begin{align}\label{eq:elasticity_dual_extended}
	\overline{D}(\btau) \coloneqq
	-\tfrac{1}{2} (\bbC^{-1}\btau,\btau)_\Omega
	+ (\btau,\nabla\uDirichlet)_\Omega
	- \characteristic{-\bff}^\Omega(\diver(\btau - \BF))\,.
\end{align}
Note that the extended dual energy functional \eqref{eq:elasticity_dual_extended}, in fact, is an extension of the dual energy functional \eqref{eq:elasticity_dual}, inasmuch as for every $\btau \in (\BF+\HdivN)\cap\Ltwosym$, we have that
\begin{equation}\label{eq:elasticity_dual_extension}
	\overline{D}(\btau) = D(\btau)\,.
\end{equation}
Eventually, by the min-max-theorem (\textit{cf}.\ \cite[Thm.\ 49.B]{ZeiIII}), there exists a saddle point $(\overline{\bsigma},\overline{\bu})\in (\BF+\HdivN)\times \HoneDirichlet $ of the extended Lagrangian \eqref{eq:elasticity_lagrangian_extended}, which is equivalently characterised by the following optimality relations: for every $\btau \in \LtwoM$ and $\bv\in \HoneDirichlet$, there holds
\begin{subequations}
	\begin{alignat}{2}
		-(\bbC^{-1}\overline{\bsigma}, \btau)_\Omega
		+ (\nabla(\overline{\bu} + \uDirichlet), \btau)_\Omega
		&	=0 \,,\label{eq:elasticity_mixed_nonsym_optimality1} \\
		(\overline{\bsigma},\nabla\bv)_\Omega &= \langle \load,\bv \rangle_\Omega \,.
	\end{alignat}
\end{subequations}
In particular, the vector field $\overline{\bu}\in \HoneDirichlet$ is the unique minimiser of the extended primal energy functional \eqref{eq:elasticity_primal_extended}, called \textit{extended primal solution}, the tensor field $\overline{\bsigma}\in \BF+\HdivN$ is the unique maximiser of the extended dual energy functional \eqref{eq:elasticity_dual_extended}, called \emph{extended dual solution}, and
a \emph{strong duality relation} applies, \textit{i.e.}, we have that 
\begin{equation} \label{eq:elasticity_strong_duality_nonsym}
	\overline{I} (\overline{\bu}) = \overline{D}(\overline{\bsigma})\,.
\end{equation} 

\subsection{The discrete primal problem}

\hspace{5mm}Let $\uhDirichlet\hspace{-0.1em} \in\hspace{-0.1em} \CR$, $\bff_h \hspace{-0.1em}\in\hspace{-0.1em} (\Pbroken{0})^d$, and $\BF_h \hspace{-0.1em}\in\hspace{-0.1em} \PbrokenM{0}$ approximations of $\uDirichlet\hspace{-0.1em}\in\hspace{-0.1em} \HoneDirichlet$, ${\bff\hspace{-0.1em}\in\hspace{-0.1em} \BL^2(\Omega)}$, and $\BF\in \mathbb{L}^2(\Omega)$, respectively. Then, we consider \emph{discrete Navier--Lam\'e {(minimisation)} problem} that seeks a discrete displacement vector field $\bu_h\in\CRDirichlet$ that minimises the \textit{discrete primal energy functional}  $I_h\colon \CRDirichlet \to \R$, for every $\bv_h\in \CRDirichlet$ defined by
\begin{equation}\label{eq:elasticity_primal_discrete}
	\begin{aligned} 
		I_h(\bv_h) &\coloneqq \tfrac{1}{2}\|\bv_h + \uhDirichlet\|_h^2
		- (\bff_h,\Pi_h\bv_h)_\Omega
		- (\BF_h,\nabla_h \bv_h)_\Omega\,.
	\end{aligned}
\end{equation}
In this section, we refer to the minimisation of \eqref{eq:elasticity_primal_discrete} as the \textit{discrete primal problem}. The existence of a unique minimiser $\bu_h\in \CRDirichlet$, called \textit{discrete primal solution}, follows from the direct method in the calculus of variations, where the needed weak coercivity of the discrete primal energy functional \eqref{eq:elasticity_primal_discrete} is crucially based on the discrete Korn inequality   \eqref{eq:discrete_korn} and recalling the definition of the discrete norm 
$\|\cdot\|_h$ (\textit{cf}.\ \eqref{eq:discrete_norm}). 
Since the discrete primal energy functional \eqref{eq:elasticity_primal_discrete} is Fréchet differentiable,  $\bu_h\in \CRDirichlet$ is minimal for \eqref{eq:elasticity_primal_discrete} if and only if for every $\bv_h\in \CRDirichlet$, there holds 
\begin{equation}\label{eq:elasticity_primal_optimality_discrete}
    (\bu_h + \uhDirichlet,\bv_h)_h=
	(\bff_h,\Pi_h\bv_h)_\Omega
	+ (\BF_h,\nabla_h\bv_h)_\Omega \,.
\end{equation}
Due to the presence of the stabilization term $s_h$ in the discrete primal energy functional \eqref{eq:elasticity_primal_discrete}, we do not expect to find an associated (Lagrangian) dual problem (in the sense of  \cite[Chap. 49]{ZeiIII}).

\subsection{\emph{A priori} error analysis}

\hspace*{5mm}Owing to the lack of an identified (Lagrangian) dual problem (in the sense of \cite[Chap. 49]{ZeiIII}) to the minimisation of \eqref{eq:elasticity_primal_optimality_discrete}, deriving simultaneously an \emph{a priori} error identity as well as quasi-optimal error estimate involving both the primal and the dual approximation --as in Theorem \ref{thm:discrete_prager_synge_identity_stokes}-- is out of reach. Nevertheless, we are still in the position to derive a quasi-optimal error estimate, which does not control a dual approximation error, but takes into account a dual approximation  error with respect to the \textit{set of admissible discrete stresses without symmetry requirement}, \textit{i.e.}, 
\begin{equation}
	\smash{\overline{K}}^h_{\bsigma} \coloneqq
	\big\{\btau_h \in \BF_h + \RTNeumann \mid \diver (\btau_h - \BF_h) = -\bff_h \text{ a.e.\ in }\Omega\big\}\,,
\end{equation}
on the right-hand side.

\begin{theorem} \label{thm:apriori_elasticity}
	Let $\widehat{\bu}_D^h\coloneqq \mathcal{I}_h^{cr}\widehat{\bu}_D\in V^h$, $\bff_h\coloneqq \Pi_h\bff\in (\Pbroken{0})^d$, and $\BF_h\coloneqq \Pi_h\BF\in (\Pbroken{0})^{d\times d}$. Then, the following statements apply:
	\begin{itemize}[noitemsep,topsep=2pt,leftmargin=!,labelwidth=\widthof{(ii)}]
		\item[(i)] 
        For every $\bv_h\in \CRDirichlet$ and $\btau_h \in \smash{\overline{K}}^h_{\bsigma}$, there holds the \emph{a priori} error estimate 
		\begin{equation}\label{eq:elasticity_apriori1}
			\begin{aligned}
				\|\bu_h - \bv_h\|_h^2
				&\leq
				8\,\big\{	\|\smash{\bbC^{\smash{\frac{1}{2}}}}(\symgraddisc{\bv_h + \uhDirichlet} - \smash{\bbC^{-1}}\sym(\btau_h))\|_{\Omega}^2 \\
				&\quad
				+ s_h(\bv_h + \uhDirichlet,\bv_h + \uhDirichlet)+ c_{\mathrm{K}}^{\mathrm{disc}}\|\smash{\bbC^{-\smash{\frac{1}{2}}}}\skewM(\btau_h)\|_{\Omega}^2\big\}\,;
			\end{aligned}
		\end{equation}
		\item[(ii)]  There holds the error estimate 
		\begin{equation}\label{eq:elasticity_apriori2}
			\begin{aligned}
        \hspace*{-2mm}\|\bu \hspace{-0.1em}+\hspace{-0.1em} \uDirichlet - \bu_h - \uhDirichlet\|_h
         &\lesssim
                \inf_{\bv_h \in \CRDirichlet}{\big\{\|\bu\hspace{-0.1em} +\hspace{-0.1em} \uDirichlet \hspace{-0.1em}- \hspace{-0.1em}\bv_h \hspace{-0.1em}- \hspace{-0.1em}\uhDirichlet\|_h\big\}}
         \\&\;
         + \inf_{\btau_h\in \smash{\overline{K}}^h_{\bsigma}}{\big\{\|\smash{\bbC^{-\smash{\frac{1}{2}}}}(\bsigma \hspace{-0.1em}-\hspace{-0.1em} \btau_h)\|_{\Omega}\big\}}
				\,.\hspace*{-2mm}
			\end{aligned}
		\end{equation}
	\end{itemize} 
\end{theorem}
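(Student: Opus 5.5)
The plan for (i) is to test the discrete optimality condition \eqref{eq:elasticity_primal_optimality_discrete} with $\mathbf{e}_h\coloneqq\bu_h-\bv_h\in\CRDirichlet$ and to rewrite its right-hand side with a discrete stress $\btau_h\in\smash{\overline{K}}^h_{\bsigma}$.

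\textbf{Step 1 (rewriting the load).} Since $\btau_h-\BF_h\in\RTNeumann$, the discrete integration-by-parts formula \eqref{eq:pi0} applies. The boundary term vanishes: on $\Gamma_N$ because $(\btau_h-\BF_h)\bn=\mathbf{0}$, and on $\Gamma_D$ because $\pi_h\mathbf{e}_h=\mathbf{0}$ there. Together with $\diver(\btau_h-\BF_h)=-\bff_h$ and $\nabla_h\mathbf{e}_h\in\PbrokenM{0}$, this gives
\begin{align*}
(\bff_h,\Pi_h\mathbf{e}_h)_\Omega+(\BF_h,\nabla_h\mathbf{e}_h)_\Omega=(\btau_h,\nabla_h\mathbf{e}_h)_\Omega\,.
\end{align*}
Consequently, with $\mathbf{w}_h\coloneqq\bv_h+\uhDirichlet$,
\begin{align*}
\|\mathbf{e}_h\|_h^2&=(\btau_h,\nabla_h\mathbf{e}_h)_\Omega-(\mathbf{w}_h,\mathbf{e}_h)_h\\
&=(\sym\btau_h-\bbC\symgraddisc{\mathbf{w}_h},\symgraddisc{\mathbf{e}_h})_\Omega+(\skewM\btau_h,\nabla_h\mathbf{e}_h)_\Omega-s_h(\mathbf{w}_h,\mathbf{e}_h)\,.
\end{align*}

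\textbf{Step 2 (bounding the three terms).} I will use Cauchy--Schwarz in the weighted norms \eqref{eq:norm_equiv}.
\begin{itemize}[noitemsep,topsep=2pt]
\item The first term is at most $\|\bbC^{\frac12}(\symgraddisc{\mathbf{w}_h}-\bbC^{-1}\sym\btau_h)\|_\Omega\,\|\bbC^{\frac12}\symgraddisc{\mathbf{e}_h}\|_\Omega$.
\item The second term is at most $\|\bbC^{-\frac12}\skewM\btau_h\|_\Omega\,\|\bbC^{\frac12}\nabla_h\mathbf{e}_h\|_\Omega$. This uses that $\bbC$ acts as $2\mu\,\mathrm{id}$ on skew matrices and preserves the sym/skew splitting. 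By the discrete Korn inequality \eqref{eq:discrete_korn}, read in the $\bbC^{\frac12}$-weighted form, the last factor is at most $(c_{\mathrm{K}}^{\mathrm{disc}})^{\frac12}\|\mathbf{e}_h\|_h$.
\item The stabilisation term is at most $s_h(\mathbf{w}_h,\mathbf{w}_h)^{\frac12}s_h(\mathbf{e}_h,\mathbf{e}_h)^{\frac12}$.
\end{itemize}
Each factor involving $\mathbf{e}_h$ is bounded by $\|\mathbf{e}_h\|_h$. Dividing by $\|\mathbf{e}_h\|_h$ and using $(a+b+c)^2\le 3(a^2+b^2+c^2)$ yields \eqref{eq:elasticity_apriori1}, even with the constant $3\le 8$.

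\textbf{Step 3 (part (ii)).} I will use the triangle inequality $\|\bu+\uDirichlet-\bu_h-\uhDirichlet\|_h\le\|\bu+\uDirichlet-\bv_h-\uhDirichlet\|_h+\|\bv_h-\bu_h\|_h$ and apply (i) to the last term. The terms on the right-hand side of \eqref{eq:elasticity_apriori1} are then reduced to the two best-approximation errors, using the continuous optimality relation \eqref{eq:elasticity_optimality_conditions.3}, $\bsigma=\bbC\symgrad{\bu+\uDirichlet}$, and the symmetry of $\bsigma$.
\begin{itemize}[noitemsep,topsep=2pt]
\item Since $\bbC^{-1}\bsigma=\symgrad{\bu+\uDirichlet}$, the first term splits as
\begin{align*}
\|\bbC^{\frac12}(\symgraddisc{\mathbf{w}_h}-\bbC^{-1}\sym\btau_h)\|_\Omega\le\|\bbC^{\frac12}(\symgraddisc{\mathbf{w}_h}-\symgrad{\bu+\uDirichlet})\|_\Omega+\|\bbC^{-\frac12}(\bsigma-\sym\btau_h)\|_\Omega\,.
\end{align*}
Because $\bsigma$ is symmetric, $\bsigma-\sym\btau_h=\sym(\bsigma-\btau_h)$, and $\sym$ is an orthogonal projection that commutes with $\bbC^{-1}$. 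Hence $\|\bbC^{-\frac12}(\bsigma-\sym\btau_h)\|_\Omega\le\|\bbC^{-\frac12}(\bsigma-\btau_h)\|_\Omega$.
\item For the same reason, $\skewM\btau_h=\skewM(\btau_h-\bsigma)$, so $\|\bbC^{-\frac12}\skewM\btau_h\|_\Omega\le\|\bbC^{-\frac12}(\bsigma-\btau_h)\|_\Omega$.
\item For the stabilisation term, I write $s_h(\mathbf{w}_h,\mathbf{w}_h)=s_h(\mathbf{w}_h-\bu-\uDirichlet,\mathbf{w}_h-\bu-\uDirichlet)$. This uses that the (extended) jump form does not see the $\BH^1$-function $\bu+\uDirichlet$: its interior jumps vanish, and on $\Gamma_D$ it matches the boundary datum encoded in $\uhDirichlet$.
\end{itemize}
Taking infima over $\bv_h$ and $\btau_h$ then gives \eqref{eq:elasticity_apriori2}.

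\textbf{Main obstacle.} The step I expect to be delicate is the stabilisation term in Step 3. It requires $s_h$ and hence $\|\cdot\|_h$ to be meaningfully extended to $\BH^1(\Omega)+\CR$, consistently with the inhomogeneous Dirichlet jumps on $\mathcal{S}_h^D$. I would first take $s_h$ on the sum space to act only on jumps measured relative to the Dirichlet datum, so that $s_h(\bu+\uDirichlet,\cdot)=0$. If that extension fails, I would restrict the $\Gamma_D$ part of the jump penalisation to $\mathbf{w}_h-\uhDirichlet$, and absorb the resulting interpolation mismatch into the displacement best-approximation error.
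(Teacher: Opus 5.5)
Your proof is correct and follows the paper's argument. Part (ii) is the same chain as in the paper: the triangle inequality, part (i), $\bbC^{-1}\bsigma$ being the strain of $\bu+\uDirichlet$, the symmetry of $\bsigma$, and consistency of $s_h$ at $\bu+\uDirichlet$. In part (i) you deviate only slightly: you test the discrete Euler--Lagrange equation with $\bu_h-\bv_h$ instead of using the Taylor expansion $0\geq I_h(\bu_h)-I_h(\bv_h)$ at $\bv_h$. This gives an identity with factor $1$ rather than an inequality with factor $\tfrac12$, hence your sharper constant $3$ in place of $8$.

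The stabilisation issue you flag for inhomogeneous Dirichlet data is genuine, and the paper does not resolve it either. It asserts $s_h(\bu+\uDirichlet,\cdot)=0$ only for test functions in $\CRDirichlet$, yet uses it with $\bv_h+\uhDirichlet$ and $\bu+\uDirichlet$ themselves.
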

\begin{remark}
    \begin{itemize}[noitemsep,topsep=2pt,leftmargin=!,labelwidth=\widthof{(ii)}]
        \item[(i)] To the best of the authors' knowledge, an error estimate such as 
	\eqref{eq:elasticity_apriori2} is new.
	Traditional approaches, such as \cite[Thm. 3.1]{HL.2003b}, have additional {regularity} {assumptions} (\textit{e.g.}, $\bu\in \BH^{\frac{3}{2}+\varepsilon}(\Omega)$ for some $\varepsilon>0$), and are often developed for homogeneous boundary conditions and a load $\bff^*\in \HminusDirichlet$ belonging to $\BL^2(\Omega)$, \textit{i.e.}, $\BF=\mathbf{0}$ a.e.\ in $\Omega$.
  It seems that the only other minimal regularity quasi-optimality result for a Crouzeix--Raviart discretisation involving the symmetric gradient is that of \cite{DHKZ.2025},
  which deals with the (non-linear) incompressible system, but which requires the application of a smoothing operator to trial and test functions.
  For a minimal regularity result (albeit including oscillation terms) for an alternative locking-free non-conforming method see also \cite{CT.2025}.

        \item[(ii)] Note that the first term in the right-hand side of the \emph{a priori} error estimate \eqref{eq:elasticity_apriori1} is a discrete counterpart of the primal-dual gap estimator at the continuous level (compare with \eqref{eq:elasticity_gap}), which measures the extent to which the optimality relation \eqref{eq:elasticity_optimality_conditions.3} (constitutive law) is violated.
	The other terms take care of the kernel of rigid body motions and the possible non-symmetry of the discrete stress $\btau_h\in \smash{\overline{K}}^h_{\bsigma}$, respectively.
    \end{itemize}
	
\end{remark} 

\begin{proof}[of Theorem \ref{thm:apriori_elasticity}]
	
	\emph{ad \eqref{eq:elasticity_apriori1}.}
	Let $\bv_h \in \CRDirichlet$ and $\btau_h\in \smash{\overline{K}}^h_{\bsigma}$ be fixed, but arbitrary. Then, since the discrete primal energy functional \eqref{eq:elasticity_primal_discrete} is a smooth quadratic functional,  $\mathrm{D}I_h(\bv_h)=R_{\smash{\CRDirichlet}}(\bv_h+\widehat{\bu}_D^h)-(\bff_h,\Pi_h(\cdot))_{\Omega}-(\BF_h,\nabla_h(\cdot))_{\Omega}$ in $(\CRDirichlet)^*$, where $R_{\smash{\CRDirichlet}}\colon \CRDirichlet\to (\CRDirichlet)^*$ is the Riesz isomorphism of $(\CRDirichlet,
    (\cdot,\cdot)_h)$, and  $\mathrm{D}^2I_h\equiv R_{\smash{\CRDirichlet}}$,  a Taylor expansion yields that
	\begin{align}\label{thm:apriori_elasticity.1}
		\begin{aligned} 
			0&\geq I_h(\bu_h) - I_h(\bv_h)
			\\&= \langle\mathrm{D}I_h(\bv_h),\bu_h - \bv_h\rangle_{\smash{\CRDirichlet}}+
			\tfrac{1}{2}\langle\mathrm{D}^2I_h(\bv_h)(\bu_h - \bv_h),\bu_h - \bv_h\rangle_{\smash{\CRDirichlet}}
			\\&= (\bv_h+\widehat{\bu}_D^h,\bu_h-\bv_h)_h
			\\&\quad -(\bff_h,\bu_h-\bv_h)_{\Omega}-(\BF_h,\nabla_h(\bu_h-\bv_h))_{\Omega}
			+\tfrac{1}{2}\|\bu_h - \bv_h\|_h^2\,.
		\end{aligned}
	\end{align}
	Next, using that $\textup{div}\,(\btau_h-\BF_h)=-\bff_h$ a.e.\ in $\Omega$ and the discrete integration-by-parts formula \eqref{eq:pi0} (since $\btau_h-\BF_h\in \RTNeumann$), we obtain
	\begin{align}\label{thm:apriori_elasticity.2}
		\begin{aligned} 
			-(\bff_h,\bu_h-\bv_h)_{\Omega}-(\BF_h,\nabla_h(\bu_h-\bv_h))_{\Omega}&=(\textup{div}\,(\btau_h-\BF_h),\bu_h-\bv_h)_{\Omega}
                                                                         \\&\quad-(\BF_h,\nabla_h(\bu_h-\bv_h))_{\Omega}
			\\&=(\btau_h,\nabla_h(\bv_h-\bu_h))_{\Omega}
			\\&=(\mathbb{C}^{-1}\sym(\btau_h),\mathbb{C}\symgraddisc{\bv_h-\bu_h)}_{\Omega}\\&\quad+(\skewM(\btau_h),\nabla_h(\bv_h-\bu_h))_{\Omega}\,.
		\end{aligned}\hspace*{-7.5mm}
	\end{align}
	Then, using \eqref{thm:apriori_elasticity.2} in \eqref{thm:apriori_elasticity.1}, Hölder's inequality, and the discrete Korn's inequality \eqref{eq:discrete_korn}, we {arrive} at 
	\begin{align*}
		\tfrac{1}{2}\|\bu_h - \bv_h\|_h^2&\leq  (\symgraddisc{\bv_h+\widehat{\bu}_D^h}-\mathbb{C}^{-1}\sym(\btau_h),\bbC\symgraddisc{\bv_h-\bu_h}) 
                                   \\&\quad 
                                   +s_h(\bv_h + \widehat{\bu}_D^h, \bu_h - \bv_h) 
    -(\skewM(\btau_h),\nabla_h(\bv_h-\bu_h))_{\Omega}\\&\leq
		\big\{ \|\bu_h - \bv_h\|_h^2 + \|\bbC\symgraddisc{\bu_h - \bv_h}\|_{\Omega}^2 \big\}^{\smash{\frac{1}{2}}}
		\\&\qquad\times\big\{ 
		\| \smash{\bbC^{\smash{\frac{1}{2}}}}(\symgraddisc{\bv_h + \uhDirichlet} - \smash{\bbC^{\smash{-1}}}\sym(\btau_h))\|_{\Omega}^2 \\
		&\quad\qquad+ 
		s_h(\bv_h + \uhDirichlet,\bv_h + \uhDirichlet)
		+ c_{\mathrm{K}}^{\mathrm{disc}} \|\smash{\bbC^{-\smash{\frac{1}{2}}}}\skewM(\btau_h)\|_{\Omega}^2
		\big\}^{\smash{\frac{1}{2}}}
		\\&
		\leq 2^{\frac{1}{2}}\|\bu_h - \bv_h\|_h
		\big\{ 
		\| \smash{\bbC^{\smash{\frac{1}{2}}}}(\symgraddisc{\bv_h + \uhDirichlet} - \smash{\bbC^{\smash{-1}}}\sym(\btau_h))\|_{\Omega}^2 \\
		&\qquad\qquad\qquad\qquad+ 
		s_h(\bv_h + \uhDirichlet,\bv_h + \uhDirichlet)
		\\
		&\qquad\qquad\qquad\qquad+ c_{\mathrm{K}}^{\mathrm{disc}} \|\smash{\bbC^{-\smash{\frac{1}{2}}}}\skewM(\btau_h)\|_{\Omega}^2
		\big\}^{\smash{\frac{1}{2}}}\,,
	\end{align*}
	which implies the claimed \emph{a priori} error estimate \eqref{eq:elasticity_apriori1}.
	
		%

\emph{ad \eqref{eq:elasticity_apriori2}.} Let $\bv_h \in \CRDirichlet$ and $\btau_h\in \smash{\overline{K}}^h_{\bsigma}$ be fixed, but arbitrary. Using the convex optimality condition \eqref{eq:elasticity_optimality_conditions.3},  \eqref{eq:elasticity_apriori1}, that $s_h(\bu + \uDirichlet,\bw_h)=0$ for all $\bw_h\in \CRDirichlet$, that $\skewM(\bsigma)=\bzero$ a.e.\ in $\Omega$, and that $\max\{\vert \mathbb{C}^{\smash{\frac{1}{2}}}\sym(\BA)\vert,\vert \mathbb{C}^{\smash{\frac{1}{2}}}\textup{skew}\,(\BA)\vert\}\leq \vert \mathbb{C}^{\smash{\frac{1}{2}}}\BA\vert$ for all $\BA\in \R^{d\times d}$,  we find that
\begin{align}\label{thm:apriori_elasticity.3}
\hspace*{-1.5mm}\begin{aligned} 
\|\bu + \uDirichlet - \bu_h - \uhDirichlet\|_h
&\leq
\|\bu -\uDirichlet -\bv_h -\uhDirichlet\|_h
+ 
\|\bv_h- \bu_h\|_h \\
&  \leq
\|\bu + \uDirichlet - \bv_h - \uhDirichlet\|_h
\\&\quad+ 
\sqrt{8}\, \big\{
\|\smash{\bbC^{\smash{\frac{1}{2}}}}(\symgraddisc{\bv_h +\uhDirichlet}- \bbC^{-1}\sym(\btau_h))\|_{\Omega}  
\\
&\qquad\qquad 		+ s_h(\bv_h+\uhDirichlet,\bv_h+\uhDirichlet)^{\smash{\frac{1}{2}}} 
\\
&\qquad\qquad + (c_{\mathrm{K}}^{\mathrm{disc}})^{\smash{\frac{1}{2}}}\|\smash{\bbC^{-\smash{\frac{1}{2}}}}\skewM(\sigma-\btau_h)\|_{\Omega}
\big\}
\\
&  \leq
\|\bu -\uDirichlet - \bv_h - \uhDirichlet\|_h
\\&\quad+ 
\sqrt{8}\, \big\{
\|\smash{\bbC^{\smash{\frac{1}{2}}}}(\symgraddisc{\bv_h + \uhDirichlet - \bu-\uDirichlet}\|_{\Omega}  
\\&\quad\qquad+ s_h(\bv_h+\uhDirichlet-(\bu +\uDirichlet),\bv_h+\uhDirichlet-(\bu +\uDirichlet))^{\smash{\frac{1}{2}}}
\\
  &\quad\qquad	+ \{(c_{\mathrm{K}}^{\mathrm{disc}})^{\smash{\frac{1}{2}}}+1\}\|\smash{\bbC^{-\smash{\frac{1}{2}}}}(\bsigma-\btau_h)\|_{\Omega}
\big\}\,.
\end{aligned}\hspace*{-6mm}
\end{align}
Then, taking infima with respect to $\bv_h\in\CRDirichlet$ and $\btau_h\in \smash{\overline{K}}^h_{\bsigma}$, respectively, in \eqref{thm:apriori_elasticity.3}, we conclude that the claimed quasi-optimal error estimate \eqref{eq:elasticity_apriori2} applies.  
\end{proof}

Combining Theorem \ref{thm:apriori_stokes} with the approximation properties \eqref{eq:cr_best_approximation}--\eqref{eq:rt_best_approximation} it is, then, possible to extract the optimal error decay rate $s\in (0,1]$, under
appropriate regularity {assumptions}, \textit{i.e.},
\begin{equation}
\|\bu + \uDirichlet - \bu_h - \uhDirichlet\|_h^2
= \mathcal{O}(h_\mathcal{T}^{2s})\,,
\end{equation}
where as usual, thanks to the approximation property \eqref{eq:cr_div_best_approximation}, the approximation is robust in the almost incompressible regime $\frac{\lambda}{\mu}\gg 1$.

\subsection{\emph{A posteriori} error analysis} \label{sec:aposteriori_elasticity}
\hspace{5mm}The goal of this section is to prove a primal-dual gap identity analogous to Theorem \ref{thm:prager_synge_identity_stokes}\eqref{eq:prager_synge_identity_stokes}. 
As previously mentioned, one major difficulty is that for a given discrete primal solution $\bu_h\in \CRDirichlet$, it is challenging to reconstruct a discrete stress $\bsigma_h^*\hspace{-0.175em}\in \hspace{-0.175em}\smash{\overline{K}}^h_{\bsigma}$ that is {exactly} (or even weakly) {symmetric};
in fact, the later reconstructed discrete stress $\bsigma_h^*\in \smash{\overline{K}}^h_{\bsigma}$ obtained via a Marini-type formula (\textit{cf}.\ \eqref{prop:marini_elasticity.1}) will neither be exactly nor weakly symmetric and, consequently,  not admissible for the dual energy functional \eqref{eq:elasticity_dual},
preventing one to repeat the argument from Theorem \ref{thm:prager_synge_identity_stokes}\eqref{eq:prager_synge_identity_stokes} \emph{{verbatim}}.
Instead, the analysis in this section will make use of the extended dual energy functional \eqref{eq:elasticity_dual_extended} and the conformity property \eqref{eq:elasticity_dual_extension}.
Of course, if a symmetric post-processing were be available, then the corresponding \emph{a posteriori} error identity could be used;
for a Prager--Synge identity for the Navier--Lam\'e problem \eqref{eq:elasticity_primal} derived in the context of exactly symmetric stresses, we refer to \cite{LS.2023}.

By analogy with Section \ref{sec:aposteriori_stokes}, we start by introducing the \textit{extended primal-dual gap estimator} $\gapext\colon \HoneDirichlet \times \overline{K}_{\bsigma}\to \R$, for every $(\bv,\btau) \in \HoneDirichlet \times \overline{K}_{\bsigma}$ defined by
\begin{equation}\label{eq:elasticity_gap}
\gapext (\bv,\btau) 
= 
\tfrac{1}{2}\|\smash{\bbC^{\smash{\frac{1}{2}}}}(\symgrad{\bv+\uDirichlet} - \smash{\bbC^{-1}}\btau)\|_{\Omega}^2\,, 
\end{equation} 
where the \emph{set of admissible stresses without symmetry requirement}~is~\mbox{defined}~by
\begin{equation}
\overline{K}_{\bsigma} \coloneqq
\{\btau \in \BF + \HdivN \mid \diver(\btau - \BF) = -\bff \text{ a.e. in }\Omega\}\,.
\end{equation}
Similarly to  Section \ref{sec:aposteriori_stokes}, we assume that $\bff=\bff_h\in (\Pbroken{0})^d$ as well as $\BF=\BF_h\in (\Pbroken{0})^{d\times d}$; in the general case, the \emph{a posteriori} error estimates, in addition, will include data oscillation terms (\textit{cf}.\ Section \ref{sec:oscillation}).
Note that, unlike the primal-dual gap estimator \eqref{eq:primal-dual.1} for the Stokes problem, here the \emph{a posteriori} error estimator \eqref{eq:elasticity_gap} is \emph{defined} as the deviation from the convex optimality relation \eqref{eq:elasticity_optimality_conditions.3}. Its precise relation to energy differences is established in the following lemma.

\begin{lemma}\label{lem:primal_dual_gap_estimator_elasticity}
For every $\bv\in \HoneDirichlet$ and $\btau \in \overline{K}_{\bsigma} $, we have that
\begin{equation*}
\gapext(\bv,\btau) =
I(\bv) - \overline{D}(\btau)
+ (\skewM(\btau),\nabla(\bv+\uDirichlet))_\Omega\,. 
\end{equation*}
\end{lemma}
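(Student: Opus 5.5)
The identity follows by expanding both energies, eliminating the load via the constraints in $\overline{K}_{\bsigma}$, and completing a square in the $\bbC^{\smash{\frac12}}$-weighted norm. Throughout, set $\bw\coloneqq\bv+\uDirichlet\in\BH^1(\Omega)$. By the definitions \eqref{eq:elasticity_primal} and \eqref{eq:elasticity_dual_extended}, for $\btau\in\overline{K}_{\bsigma}$ the indicator term vanishes, so
\begin{align*}
I(\bv)-\overline{D}(\btau)&=\tfrac12(\bbC\symgrad{\bw},\symgrad{\bw})_\Omega-\langle\load,\bv\rangle_\Omega
+\tfrac12(\bbC^{-1}\btau,\btau)_\Omega-(\btau,\nabla\uDirichlet)_\Omega\,.
\end{align*}

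The first real step is to rewrite the load term using the constraints encoded in $\overline{K}_{\bsigma}$. By the representation \eqref{eq:dual_representation}, $\langle\load,\bv\rangle_\Omega=(\bff,\bv)_\Omega+(\BF,\nabla\bv)_\Omega$. Replacing $\bff=-\diver(\btau-\BF)$ and integrating by parts gives $-(\diver(\btau-\BF),\bv)_\Omega=(\btau-\BF,\nabla\bv)_\Omega$. Hence
\begin{align*}
\langle\load,\bv\rangle_\Omega=(\btau,\nabla\bv)_\Omega\,.
\end{align*}
This integration by parts is the only delicate point. I would justify it with the generalised Green formula on $\Hdiv$:
\begin{align*}
(\diver\boldsymbol{\chi},\bv)_\Omega+(\boldsymbol{\chi},\nabla\bv)_\Omega=\langle\boldsymbol{\chi}\bn,\bv\rangle_{\partial\Omega}\quad\text{for }\boldsymbol{\chi}\coloneqq\btau-\BF\in\HdivN\,.
\end{align*}
Since $\bv\in\HoneDirichlet$, its trace vanishes on $\Gamma_D$, so it is the zero extension of an element of $\HNeumannDual$. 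The weak Neumann condition (the second relation in \eqref{eq:elasticity_stress_constraint}, built into $\HdivN$) then makes the boundary pairing vanish. This is exactly the argument already used for Lemma \ref{lem:primal_dual_gap_estimator_stokes}, so I expect no new difficulty.

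Inserting this, the two gradient terms combine:
\begin{align*}
I(\bv)-\overline{D}(\btau)=\tfrac12(\bbC\symgrad{\bw},\symgrad{\bw})_\Omega-(\btau,\nabla\bw)_\Omega+\tfrac12(\bbC^{-1}\btau,\btau)_\Omega\,.
\end{align*}
Next I split $\btau=\sym(\btau)+\skewM(\btau)$ pointwise. Since symmetric and skew-symmetric matrices are Frobenius-orthogonal, $\btau\fp\nabla\bw=\btau\fp\symgrad{\bw}+\skewM(\btau)\fp\nabla\bw$.

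Finally, because $\bbC$ is symmetric, a binomial expansion gives pointwise
\begin{align*}
\tfrac12\big|\bbC^{\smash{\frac12}}(\symgrad{\bw}-\bbC^{-1}\btau)\big|^2=\tfrac12\bbC\symgrad{\bw}\fp\symgrad{\bw}-\btau\fp\symgrad{\bw}+\tfrac12\bbC^{-1}\btau\fp\btau\,.
\end{align*}
Integrating over $\Omega$ and comparing with the previous display yields
\begin{align*}
I(\bv)-\overline{D}(\btau)=\gapext(\bv,\btau)-(\skewM(\btau),\nabla(\bv+\uDirichlet))_\Omega\,,
\end{align*}
which, rearranged, is the claimed identity.
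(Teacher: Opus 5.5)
Your proof is correct and follows the paper's argument essentially line by line. You eliminate the load via $\diver(\btau-\BF)=-\bff$ and the generalised Green formula with the weak Neumann condition, collect the terms into $-(\btau,\nabla(\bv+\uDirichlet))_\Omega$, split off $\skewM(\btau)$, and complete the square in the $\bbC$-weighted norm; your explicit justification of the vanishing boundary pairing (the trace of $\bv$ being the zero extension of an element of $\HNeumannDual$) spells out a step the paper only invokes by reference.
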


\begin{remark}
We remark that in case that $\skewM(\btau)=\bzero$ a.e.\ in $\Omega$, then the extended primal-dual gap estimator \eqref{eq:elasticity_gap} is, in fact, the duality gap, since, in this case, we have that $\overline{D}(\btau)=D(\btau)$.
\end{remark}

\begin{proof}[of Lemma \ref{lem:primal_dual_gap_estimator_elasticity}]
For every $\bv\in \HoneDirichlet$ and $\btau \in \overline{K}_{\bsigma}$, using \eqref{eq:stokes_stress_constraint}, the continuous integra\-tion-by-parts formula (since $\btau-\BF\in \HdivN$), \eqref{eq:Neumann_condition_weak}, and a binomial formula, we find that
\begin{align*}
I(\bv) - \overline{D}(\btau)
&=
\tfrac{1}{2}(\bbC\symgrad{\bv+\uDirichlet},\symgrad{\bv+\uDirichlet})_\Omega
- (\bff,\bv)_\Omega - (\BF, \nabla \bv)_\Omega \\
&\quad
+ \tfrac{1}{2}(\bbC^{-1}\btau,\btau)_\Omega
- (\btau,\nabla \uDirichlet)_\Omega
\\ 
&=
\tfrac{1}{2}(\bbC\symgrad{\bv+\uDirichlet},\symgrad{\bv+\uDirichlet})_\Omega
+ (\diver(\btau - \BF),\bv)_\Omega - (\BF, \nabla \bv)_\Omega \\
&\quad
+ \tfrac{1}{2}(\bbC^{-1}\btau,\btau)_\Omega
- (\btau,\nabla \uDirichlet)_\Omega
\\ 
&=
\tfrac{1}{2}(\bbC\symgrad{\bv+\uDirichlet},\symgrad{\bv+\uDirichlet})_\Omega
- (\btau, \nabla(\bv+\uDirichlet)) 
+ \tfrac{1}{2}(\bbC^{-1}\btau,\btau)_\Omega
\\ 
&=
\tfrac{1}{2}\|\smash{\bbC^{\smash{\frac{1}{2}}}}(\symgrad{\bv+\uDirichlet} - \bbC^{-1}\btau)\|_{\Omega}^2
- (\skewM(\btau),\nabla(\bv+\uDirichlet))_\Omega\,,
\end{align*}
which is the claimed representation of the extended primal-dual gap estimator \eqref{eq:elasticity_gap}.
\end{proof}

Next, by analogy with Section \ref{sec:aposteriori_stokes}, we continue with introducing the \emph{(exten\-ded) strong convexity measures} for the primal energy functional \eqref{eq:elasticity_primal} at the primal solution $\bu\hspace{-0.1em}\in\hspace{-0.1em} K_{\bu}$, \textit{i.e.}, $\rhoprimal\colon \HoneDirichlet\hspace{-0.1em} \to\hspace{-0.1em} [0,+\infty) $, for every~${\bv\hspace{-0.1em}\in\hspace{-0.1em} K_{\bu}}$~\mbox{defined}~by
\begin{equation}\label{def:elasticity_primal_error}
\rhoprimal(\bv) \coloneqq
\tfrac{1}{2}\|\smash{\bbC^{\smash{\frac{1}{2}}}}\symgrad{\bv-\bu}\|_{\Omega}^2\,,
\end{equation}
and of (the negative of) the extended dual energy functional \eqref{eq:elasticity_dual_extended} at the dual solution $\bsigma \in \overline{K}_{\bsigma} $, \textit{i.e.}, $\rhodualext\colon \overline{K}_{\bsigma} \to \R$, for every $\btau\in \overline{K}_{\bsigma}$ defined by 
\begin{equation}\label{def:elasticity_dual_error_extended}
\rhodualext(\btau) \coloneqq
\tfrac{1}{2}\|\smash{\bbC^{-\smash{\frac{1}{2}}}}(\btau-\bsigma)\|_{\Omega}^2\,.
\end{equation}
Note that, unlike the optimal strong convexity measures \eqref{def:optimal_primal_error} and \eqref{def:optimal_dual_error} for the Stokes {problem}, here the (extended) strong convexity measures \eqref{def:elasticity_primal_error}~and~\eqref{def:elasticity_dual_error_extended}, respectively, are \emph{defined} as the
desired error measures for the primal energy functional \eqref{eq:elasticity_primal} and the extended dual energy functional \eqref{eq:elasticity_dual_extended}. 
Their precise relations to energy differences is established in the following lemma.


\begin{lemma}\label{lem:strong_convexity_measures_elasticity}
The following representations apply:
\begin{itemize}[noitemsep,topsep=2pt,leftmargin=!,labelwidth=\widthof{(ii)}]
\item[(i)]\hypertarget{lem:strong_convexity_measures_elasticity.i}{} For every $\bv\in \HoneDirichlet$, we have that 
\begin{equation*}
\rhoprimal(\bv) 
= I(\bv) - I(\bu)\,; 
\end{equation*}

\item[(ii)]\hypertarget{lem:strong_convexity_measures_elasticity.ii}{} For every $\btau\in \overline{K}_{\bsigma}$, we have that
\begin{equation*}
\rhodualext(\btau) 
=
-\overline{D}(\btau) + \overline{D}(\bsigma)
+ (\skewM(\btau),\nabla(\bu+\uDirichlet))_\Omega\,.
\end{equation*}
\end{itemize}
\end{lemma}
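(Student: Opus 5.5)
Both representations follow from a direct expansion of the quadratic energies combined with the optimality relations for $(\bsigma,\bu)$.

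\emph{(i).} For $\bv\in\HoneDirichlet$, write $\bv+\uDirichlet=(\bu+\uDirichlet)+(\bv-\bu)$ and expand the quadratic part of $I$ from \eqref{eq:elasticity_primal} binomially:
\begin{align*}
I(\bv)-I(\bu)&=(\bbC\symgrad{\bu+\uDirichlet},\symgrad{\bv-\bu})_\Omega-\langle\load,\bv-\bu\rangle_\Omega
+\tfrac12\|\bbC^{\smash{\frac12}}\symgrad{\bv-\bu}\|_\Omega^2\,.
\end{align*}
The first two terms cancel by the primal optimality condition \eqref{eq:elasticity_primal_optimality} tested with $\bv-\bu\in\HoneDirichlet$. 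What remains is $\rhoprimal(\bv)$ by \eqref{def:elasticity_primal_error}.

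\emph{(ii).} For $\btau\in\overline{K}_{\bsigma}$, note first that $\bsigma\in K_{\bsigma}\subseteq\overline{K}_{\bsigma}$, so both indicator terms in \eqref{eq:elasticity_dual_extended} vanish. Write $\btau=\bsigma+(\btau-\bsigma)$ and use the symmetry of $\bbC^{-1}$:
\begin{align*}
-\overline{D}(\btau)+\overline{D}(\bsigma)=(\bbC^{-1}\bsigma,\btau-\bsigma)_\Omega+\tfrac12\|\bbC^{-\smash{\frac12}}(\btau-\bsigma)\|_\Omega^2-(\btau-\bsigma,\nabla\uDirichlet)_\Omega\,.
\end{align*}
By \eqref{eq:elasticity_optimality_conditions.3}, $\bbC^{-1}\bsigma=\symgrad{\bu+\uDirichlet}$, and since a symmetric tensor is orthogonal to the skew part, $(\symgrad{\bu+\uDirichlet},\btau-\bsigma)_\Omega=(\nabla(\bu+\uDirichlet),\sym(\btau-\bsigma))_\Omega$. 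The key remaining step is to show
\begin{align*}
(\btau-\bsigma,\nabla\bu)_\Omega=0\,.
\end{align*}
This holds because $\btau-\bsigma\in\HdivN$ with $\diver(\btau-\bsigma)=\bzero$ a.e.\ in $\Omega$: both lie in $\overline{K}_{\bsigma}$, so their divergences agree and the $\BF$-parts cancel. Integrating by parts against $\bu\in\HoneDirichlet$ then gives zero, since the boundary term vanishes on $\Gamma_D$ by the trace of $\bu$ and on $\Gamma_N$ by the generalised Neumann condition in the definition of $\HdivN$.

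With this orthogonality, and since $\skewM\bsigma=\bzero$ so that $\skewM(\btau-\bsigma)=\skewM\btau$, we get
\begin{align*}
(\nabla(\bu+\uDirichlet),\sym(\btau-\bsigma))_\Omega-(\btau-\bsigma,\nabla\uDirichlet)_\Omega
&=(\btau-\bsigma,\nabla(\bu+\uDirichlet))_\Omega-(\skewM\btau,\nabla(\bu+\uDirichlet))_\Omega-(\btau-\bsigma,\nabla\uDirichlet)_\Omega\\
&=-(\skewM\btau,\nabla(\bu+\uDirichlet))_\Omega\,.
\end{align*}
Hence $-\overline{D}(\btau)+\overline{D}(\bsigma)=\rhodualext(\btau)-(\skewM\btau,\nabla(\bu+\uDirichlet))_\Omega$, which rearranges to the claim.

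The main obstacle is justifying the vanishing integration-by-parts term rigorously for mixed boundary conditions with only $\HdivN$ regularity. The clean route is to avoid the boundary term altogether by taking $\bv=\bu$ in the characterisation \eqref{eq:elasticity_stress_constraint}, i.e.\ the identity $(\btau,\nabla\bv)_\Omega=\langle\load,\bv\rangle_\Omega$ for all $\bv\in\HoneDirichlet$. This identity holds for every $\btau\in\overline{K}_{\bsigma}$, in particular for both $\btau$ and $\bsigma$, and subtracting the two gives $(\btau-\bsigma,\nabla\bu)_\Omega=0$ directly. Keeping track of the $\uDirichlet$ terms, which do not cancel on their own, is the remaining bookkeeping point; it is handled by the last display above.
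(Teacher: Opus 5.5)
Your proposal is correct and follows essentially the paper's argument: in (i) the same quadratic (Taylor) expansion, with the linear term removed by the optimality condition \eqref{eq:elasticity_primal_optimality}; and in (ii) the same ingredients, namely the constitutive relation \eqref{eq:elasticity_optimality_conditions.3}, the splitting into symmetric and skew parts with $\skewM(\bsigma)=\bzero$, and the orthogonality $(\btau-\bsigma,\nabla\bu)_\Omega=0$ obtained by integration by parts for $\btau,\bsigma\in\overline{K}_{\bsigma}$. The only difference is the order of bookkeeping (you expand around $\bsigma$ first, whereas the paper first inserts $\nabla\bu$ and then splits off the skew part), and this does not matter.
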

\begin{proof}
\emph{ad (\hyperlink{lem:strong_convexity_measures_elasticity.i}{i}).} 
Since $I\colon \HoneDirichlet\to \mathbb{R}$ is a quadratic functional, $\mathrm{D}I(\bu)=\mathbf{0}^*$ in $\HminusDirichlet$, and $\mathrm{D}^2I\equiv R_{\smash{\HoneDirichlet}}$, where $R_{\smash{\HoneDirichlet}}\colon\hspace{-0.175em}\HoneDirichlet\hspace{-0.175em}\to\hspace{-0.175em} \HminusDirichlet$ is the Riesz isomorphism of $(\HoneDirichlet,(\bbC^{\frac{1}{2}}\symgrad{\cdot},\bbC^{\frac{1}{2}}\symgrad{\cdot})_{\Omega})$, for every $\bv\in \HoneDirichlet$, a Taylor expansion yields that
\begin{align*}
I(\bv) - I(\bu)
&= \langle \mathrm{D}I(\bu),\bv-\bu\rangle_{\HoneDirichlet}+\tfrac{1}{2}\langle \mathrm{D}^2I(\bu)(\bv-\bu),\bv-\bu\rangle_{\smash{\HoneDirichlet}}
\\&=
\tfrac{1}{2}\|\smash{\bbC^{\smash{\frac{1}{2}}}}\symgrad{\bv-\bu}\|_{\Omega}^2\,,
\end{align*}
which is the claimed representation of the strong convexity measure \eqref{def:elasticity_primal_error}.

\emph{ad (\hyperlink{lem:strong_convexity_measures_elasticity.ii}{ii})}.
For $\btau\in \overline{K}_{\bsigma}$, 
using that $(\btau-\bsigma,\nabla \bu)_{\Omega}=0$ (which follows from the continuous integra\-tion-by-parts formula together with $\btau,\bsigma\in \overline{K}_{\bsigma}$), that
$\skewM(\bsigma)=\bzero$ a.e.\ in $\Omega$ (since $\bsigma\in K_{\bsigma}$) and the convex optimality condition \eqref{eq:elasticity_optimality_conditions.3},  we find that
\begin{align*}
-\overline{D}(\btau) + \overline{D}(\bsigma)
&=
\tfrac{1}{2}(\bbC^{-1}\btau,\btau)_\Omega
-\tfrac{1}{2}(\bbC^{-1}\bsigma,\bsigma)_\Omega
- (\btau - \bsigma,\nabla\uDirichlet)_\Omega \\
&=
\tfrac{1}{2}(\bbC^{-1}\btau,\btau)_\Omega
-\tfrac{1}{2}(\bbC^{-1}\bsigma,\bsigma)_\Omega
- (\btau - \bsigma,\nabla(\bu +\uDirichlet))_\Omega
\\
&=
\tfrac{1}{2}(\bbC^{-1}\btau,\btau)_\Omega
-\tfrac{1}{2}(\bbC^{-1}\bsigma,\bsigma)_\Omega
- (\btau - \bsigma,\symgrad{\bu +\uDirichlet})_\Omega
\\&\quad- (\skewM(\btau),\nabla(\bu +\uDirichlet))_\Omega
\\
&=
\tfrac{1}{2}\|\smash{\bbC^{-\smash{\frac{1}{2}}}}(\btau-\bsigma)\|_{\Omega}^2
- (\skewM(\btau),\nabla(\bu+\uDirichlet))_\Omega\,,
\end{align*}
which is the claimed representation of the  strong convexity measure \eqref{def:elasticity_dual_error_extended}.
\end{proof}

Now let us introduce the \emph{extended primal-dual total error}, \textit{i.e.},  $\rhototext \colon \HoneDirichlet \times \overline{K}_{\bsigma} \to \R$, for every $\bv\in \BH^1_D(\Omega)$ and $\btau\in \overline{K}_{\sigma}$ defined by
\begin{equation*}
\rhototext(\bv,\btau) \coloneqq
\rhoprimal(\bv) + \rhodualext(\btau)\,.
\end{equation*}
We are now in the position  to derive  an \emph{a posteriori}  error equivalence.
Note that, similarly to the \emph{a priori} error estimate in Theorem \ref{thm:apriori_elasticity}, we obtain an equivalence as opposed to an identity, 
since the admissible stresses $\btau\in \overline{K}_{\bsigma}$ is not assumed to be symmetric.

\begin{theorem}\label{thm:prager_synge_identity_elasticity}
For every $\bv\in \HoneDirichlet$ and $\btau\in \overline{K}_{\bsigma}$, we have the following identity
and equivalence
\begin{equation}\label{eq:elasticity_aposteriori1}
\begin{aligned} 
\rhototext(\bv,\btau)= \gapext(\bv,\btau)
+ (\skewM(\btau),\nabla(\bu-\bv))_\Omega\sim \gapext(\bv,\btau)\,,
\end{aligned}
\end{equation}
where the equivalence explicitly written reads  
\begin{equation}\label{eq:elasticity_aposteriori2}
\|\smash{\smash{\bbC^{\smash{\frac{1}{2}}}}}\symgrad{\bv-\bu}\|_{\Omega}^2
+
\|\smash{\smash{\bbC^{-\smash{\frac{1}{2}}}}}(\btau-\bsigma)\|_{\Omega}^2
\sim
\|\smash{\smash{\bbC^{\smash{\frac{1}{2}}}}}(\symgrad{\bv+\uDirichlet} - \smash{\bbC^{-1}}\btau)\|_{\Omega}^2\,,
\end{equation}
the implicit constant in the estimate $\gtrsim$ is given via $2$ and the implicit  in the estimate $\lesssim $ depends only on $\max_{T\in \mathcal{T}_h}{\{c_{\mathrm{K},T}^2\}}$, \textit{i.e.}, the maximum of element-wise Korn constants $c_{\mathrm{K},T}^2>0$ (\textit{cf}.~\cite[Ineq. (3.2) \& Thm. 3.1]{Kim.2011}). 
\end{theorem}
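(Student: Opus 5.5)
The plan is to obtain the identity in \eqref{eq:elasticity_aposteriori1} by pure bookkeeping, and then to prove the two directions of \eqref{eq:elasticity_aposteriori2} separately. The upper bound on $\gapext$ is a triangle inequality. The lower bound exploits that $\bbC\symgrad{\bv+\uDirichlet}$ is symmetric, so that $\skewM(\btau)$ is itself controlled by $\gapext$.

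\emph{Identity.} I add Lemma~\ref{lem:strong_convexity_measures_elasticity}(i) and (ii) and use the strong duality relation \eqref{eq:elasticity_strong_duality} together with the extension property \eqref{eq:elasticity_dual_extension}, i.e.\ $I(\bu)=D(\bsigma)=\overline{D}(\bsigma)$. This gives
$\rhototext(\bv,\btau)=I(\bv)-\overline{D}(\btau)+(\skewM(\btau),\nabla(\bu+\uDirichlet))_\Omega$.
Inserting Lemma~\ref{lem:primal_dual_gap_estimator_elasticity}, i.e.\ $I(\bv)-\overline{D}(\btau)=\gapext(\bv,\btau)-(\skewM(\btau),\nabla(\bv+\uDirichlet))_\Omega$, yields $\rhototext=\gapext+(\skewM(\btau),\nabla(\bu-\bv))_\Omega$.

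\emph{Bound $\gapext\le 2\rhototext$.} By \eqref{eq:elasticity_optimality_conditions.3} we have $\bbC^{-1}\bsigma=\symgrad{\bu+\uDirichlet}$. Hence
$\symgrad{\bv+\uDirichlet}-\bbC^{-1}\btau=\symgrad{\bv-\bu}-\bbC^{-1}(\btau-\bsigma)$, and $\|\bbC^{\frac12}\bbC^{-1}\BB\|_\Omega=\|\bbC^{-\frac12}\BB\|_\Omega$. The triangle inequality together with $(a+b)^2\le 2a^2+2b^2$ then gives $\gapext\le 2\rhototext$, which is the constant $2$ in the $\gtrsim$ direction.

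\emph{Bound $\rhototext\lesssim\gapext$ (main step).} A direct Young absorption of the skew term in the identity fails whenever $c_{\mathrm K}>1$, so I argue differently. Set $\bw\coloneqq\bv-\bu\in\HoneDirichlet$ and $\BB\coloneqq\btau-\bbC\symgrad{\bv+\uDirichlet}$, so that $2\gapext=\|\bbC^{-\frac12}\BB\|_\Omega^2$.

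Both $\btau$ and $\bsigma$ lie in $\overline K_{\bsigma}$. By integration by parts we therefore get $(\btau-\bsigma,\nabla\bw)_\Omega=0$, hence $(\sym(\btau-\bsigma),\symgrad{\bw})_\Omega=-(\skewM(\btau),\nabla\bw)_\Omega$, using $\skewM(\bsigma)=\bzero$. Writing $\bbC\symgrad{\bw}=(\bsigma-\btau)+\BB\cdot(-1)\cdot(-1)$ more precisely as $\bbC\symgrad{\bw}=-\BB+(\btau-\bsigma)$ up to sign conventions, i.e.\ $\bbC\symgrad{\bw}=\bbC\symgrad{\bv+\uDirichlet}-\bsigma=(\btau-\bsigma)-\BB$, we obtain
\[
\|\bbC^{\frac12}\symgrad{\bw}\|_\Omega^2=-(\BB,\symgrad{\bw})_\Omega+(\skewM(\btau),\nabla\bw)_\Omega .
\]
The key observation is that $\bbC\symgrad{\bv+\uDirichlet}$ is symmetric, so $\skewM(\btau)=\skewM(\BB)$ pointwise, and
$|\bbC^{-\frac12}\skewM\BB|\le|\bbC^{-\frac12}\BB|$. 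Applying Cauchy--Schwarz and Korn's inequality \eqref{eq:korn2} gives
$\|\bbC^{\frac12}\symgrad{\bw}\|_\Omega\le(1+c_{\mathrm K}^{1/2})\|\bbC^{-\frac12}\BB\|_\Omega$.
Finally, $\|\bbC^{-\frac12}(\btau-\bsigma)\|_\Omega\le\|\bbC^{-\frac12}\BB\|_\Omega+\|\bbC^{\frac12}\symgrad{\bw}\|_\Omega$, and together these two bounds yield $\rhototext\lesssim\gapext$.

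The delicate point is the Korn step. With the global inequality \eqref{eq:korn2} the constant is $c_{\mathrm K}$. To obtain the stated dependence only on element-wise Korn constants, I would localise the term $(\skewM(\BB),\nabla\bw)_T$ element by element, following \cite[Ineq.~(3.2) \& Thm.~3.1]{Kim.2011}. I expect this localisation to be the main technical obstacle.
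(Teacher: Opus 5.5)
Your derivation of the identity in \eqref{eq:elasticity_aposteriori1} and of $\gapext\le 2\rhototext$ is the paper's argument. For the reverse bound you take a different route. The paper estimates the skew term $(\skewM(\btau),\nabla(\bu-\bv))_\Omega$ in the identity; you instead test $\bbC\symgrad{\bw}=(\btau-\bsigma)-\BB$ with $\symgrad{\bw}$ and apply the global Korn inequality \eqref{eq:korn2}. This is correct, up to a harmless sign: the correct relation is $\|\bbC^{\frac12}\symgrad{\bw}\|_\Omega^2=-(\BB,\symgrad{\bw})_\Omega-(\skewM(\btau),\nabla\bw)_\Omega$. It can even be sharpened. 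Split $\BB$ $\bbC^{-1}$-orthogonally into its symmetric and skew parts and use $\|\bbC^{\frac12}\skewM\nabla\bw\|_\Omega^2\le(c_{\mathrm K}-1)\|\bbC^{\frac12}\symgrad{\bw}\|_\Omega^2$. This gives $\tfrac12\|\bbC^{\frac12}\symgrad{\bw}\|_\Omega^2\le c_{\mathrm K}\,\gapext$. However, your constant is the global Korn constant $c_{\mathrm K}$, so you have not proved the dependence on $\max_T c_{\mathrm K,T}^2$ that the statement asserts.

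Do not try to close this by localisation. It is not a technical obstacle: for general $\btau\in\overline{K}_{\bsigma}$ it is impossible, and the paper's proof of this part does not hold. The paper subtracts $\BA_h=\Pi_h\skewM\nabla(\bu-\bv)$ and then applies element-wise Korn. That step needs $(\skewM(\btau),\BA_h)_\Omega=0$, i.e.\ weak symmetry of $\btau$ against element-wise constant skew fields, which is the setting of \cite{Kim.2011}. This condition is not part of $\overline{K}_{\bsigma}$. It also fails for the Marini stress \eqref{prop:marini_elasticity.1}, whose skew part $\skewM(\nabla_h\br_h)$ is an element-wise constant that is in general non-zero.

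In fact the global constant cannot be avoided. Take $\bw\in\HoneDirichlet$ and let $\vect{z}\in\HoneDirichlet$ solve $(\bbC\nabla\vect{z},\nabla\vect{y})_\Omega=(\bbC\symgrad{\bw},\symgrad{\vect{y}})_\Omega$ for all $\vect{y}\in\HoneDirichlet$. Set $\bv\coloneqq\bu+\bw$ and $\btau\coloneqq\bsigma+\bbC(\symgrad{\bw}-\nabla\vect{z})$. Then $\btau\in\overline{K}_{\bsigma}$ and $\gapext(\bv,\btau)=\tfrac12\|\bbC^{\frac12}\nabla\vect{z}\|_\Omega^2$. Let $c_*$ be the optimal constant in $\|\bbC^{\frac12}\nabla\bw\|_\Omega^2\le c_*\|\bbC^{\frac12}\symgrad{\bw}\|_\Omega^2$ on $\HoneDirichlet$. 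If $\bw$ is (nearly) extremal, then $\vect{z}\approx c_*^{-1}\bw$, and hence $\rhototext(\bv,\btau)\ge\tfrac12\|\bbC^{\frac12}\symgrad{\bw}\|_\Omega^2\approx c_*\,\gapext(\bv,\btau)$. The constant $c_*$ is not controlled by element shapes. For example, on the strip $(0,L)\times(0,1)$ clamped at $x_1=0$ and meshed by congruent triangles, $c_*$ grows at least like $L^2$, while every $c_{\mathrm K,T}$ stays fixed. So your global-Korn bound is the correct form of the upper estimate. Element-wise Korn constants suffice only under the extra hypothesis $(\skewM(\btau),\BA_h)_\Omega=0$ for all element-wise constant skew $\BA_h$.
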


\begin{proof}

\emph{ad identity in \eqref{eq:elasticity_aposteriori1}.}
Recalling the conformity property \eqref{eq:elasticity_dual_extension} and the strong duality relation \eqref{eq:elasticity_strong_duality},
the identity in \eqref{eq:elasticity_aposteriori1} is a direct consequence of Lemma \ref{lem:primal_dual_gap_estimator_elasticity} together with Lemma \ref{lem:strong_convexity_measures_elasticity}.

\emph{ad equivalence in \eqref{eq:elasticity_aposteriori1}.} 
Next, let us establish the claimed equivalence:

\emph{ad $\lesssim$.} For every $\bv\in \HoneDirichlet$ and $\btau\in \overline{K}_{\bsigma}$, abbreviating $\BA_h\coloneqq \Pi_h\skewM\nabla(\bu-\bv) \in (\Pbroken{0})^{d\times d}$ and 
using that $\skewM(\btau)\perp_{\mathbb{L}^2} \skewM\nabla(\bu-\bv)-\BA_h$, that 
$\skewM(\btau):\nabla(\bu-\bv)=\btau:\skewM\nabla(\bu-\bv)$ a.e.\ in $\Omega$, that 
$\symgrad{\bv+\uDirichlet}\perp_{\mathbb{L}^2}\mathbb{C}\skewM\nabla(\bu-\bv)$,  and Hölder's inequality, 
we find that 
\begin{align*}
	( \skewM(\btau),\nabla(\bu-\bv))_\Omega
	&= (\symgrad{\bv+\uDirichlet} - \bbC^{-1}\btau, \bbC \,\skewM\nabla(\bu-\bv))_\Omega \\
	&= (\symgrad{\bv+\uDirichlet} - \bbC^{-1}\btau, \bbC \,(\skewM\nabla(\bu-\bv)-\BA_h))_\Omega \\
	&\leq \|\bbC^{\smash{\frac{1}{2}}}(\symgrad{\bv+\uDirichlet} - \bbC^{-1}\btau)\|_{\Omega}	\\&\quad\times\| \bbC^{\smash{\frac{1}{2}}}(\skewM\nabla(\bu-\bv)-\BA_h)\|_{\Omega}\,,
\end{align*}
where, by the $\mathbb{L}^2$-best-approximation property of $\Pi_h$, 
denoting by 
$$
\mathrm{RM}(T)\coloneqq \{\BA(\cdot)+\bb\colon T\to \mathbb{R}^{d\times d}\mid \BA\in \mathbb{R}^{d\times d}\,,\; \bb\in \mathbb{R}^d\,,\;\BA^{\top}=-\BA\}\,,
$$
the \emph{space of rigid body motions on $T$}, an element-wise application of the Korn inequality  (\textit{cf}.\ \cite[Ineq. (3.2)]{Kim.2011}) yields that
\begin{align*}
	&\| \bbC^{\smash{\frac{1}{2}}}(\skewM\nabla(\bu-\bv)-\BA_h)\|_{\Omega}\\&=\bigg\{\sum_{T\in \mathcal{T}_h}{\| \bbC^{\smash{\frac{1}{2}}}(\skewM\nabla(\bu-\bv)-\BA_h)\|_T^2}\bigg\}^{\smash{\frac{1}{2}}}
	\\&\leq \bigg\{\sum_{T\in \mathcal{T}_h}{	\inf_{\brho_T\in \mathrm{RM}(T)}{\big\{\|\bbC^{\smash{\frac{1}{2}}}
		(\skewM\nabla(\bu-\bv)-\brho_T)\|_T^2\big\}}}\bigg\}^{\smash{\frac{1}{2}}} 
			\\&\leq \bigg\{\sum_{T\in \mathcal{T}_h}{	c_{\mathrm{K},T}^2\|\bbC^{\smash{\frac{1}{2}}} \symgrad{\bu-\bv})\|_T^2}\bigg\}^{\smash{\frac{1}{2}}} \,,
\end{align*}
where $c_{\mathrm{K},T}^2>0$, $T\in \mathcal{T}_h$, denote the element-wise Korn constants (\textit{cf}.\ \cite[Thm.\ 3.1]{Kim.2011}),
so that, an application of Young's inequality yields that the claimed upper bound applies.

\emph{ad $\gtrsim$.} Using the convex optimality condition \eqref{eq:elasticity_optimality_conditions.3}, we obtain
\begin{align*}
\|\smash{\bbC^{\smash{\frac{1}{2}}}}(\symgrad{\bv+\uDirichlet} - \smash{\bbC^{-1}}\btau)\|_{\Omega}^2
&=
\|\smash{\bbC^{\smash{\frac{1}{2}}}}(\symgrad{\bv+\uDirichlet} - \symgrad{\bu + \uDirichlet} + \smash{\bbC^{-1}}\bsigma - \smash{\bbC^{-1}}\btau)\|_{\Omega}^2
\\ 
&\leq
2\big\{ \|\smash{\bbC^{\smash{\frac{1}{2}}}}(\symgrad{\bv - \bu}\|_{\Omega}^2   + \|\smash{\bbC^{-1}}(\bsigma - \btau)\|_{\Omega}^2 \big\}\,,
\end{align*}
which yields the claimed lower bound. 
\end{proof}

\begin{remark}[on the element-wise Korn constants in Theorem \ref{thm:prager_synge_identity_elasticity}]\label{rem:local_Korn_constants}
    According to \cite[Thm. 3.1]{Kim.2011}, for every $T\hspace{-0.15em}\in\hspace{-0.15em} \mathcal{T}_h$,
    we have that $c_{\mathrm{K},T}^2\hspace{-0.15em}\leq\hspace{-0.15em} \tfrac{2}{\sin^2(\frac{\theta_{\smash{\min,T}}}{4})}$,
    where $\theta_{\min,T}\hspace{-0.15em}\in\hspace{-0.15em} (0,2\pi)$ is the minimum {angle} in $T$.
\end{remark}

\begin{remark}[on related results to Theorem \ref{thm:prager_synge_identity_elasticity}]

    Several works have developed Prager--Synge-type \emph{a posteriori} error estimators for the Navier--Lam\'e problem: 
    
    The results in \cite{DM.2013} are structurally closest to the present approach in that a \emph{single} \emph{a posteriori} estimator is derived (without requiring symmetry of the reconstructed discrete stresses) and the analysis involves element-wise Korn constants (\textit{cf}.\ Remark \ref{rem:local_Korn_constants}).  The results in \cite{DM.2013}, however, are established for conforming discretisations with polynomial degree $k \ge 2$. 
    
    A related framework is presented in \cite{BKMS.2021}, where a displacement-pressure formulation is analysed and a Prager--Synge-type \emph{a posteriori} estimator is constructed based on a weakly symmetric reconstruction $\bsigma_h^* \in \BH(\mathrm{div};\Omega)$. 
    The framework in  \cite{BKMS.2021} likewise assumes polynomial degree $k \ge 2$, proves local efficiency, and demonstrates robustness with respect to the incompressible limit. 

The work \cite{Kim.2012} employs a Crouzeix--Raviart discretisation and reconstructs stresses in the next-to-lowest Raviart--Thomas space. 
The resulting reconstruction is weakly symmetric, but the analysis requires separate estimators for the symmetry defect and for the non-conformity of $\bu_h\in V_D^h$, and does not establish efficiency. The reconstruction strategy from \cite{Kim.2012} could, in principle, also be combined with the present estimator without affecting the validity of the analysis, which remains applicable under minimal regularity assumptions on the displacement vector field ${\bu \in \BH^1_D(\Omega)}$.

\end{remark}

For the applicability of the derived \emph{a posteriori}  error equivalence, it is necessary to have a computationally inexpensive way to reconstruct, from the discrete displacement field $\bu_h\in K_{\bu}^h$, solving \eqref{eq:elasticity_primal_optimality_discrete}, an 
admissible discrete tensor field $\bsigma_h^* \in K_{\bsigma}$. Although we have not derived a discrete dual problem to minimisation of the discrete primal energy functional \eqref{eq:elasticity_primal_discrete}, {similar} to Proposition \ref{prop:stokes_marini}, we are able to establish a Marini-type formula for such an admissible discrete tensor field.

\begin{proposition}[Marini-type formula]\label{prop:marini_elasticity}
Given the discrete primal solution $\bu_h\in K_{\bu}^h$ (\textit{i.e.}, the discrete displacement field) and a discrete lift  $\br_h\in \CRDirichlet$ of the jump stabilization term $s_h(\bu_h + \uhDirichlet, \cdot)$, for every $\bv_h\in \CRDirichlet$ defined by 
\begin{align}\label{eq:lifting}
(\nabla_h \br_h, \nabla_h \bv_h)_\Omega
=
s_h(\bu_h + \uhDirichlet, \bv_h)\,,
\end{align}
the discrete tensor field 
\begin{align}\label{prop:marini_elasticity.1}
\smash{\bsigma_h^*\coloneqq \bbC\symgraddisc{\bu_h + \uhDirichlet} +
\nabla_h \br_h 
- \tfrac{2}{d+1}\sym (\bff_h \otimes (\mathrm{id}_{\R^d} - \Pi_h \mathrm{id}_{\R^d}))\in (\Pbroken{1})^{d\times d}\,,}
\end{align}
satisfies $\bsigma_h^*\in K_{\bsigma}^h$ with 
\begin{subequations}\label{prop:marini_elasticity.2}
	\begin{alignat}{2}\label{prop:marini_elasticity.2.1}
		\Pi_h\bsigma_h^*&=\bbC\symgraddisc{\bu_h + \uhDirichlet} +
		\nabla_h \br_h&&\quad \text{ a.e.\ in }\Omega\,,\\
		\textup{div}(\bsigma_h^*-\BF_h)&=-\bff_h&&\quad\text{ a.e.\ in }\Omega\,.\label{prop:marini_elasticity.2.2}
	\end{alignat}
\end{subequations}
\end{proposition}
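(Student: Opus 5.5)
The proof mirrors part (i) of Proposition~\ref{prop:stokes_marini}. The two element-wise identities in \eqref{prop:marini_elasticity.2} come from direct computation. Membership in $\BF_h+\RTNeumann$ then follows from the discrete optimality condition \eqref{eq:elasticity_primal_optimality_discrete}, the lifting \eqref{eq:lifting}, the discrete integration-by-parts formula \eqref{eq:pi0} and the discrete Helmholtz--Weyl decomposition \eqref{eq:decomposition}.

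\emph{Element-wise identities.} On each $T\in\mathcal{T}_h$ the first two summands of $\bsigma_h^*$ are constant. Moreover, $\Pi_h(\mathrm{id}_{\R^d}-\Pi_h\mathrm{id}_{\R^d})=\mathbf{0}$ and $\bff_h$ is element-wise constant, so the correction term has vanishing element-wise mean. This gives \eqref{prop:marini_elasticity.2.1}. For the divergence, set $\bw\coloneqq \mathrm{id}_{\R^d}-\Pi_h\mathrm{id}_{\R^d}$, so that $\nabla_h\bw=\BI$. A row-wise computation gives $\textup{div}_h(\bff_h\otimes\bw)=d\,\bff_h$ and $\textup{div}_h(\bw\otimes\bff_h)=\bff_h$. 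Hence $\textup{div}_h\sym(\bff_h\otimes\bw)=\tfrac{d+1}{2}\bff_h$, and therefore $\textup{div}_h\bsigma_h^*=-\bff_h$ element-wise. This explains the factor $\tfrac{2}{d+1}$.

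\emph{Conformity.} Since $\Gamma_D\neq\emptyset$, we have $\diver(\RTNeumann)=(\Pbroken{0})^d$, so there is a reference field $\btau_h\in\BF_h+\RTNeumann$ with $\diver(\btau_h-\BF_h)=-\bff_h$. For every $\bv_h\in\CRDirichlet$ I will use, in turn, \eqref{prop:marini_elasticity.2.1}, the symmetry of $\bbC\symgraddisc{\cdot}$ (to replace $\nabla_h\bv_h$ by $\symgraddisc{\bv_h}$ in the first term), \eqref{eq:lifting} and \eqref{eq:elasticity_primal_optimality_discrete}. These give
\begin{align*}
(\Pi_h\bsigma_h^*,\nabla_h\bv_h)_\Omega=(\bu_h+\uhDirichlet,\bv_h)_h=(\bff_h,\Pi_h\bv_h)_\Omega+(\BF_h,\nabla_h\bv_h)_\Omega\,.
\end{align*}
By \eqref{eq:pi0}, the reference field satisfies the same identity. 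Hence $\Pi_h\bsigma_h^*-\Pi_h\btau_h\perp\nabla_h(\CRDirichlet)$, and \eqref{eq:decomposition} places this difference in $\ker(\diver|_{\RTNeumann})$. The constant part of $\bsigma_h^*$ is thus $\Pi_h\btau_h$ plus a divergence-free $\RTNeumann$ field.

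\emph{Main obstacle.} The remaining and genuinely delicate step is to identify the non-constant part $-\tfrac{2}{d+1}\sym(\bff_h\otimes\bw)$ with the non-constant part $\btau_h-\Pi_h\btau_h=-\tfrac1d\bff_h\otimes\bw$ of an element-wise Raviart--Thomas field. Only then is the difference $\bsigma_h^*-\btau_h$ element-wise constant and orthogonal to $\nabla_h(\CRDirichlet)$, which closes the argument exactly as in Proposition~\ref{prop:stokes_marini}. The symmetric part is not obviously of local Raviart--Thomas form: its normal trace $\tfrac12\bigl((\bw\cdot\bn)\bff_h+(\bff_h\cdot\bn)\bw\bigr)$ contains a term that is affine rather than constant on each side.

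I would first test whether this term is nevertheless harmless. One route is to check whether its side integrals cancel against CR test functions, so that only side means of the normal trace enter. If this fails, the conclusion has to be weakened: the normal trace of $\bsigma_h^*$ would only be continuous in the side-mean sense. I would then check whether that weaker property is all that the later use in Theorem~\ref{thm:prager_synge_identity_elasticity} actually requires.
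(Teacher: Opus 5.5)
You stop at the step that decides the matter, and that step cannot be completed. The obstacle you flag is fatal, so the proposition as stated is false as soon as $\bff_h\not\equiv\mathbf{0}$.

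On an element $T$ with $\bff_h|_T\neq\mathbf{0}$, row $i$ of $\sym(\bff_h\otimes\bw)$ is the field $\tfrac12(f_i\bw+w_i\bff_h)$. Its Jacobian $\tfrac12(f_i\BI+\bff_h\otimes\mathbf{e}_i)$ is not a multiple of $\BI$, because $\bff_h\otimes\mathbf{e}_i\neq\mathbf{0}$ has rank one and $d\ge 2$. So it is not a lowest-order Raviart--Thomas field, and $\bsigma_h^*\notin\BF_h+\RTNeumann$.

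Your fallback fails too. Your Helmholtz--Weyl step is correct. Exactly as in Proposition~\ref{prop:stokes_marini}, it shows that $\bsigma_h^{\mathrm{RT}}\coloneqq\Pi_h\bsigma_h^*-\tfrac1d\bff_h\otimes\bw$ lies in $\BF_h+\RTNeumann$ with $\textup{div}(\bsigma_h^{\mathrm{RT}}-\BF_h)=-\bff_h$. The entire defect is therefore
\begin{align*}
\bsigma_h^*-\bsigma_h^{\mathrm{RT}}=\mathbf{B}_h\coloneqq\tfrac{1}{d(d+1)}\,\bff_h\otimes\bw-\tfrac{1}{d+1}\,\bw\otimes\bff_h\,,
\end{align*}
which is element-wise divergence-free but not element-wise constant.

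Take an interior side $S=T_+\cap T_-$ with $\bn\coloneqq\bn_{T_+}$, assume $\bff_h\equiv\bff$ on $T_+\cup T_-$, and set $\mathbf{z}_S\coloneqq(\Pi_h\mathrm{id}_{\R^d})|_{T_-}-(\Pi_h\mathrm{id}_{\R^d})|_{T_+}$, so that $\mathbf{z}_S\cdot\bn>0$. Since $\mathbf{B}_h$ is linear in $\bw$ and $\bw|_{T_+}-\bw|_{T_-}=\mathbf{z}_S$, the normal jump is
$\mathbf{B}_h|_{T_+}\bn-\mathbf{B}_h|_{T_-}\bn=\tfrac{1}{d(d+1)}(\mathbf{z}_S\cdot\bn)\bff-\tfrac{1}{d+1}(\bff\cdot\bn)\mathbf{z}_S$.
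This is constant on $S$, with normal component $\tfrac{1-d}{d(d+1)}(\bff\cdot\bn)(\mathbf{z}_S\cdot\bn)\neq0$ whenever $\bff\cdot\bn\neq0$. For example, with $d=2$, $\bff=(1,0)^\top$, and $T_\pm$ the lower-left and upper-right halves of the unit square, it equals $-\tfrac{1}{18}$.

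So the normal trace of $\bsigma_h^*$ jumps even in the side-mean sense. Likewise $(\bsigma_h^*-\BF_h)\bn\neq\mathbf{0}$ on Neumann sides with $\bff_h\cdot\bn\neq0$. Hence $\bsigma_h^*\notin\BF_h+\HdivN$, \eqref{prop:marini_elasticity.2.2} fails as a statement about the global divergence, and $\bsigma_h^*$ is not admissible in Theorem~\ref{thm:prager_synge_identity_elasticity}.

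The paper's proof follows your route and passes over exactly this point. It infers $\bsigma_h^*-\widehat{\bsigma}_h\in\PbrokenM{0}$ from $\textup{div}_h(\bsigma_h^*-\widehat{\bsigma}_h)=\mathbf{0}$. That implication holds for element-wise Raviart--Thomas fields, as in Proposition~\ref{prop:stokes_marini}, but not for general element-wise affine fields; $\mathbf{B}_h$ is a counterexample.

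What your argument does establish is \eqref{prop:marini_elasticity.2.1} and the element-wise identity $\textup{div}_h\bsigma_h^*=-\bff_h$. It also gives the full statement in two cases: when $\bff_h\equiv\mathbf{0}$ (as in the Cook's membrane experiment), or for the unsymmetrised correction $-\tfrac1d\bff_h\otimes(\mathrm{id}_{\R^d}-\Pi_h\mathrm{id}_{\R^d})$ of Remark~\ref{rem:marini_elasticity}(iii), i.e.\ for $\bsigma_h^{\mathrm{RT}}$, where your first two steps already close the proof. Rather than testing whether the symmetric term is harmless, you should conclude that the proposition must be restricted or the formula replaced in this way.
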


\begin{proof}
	To begin with, we note that $\textup{div}_h \bsigma_h^* = -\tfrac{2}{d+1}(\tfrac{d}{2}\bff_h + \tfrac{1}{2}\bff_h )
		= - \bff_h$ a.e.\ in $\Omega$.
	On the other hand, by the surjectivity of the row-wise divergence operator $\textup{div}\colon \RTNeumann\to (\Pbroken{0})^d$ (since $\vert \Gamma_D\vert>0$), there exists a tensor field $\widehat{\bsigma}_h\in \RTNeumann$ with $\diver \widehat{\bsigma}_h=-\bff_h$ a.e.\ in $\Omega$.  As a consequence, we have that
	$\textup{div}_h (\bsigma_h^*-\widehat{\bsigma}_h)=\mathbf{0}$ a.e.\ in $\Omega$ and, thus, $\bsigma_h^*-\widehat{\bsigma}_h\in  (\Pbroken{0})^{d\times d}$. 
    By the discrete convex optimality condition
	\eqref{eq:elasticity_primal_optimality_discrete} and the discrete integration-by-parts formula \eqref{eq:pi0}, for {any} ${\bv_h\hspace{-0.15em}\in\hspace{-0.15em} \CRDirichlet}$, we have that
	\begin{align*}
		(\bsigma_h^* -\BF_h- \widehat{\bsigma}_h, \nabla_h{\bv_h})_\Omega=(\bff_h,\Pi_h\bv_h)_{\Omega}-(\Pi_h  \widehat{\bsigma}_h,\nabla_h{\bv_h})_{\Omega}=0\,.
	\end{align*}
	Then, using the discrete Helmholtz--Weyl decomposition \eqref{eq:decomposition}, we find that 
	\begin{align*}
		\bsigma_h^*-\BF_h-\widehat{\bsigma}_h\in \smash{(\nabla_h(\CRDirichlet))^{\perp_{\mathbb{L}^2}}}=\textup{ker}(\textup{div}|_{\smash{\RTNeumann}})\,,
	\end{align*}
	and, due to $\widehat{\bsigma}_h\in \RTNeumann$, we infer that $	\bsigma_h^*-\BF_h\in \RTNeumann$ and that $\textup{div}(	\bsigma_h^*-\BF_h)=\diver \widehat{\bsigma}_h=-\bff_h$ a.e. in $\Omega$, so that, in summary, we conclude that $	\bsigma_h^*\in K_{\bsigma}^h$ with \eqref{prop:marini_elasticity.2}.
\end{proof}

\begin{remark}[on Proposition \ref{prop:marini_elasticity}]\label{rem:marini_elasticity}
	\begin{itemize}[noitemsep,topsep=2pt,leftmargin=!,labelwidth=\widthof{(ii)}]
		\item[(i)] In practice, the linear problem  \eqref{eq:lifting} may be solved inexactly without affecting the performance significantly;
		\item[(ii)] The lack of symmetry in the Marini-type formula \eqref{prop:marini_elasticity.1} can be quantified as
		\begin{equation*}
			\smash{\|\skewM{\bsigma_h^*}\|_{\Omega} ^2
			\sim 
			\|\skewM(\nabla_h \br_h)\|_{\Omega} ^2
			\sim 
			s_h(\bu_h + \uhDirichlet, \bu_h + \uhDirichlet)\,,}
		\end{equation*}
		and, thus, is related to the $\BH^1(\Omega)$-conformity of $\bu_{\textup{orig}}^h\coloneqq\bu_h + \uhDirichlet\in \CR$.
    This suggests that, in practice, one could omit the computation of the lifting $\br_h$ and still get a reliable estimator by including instead the residual jump estimator $s_h(\bu_h + \uhDirichlet, \bu_h + \uhDirichlet)$.
		\item[(iii)] An alternative Marini-type formula to \eqref{prop:marini_elasticity.1}, which does not have a symmetric affine part, is 
		\begin{align*}
			\smash{\bsigma_h^*\coloneqq \bbC\symgraddisc{\bu_h + \uhDirichlet} +
			\nabla_h \br_h 
			- \tfrac{1}{d}\bff_h \otimes (\mathrm{id}_{\R^d} - \Pi_h \mathrm{id}_{\R^d})\in (\Pbroken{1})^{d\times d}\,.}
		\end{align*} 
	\end{itemize} 
\end{remark}

\begin{remark}\label{rem:marini_elasticity2}
The Marini-type formula \eqref{prop:marini_elasticity.1} seems to be new;
in the context of Crouzeix--Raviart discretisations of linear elasticity,
the only somewhat similar reconstruction procedure was developed in \cite{Kim.2012},
where a stress is reconstructed in the next-to-lowest order Raviart--Thomas space for $d=2$, homogeneous Neumann boundary conditions, and load in $\BL^2(\Omega)$;
we note that their analysis is more involved, requiring additional estimators and proving only reliability.
Other works developed for other discretisations, such as \cite{AR.2010,AR.2011,NWW.2008,Kim.2011,RdPE.2017,OSW.2001,PCL.2012,LS.2024},
present reconstruction procedures based on local solves that employ finite elements, which allow the enforcement of symmetry (either weakly or strongly); in order to enforce symmetry, most of these works employ at least quadratic elements.
For an approach based also on a Prager--Synge identity that does not require symmetry (which, however, assumes that the polynomial degree is at least 2), we refer to \cite{DM.2013}.
In summary, the reconstruction formula \eqref{prop:marini_elasticity.1} provides a lowest-order $H(\textup{div};\Omega)$-conforming approximation of the stress with computational cost comparable to a primal solve.
\end{remark}

\subsection{The Stokes problem revisited}\label{sec:stokes_revisited}

\hspace{5mm}Incidentally, it is possible to carry out an argument analogous Section \ref{sec:aposteriori_elasticity} for the incompressible Stokes  problem \eqref{eq:stokes_primal}, and obtain an alternative result to the \emph{a posteriori} identity \eqref{eq:prager_synge_identity_stokes}, which does not require primal approximations $\bv\in \BH^1_D(\Omega)$ to be exactly divergence-free;
this simplifies the implementation in that we can employ the discrete primal solution $\bu_h\in V_D^h$ after an inexpensive post-processing (\textit{e.g.}, via nodal averaging)  
directly.
The clear downside, of course, is that we obtain an \emph{a posteriori} error equivalence, as opposed to an \emph{a posteriori} error identity.

To begin with, we introduce a \emph{(to non-divergence-free vector field) extended primal problem} that seeks a vector field $\overline{\bu}\in\BH^1_D(\Omega)$ that minimises the \emph{extended primal energy functional} $\overline{I}\colon \BH^1_D(\Omega)\to \mathbb{R}$, for every $\bv\in \BH^1_D(\Omega)$ defined by
\begin{equation}\label{eq:stokes_primal_extended} 
\overline{I}(\bv) \coloneqq \tfrac{\nu}{2}\|\nabla(\bv + \uDirichlet)\|_{\Omega}^2 - \langle \load ,\bv\rangle_\Omega\,.
\end{equation}

Next, by analogy with Section \ref{sec:aposteriori_elasticity}, we introduce the \emph{(extended) strong convexity measure} $\rho^2_{\overline{I}}\colon \HoneDirichlet \to [0,+\infty) $, for every $\bv\in \HoneDirichlet$ 
\emph{defined} by
\begin{align}\label{eq:stokes_optimal2}
\rho^2_{\overline{I}}(\bv) &\coloneqq \tfrac{\nu}{2}\|\nabla \bv - \nabla \bu\|_{\Omega}^2\,,
\end{align}
where $\bu\in K_{\bu}$ denotes the unique primal solution of the incompressible Stokes  problem \eqref{eq:stokes_primal}.
The dual energy functional \eqref{eq:stokes_dual} with unique dual solution $\BT\in K_{\BT}$ and the corresponding optimal strong convexity measure \eqref{def:optimal_dual_error} remain as before (\textit{cf}.\  Lemma \ref{lem:strong_convexity_measures_stokes}), 
so that the \emph{extended primal-dual total error} $\rhototext\colon \HoneDirichlet\times K_{\BT} \to [0,+\infty) $, for every $\bv\in \HoneDirichlet$ and $\btau\in K_{\BT} $, is defined by 
\begin{align}
    \rhototext(\bv,\btau)\coloneqq \rho^2_{\overline{I}}(\bv)+\rho^2_{D}(\btau)\,.
\end{align}
Eventually, by analogy with Section \ref{sec:aposteriori_elasticity}, we introduce the \emph{extended primal-dual gap estimator} $\gapext\colon\HoneDirichlet\times K_{\BT} \to [0,+\infty) $, for every $\bv\in \HoneDirichlet$ and $\btau\in K_{\BT} $ \emph{defined} by  
\begin{equation}\label{eq:stokes_gap2}
\gapext(\bv,\btau)\coloneqq \tfrac{\nu}{2}\|\nabla (\bv+\uDirichlet)-\smash{\tfrac{1}{\nu}}\dev \btau\|_{\Omega}^2\,. 
\end{equation}
Their precise relations to energy differences is established in the following lemma.

\begin{lemma}
The following statements apply:
\begin{itemize}[noitemsep,topsep=2pt,leftmargin=!,labelwidth=\widthof{(ii)}]
\item[(i)]\hypertarget{eq:stokes_gap2.i}{} For every $\btau \in K_{\BT}$ and $\bv\in \HoneDirichlet$, we  have that
\begin{equation*}
\gapext(\bv,\btau) = \overline{I}(\bv)-D(\btau)
+ (\tfrac{1}{d}\mathrm{tr}(\btau), \nabla \bv)_\Omega\,;
\end{equation*}
\item[(ii)]\hypertarget{eq:stokes_gap2.ii}{} 
For every $\bv \in \HoneDirichlet$, we have that 
\begin{equation*}
\rho^2_{\overline{I}}(\bv) = 
\overline{I}(\bv)-I(\bu) +
(\tfrac{1}{d}\mathrm{tr}(\BT)\BI, \nabla\bv)_{\Omega}\,.
\end{equation*}
\end{itemize}
\end{lemma}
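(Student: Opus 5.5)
The plan is a direct computation in both parts, following the proofs of Lemma~\ref{lem:primal_dual_gap_estimator_stokes} and Lemma~\ref{lem:strong_convexity_measures_stokes}. The only new ingredient is the splitting
\begin{align*}
(\btau,\nabla\bv)_\Omega=(\dev\btau,\nabla\bv)_\Omega+(\tfrac{1}{d}\mathrm{tr}(\btau)\BI,\nabla\bv)_\Omega\,,
\end{align*}
valid for every $\btau\in\LtwoM$ and $\bv\in\HoneDirichlet$. Previously the second term vanished because $\diver\bv=0$. Now it is kept as the correction term; I read $(\tfrac{1}{d}\mathrm{tr}(\btau),\nabla\bv)_\Omega$ in (i) as $(\tfrac{1}{d}\mathrm{tr}(\btau)\BI,\nabla\bv)_\Omega=\tfrac{1}{d}(\mathrm{tr}(\btau),\diver\bv)_\Omega$.

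\emph{ad (i).} Fix $\bv\in\HoneDirichlet$ and $\btau\in K_{\BT}$. I write out $\overline{I}(\bv)-D(\btau)$ using \eqref{eq:stokes_primal_extended}, \eqref{eq:stokes_dual} and the representation \eqref{eq:dual_representation}. I then replace $-(\bff,\bv)_\Omega$ by $(\diver(\btau-\BF),\bv)_\Omega$ via \eqref{eq:stokes_stress_constraint}. Next I integrate by parts, using $\btau-\BF\in\HdivN$ and the weak Neumann condition \eqref{eq:Neumann_condition_weak}, which is legitimate because $\bv$ vanishes on $\Gamma_D$ so its trace lies in $\HNeumannDual$. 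This gives $-(\btau-\BF,\nabla\bv)_\Omega$, and the $\BF$-terms cancel. What remains is
\begin{align*}
\tfrac{\nu}{2}\|\nabla(\bv+\uDirichlet)\|_\Omega^2-(\btau,\nabla\bv)_\Omega+\tfrac{1}{2\nu}\|\dev\btau\|_\Omega^2-(\dev\btau,\nabla\uDirichlet)_\Omega\,.
\end{align*}
Applying the splitting above to $(\btau,\nabla\bv)_\Omega$ combines the deviatoric terms into $-(\dev\btau,\nabla(\bv+\uDirichlet))_\Omega$. A binomial formula then yields $\gapext(\bv,\btau)-(\tfrac{1}{d}\mathrm{tr}(\btau)\BI,\nabla\bv)_\Omega$, which is (i). Notably, $\diver\uDirichlet=0$ is not even needed here, since $\uDirichlet$ only appears paired with $\dev\btau$.

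\emph{ad (ii).} Since $\diver\bu=0$, we have $I(\bu)=\overline{I}(\bu)$. Because $\overline{I}$ is quadratic, a Taylor expansion around $\bu$ gives
\begin{align*}
\overline{I}(\bv)-I(\bu)=\nu(\nabla(\bu+\uDirichlet),\nabla(\bv-\bu))_\Omega-\langle\load,\bv-\bu\rangle_\Omega+\tfrac{\nu}{2}\|\nabla(\bv-\bu)\|_\Omega^2\,.
\end{align*}
For the first two terms I use \eqref{eq:stokes_optimality_continuous_3}, namely $\nu\nabla(\bu+\uDirichlet)=\dev\BT$, together with the second optimality relation $(\BT,\nabla\bw)_\Omega=\langle\load,\bw\rangle_\Omega$ for all $\bw\in\HoneDirichlet$. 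Together with the splitting, they collapse to $-(\tfrac{1}{d}\mathrm{tr}(\BT)\BI,\nabla(\bv-\bu))_\Omega$. This equals $-(\tfrac{1}{d}\mathrm{tr}(\BT)\BI,\nabla\bv)_\Omega$, again because $\mathrm{tr}\,\nabla\bu=\diver\bu=0$. Rearranging gives (ii).

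The only step needing care is the boundary term in the integration by parts in (i). It relies on the weak Neumann condition \eqref{eq:Neumann_condition_weak} being tested with arbitrary, not necessarily divergence-free, $\bv\in\HoneDirichlet$. That is exactly how $K_{\BT}$ was defined, and the argument is identical to the one in Lemma~\ref{lem:primal_dual_gap_estimator_stokes}, so I expect no real obstacle.
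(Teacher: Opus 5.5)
Your proposal is correct and follows essentially the same approach as the paper. For (i) you use the same chain: the divergence constraint, integration by parts with the weak Neumann condition, the $\dev$/trace splitting of $(\btau,\nabla\bv)_\Omega$, and a binomial formula. For (ii) you use the same expansion around $\bu$ via $\overline{I}(\bu)=I(\bu)$, $\dev\BT=\nu\nabla(\bu+\uDirichlet)$ and $\mathrm{tr}\,\nabla\bu=0$; the only cosmetic difference is that you invoke the saddle-point relation $(\BT,\nabla\bw)_\Omega=\langle\load,\bw\rangle_\Omega$ directly, whereas the paper re-derives it from $\BT\in K_{\BT}$ by integration by parts.
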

\begin{proof}
\emph{ad (\hyperlink{eq:stokes_gap2.i}{i})}. For every $\bv\in \HoneDirichlet$, using \eqref{eq:stokes_stress_constraint}, the continuous integration-by-parts formula (since $\BT-\BF\in \HdivN$), \eqref{eq:Neumann_condition_weak},  $\BT=\dev\BT+\tfrac{1}{d}\textup{tr}(\BT)$ a.e.\ in $\Omega$, $(\BT,\nabla \bu)_{\Omega}=(\dev\BT,\nabla \bu)_{\Omega}$ (since $\textup{tr}(\nabla \bu)=\diver\bu=0$ a.e.\ in $\Omega$), the convex optimality relation \eqref{eq:stokes_optimality_continuous_3}, and a binomial formula, we find that
\begin{align*}
\overline{I}(\bv)\hspace{-0.125em}-\hspace{-0.125em}I(\bu)&=\tfrac{\nu}{2}\|\nabla (\bv\hspace{-0.125em}+\hspace{-0.125em}\uDirichlet)\|_{\Omega}^2-\tfrac{\nu}{2}\|\nabla (\bu\hspace{-0.125em}+\hspace{-0.125em}\uDirichlet)\|_{\Omega}^2\hspace{-0.125em}-\hspace{-0.125em}(\bff,\bv\hspace{-0.125em}-\hspace{-0.125em}\bu)_{\Omega}-(\BF,\nabla(\bv\hspace{-0.125em}-\hspace{-0.125em}\bu))_{\Omega}\\&=\tfrac{\nu}{2}\|\nabla (\bv\hspace{-0.125em}+\hspace{-0.125em}\uDirichlet)\|_{\Omega}^2\hspace{-0.125em}-\hspace{-0.125em}\tfrac{\nu}{2}\|\nabla (\bu\hspace{-0.125em}+\hspace{-0.125em}\uDirichlet)\|_{\Omega}^2+(\textup{div}\,(\BT\hspace{-0.125em}-\hspace{-0.125em}\BF),\bv\hspace{-0.125em}-\hspace{-0.125em}\bu)_{\Omega}\hspace{-0.125em}-\hspace{-0.125em}(\BF,\nabla(\bv\hspace{-0.125em}-\hspace{-0.125em}\bu))_{\Omega}
\\&=\tfrac{\nu}{2}\|\nabla (\bv\hspace{-0.125em}+\hspace{-0.125em}\uDirichlet)\|_{\Omega}^2\hspace{-0.125em}-\hspace{-0.125em}\tfrac{\nu}{2}\|\nabla (\bu\hspace{-0.125em}+\hspace{-0.125em}\uDirichlet)\|_{\Omega}^2\hspace{-0.125em}-\hspace{-0.125em}(\BT,\nabla(\bv\hspace{-0.125em}-\hspace{-0.125em}\bu))_{\Omega}
\\&=\tfrac{\nu}{2}\|\nabla (\bv\hspace{-0.125em}+\hspace{-0.125em}\uDirichlet)\|_{\Omega}^2\hspace{-0.125em}-\hspace{-0.125em}\tfrac{\nu}{2}\|\nabla (\bu\hspace{-0.125em}+\hspace{-0.125em}\uDirichlet)\|_{\Omega}^2\hspace{-0.125em}-\hspace{-0.125em}(\dev\BT,\nabla(\bv\hspace{-0.125em}-\hspace{-0.125em}\bu))_{\Omega} 
\hspace{-0.125em}-\hspace{-0.125em} (\tfrac{1}{d}\tr(\BT)\BI, \nabla\bv)_\Omega 
\\&=\tfrac{\nu}{2}\|\nabla (\bv\hspace{-0.125em}+\hspace{-0.125em}\uDirichlet)\|_{\Omega}^2\hspace{-0.125em}-\hspace{-0.125em}\tfrac{\nu}{2}\|\nabla (\bu\hspace{-0.125em}+\hspace{-0.125em}\uDirichlet)\|_{\Omega}^2\hspace{-0.125em}-\hspace{-0.125em}\nu(\nabla(\bu+\uDirichlet),\nabla(\bv\hspace{-0.125em}-\hspace{-0.125em}\bu))_{\Omega} 
\hspace{-0.125em}-\hspace{-0.125em} (\tfrac{1}{d}\tr(\BT)\BI, \nabla\bv)_\Omega
\\&=\tfrac{\nu}{2}\|\nabla (\bv\hspace{-0.125em}+\hspace{-0.125em}\uDirichlet)\|_{\Omega}^2\hspace{-0.125em}-\hspace{-0.125em}\nu(\nabla(\bu\hspace{-0.125em}+\hspace{-0.125em}\uDirichlet),\nabla(\bv\hspace{-0.125em}+\hspace{-0.125em}\uDirichlet))_{\Omega}\hspace{-0.125em}+\hspace{-0.125em}\tfrac{\nu}{2}\|\nabla (\bu\hspace{-0.125em}+\hspace{-0.125em}\uDirichlet)\|_{\Omega}^2\hspace{-0.125em}- \hspace{-0.125em}(\tfrac{1}{d}\tr(\BT)\BI, \nabla\bv)_\Omega 
\\&=\tfrac{\nu}{2}\|\nabla(\bv\hspace{-0.125em}-\hspace{-0.125em}\bu)\|_{\Omega}^2 \hspace{-0.125em}-\hspace{-0.125em} (\tfrac{1}{d}\tr(\BT)\BI, \nabla\bv)_\Omega \,,
\end{align*}

\emph{ad (\hyperlink{eq:stokes_gap2.ii}{ii}).} For every $\btau \in K_{\BT}$ and $\bv\in \HoneDirichlet$, using \eqref{eq:stokes_stress_constraint}, the continuous integration-by-parts formula (since $\btau-\BF\in \HdivN$), \eqref{eq:Neumann_condition_weak}, $\btau=\dev\btau+\tfrac{1}{d}\textup{tr}(\btau)$ a.e.\ in $\Omega$, $(\btau,\nabla \widehat{\bu}_D)_{\Omega}=(\dev\btau,\nabla \widehat{\bu}_D)_{\Omega}$ (since $\textup{tr}(\nabla \widehat{\bu}_D)=\diver\widehat{\bu}_D=0$ a.e.\ in $\Omega$), and a binomial formula, we find that
\begin{align*}
\overline{I}(\bv)-D(\btau)&= \tfrac{\nu}{2}\| \nabla (\bv\hspace{-0.125em}+\hspace{-0.125em}\uDirichlet)\|_{\Omega}^2-(\bff,\bv)_{\Omega}\hspace{-0.125em}-\hspace{-0.125em}(\BF,\nabla\bv)_{\Omega}
\hspace{-0.125em}+\hspace{-0.125em}\tfrac{1}{2\nu}\|\dev \btau\|_{\Omega}^2\hspace{-0.125em}-\hspace{-0.125em}(\dev\btau,\nabla\uDirichlet)_{\Omega}\\&
= \tfrac{\nu}{2}\| \nabla (\bv\hspace{-0.125em}+\hspace{-0.125em}\uDirichlet)\|_{\Omega}^2\hspace{-0.125em}+\hspace{-0.125em}(\textup{div}\,(\btau\hspace{-0.125em}-\hspace{-0.125em}\BF),\bv)_{\Omega}\hspace{-0.125em}-\hspace{-0.125em}(\BF,\nabla\bv)_{\Omega}
\hspace{-0.125em}+\hspace{-0.125em}\tfrac{1}{2\nu}\| \dev \btau\|_{\Omega}^2-(\dev\btau,\nabla\uDirichlet)_{\Omega}
\\&
= \tfrac{\nu}{2}\| \nabla (\bv\hspace{-0.125em}+\hspace{-0.125em}\uDirichlet)\|_{\Omega}^2\hspace{-0.125em}-\hspace{-0.125em}(\dev \btau,\nabla (\bv\hspace{-0.125em}+\hspace{-0.125em}\uDirichlet))_{\Omega}
\hspace{-0.125em}-\hspace{-0.125em} (\tfrac{1}{d}\tr(\btau), \nabla \bv)_\Omega \hspace{-0.125em}+\hspace{-0.125em}\tfrac{1}{2\nu}\|\dev \btau \|_{\Omega}^2
\\&
= \tfrac{\nu}{2}\| \nabla (\bv\hspace{-0.125em}+\hspace{-0.125em}\uDirichlet)\hspace{-0.125em}-\hspace{-0.125em}\smash{\tfrac{1
}{\nu}}\dev \btau\|_{\Omega}^2 \hspace{-0.125em} -\hspace{-0.125em} (\tfrac{1}{d}\tr(\btau), \nabla \bv)_\Omega\,,
\end{align*}
which is the claimed representation of the extended primal-dual gap estimator \eqref{eq:stokes_gap2}.
\end{proof}

Eventually, we are in position to establish an analogous equivalence to \eqref{eq:elasticity_aposteriori2}.

\begin{theorem}\label{thm:prager_synge_identity_stokes2}
For every $\bv\hspace{-0.1em}\in \hspace{-0.1em}\HoneDirichlet$ and $\btau\hspace{-0.1em}\in\hspace{-0.1em} K_{\BT}$, we have the following identity and equivalence
\begin{equation}\label{eq:stokes_aposteriori1}
\begin{aligned}
\rhototext(\bv,\btau) &= \gapext(\bv,\btau)
+ \tfrac{1}{d}(\tr(\BT - \btau)\BI,\nabla\bv)_\Omega
\sim \gapext(\bv,\btau)\,,
\end{aligned}
\end{equation}
where the equivalence  explicitly written reads
\begin{equation}\label{eq:stokes_aposteriori2}
\tfrac{\nu}{2}\|\nabla \bv - \nabla \bu\|_{\Omega}^2
+
\tfrac{1}{2\nu}\|\dev \btau - \dev \BT\|_{\Omega}^2
\sim
\tfrac{\nu}{2}\|\nabla \bv + \nabla \uDirichlet - \smash{\tfrac{1}{\nu}}\dev \btau\|_{\Omega}^2\,,
\end{equation}
the implicit constants in $\sim$ only depends on the dev-div constant $c_{\mathrm{DD}}>0$.
\end{theorem}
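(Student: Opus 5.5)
The identity in \eqref{eq:stokes_aposteriori1} follows by adding the two representations of the preceding lemma and using strong duality. The equivalence then amounts to one easy bound (gap by total error) and one harder bound (total error by gap). For the harder bound the plan is to control the pressure-type correction term with the dev-div inequality \eqref{eq:dev_div} and absorb it.

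\emph{Identity.} By part (ii) of the preceding lemma, $\rho^2_{\overline{I}}(\bv)=\overline{I}(\bv)-I(\bu)+(\tfrac1d\tr(\BT)\BI,\nabla\bv)_\Omega$. Lemma \ref{lem:strong_convexity_measures_stokes}(ii) gives $\rho^2_{-D}(\btau)=-D(\btau)+D(\BT)$. Adding these and using the strong duality relation \eqref{eq:stokes_strong_duality}, $I(\bu)=D(\BT)$, we get
\begin{align*}
\rhototext(\bv,\btau)=\overline{I}(\bv)-D(\btau)+(\tfrac1d\tr(\BT)\BI,\nabla\bv)_\Omega .
\end{align*}
Part (i) of the lemma gives $\overline{I}(\bv)-D(\btau)=\gapext(\bv,\btau)-(\tfrac1d\tr(\btau)\BI,\nabla\bv)_\Omega$. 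Substituting yields the claimed identity.

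\emph{Bound $\gapext\lesssim\rhototext$.} Since $\dev\BT=\nu\nabla(\bu+\uDirichlet)$ by \eqref{eq:stokes_optimality_continuous_3}, we can write $\nabla(\bv+\uDirichlet)-\tfrac1\nu\dev\btau=\nabla(\bv-\bu)-\tfrac1\nu\dev(\btau-\BT)$. The inequality $(a+b)^2\le 2a^2+2b^2$ then gives $\gapext\le 2\rhototext$.

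\emph{Bound $\rhototext\lesssim\gapext$.} This is the main step: the remainder term has no sign and must be absorbed.
\begin{itemize}[noitemsep,topsep=2pt]
\item[(a)] Since $\btau,\BT\in K_{\BT}$, the difference satisfies $\btau-\BT\in\HdivN$ with $\diver(\btau-\BT)=\mathbf 0$. Hence $(\BT-\btau,\nabla\bw)_\Omega=0$ for all $\bw\in\HoneDirichlet$, so $\diver(\BT-\btau)=0$ in $\HminusDirichlet$.
\item[(b)] The dev-div inequality \eqref{eq:dev_div} then gives $\|\tr(\BT-\btau)\|_\Omega\le\sqrt d\,\|\BT-\btau\|_\Omega\le\sqrt d\,c_{\mathrm{DD}}\|\dev(\BT-\btau)\|_\Omega$.
\item[(c)] The key observation concerns the other factor. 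Since $\diver\uDirichlet=0$ and $\tr\dev\btau=0$, we have $\diver\bv=\tr\big(\nabla(\bv+\uDirichlet)-\tfrac1\nu\dev\btau\big)$. Pointwise this gives $|\diver\bv|\le\sqrt d\,|\nabla(\bv+\uDirichlet)-\tfrac1\nu\dev\btau|$, and therefore $\|\diver\bv\|_\Omega\le\sqrt{2d\,\gapext/\nu}$.
\end{itemize}
Combining (b) and (c), and noting $(\tr(\BT-\btau)\BI,\nabla\bv)_\Omega=(\tr(\BT-\btau),\diver\bv)_\Omega$, we obtain
\begin{align*}
\tfrac1d(\tr(\BT-\btau)\BI,\nabla\bv)_\Omega\le c_{\mathrm{DD}}\|\dev(\BT-\btau)\|_\Omega\sqrt{2\gapext/\nu}=2c_{\mathrm{DD}}\sqrt{\rho^2_{-D}(\btau)\,\gapext}.
\end{align*}
Young's inequality bounds the right-hand side by $\tfrac12\rho^2_{-D}(\btau)+2c_{\mathrm{DD}}^2\gapext$. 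Inserting this into the identity and absorbing $\tfrac12\rho^2_{-D}(\btau)\le\tfrac12\rhototext$ into the left-hand side yields $\rhototext\le(2+4c_{\mathrm{DD}}^2)\gapext$. The constants thus depend only on $c_{\mathrm{DD}}$, as claimed.

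The step I expect to be the obstacle is this absorption. A naive estimate of $\diver\bv=\diver(\bv-\bu)$ by $\|\nabla(\bv-\bu)\|_\Omega$ gives a factor $c_{\mathrm{DD}}\rhototext$, which cannot be absorbed when $c_{\mathrm{DD}}\ge1$. It is necessary to measure $\diver\bv$ through the gap itself, as in step (c).
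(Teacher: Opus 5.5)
Your proof is correct and follows essentially the paper's route. The identity comes from adding the two representations of the preceding lemma and Lemma~\ref{lem:strong_convexity_measures_stokes}(ii) and invoking $I(\bu)=D(\BT)$; the hard direction pairs $\tr(\BT-\btau)$ against the gap integrand (your observation $\diver\bv=\tr\big(\nabla(\bv+\uDirichlet)-\tfrac{1}{\nu}\dev\btau\big)$ is exactly the paper's orthogonality $\tr(\BT-\btau)\BI\perp_{\mathbb{L}^2}\nabla\uDirichlet-\tfrac{1}{\nu}\dev\btau$), then applies the dev-div inequality to the divergence-free difference $\BT-\btau$ and Young's inequality, mirroring the proof of Theorem~\ref{thm:prager_synge_identity_elasticity}, with your explicit constants $2$ and $2+4c_{\mathrm{DD}}^2$ making concrete what the paper leaves implicit.
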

\begin{remark}
Note that another consequence of the dev-div inequality \eqref{eq:dev_div} is that, in the \emph{a posteriori} error estimate \eqref{eq:stokes_aposteriori2}, we can replace $\|\dev(\btau - \BT)\|_{\Omega}^2$ with the full norm $\|\btau - \BT\|_{\Omega}^2$, yielding then an estimate for the pressure $p= -\frac{1}{d}\tr\BT$.
\end{remark}
\begin{proof}[of Theorem \ref{thm:prager_synge_identity_stokes2}]
The proof is analogous to that of Theorem \ref{thm:prager_synge_identity_elasticity}, noting that $\overline{I}(\bu)=I(\bu)$ (since $\textup{div}\,\bu=0$ a.e.\ in $\Omega$) and that
\begin{equation*}
\tfrac{1}{d}(\tr(\BT - \btau)\BI,\nabla \bv)_\Omega
=
\tfrac{1}{d}(\tr(\BT - \btau)\BI,\nabla \bv+\nabla \uDirichlet - \tfrac{1}{\nu}\dev \btau)_\Omega\,,
\end{equation*}
where we used that $\tr(\BT - \btau)\BI\perp_{\mathbb{L}^2}\nabla \uDirichlet - \tfrac{1}{\nu}\dev \btau$  as $\diver \uDirichlet = 0$ a.e.\ in $\Omega$.
\end{proof}

\subsection{Data oscillation}\label{sec:oscillation}

\hspace{5mm}In general, the data $\bff\in \BL^2(\Omega)$ and $\BF\in \mathbb{L}^2(\Omega)$ will not be element-wise constant and, {thus},  the previously obtained results do not apply immediately.
However, a similar argument based on modified energies  yields an \emph{a posteriori} error estimate valid also for the general loads. For {simplicity}, let us return to the Poisson  problem \eqref{def:poisson_primal}.

Given that the flux reconstruction $\bq_h\in \bff_h+\RTNeumann$ satisfies $\diver(\bq_h - \bff_h) = -f_h$ a.e.\ in $\Omega$, it is not admissible for the dual energy functional \eqref{eq:dual_energy_laplace}, but admissible for the modified dual energy functional $\overline{D}_h\colon \bff_h+H_N(\textup{div};\Omega)\to \mathbb{R}\cup\{-\infty\}$, for every $\br_h\in \bff_h+H_N(\textup{div};\Omega)$ defined by
\begin{equation}
\overline{D}_h(\br_h) \coloneqq -\tfrac{1}{2}\|\br_h\|_{\Omega}^2 
+ (\br_h,\nabla \widehat{u}_D)
- \characteristic{-f_h}^{\Omega}(\diver(\br_h- \bff_h))\,.
\end{equation}
Then, with analogous arguments as above, for every $v\in H^1_D(\Omega)$ and $\br_h\in \bff_h+H_N(\textup{div};\Omega)$ with $\diver(\bq_h - \bff_h) = -f_h$ a.e.\ in $\Omega$, we  arrive at the modified primal-dual gap identity
\begin{align}
\begin{aligned}
\tfrac{1}{2}\|\nabla u - \nabla v\|^2_\Omega 
+ \tfrac{1}{2}\|\bq - \br\|^2_\Omega
&=
\tfrac{1}{2}\|\nabla(u+\widehat{u}_D) - \br\|^2_\Omega 
\\&\quad+(f-f_h, u-v)_\Omega
\\&\quad + (\bff - \bff_h, \nabla(u-v))_\Omega\,.
\end{aligned}
\end{align}
Recalling that, by the local Poincaré inequality (\textit{cf}.\ \cite[Lem.\ 3.24]{EG21}), for every $T\hspace{-0.15em}\in\hspace{-0.15em} \mathcal{T}_h$ and ${v\hspace{-0.15em}\in\hspace{-0.15em} H^1(T)}$, we have that $\|v - \Pi_h v\|_T \leq 
\frac{h_T}{\pi} \|\nabla v\|_T$,
for every $v\in H^1_D(\Omega)$ and $\br_h\in \bff_h+H_N(\textup{div};\Omega)$ with $\diver(\bq_h - \bff_h) = -f_h$ a.e.\ in $\Omega$, 
Young's inequality yields the \emph{a posteriori} error estimate
\begin{equation}
\tfrac{1}{2}\|\nabla u - \nabla v\|^2_\Omega 
+ \tfrac{1}{2}\|\bq - \br\|^2_\Omega
\lesssim
\tfrac{1}{2}\|\nabla(u+\widehat{u}_D) - \br\|^2_\Omega 
+ \sum_{T\in \tria} \mathrm{osc}^2_T(f,\bff)\,,
\end{equation}
where the \textit{local data oscillation term} 
is defined by $\mathrm{osc}_T^2(f,\bff) \hspace{-0.15em}\coloneqq \hspace{-0.15em}
\tfrac{h_T^2}{\pi^2}\|f-f_h\|^2_T 
+ \| \bff  - \bff_h\|^2_T$ for all ${T\hspace{-0.15em}\in \hspace{-0.15em}\mathcal{T}_h}$.

\section{Numerical experiments}\label{sec:experiments}

\hspace{5mm}In this section, we 
present numerical experiments that support the theoretical findings obtained in the previous sections.
All numerical experiments were implemented in \texttt{firedrake} (\textit{cf}.~\cite{Firedrake}).
The arising linear systems were solved using the \texttt{MUMPS} library (\textit{cf}.\ \cite{MUMPS}).

Even though the \emph{a posteriori}  error identities/estimates derived in this work are of global {nature},
all derived \emph{a posteriori} error estimators have integral representations with non-negative {integrands} (\textit{cf}.\ Lemma \ref{lem:discrete_primal_dual_gap_estimator_stokes} and Lemma \ref{lem:primal_dual_gap_estimator_elasticity}, respectively), measuring the violation of the convex optimality relations \eqref{eq:stokes_optimality_continuous_3} and \eqref{eq:elasticity_optimality_conditions.3}, respectively.
This  allows the definition of local mesh-refinement {indicators} that can be employed for local mesh-refinement. 
More precisely, for the incompressible Stokes problem (\textit{cf}.\ Section \ref{sec:stokes_primal}) and the Navier--Lam\'e  problem (\textit{cf}.\ Section \ref{subsec:elasticity_primal}), we intro\-duce the following local mesh-refinement indicators:
\begin{itemize}[noitemsep,topsep=2pt,leftmargin=!,labelwidth=\widthof{$\bullet$}]
\item[$\bullet$] \emph{Incompressible Stokes problem:} For every $T\in \mathcal{T}_h$, we introduce the local refinement indicator $\gaplocal\colon K_{\bu}\times K_{\BT}\to \mathbb{R}$, for every $\bv\in K_{\bu}$ and $\btau\in  K_{\BT}$ defined by
\begin{align}\label{def:indicator_stokes}
\gaplocal(\bv,\btau)
&= 
\tfrac{\nu}{2}\|\nabla \bv + \nabla \uDirichlet - \tfrac{1}{\nu}\dev \btau \|^2_T\,;
\end{align}
\item[$\bullet$] \emph{Navier--Lam\'e problem:} For every $T\in \mathcal{T}_h$, we introduce the local refinement indicator $\gapextlocal\colon \HoneDirichlet\times \overline{K}_{\bsigma}\to \mathbb{R}$, for every $\bv\in \HoneDirichlet$ and $\btau\in \overline{K}_{\bsigma}$ defined by
\begin{align}\label{def:indicator_elasticity}
\gapextlocal(\bv,\btau)
&= 
\tfrac{1}{2}\|\smash{\bbC^{\smash{\frac{1}{2}}}}(\symgrad{\bv+\uDirichlet} - \smash{\bbC^{-1}}\btau)\|^2_T\,.
\end{align}
\end{itemize}

The numerical experiments not necessarily make use of element-wise constant data ${\bff\hspace*{-0.175em} \in\hspace*{-0.175em} (\Pbroken{0})^d}$ and $\BF\in \PbrokenM{0}$. Instead, in all numerical experiments, we employ $\bu_D^h\coloneqq \mathcal{I}_h^{cr}\bu_D\in \CR$, $\bff_h\coloneqq \Pi_h\bff\in (\Pbroken{0})^d$, and $\BF_h\coloneqq \Pi_h\BF\in (\Pbroken{0})^{d\times d}$. 
For this reason, 
in view of Section \ref{sec:oscillation}, for every $T\in \mathcal{T}_h$, we introduce the \emph{local data oscillation estimator}
\begin{equation}\label{def:osc_exp}
\osclocal(\bff,\BF) \coloneqq \tfrac{h_T^2}{\pi^2}\|\bff -\bff_h\|^2_T
+ \|\BF - \BF_h\|^2_T\,.
\end{equation}

In all numerical experiments that make use of local mesh-refinement (based on \eqref{def:indicator_stokes} or \eqref{def:indicator_elasticity}), we implement the following local mesh-refinement algorithm.

\begin{algorithm}\label{alg:adaptive}
Choose an initial triangulation $\mathcal{T}_0$, a maximum number of iterations ${k_{\mathrm{max}}\in \mathbb{N}}$,
an error tolerance $\varepsilon_{\mathrm{STOP}}\hspace{-0.05em}>\hspace{-0.05em}0$, and a refinement parameter $\theta\hspace{-0.05em}\in\hspace{-0.05em} (0,1]$.
Then, for $k\hspace{-0.05em}\in \hspace{-0.05em}\{1,2,\ldots, k_{\mathrm{max}}\}$, perform the following iteration loop:

\begin{description}[noitemsep,topsep=1pt,labelwidth=\widthof{('\textsc{Estimate}')},leftmargin=!,font=\normalfont\itshape]
\item[('\textsc{Solve}')]\hypertarget{Solve}{}
Compute a discrete primal solution $\bu_{h_k} \hspace{-0.2em}\in \hspace{-0.175em} V_D^{h_k}$ and a discrete dual solution ${\BT_{h_k} \hspace{-0.2em} \in \hspace{-0.175em} \BF_{h_k}}$ $ + \Sigma_N^{h_k}$ (resp. $\bsigma_{h_k}^*\in \BF_{h_k} + \Sigma_N^{h_k}$) via the reconstruction formula \eqref{eq:stokes_marini} (resp. \eqref{prop:marini_elasticity.1}). Post-process the discrete primal solution $\bu_{h_k} \in V_D^{h_k}$ into an admissible vector field $\widehat{\bu}_{h_k}\in \BH^1_D(\Omega)$ by nodal averaging (\textit{cf}.\ \cite[Sec.\ 22.2]{EG21}).

\item[('\textsc{Estimate}')]\hypertarget{Estimate}{}
For each $T\in \mathcal{T}_{h_k}$, compute the local refinement indicator $\eta^2_T\coloneqq\eta^2_T(\widehat{\bu}_{h_k}, \BT_{h_k})+\osclocal(\bff,\BF)$ ($\eta^2_T\coloneqq\overline{\eta}^2_T(\widehat{\bu}_{h_k}, \bsigma_{h_k}^*)+\osclocal(\bff,\BF)$, respectively).
If ${\sum_{T\in \mathcal{T}_{h_k}}{\eta^2_T}< \varepsilon_{\mathrm{STOP}}}$, then \textup{STOP}.
Otherwise, continue with Step (\hyperlink{Mark}{'\textsc{Mark}'});

\item[('\textsc{Mark}')]\hypertarget{Mark}{}
Mark all elements $T\in \mathcal{T}_{h_k}$ for refinement which satisfy a max-marking condition, \textit{i.e.},  there holds
\begin{equation*}
\eta^2_T \geq \theta 
\max_{\widehat{T}\in \mathcal{T}_{h_k}}{\big\{\eta^2_{\widehat{T}}\big\}}\,;
\end{equation*}

\item[('\textsc{Refine}')]\hypertarget{Refine}{}
Employ the bisection algorithm from \cite{ANP.2000}
to obtain the next triangulation $\mathcal{T}_{h_{k+1}}$.
Increase $k\mapsto k+1$ and continue with Step (\hyperlink{Solve}{'\textsc{Solve}'}).
\end{description}
\end{algorithm}
Unless otherwise specified, we will employ $\theta = \frac{1}{2}$ in the experiments with local mesh-refinement.
Note that $\theta = 0$ corresponds to uniform mesh-refinement, \textit{i.e.}, each element is bisection refined.\pagebreak

\subsection{Incompressible Stokes problem: \emph{a priori} error analysis}

In this first numerical experment, we  review the validity of the discrete primal-dual gap identity \eqref{eq:discrete_praguer_synge_identity_stokes}. To this end, let the computational domain be given via $\Omega\! \coloneqq\! (0,1)^2$ with {Dirichlet} {boundaries}  at the left and right walls, \textit{i.e.}, $\Gamma_D \coloneqq \{0\}\times (0,1) \cup \{1\}\times (0,1)$, as well as 
Neumann boundaries at the top and bottom walls, \textit{i.e.}, 
$
\Gamma_N \coloneqq  (0,1)\times  \{0\} \cup(0,1)\times  \{1\}$, (\textit{cf}.\ Figure \ref{fig:unit_square}). Then, as manufactured (original) velocity vector field  $\bu_{\textup{orig}}\in \BH^1(\Omega)$ and kinematic pressure $p_{\textup{orig}}\in L^2(\Omega)$, we consider a Taylor--Green vortex (\textit{cf}.\ \cite{TaylorGreen1937}), for every $x=(x_1,x_2)\in \Omega$ defined by
\begin{subequations}\label{eq:exp1_solution}
\begin{align}
\bu_{\textup{orig}}(x) &\coloneqq \smash{(\sin(\pi x_1)\cos(\pi x_2), -\cos(\pi x_1)\sin(\pi x_2))^\top\,, }\\
p_{\textup{orig}}(x) &\coloneqq \smash{\tfrac{1}{4}\{\cos(2\pi x_1)+ \sin(2\pi x_2)\}\,,}
\end{align}
\end{subequations}
respectively. Then, the load is set as $\bff = -\nu\Delta\bu_{\textup{orig}}+ \nabla p_{\textup{orig}}\in \BL^2(\Omega)$, where $\nu = \frac{1}{2}$, 
the Dirichlet boundary datum $\widehat{\bu}_D\in \BH^1(\Omega)$ such that $\diver \widehat{\bu}_D=0$ a.e.\ in $\Omega$ and $\widehat{\bu}_D=\bu_{\textup{orig}}$ q.e.\ in $\Gamma_D$, and the Neumann boundary datum 
$\BF\in \mathbb{L}^2(\Omega)$ such that ${\BF\bn=\nu \nabla \bu_{\textup{orig}}\bn}$ q.e. in $\Gamma_N$ (\textit{cf}. {Remark} \ref{rem:Neumann}).

Starting with a triangulation $\mathcal{T}_h$ with 200 elements,  the discrete incompressible Stokes problem (\textit{cf}.\ Section \ref{sec:stokes_discrete_primal}) is solved introducing a discrete kinematic pressure, which serves a Lagrange multiplier to enforce the divergence constraint.
More precisely, we seek  $(\bu_h, p_h) \in \CRDirichlet \times \mathbb{P}^0(\tria)$ 
such that for every $(\bv_h,q_h)\in \CRDirichlet\times \mathbb{P}^0(\tria)$, there holds
\begin{alignat*}{2}
\nu (\nabla_h (\bu_h+\bu_D^h), \nabla_h \bv_h)_\Omega 
- (p_h,\textup{div}_h\bv_h)_\Omega &= (\bff_h,\Pi_h\bv_h)_\Omega \,, \\ 
(\textup{div}_h \bu_h, q_h)_\Omega &= 0 \,.
\end{alignat*}
The discrete dual solution/stress $\BT_h\in K_{\BT}^h$ is found using the reconstruction formula \eqref{eq:stokes_marini}. Then, we generate random vector fields $\bv_h^i\in K_{\bu}^h$, $i\in \{1,2,3\}$, and tensors $\btau_h^i \in \R^{2\times 2}$, $i\in\{1,2,3\}$, (\textit{i.e.}, $\BT_h + \btau_h^i \in \RTNeumann$ with $\diver(\BT_h + \btau_h^i) = -\bff_h$ a.e.\ in $\Omega$ for all $i\in \{1,2,3\}$); each via sampling from a beta distribution. 
In order to quantify the validity of the discrete primal-dual gap identity \eqref{eq:discrete_praguer_synge_identity_stokes}, for each refinement level $k\in \{1,\ldots,6\}$ and sampling step $i\in \{1,2,3\}$,
we compute the \textit{relative identity errors} (\textit{cf}.\ Table \ref{tbl:discrete_prager_synge})\vspace*{-0.5mm}
\begin{equation}
\mathtt{err}^{\mathrm{iden}}_{k,i} \coloneqq 
\frac{|\eta_{\mathrm{gap},h_k}^2(\bv_{h_k}^i, \BT_{h_k} + \btau^i_{h_k}) - \rho_{\mathrm{tot},h_k}^2(\bv_{h_k}^i, \BT_{h_k} + \btau^i_{h_k})|}{\rho_{\mathrm{tot},h_k}^2(\bv_{h_k}^i, \BT_{h_k} + \btau_{h_k}^i)}\,. 
\end{equation}
Moreover, for each refinement level $k\hspace{-0.15em}\in\hspace{-0.15em} \{1,\ldots,6\}$,
Table \ref{tbl:discrete_prager_synge}  contains the primal error ${\texttt{err}_k^{\bu}\hspace{-0.15em}\coloneqq\hspace{-0.15em}\rho_I(\bu_{h_k})}$ and the dual error $\texttt{err}_k^{\BT}\coloneqq \rho_{-D}(\BT_{h_k})$,  along with experimental rates of convergence computed via $\text{\texttt{EOC}}_k(\texttt{err}_k)
\coloneqq 
(\log(\texttt{err}_k)-\log(\texttt{err}_{k-1}))/(\log(h_k)-\log(h_{k-1}))$, where $\texttt{err}_k\in \{\texttt{err}_k^{\bu},\texttt{err}_k^{\BT}\}$. In it,
we observe that  
the discrete primal-dual gap identity \eqref{eq:discrete_praguer_synge_identity_stokes} is satisfied and a linear convergence rate, which confirms to what is expected, given the smoothness of the original solution \eqref{eq:exp1_solution}.

\begin{center}
\begin{table}[H]
\renewcommand{\arraystretch}{0.05} 
%
  \centering
\begin{tabular}{@{}ccccccc@{}}
\toprule[2.0pt]
\texttt{num\_dof}$_k$ & $\texttt{err}_k^{\bu} $ & $\texttt{EOC}_k(\texttt{err}_k^{\bu})$ & $\texttt{err}_k^{\BT}$ & $\texttt{EOC}_k(\texttt{err}_k^{\BT})$\vphantom{$X^{X^X}$} & $i$ & $\mathtt{err}^{\mathrm{iden}}_{k,i}$\\[1.25mm]
\toprule[1.5pt]
\multirow{3}{*}{840} & \multirow{3}{*}{0.1830} & \multirow{3}{*}{-} & \multirow{3}{*}{0.1573} & \multirow{3}{*}{-} & 1 & $8.9\times 10^{-4}$\vphantom{$X^{X^x}$} \\ 
&&&&&2 & $7.0\times 10^{-3}$ \\
&&&&&3 & $6.9\times 10^{-4}$ \\[0.5mm]
\midrule
\multirow{3}{*}{3 280} & \multirow{3}{*}{0.0920} & \multirow{3}{*}{1.0091} & \multirow{3}{*}{0.0783} & \multirow{3}{*}{1.0238} & 1 & $3.0\times 10^{-3}$\vphantom{$X^{X^x}$}\\ 
&&&&&2 & $1.8\times 10^{-3}$ \\
&&&&&3 & $1.8\times 10^{-4}$\\[0.5mm]
\midrule
\multirow{3}{*}{12 960} & \multirow{3}{*}{0.0461} & \multirow{3}{*}{1.0054} & \multirow{3}{*}{0.0391} & \multirow{3}{*}{1.0114} & 1 & $4.7\times 10^{-5}$\vphantom{$X^{X^x}$} \\
&&&&&2& $1.5\times 10^{-4}$ \\
&&&&&3& $2.6\times 10^{-4}$ \\[0.5mm]
\midrule
\multirow{3}{*}{51 520} & \multirow{3}{*}{0.0231} & \multirow{3}{*}{1.0029} & \multirow{3}{*}{0.0195} & \multirow{3}{*}{1.0054} & 1 & $7.9\times 10^{-6}$\vphantom{$X^{X^x}$} \\
&&&&&2& $4.8\times 10^{-5}$ \\
&&&&&3& $3.5\times 10^{-5}$ \\[0.5mm]
\midrule
\multirow{3}{*}{205 440} & \multirow{3}{*}{0.0115} & \multirow{3}{*}{1.0015} & \multirow{3}{*}{0.0098} & \multirow{3}{*}{1.0027} & 1 & $8.2\times 10^{-8}$\vphantom{$X^{X^x}$}\\ 
&&&&&2& $1.1\times 10^{-5}$ \\
&&&&&3 & $2.8\times 10^{-5}$\\[0.5mm]
\midrule
\multirow{3}{*}{820 480} & \multirow{3}{*}{0.0058} & \multirow{3}{*}{1.008} & \multirow{3}{*}{0.0049} & \multirow{3}{*}{1.0013} & 1& $6.3\times 10^{-7}$\vphantom{$X^{X^x}$} \\
&&&&&2 & $2.3\times 10^{-7}$\\
&&&&&3& $7.5\times 10^{-7}$ \\[0.5mm]
\bottomrule[2pt]
\end{tabular} \vspace*{-1.5mm}
\captionof{table}{Primal, dual, and relative identity errors.}
\label{tbl:discrete_prager_synge}
\end{table}
\end{center}
\vspace{-2.5cm}

\begin{center}
\begin{figure}
  \centering

\tikzset {_c5juh8s6p/.code = {\pgfsetadditionalshadetransform{ \pgftransformshift{\pgfpoint{0 bp } { 0 bp }  }  \pgftransformscale{1 }  }}}
\pgfdeclareradialshading{_vov8r40tj}{\pgfpoint{0bp}{0bp}}{rgb(0bp)=(1,1,1);
rgb(2.142857142857143bp)=(1,1,1);
rgb(14.285714285714285bp)=(0.89,0.89,0.89);
rgb(14.285714285714285bp)=(0.89,0.89,0.89);
rgb(18.482142857142858bp)=(0.86,0.86,0.86);
rgb(25bp)=(0.82,0.82,0.82);
rgb(400bp)=(0.82,0.82,0.82)}


\tikzset{
pattern size/.store in=\mcSize, 
pattern size = 5pt,
pattern thickness/.store in=\mcThickness, 
pattern thickness = 0.3pt,
pattern radius/.store in=\mcRadius, 
pattern radius = 1pt}
\makeatletter
\pgfutil@ifundefined{pgf@pattern@name@_ttb5rjq11}{
\pgfdeclarepatternformonly[\mcThickness,\mcSize]{_ttb5rjq11}
{\pgfqpoint{0pt}{-\mcThickness}}
{\pgfpoint{\mcSize}{\mcSize}}
{\pgfpoint{\mcSize}{\mcSize}}
{
\pgfsetcolor{\tikz@pattern@color}
\pgfsetlinewidth{\mcThickness}
\pgfpathmoveto{\pgfqpoint{0pt}{\mcSize}}
\pgfpathlineto{\pgfpoint{\mcSize+\mcThickness}{-\mcThickness}}
\pgfusepath{stroke}
}}
\makeatother


\tikzset{
pattern size/.store in=\mcSize, 
pattern size = 5pt,
pattern thickness/.store in=\mcThickness, 
pattern thickness = 0.3pt,
pattern radius/.store in=\mcRadius, 
pattern radius = 1pt}
\makeatletter
\pgfutil@ifundefined{pgf@pattern@name@_egox6bz96}{
\pgfdeclarepatternformonly[\mcThickness,\mcSize]{_egox6bz96}
{\pgfqpoint{0pt}{-\mcThickness}}
{\pgfpoint{\mcSize}{\mcSize}}
{\pgfpoint{\mcSize}{\mcSize}}
{
\pgfsetcolor{\tikz@pattern@color}
\pgfsetlinewidth{\mcThickness}
\pgfpathmoveto{\pgfqpoint{0pt}{\mcSize}}
\pgfpathlineto{\pgfpoint{\mcSize+\mcThickness}{-\mcThickness}}
\pgfusepath{stroke}
}}
\makeatother


\tikzset{
pattern size/.store in=\mcSize, 
pattern size = 5pt,
pattern thickness/.store in=\mcThickness, 
pattern thickness = 0.3pt,
pattern radius/.store in=\mcRadius, 
pattern radius = 1pt}
\makeatletter
\pgfutil@ifundefined{pgf@pattern@name@_1epxomdb1}{
\pgfdeclarepatternformonly[\mcThickness,\mcSize]{_1epxomdb1}
{\pgfqpoint{0pt}{-\mcThickness}}
{\pgfpoint{\mcSize}{\mcSize}}
{\pgfpoint{\mcSize}{\mcSize}}
{
\pgfsetcolor{\tikz@pattern@color}
\pgfsetlinewidth{\mcThickness}
\pgfpathmoveto{\pgfqpoint{0pt}{\mcSize}}
\pgfpathlineto{\pgfpoint{\mcSize+\mcThickness}{-\mcThickness}}
\pgfusepath{stroke}
}}
\makeatother


\tikzset{
pattern size/.store in=\mcSize, 
pattern size = 5pt,
pattern thickness/.store in=\mcThickness, 
pattern thickness = 0.3pt,
pattern radius/.store in=\mcRadius, 
pattern radius = 1pt}
\makeatletter
\pgfutil@ifundefined{pgf@pattern@name@_7mca2zvg8}{
\pgfdeclarepatternformonly[\mcThickness,\mcSize]{_7mca2zvg8}
{\pgfqpoint{0pt}{-\mcThickness}}
{\pgfpoint{\mcSize}{\mcSize}}
{\pgfpoint{\mcSize}{\mcSize}}
{
\pgfsetcolor{\tikz@pattern@color}
\pgfsetlinewidth{\mcThickness}
\pgfpathmoveto{\pgfqpoint{0pt}{\mcSize}}
\pgfpathlineto{\pgfpoint{\mcSize+\mcThickness}{-\mcThickness}}
\pgfusepath{stroke}
}}
\makeatother
\tikzset{every picture/.style={line width=0.75pt}} 

\begin{tikzpicture}[x=0.75pt,y=0.75pt,yscale=-1,xscale=1]

\path  [shading=_vov8r40tj,_c5juh8s6p] (60.25,50.19) -- (180,50.19) -- (180,169.94) -- (60.25,169.94) -- cycle ; 
\draw   (60.25,50.19) -- (180,50.19) -- (180,169.94) -- (60.25,169.94) -- cycle ; 

\draw  [draw opacity=0][pattern=_ttb5rjq11,pattern size=3.75pt,pattern thickness=0.75pt,pattern radius=0pt, pattern color={shamrockgreen}] (180,50.19) -- (190.25,50.19) -- (190.25,170) -- (180,170) -- cycle ;
\draw  [draw opacity=0][pattern=_egox6bz96,pattern size=3.75pt,pattern thickness=0.75pt,pattern radius=0pt, pattern color={shamrockgreen}] (50,50.19) -- (60.25,50.19) -- (60.25,170) -- (50,170) -- cycle ;
\draw  [draw opacity=0][pattern=_1epxomdb1,pattern size=3.75pt,pattern thickness=0.75pt,pattern radius=0pt, pattern color={denim}] (60.25,39.75) -- (180,39.75) -- (180,50.19) -- (60.25,50.19) -- cycle ;
\draw  [draw opacity=0][pattern=_7mca2zvg8,pattern size=3.75pt,pattern thickness=0.75pt,pattern radius=0pt, pattern color={denim}] (60.25,169.94) -- (180,169.94) -- (180,180.39) -- (60.25,180.39) -- cycle ;
\draw [color={rgb, 255:red, 206; green, 206; blue, 206 }  ,draw opacity=1 ] [dash pattern={on 0.75pt off 0.75pt on 0.75pt off 0.75pt}]  (20.25,170) -- (60.25,170) ;
\draw [color={rgb, 255:red, 206; green, 206; blue, 206 }  ,draw opacity=1 ] [dash pattern={on 0.75pt off 0.75pt on 0.75pt off 0.75pt}]  (20.25,50.19) -- (60.25,50.19) ;
\draw [color={rgb, 255:red, 206; green, 206; blue, 206 }  ,draw opacity=1 ] [dash pattern={on 0.75pt off 0.75pt on 0.75pt off 0.75pt}]  (60.25,169.94) -- (60.25,209.94) ;
\draw [color={rgb, 255:red, 206; green, 206; blue, 206 }  ,draw opacity=1 ] [dash pattern={on 0.75pt off 0.75pt on 0.75pt off 0.75pt}]  (180,169.94) -- (180,209.94) ;
\draw [color={rgb, 255:red, 128; green, 128; blue, 128 }  ,draw opacity=1 ]   (20.25,50.19) -- (20.25,170) ;
\draw [shift={(20.25,170)}, rotate = 270] [color={rgb, 255:red, 128; green, 128; blue, 128 }  ,draw opacity=1 ][line width=0.75]    (0,5.59) -- (0,-5.59)   ;
\draw [shift={(20.25,50.19)}, rotate = 270] [color={rgb, 255:red, 128; green, 128; blue, 128 }  ,draw opacity=1 ][line width=0.75]    (0,5.59) -- (0,-5.59)   ;
\draw [color={rgb, 255:red, 128; green, 128; blue, 128 }  ,draw opacity=1 ]   (60.25,209.94) -- (180,209.94) ;
\draw [shift={(180,209.94)}, rotate = 180] [color={rgb, 255:red, 128; green, 128; blue, 128 }  ,draw opacity=1 ][line width=0.75]    (0,5.59) -- (0,-5.59)   ;
\draw [shift={(60.25,209.94)}, rotate = 180] [color={rgb, 255:red, 128; green, 128; blue, 128 }  ,draw opacity=1 ][line width=0.75]    (0,5.59) -- (0,-5.59)   ;
\draw   (60.25,50.25) -- (180,50.25) -- (180,170) -- (60.25,170) -- cycle ;

\draw (192.67,100) node [anchor=north west][inner sep=0.75pt]  [color={shamrockgreen}  ,opacity=1 ]  {$\Gamma _{D}$};
\draw (112.5,22.5) node [anchor=north west][inner sep=0.75pt]  [color={denim}  ,opacity=1 ]  {$\Gamma _{N}$};
\draw (112.5,182.32) node [anchor=north west][inner sep=0.75pt]  [color={denim}  ,opacity=1 ]  {$\Gamma _{N}$};
\draw (170.75,216) node [anchor=north west][inner sep=0.75pt]    {$1.0$};
\draw (50.25,216) node [anchor=north west][inner sep=0.75pt]    {$0.0$};
\draw (-6,164) node [anchor=north west][inner sep=0.75pt]    {$0.0$};
\draw (-6,44) node [anchor=north west][inner sep=0.75pt]    {$1.0$};
\draw (113,99.57) node [anchor=north west][inner sep=0.75pt]  [font=\Large]  {$\Omega $};
\draw (28.67,100) node [anchor=north west][inner sep=0.75pt]  [color={shamrockgreen}  ,opacity=1 ]  {$\Gamma _{D}$};

\end{tikzpicture}
\captionof{figure}{\hspace{-0.25mm}Square \hspace{-0.15mm}domain \hspace{-0.15mm}$\Omega$ \hspace{-0.15mm}(\textcolor{gray}{gray}), Dirichlet boundary $\Gamma_D$ (\textcolor{shamrockgreen}{green}), and Neumann boundary $\Gamma_N$ (\textcolor{denim}{blue}).}
\label{fig:unit_square}
\end{figure}
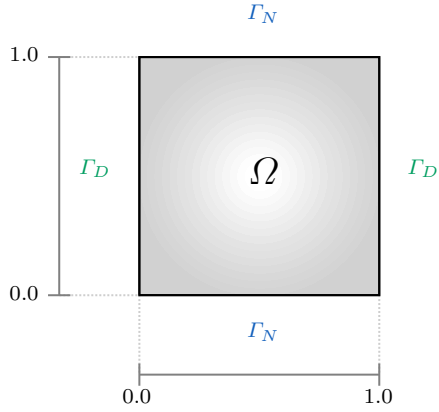
\end{center}

\subsection{Incompressible Stokes problem: \emph{a posteriori} error analysis}

In the second numerical experiment, we review the performance of the local refinement indicators \eqref{def:indicator_stokes} in the local mesh-refinement algorithm (\textit{cf}.\ Algorithm \ref{alg:adaptive}). To this end, let the computational domain be given via the $L$-shaped domain $\Omega \coloneqq (-1,1)^2 \setminus ([0,1]\times [-1,0])$ with Dirichlet boundaries at all boundary parts, \textit{i.e.}, $\Gamma_D\coloneqq\partial\Omega$, (\textit{cf}.\ Figure \ref{fig:stokes_rates}(\textit{right})). Then, for the manufactured (original) velocity vector field $\bu_{\textup{orig}}\in \BH^1(\Omega)$ and kinematic pressure $p_{\textup{orig}}\in L^2(\Omega)$, we consider the example derived in \cite{Ver.1989}, which results in a trivial load, \textit{i.e.}, $\load = \bzero^*$ in $\HminusDirichlet$, in polar coordinates, for every $(r,\theta)\in [0,+\infty) \times [0,2\pi)$  defined by 
\begin{align*}
\bu_{\textup{orig}}(r,\theta) &\coloneqq
r^\alpha\{(1+\alpha)\psi(\theta)(\sin(\theta) ,
-\cos(\theta))^\top+\psi'(\theta)(\cos(\theta),\sin(\theta))^\top\}\,, \\ 
p_{\textup{orig}}(r,\theta) &\coloneqq
\tfrac{r^{\alpha -1}}{\alpha -1} \{ (\alpha+1)^2 \psi'(\theta) + \psi'''(\theta)\}\,,
\end{align*}
where $\alpha = \tfrac{856 399}{1 572 864}$ and the function $\psi\colon [0,2\pi)\to \R$, for every $\theta\in [0,2\pi)$, is defined by
\begin{align*}
\psi(\theta) &= 
\tfrac{1}{\alpha +1}\cos(\tfrac{3\pi}{2}\alpha) \sin((\alpha +1 )\theta) - \cos((\alpha+1)\theta)
\\&\quad+ \tfrac{1}{1-\alpha} \cos(\tfrac{3\pi}{2}\alpha)\sin((\alpha-1)\theta)
+ \cos((\alpha-1)\theta)\,.
\end{align*}
The Dirichlet boundary datum $\uDirichlet\in \BH^1(\Omega)$ is defined so that $\textup{div}\,\uDirichlet=0$ a.e.\ in $\Omega$ and $\uDirichlet=\bu_{\textup{orig}}$ q.e.\ in $\Gamma_D$. Note that the original velocity vector field $\bu_{\textup{orig}}\in\BH^1(\Omega)$ has a singularity at the re-entrant corner  (\textit{i.e.}, at the origin $(0,0)$, \textit{cf}.\ Figure \ref{fig:stokes_rates}(\textit{right})) of the $L$-shaped domain $\Omega$ limiting its maximal regularity to $\bu_{\textup{orig}}\in\BH^{\frac{3}{2}}(\Omega)$.

In Figure \ref{fig:meshes_stokes}, the initial triangulation $\mathcal{T}_{h_0}$ consisting of 114 elements and final triangulation $\mathcal{T}_{h_{12}}$, which is obtained by applying the local mesh-refinement algorithm (\textit{cf}. {Algorithm} \ref{alg:adaptive}), are {depicted}. As expected, the local  mesh-refinement algorithm (\textit{cf}.\ Algorithm \ref{alg:adaptive}) iteratively refines towards the re-entrant corner (\textit{i.e.}, the origin $(0,0)$, \textit{cf}.\ Figure \ref{fig:stokes_rates}(\textit{right})). 
In Figure \ref{fig:stokes_rates}(\textit{left}), for the 
primal errors $\rho_{I}(\bu_{h_k})$, $k=0,\ldots,k_{\max}$, and the dual errors $\rho_{-D}(\BT_{h_k})$, $k=0,\ldots,k_{\max}$, we report the optimal convergence rate with respect to the number of degrees of freedom $\mathcal{O}(\mathtt{num\_dof}_{\smash{k}}^{-\smash{\frac{1}{2}}})=\mathcal{O}(h_k)$,  $k=1,\ldots,k_{\max}\coloneqq 12$, when using adaptive mesh-refinement (\textit{i.e.}, $\theta=\frac{1}{2}$ in Algorithm \ref{alg:adaptive}) and
$\mathcal{O}(\mathtt{num\_dof}_{\smash{k}}^{-\smash{\frac{1}{4}}})=\mathcal{O}(h_k^{-\smash{\frac{1}{2}}})$,  $k=1,\ldots,k_{\max}\coloneqq 4$, when using uniform mesh-refinement (\textit{i.e.}, $\theta=0$ in Algorithm \ref{alg:adaptive}).

\begin{figure}


\tikzset {_ty39njco4/.code = {\pgfsetadditionalshadetransform{ \pgftransformshift{\pgfpoint{0 bp } { 0 bp }  }  \pgftransformscale{1 }  }}}
\pgfdeclareradialshading{_rjfcs6cqi}{\pgfpoint{0bp}{0bp}}{rgb(0bp)=(1,1,1);
rgb(2.142857142857143bp)=(1,1,1);
rgb(14.285714285714285bp)=(0.89,0.89,0.89);
rgb(14.285714285714285bp)=(0.89,0.89,0.89);
rgb(18.482142857142858bp)=(0.86,0.86,0.86);
rgb(25bp)=(0.82,0.82,0.82);
rgb(400bp)=(0.82,0.82,0.82)}


\tikzset{
pattern size/.store in=\mcSize, 
pattern size = 5pt,
pattern thickness/.store in=\mcThickness, 
pattern thickness = 0.3pt,
pattern radius/.store in=\mcRadius, 
pattern radius = 1pt}
\makeatletter
\pgfutil@ifundefined{pgf@pattern@name@_ny8wpwm21}{
\pgfdeclarepatternformonly[\mcThickness,\mcSize]{_ny8wpwm21}
{\pgfqpoint{0pt}{-\mcThickness}}
{\pgfpoint{\mcSize}{\mcSize}}
{\pgfpoint{\mcSize}{\mcSize}}
{
\pgfsetcolor{\tikz@pattern@color}
\pgfsetlinewidth{\mcThickness}
\pgfpathmoveto{\pgfqpoint{0pt}{\mcSize}}
\pgfpathlineto{\pgfpoint{\mcSize+\mcThickness}{-\mcThickness}}
\pgfusepath{stroke}
}}
\makeatother


\tikzset{
pattern size/.store in=\mcSize, 
pattern size = 5pt,
pattern thickness/.store in=\mcThickness, 
pattern thickness = 0.3pt,
pattern radius/.store in=\mcRadius, 
pattern radius = 1pt}
\makeatletter
\pgfutil@ifundefined{pgf@pattern@name@_4xaet1p9u}{
\pgfdeclarepatternformonly[\mcThickness,\mcSize]{_4xaet1p9u}
{\pgfqpoint{0pt}{-\mcThickness}}
{\pgfpoint{\mcSize}{\mcSize}}
{\pgfpoint{\mcSize}{\mcSize}}
{
\pgfsetcolor{\tikz@pattern@color}
\pgfsetlinewidth{\mcThickness}
\pgfpathmoveto{\pgfqpoint{0pt}{\mcSize}}
\pgfpathlineto{\pgfpoint{\mcSize+\mcThickness}{-\mcThickness}}
\pgfusepath{stroke}
}}
\makeatother


\tikzset{
pattern size/.store in=\mcSize, 
pattern size = 5pt,
pattern thickness/.store in=\mcThickness, 
pattern thickness = 0.3pt,
pattern radius/.store in=\mcRadius, 
pattern radius = 1pt}
\makeatletter
\pgfutil@ifundefined{pgf@pattern@name@_z0we6ibg5}{
\pgfdeclarepatternformonly[\mcThickness,\mcSize]{_z0we6ibg5}
{\pgfqpoint{0pt}{-\mcThickness}}
{\pgfpoint{\mcSize}{\mcSize}}
{\pgfpoint{\mcSize}{\mcSize}}
{
\pgfsetcolor{\tikz@pattern@color}
\pgfsetlinewidth{\mcThickness}
\pgfpathmoveto{\pgfqpoint{0pt}{\mcSize}}
\pgfpathlineto{\pgfpoint{\mcSize+\mcThickness}{-\mcThickness}}
\pgfusepath{stroke}
}}
\makeatother


\tikzset{
pattern size/.store in=\mcSize, 
pattern size = 5pt,
pattern thickness/.store in=\mcThickness, 
pattern thickness = 0.3pt,
pattern radius/.store in=\mcRadius, 
pattern radius = 1pt}
\makeatletter
\pgfutil@ifundefined{pgf@pattern@name@_2o6mjvbvk}{
\pgfdeclarepatternformonly[\mcThickness,\mcSize]{_2o6mjvbvk}
{\pgfqpoint{0pt}{-\mcThickness}}
{\pgfpoint{\mcSize}{\mcSize}}
{\pgfpoint{\mcSize}{\mcSize}}
{
\pgfsetcolor{\tikz@pattern@color}
\pgfsetlinewidth{\mcThickness}
\pgfpathmoveto{\pgfqpoint{0pt}{\mcSize}}
\pgfpathlineto{\pgfpoint{\mcSize+\mcThickness}{-\mcThickness}}
\pgfusepath{stroke}
}}
\makeatother


\tikzset{
pattern size/.store in=\mcSize, 
pattern size = 5pt,
pattern thickness/.store in=\mcThickness, 
pattern thickness = 0.3pt,
pattern radius/.store in=\mcRadius, 
pattern radius = 1pt}
\makeatletter
\pgfutil@ifundefined{pgf@pattern@name@_he8jp9jma}{
\pgfdeclarepatternformonly[\mcThickness,\mcSize]{_he8jp9jma}
{\pgfqpoint{0pt}{-\mcThickness}}
{\pgfpoint{\mcSize}{\mcSize}}
{\pgfpoint{\mcSize}{\mcSize}}
{
\pgfsetcolor{\tikz@pattern@color}
\pgfsetlinewidth{\mcThickness}
\pgfpathmoveto{\pgfqpoint{0pt}{\mcSize}}
\pgfpathlineto{\pgfpoint{\mcSize+\mcThickness}{-\mcThickness}}
\pgfusepath{stroke}
}}
\makeatother


\tikzset{
pattern size/.store in=\mcSize, 
pattern size = 5pt,
pattern thickness/.store in=\mcThickness, 
pattern thickness = 0.3pt,
pattern radius/.store in=\mcRadius, 
pattern radius = 1pt}
\makeatletter
\pgfutil@ifundefined{pgf@pattern@name@_tdvyhmk1f}{
\pgfdeclarepatternformonly[\mcThickness,\mcSize]{_tdvyhmk1f}
{\pgfqpoint{0pt}{-\mcThickness}}
{\pgfpoint{\mcSize}{\mcSize}}
{\pgfpoint{\mcSize}{\mcSize}}
{
\pgfsetcolor{\tikz@pattern@color}
\pgfsetlinewidth{\mcThickness}
\pgfpathmoveto{\pgfqpoint{0pt}{\mcSize}}
\pgfpathlineto{\pgfpoint{\mcSize+\mcThickness}{-\mcThickness}}
\pgfusepath{stroke}
}}
\makeatother
\tikzset{every picture/.style={line width=0.75pt}} 

\centering
\subfloat[C][{\centering Logarithmic plots of the primal errors $\rho_I(\bu_{h_k})$, $k=1,\ldots,k_{\max}$, and the dual errors $\rho_{-D}(\BT_{h_k})$, $k=1,\ldots,k_{\max}$,
for $k_{\max}=12$, when using adaptive mesh-refinement (\textit{i.e.}, $\theta=\frac{1}{2}$ in Algorithm \ref{alg:adaptive}), and for $k_{\max}=4$, when using uniform mesh-refinement (\textit{i.e.}, $\theta=0$ in Algorithm \ref{alg:adaptive}).}]{{%
\hspace{-2.5mm}\includegraphics[width=0.575\textwidth]{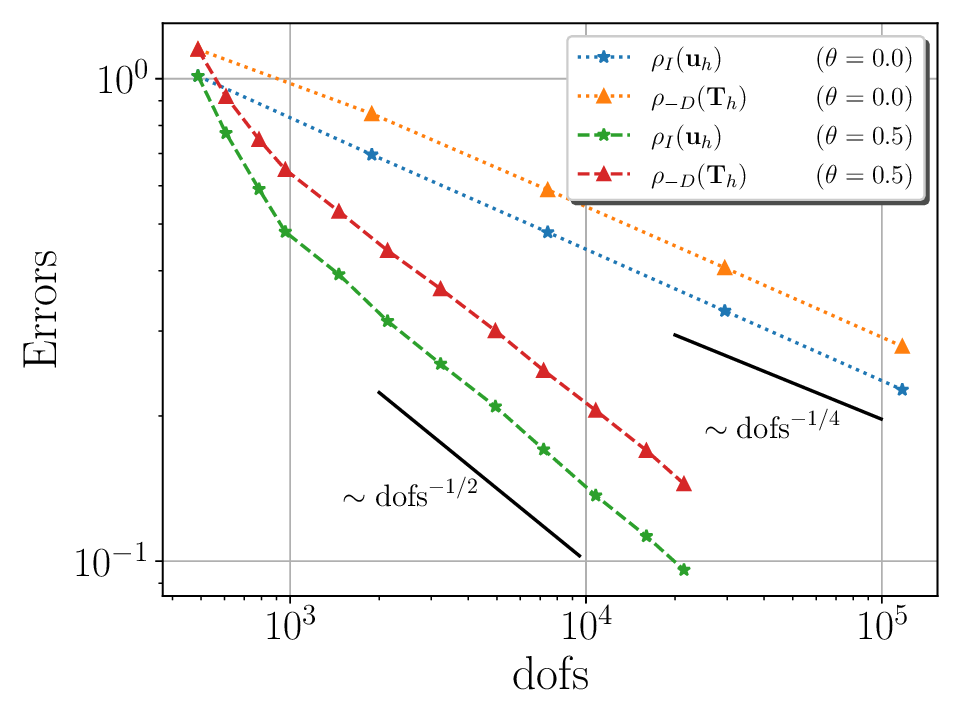}%
}}%
\subfloat[D][{\centering $L$-shaped domain $\Omega$ (\textcolor{gray}{gray}), Dirichlet boundary $\Gamma_D$ (\textcolor{shamrockgreen}{green}), and location of singularity (\textcolor{red!66!black}{red star}).}]{
\begin{tikzpicture}[x=0.86pt,y=0.86pt,yscale=-1,xscale=1]

\path  [shading=_rjfcs6cqi,_ty39njco4] (198.75,42.69) -- (198.75,102.19) -- (138.5,102.19) -- (138.5,162.44) -- (79,162.44) -- (79,42.69) -- (198.75,42.69) -- cycle ; 
\draw   (198.75,42.69) -- (198.75,102.19) -- (138.5,102.19) -- (138.5,162.44) -- (79,162.44) -- (79,42.69) -- (198.75,42.69) -- cycle ; 

\draw  [draw opacity=0][pattern=_ny8wpwm21,pattern size=3.75pt,pattern thickness=0.75pt,pattern radius=0pt, pattern color={shamrockgreen}] (198.75,42.69) -- (209,42.69) -- (209,102.25) -- (198.75,102.25) -- cycle ;
\draw  [draw opacity=0][pattern=_4xaet1p9u,pattern size=3.75pt,pattern thickness=0.75pt,pattern radius=0pt, pattern color={shamrockgreen}] (68.75,42.69) -- (79,42.69) -- (79,162.5) -- (68.75,162.5) -- cycle ;
\draw  [draw opacity=0][pattern=_z0we6ibg5,pattern size=3.75pt,pattern thickness=0.75pt,pattern radius=0pt, pattern color={shamrockgreen}] (79,32.25) -- (198.75,32.25) -- (198.75,42.69) -- (79,42.69) -- cycle ;
\draw  [draw opacity=0][pattern=_2o6mjvbvk,pattern size=3.75pt,pattern thickness=0.75pt,pattern radius=0pt, pattern color={shamrockgreen}] (79,162.44) -- (138.5,162.44) -- (138.5,172.89) -- (79,172.89) -- cycle ;
\draw [color={rgb, 255:red, 206; green, 206; blue, 206 }  ,draw opacity=1 ] [dash pattern={on 0.75pt off 0.75pt on 0.75pt off 0.75pt}]  (39,162.5) -- (79,162.5) ;
\draw [color={rgb, 255:red, 206; green, 206; blue, 206 }  ,draw opacity=1 ] [dash pattern={on 0.75pt off 0.75pt on 0.75pt off 0.75pt}]  (39,42.69) -- (79,42.69) ;
\draw [color={rgb, 255:red, 206; green, 206; blue, 206 }  ,draw opacity=1 ] [dash pattern={on 0.75pt off 0.75pt on 0.75pt off 0.75pt}]  (79,162.44) -- (79,202.44) ;
\draw [color={rgb, 255:red, 206; green, 206; blue, 206 }  ,draw opacity=1 ] [dash pattern={on 0.75pt off 0.75pt on 0.75pt off 0.75pt}]  (198.75,102.19) -- (198.75,202.44) ;
\draw [color={rgb, 255:red, 128; green, 128; blue, 128 }  ,draw opacity=1 ]   (39,42.69) -- (39,162.5) ;
\draw [shift={(39,162.5)}, rotate = 270] [color={rgb, 255:red, 128; green, 128; blue, 128 }  ,draw opacity=1 ][line width=0.75]    (0,5.59) -- (0,-5.59)   ;
\draw [shift={(39,42.69)}, rotate = 270] [color={rgb, 255:red, 128; green, 128; blue, 128 }  ,draw opacity=1 ][line width=0.75]    (0,5.59) -- (0,-5.59)   ;
\draw [color={rgb, 255:red, 128; green, 128; blue, 128 }  ,draw opacity=1 ]   (79,202.44) -- (198.75,202.44) ;
\draw [shift={(198.75,202.44)}, rotate = 180] [color={rgb, 255:red, 128; green, 128; blue, 128 }  ,draw opacity=1 ][line width=0.75]    (0,5.59) -- (0,-5.59)   ;
\draw [shift={(79,202.44)}, rotate = 180] [color={rgb, 255:red, 128; green, 128; blue, 128 }  ,draw opacity=1 ][line width=0.75]    (0,5.59) -- (0,-5.59)   ;
\draw  [draw opacity=0][pattern=_he8jp9jma,pattern size=3.75pt,pattern thickness=0.75pt,pattern radius=0pt, pattern color={shamrockgreen}] (138.5,102.19) -- (148.75,102.19) -- (148.75,161.75) -- (138.5,161.75) -- cycle ;
\draw  [draw opacity=0][pattern=_tdvyhmk1f,pattern size=3.75pt,pattern thickness=0.75pt,pattern radius=0pt, pattern color={shamrockgreen}] (138.75,102.25) -- (198.75,102.25) -- (198.75,112.69) -- (138.75,112.69) -- cycle ;
\draw   (198.75,42.75) -- (198.75,102.25) -- (138.5,102.25) -- (138.5,162.5) -- (79,162.5) -- (79,42.75) -- (198.75,42.75) -- cycle ;
\draw  [fill={rgb, 255:red, 208; green, 2; blue, 27 }  ,fill opacity=1 ] (138.75,97.25) -- (140.15,100.23) -- (143.27,100.7) -- (141.01,103.02) -- (141.54,106.3) -- (138.75,104.75) -- (135.96,106.3) -- (136.49,103.02) -- (134.23,100.7) -- (137.35,100.23) -- cycle ;
\draw [color={rgb, 255:red, 206; green, 206; blue, 206 }  ,draw opacity=1 ] [dash pattern={on 0.75pt off 0.75pt on 0.75pt off 0.75pt}]  (138.5,162.5) -- (138.5,202.5) ;
\draw [color={rgb, 255:red, 206; green, 206; blue, 206 }  ,draw opacity=1 ] [dash pattern={on 0.75pt off 0.75pt on 0.75pt off 0.75pt}]  (39,102.6) -- (79,102.75) ;
\draw [color={rgb, 255:red, 128; green, 128; blue, 128 }  ,draw opacity=1 ]   (39,102.6) -- (39,162.5) ;
\draw [shift={(39,162.5)}, rotate = 270] [color={rgb, 255:red, 128; green, 128; blue, 128 }  ,draw opacity=1 ][line width=0.75]    (0,5.59) -- (0,-5.59)   ;
\draw [shift={(39,102.6)}, rotate = 270] [color={rgb, 255:red, 128; green, 128; blue, 128 }  ,draw opacity=1 ][line width=0.75]    (0,5.59) -- (0,-5.59)   ;
\draw [color={rgb, 255:red, 128; green, 128; blue, 128 }  ,draw opacity=1 ]   (79,202.44) -- (138.5,202.5) ;
\draw [shift={(138.5,202.5)}, rotate = 180.05] [color={rgb, 255:red, 128; green, 128; blue, 128 }  ,draw opacity=1 ][line width=0.75]    (0,5.59) -- (0,-5.59)   ;
\draw [shift={(79,202.44)}, rotate = 180.05] [color={rgb, 255:red, 128; green, 128; blue, 128 }  ,draw opacity=1 ][line width=0.75]    (0,5.59) -- (0,-5.59)   ;

\draw (212.5,65) node [anchor=north west][inner sep=0.75pt]  [color={shamrockgreen}  ,opacity=1 ]  {$\Gamma _{D}$};
\draw (130,17.5) node [anchor=north west][inner sep=0.75pt]  [color={shamrockgreen}  ,opacity=1 ]  {$\Gamma _{D}$};
\draw (102.5,175) node [anchor=north west][inner sep=0.75pt]  [color={shamrockgreen}  ,opacity=1 ]  {$\Gamma _{D}$};
\draw (191,209) node [anchor=north west][inner sep=0.75pt]    {$1.0$};
\draw (70,209) node [anchor=north west][inner sep=0.75pt]    {$0.0$};
\draw (15,157.5) node [anchor=north west][inner sep=0.75pt]    {$0.0$};
\draw (15,37.5) node [anchor=north west][inner sep=0.75pt]    {$1.0$};
\draw (106,69) node [anchor=north west][inner sep=0.75pt]  [font=\Large]  {$\Omega $};
\draw (151.5,130) node [anchor=north west][inner sep=0.75pt]  [color={shamrockgreen}  ,opacity=1 ]  {$\Gamma _{D}$};
\draw (165,115) node [anchor=north west][inner sep=0.75pt]  [color={shamrockgreen}  ,opacity=1 ]  {$\Gamma _{D}$};
\draw (130,209) node [anchor=north west][inner sep=0.75pt]    {$0.5$};
\draw (15,97.5) node [anchor=north west][inner sep=0.75pt]    {$0.5$};
\draw (49,97) node [anchor=north west][inner sep=0.75pt]  [color={shamrockgreen}  ,opacity=1 ]  {$\Gamma _{D}$};
\draw (138,84) node [anchor=north west][inner sep=0.75pt]    {$(0,0)$};

\end{tikzpicture}
}%
\caption{\textit{left}: error decay plot; \textit{right}: computational domain with boundary conditions.}%
\label{fig:stokes_rates}
\end{figure}

\begin{figure}
\centering
\subfloat[C][{\centering Initial triangulation $\mathcal{T}_0$.}]{{%
\includegraphics[width=0.5\textwidth]{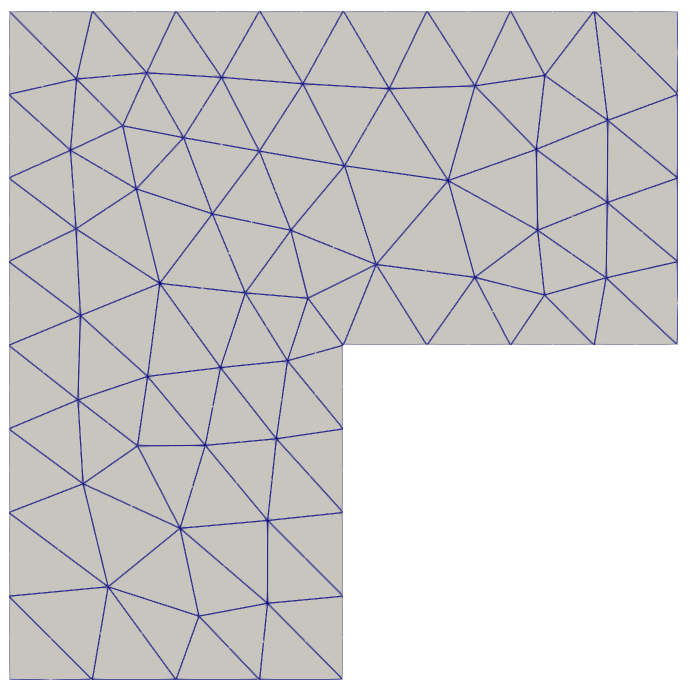}%
}}%
\subfloat[D][{\centering Triangulation $\mathcal{T}_{12} $ after 11 refinements.}]{{%
\includegraphics[width=0.5\textwidth]{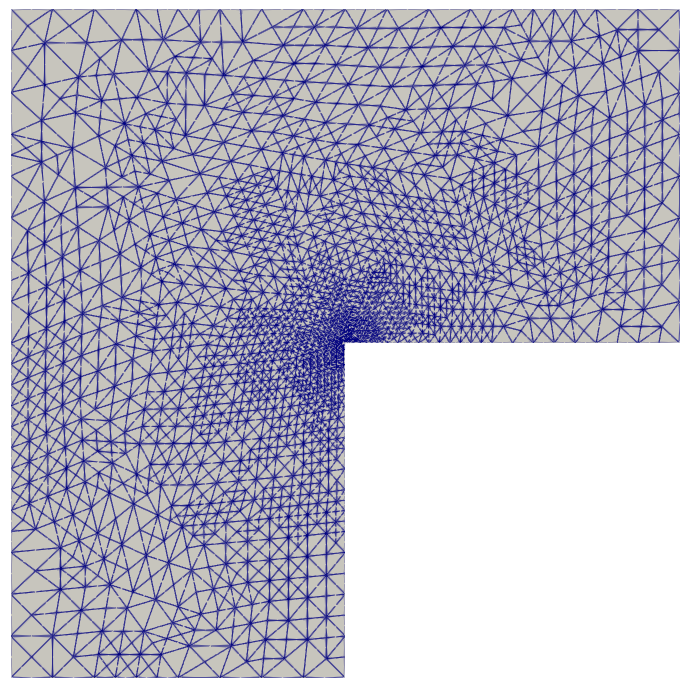}%
}}%
\caption{Initial and locally refined triangulation obtained following Algorithm \ref{alg:adaptive} with refinement parameter $\theta=0.5$ for the Stokes minimisation problem (\textit{cf}.\ Section \ref{sec:stokes}).}%
\label{fig:meshes_stokes}
\end{figure}

\subsection{\emph{A posteriori} error analysis --- Navier--Lam\'e minimisation problem}

\hspace{5mm}In this third numerical experiment, we review the performance of the local refinement indicators \eqref{def:indicator_elasticity} in the local mesh-refinement algorithm (\textit{cf}.\ Algorithm \ref{alg:adaptive}). To this end, we consider Cook's Membrane problem (\textit{cf}.\ \cite{Coo.1974}):
let the computational domain be given via $\Omega\subseteq \R^2$ described in Figure \ref{fig:elasticity_rates_sols}(\textit{right}) with Dirichlet boundary at the left wall, \textit{i.e.}, $\Gamma_D = \{0\}\times (0,0.44)$, 
and Neumann boundaries at the remaining walls, \textit{i.e.}, $\Gamma_N\coloneqq\partial\Omega\setminus \Gamma_D$. On the Dirichlet boundary $\Gamma_D$, we impose homogeneous Dirichlet boundary conditions, \textit{i.e.}, $\widehat{\bu}_{D} = \bzero$ a.e.\ in $\Omega$. On the Neumann boundary, for a given positive parameter $\gamma>0$, we impose that, for every $s=(s_1,s_2)\in \Gamma_N$, there holds
\begin{equation}\label{eq:cook_neumannbc}
(\BT \bn)(s) = 
\begin{cases}
(0,\gamma)^\top & \text{if }s_1=0.48\,, \\
(0,0)^\top & \text{otherwise}\,.
\end{cases} 
\end{equation}
More precisely, in the numerical experiment, we choose $\gamma = 0.01$, the Lam\'e parameters $\mu=1$ and $\lambda=5$, and a trivial load $\load = \bzero^*$ in $\HminusDirichlet$. Note that,
in Cook's Membrane problem (\textit{cf}.\ \cite{Coo.1974}), the original displacement field $\bu_{\textup{orig}}\in \BH^1(\Omega)$ is expected to have a singularity at the top left corner (\textit{i.e.}, at $(0,0.44)$, \textit{cf}.\ Figure \ref{fig:elasticity_rates_sols}(\textit{right})) of the computational domain $\Omega$ limiting its maximal regularity  to $\bu_{\textup{orig}}\in \BH^{\frac{5}{3}}(\Omega)$.

In Figure \ref{fig:meshes_elasticity}(\textit{left}/\textit{middle}), the initial triangulation $\mathcal{T}_{h_0}$ consisting of 119 elements and intermediate triangulation $\mathcal{T}_{h_7}$, which is obtained by applying seven times the local mesh-refinement algorithm (\textit{cf}.\ Algorithm \ref{alg:adaptive})  are depicted. As expected, the local  mesh-refinement algorithm (\textit{cf}.\ Algorithm \ref{alg:adaptive}) iteratively refines towards the top left corner (\textit{i.e.}, at $(0,0.44)$, \textit{cf}.\ Figure \ref{fig:elasticity_rates_sols}(\textit{right})).\linebreak 
Moreover, in Figure \ref{fig:elasticity_rates_sols}(\textit{left}), 
for the extended primal-dual gap estimator values $\overline{\eta}_{\mathrm{gap}}(\bu_{h_k},\bsigma_{h_k}^*)$, $k=0,\ldots,k_{\textup{max}}$,  (\textit{cf}.\ \eqref{eq:elasticity_gap}), we report the optimal convergence rate with respect to the total number of degrees of freedom $\mathcal{O}(\mathtt{num\_dof}_{\smash{k}}^{-\smash{\frac{1}{2}}})=\mathcal{O}(h_k)$,  $k=1,\ldots,k_{\max}\coloneqq 13$, when using adaptive mesh-refinement (\textit{i.e.}, $\theta=\frac{1}{2}$ in Algorithm \ref{alg:adaptive}) and
$\mathcal{O}(\mathtt{num\_dof}_{\smash{k}}^{-\smash{\frac{1}{3}}})=\mathcal{O}(h_k^{-\smash{\frac{2}{3}}})$,  $k=1,\ldots,k_{\max}\coloneqq 5$, when using uniform mesh-refinement (\textit{i.e.}, $\theta=0$ in Algorithm \ref{alg:adaptive}).

\begin{figure}[H]

\tikzset{
pattern size/.store in=\mcSize, 
pattern size = 5pt,
pattern thickness/.store in=\mcThickness, 
pattern thickness = 0.3pt,
pattern radius/.store in=\mcRadius, 
pattern radius = 1pt}
\makeatletter
\pgfutil@ifundefined{pgf@pattern@name@_ptwc1udgt}{
\pgfdeclarepatternformonly[\mcThickness,\mcSize]{_ptwc1udgt}
{\pgfqpoint{0pt}{-\mcThickness}}
{\pgfpoint{\mcSize}{\mcSize}}
{\pgfpoint{\mcSize}{\mcSize}}
{
\pgfsetcolor{\tikz@pattern@color}
\pgfsetlinewidth{\mcThickness}
\pgfpathmoveto{\pgfqpoint{0pt}{\mcSize}}
\pgfpathlineto{\pgfpoint{\mcSize+\mcThickness}{-\mcThickness}}
\pgfusepath{stroke}
}}
\makeatother


\tikzset{
pattern size/.store in=\mcSize, 
pattern size = 5pt,
pattern thickness/.store in=\mcThickness, 
pattern thickness = 0.3pt,
pattern radius/.store in=\mcRadius, 
pattern radius = 1pt}
\makeatletter
\pgfutil@ifundefined{pgf@pattern@name@_qe634pa47}{
\pgfdeclarepatternformonly[\mcThickness,\mcSize]{_qe634pa47}
{\pgfqpoint{0pt}{-\mcThickness}}
{\pgfpoint{\mcSize}{\mcSize}}
{\pgfpoint{\mcSize}{\mcSize}}
{
\pgfsetcolor{\tikz@pattern@color}
\pgfsetlinewidth{\mcThickness}
\pgfpathmoveto{\pgfqpoint{0pt}{\mcSize}}
\pgfpathlineto{\pgfpoint{\mcSize+\mcThickness}{-\mcThickness}}
\pgfusepath{stroke}
}}
\makeatother


\tikzset{
pattern size/.store in=\mcSize, 
pattern size = 5pt,
pattern thickness/.store in=\mcThickness, 
pattern thickness = 0.3pt,
pattern radius/.store in=\mcRadius, 
pattern radius = 1pt}
\makeatletter
\pgfutil@ifundefined{pgf@pattern@name@_sf5cpxeba}{
\pgfdeclarepatternformonly[\mcThickness,\mcSize]{_sf5cpxeba}
{\pgfqpoint{0pt}{-\mcThickness}}
{\pgfpoint{\mcSize}{\mcSize}}
{\pgfpoint{\mcSize}{\mcSize}}
{
\pgfsetcolor{\tikz@pattern@color}
\pgfsetlinewidth{\mcThickness}
\pgfpathmoveto{\pgfqpoint{0pt}{\mcSize}}
\pgfpathlineto{\pgfpoint{\mcSize+\mcThickness}{-\mcThickness}}
\pgfusepath{stroke}
}}
\makeatother


\tikzset{
pattern size/.store in=\mcSize, 
pattern size = 5pt,
pattern thickness/.store in=\mcThickness, 
pattern thickness = 0.3pt,
pattern radius/.store in=\mcRadius, 
pattern radius = 1pt}
\makeatletter
\pgfutil@ifundefined{pgf@pattern@name@_owbrjbr8j}{
\pgfdeclarepatternformonly[\mcThickness,\mcSize]{_owbrjbr8j}
{\pgfqpoint{0pt}{-\mcThickness}}
{\pgfpoint{\mcSize}{\mcSize}}
{\pgfpoint{\mcSize}{\mcSize}}
{
\pgfsetcolor{\tikz@pattern@color}
\pgfsetlinewidth{\mcThickness}
\pgfpathmoveto{\pgfqpoint{0pt}{\mcSize}}
\pgfpathlineto{\pgfpoint{\mcSize+\mcThickness}{-\mcThickness}}
\pgfusepath{stroke}
}}
\makeatother


\tikzset {_jcs3jdu1f/.code = {\pgfsetadditionalshadetransform{ \pgftransformshift{\pgfpoint{0 bp } { 0 bp }  }  \pgftransformscale{1 }  }}}
\pgfdeclareradialshading{_7lcynulhw}{\pgfpoint{0bp}{0bp}}{rgb(0bp)=(0.89,0.89,0.89);
rgb(0bp)=(0.89,0.89,0.89);
rgb(0bp)=(1,1,1);
rgb(19.910714285714285bp)=(0.86,0.86,0.86);
rgb(25bp)=(0.82,0.82,0.82);
rgb(400bp)=(0.82,0.82,0.82)}
\tikzset{every picture/.style={line width=0.75pt}} 

\centering
\hspace*{-3.5mm}\subfloat[C][{\centering\hspace{-0.5mm}Logarithmic \hspace{-0.25mm}plots \hspace{-0.25mm}of \hspace{-0.25mm}the \hspace{-0.25mm}extended \hspace{-0.25mm}primal-dual \hspace{-0.25mm}gap \hspace{-0.25mm}estimator  \linebreak $\overline{\eta}_{\textup{gap}}(\bu_{h_k},\bsigma_{h_k}^*)$, $k=1,\ldots,k_{\max}$,  (\textit{cf}.\ \eqref{eq:elasticity_gap}),
for $k_{\max}=13$, when using adaptive mesh-refinement (\textit{i.e.}, $\theta=\frac{1}{2}$ in Algorithm \ref{alg:adaptive}), and for $k_{\max}=5$, when using uniform mesh-refinement (\textit{i.e.}, $\theta=0$ in Algorithm \ref{alg:adaptive})}]{{%
\includegraphics[width=0.575\textwidth]{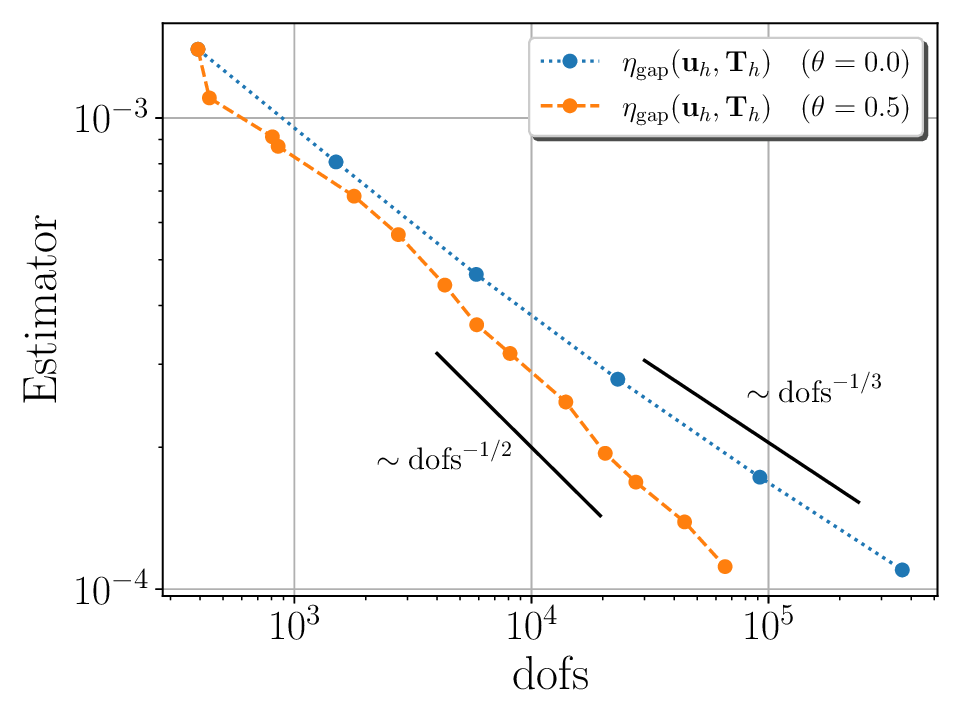}%
}}%
\subfloat[D][{\centering Domain $\Omega$ (\textcolor{gray}{gray}), Dirichlet boundary $\Gamma_D$ (\textcolor{shamrockgreen}{green}), Neumann boundary $\Gamma_N$ (\textcolor{denim}{blue}), and location of singularity (\textcolor{red!66!black}{red star}) in Cook's Membrane problem (\textit{cf}.\ \cite{Coo.1974}).}]{
\begin{tikzpicture}[x=1.1pt,y=1.1pt,yscale=-1,xscale=1]

\draw  [draw opacity=0][pattern=_ptwc1udgt,pattern size=3.75pt,pattern thickness=0.75pt,pattern radius=0pt, pattern color={denim}] (165.86,13.98) -- (165.58,39.05) -- (159.11,38.96) -- (159.39,13.89) -- cycle ;
\draw  [draw opacity=0][pattern=_qe634pa47,pattern size=3.75pt,pattern thickness=0.75pt,pattern radius=0pt, pattern color={denim}] (163.67,43.84) -- (71.93,136.61) -- (67.51,131.48) -- (159.26,38.71) -- cycle ;
\draw  [draw opacity=0][pattern=_sf5cpxeba,pattern size=3.75pt,pattern thickness=0.75pt,pattern radius=0pt, pattern color={shamrockgreen}] (68.03,48.79) -- (67.13,130.69) -- (60.66,130.61) -- (61.56,48.7) -- cycle ;
\draw  [draw opacity=0][pattern=_owbrjbr8j,pattern size=3.75pt,pattern thickness=0.75pt,pattern radius=0pt, pattern color={denim}] (65.53,42.44) -- (157.26,7.87) -- (159.38,14.5) -- (67.65,49.07) -- cycle ;
\draw [color={rgb, 255:red, 206; green, 206; blue, 206 }  ,draw opacity=1 ] [dash pattern={on 0.75pt off 0.75pt on 0.75pt off 0.75pt}]  (30.22,48.89) -- (67.65,49.07) ;
\draw [color={rgb, 255:red, 206; green, 206; blue, 206 }  ,draw opacity=1 ] [dash pattern={on 0.75pt off 0.75pt on 0.75pt off 0.75pt}]  (30,130.33) -- (67.43,130.52) ;
\draw [color={rgb, 255:red, 206; green, 206; blue, 206 }  ,draw opacity=1 ] [dash pattern={on 0.75pt off 0.75pt on 0.75pt off 0.75pt}]  (30.31,14) -- (159.39,13.9) ;
\draw [color={rgb, 255:red, 206; green, 206; blue, 206 }  ,draw opacity=1 ] [dash pattern={on 0.75pt off 0.75pt on 0.75pt off 0.75pt}]  (67.43,130.52) -- (67.79,150.51) ;
\draw [color={rgb, 255:red, 206; green, 206; blue, 206 }  ,draw opacity=1 ] [dash pattern={on 0.75pt off 0.75pt on 0.75pt off 0.75pt}]  (159.2,48.63) -- (159.45,149.67) ;
\draw [color={rgb, 255:red, 128; green, 128; blue, 128 }  ,draw opacity=1 ]   (67.79,150.51) -- (159.45,149.67) ;
\draw [shift={(159.45,149.67)}, rotate = 179.48] [color={rgb, 255:red, 128; green, 128; blue, 128 }  ,draw opacity=1 ][line width=0.75]    (0,5.59) -- (0,-5.59)   ;
\draw [shift={(67.79,150.51)}, rotate = 179.48] [color={rgb, 255:red, 128; green, 128; blue, 128 }  ,draw opacity=1 ][line width=0.75]    (0,5.59) -- (0,-5.59)   ;
\draw [color={rgb, 255:red, 128; green, 128; blue, 128 }  ,draw opacity=1 ]   (30,130.33) -- (30.33,14) ;
\draw [shift={(30.33,14)}, rotate = 90.16] [color={rgb, 255:red, 128; green, 128; blue, 128 }  ,draw opacity=1 ][line width=0.75]    (0,5.59) -- (0,-5.59)   ;
\draw [shift={(30,130.33)}, rotate = 90.16] [color={rgb, 255:red, 128; green, 128; blue, 128 }  ,draw opacity=1 ][line width=0.75]    (0,5.59) -- (0,-5.59)   ;
\draw [color={rgb, 255:red, 128; green, 128; blue, 128 }  ,draw opacity=1 ]   (30,130.33) -- (30.24,48.89) ;
\draw [shift={(30.24,48.89)}, rotate = 90.17] [color={rgb, 255:red, 128; green, 128; blue, 128 }  ,draw opacity=1 ][line width=0.75]    (0,5.59) -- (0,-5.59)   ;
\draw [shift={(30,130.33)}, rotate = 90.17] [color={rgb, 255:red, 128; green, 128; blue, 128 }  ,draw opacity=1 ][line width=0.75]    (0,5.59) -- (0,-5.59)   ;
\path  [shading=_7lcynulhw,_jcs3jdu1f] (67.78,48.55) -- (159.39,14.5) -- (159.26,38.71) -- (67.34,130.97) -- cycle ; 
\draw   (67.78,48.55) -- (159.39,14.5) -- (159.26,38.71) -- (67.34,130.97) -- cycle ; 
\draw  [fill={rgb, 255:red, 208; green, 2; blue, 27 }  ,fill opacity=1 ] (68.03,43.79) -- (69.42,46.76) -- (72.54,47.24) -- (70.29,49.56) -- (70.82,52.83) -- (68.03,51.29) -- (65.23,52.83) -- (65.77,49.56) -- (63.51,47.24) -- (66.63,46.76) -- cycle ;

\draw (116.51,93.48) node [anchor=north west][inner sep=0.75pt]  [color={denim}  ,opacity=1 ]  {$\Gamma _{N}$};
\draw (167.5,21) node [anchor=north west][inner sep=0.75pt]  [color={denim}  ,opacity=1 ]  {$\Gamma _{N}$};
\draw (97.5,14) node [anchor=north west][inner sep=0.75pt]  [color={denim}  ,opacity=1 ]  {$\Gamma _{N}$};
\draw (45,83.73) node [anchor=north west][inner sep=0.75pt]  [color={shamrockgreen}  ,opacity=1 ]  {$\Gamma _{D}$};
\draw (150,156) node [anchor=north west][inner sep=0.75pt]    {$0.48$};
\draw (61,156) node [anchor=north west][inner sep=0.75pt]    {$0.0$};
\draw (11,126.5) node [anchor=north west][inner sep=0.75pt]    {$0.0$};
\draw (6.5,45) node [anchor=north west][inner sep=0.75pt]    {$0.44$};
\draw (11,10) node [anchor=north west][inner sep=0.75pt]    {$0.6$};
\draw (102.5,54.5) node [anchor=north west][inner sep=0.75pt] [font=\Large]   {$\Omega $};
\draw (31,35) node [anchor=north west][inner sep=0.75pt]    {$(0,0.44)$};

\end{tikzpicture}
}%
\caption{\textit{left}: error decay plot; \textit{right}: computational domain with boundary conditions.}%
\label{fig:elasticity_rates_sols}
\end{figure}

\begin{figure}[H]
\centering
\subfloat[C][{\centering Initial triangulation $\mathcal{T}_0$.}]{{%
\includegraphics[width=0.335\textwidth]{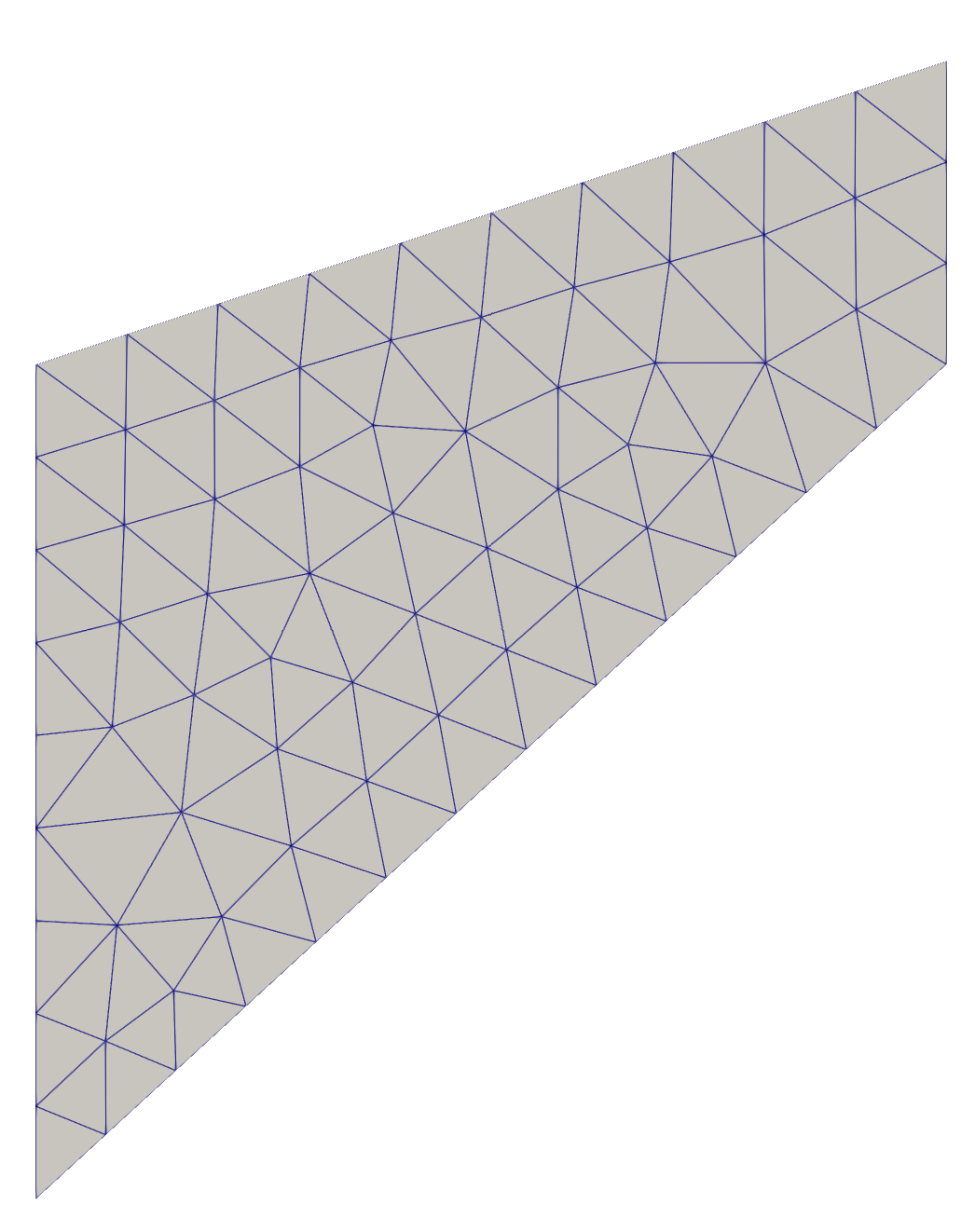}%
}}%
\subfloat[D][{\centering Triangulation $\mathcal{T}_8$ after 7 refinements.}]{{%
\includegraphics[width=0.335\textwidth]{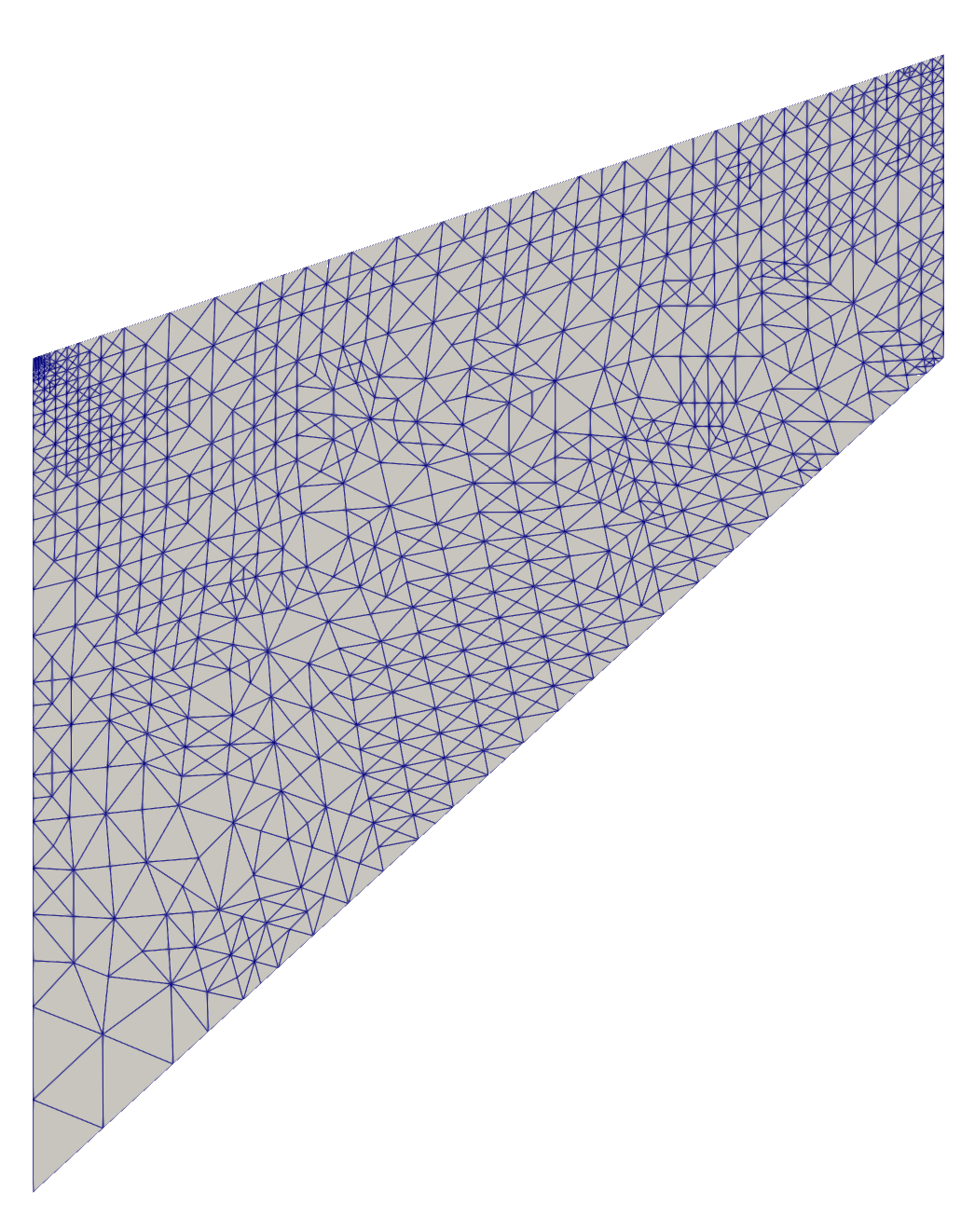}%
}}%
\subfloat[E][{\centering Reference configuration $\Omega$ (\textcolor{denim}{blue}) and deformed configuration $\bu_{h_{13}}(\Omega)$ (\textcolor{shamrockgreen}{green}).}]{{%
\includegraphics[width=0.335\textwidth]{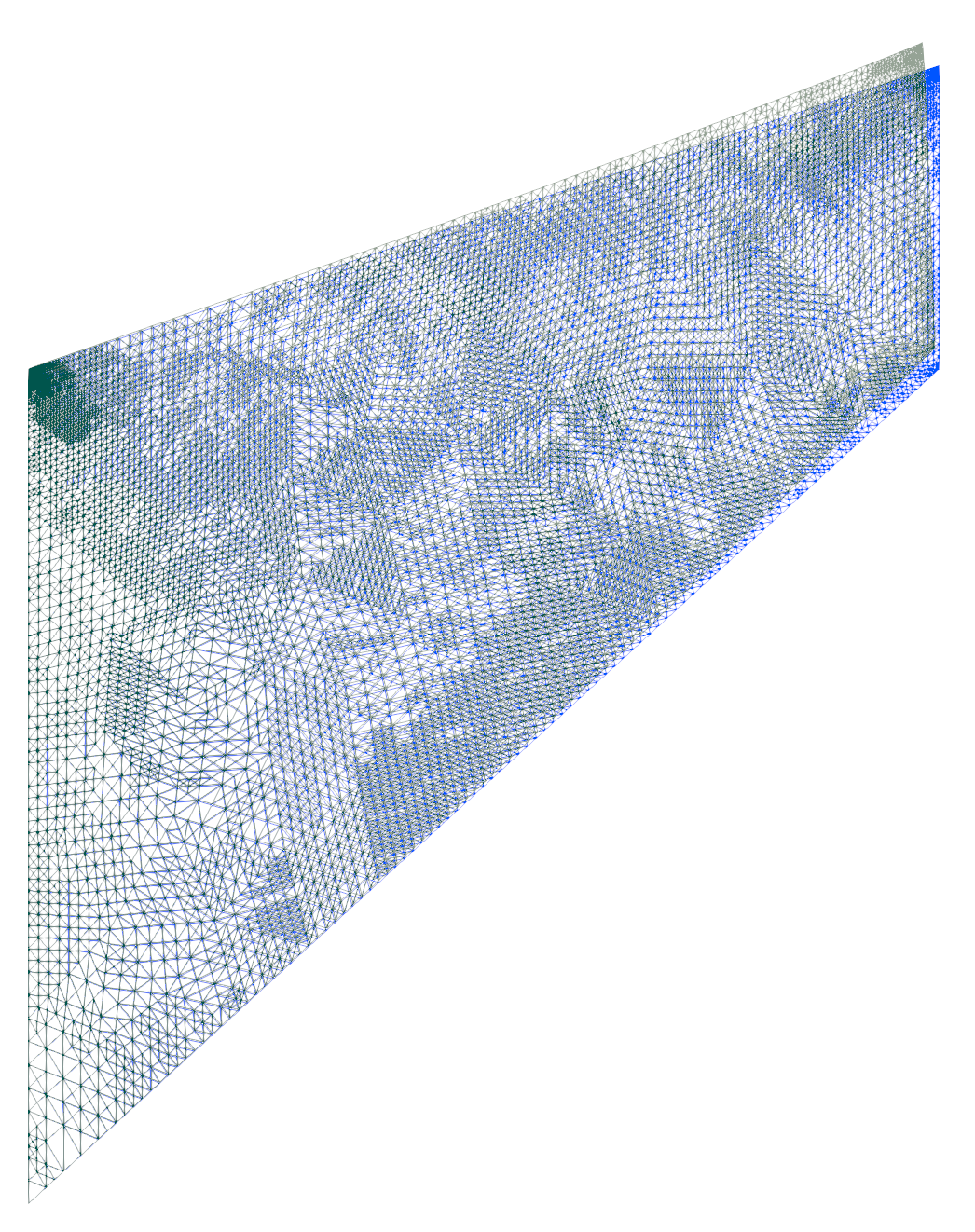}%
}}%
\caption{Initial and locally refined triangulation obtained following Algorithm \ref{alg:adaptive} with refinement parameter $\theta=0.5$ for the Navier--Lam\'e minimisation minimisation problem (\textit{cf}.\ Section \ref{sec:stokes}).}%
\label{fig:meshes_elasticity}
\end{figure}

\begin{acknowledgements}
The authors would like to thank S.\ Bartels for helpful discussions during the development of this work.
\end{acknowledgements}

%
 \section*{Declarations}

 \textbf{Data availability. }
 This study did not generate or analyze any datasets.
 Therefore, data sharing is not applicable to this work.\\

 \noindent
 \textbf{Conflict of interest.}
 The authors declare that they have no conflict of interest.

%
%


%
%

\bibliographystyle{spmpsci}      
\bibliography{literatur}   


\end{document}